\documentclass[a4paper, leqno]{article}
\usepackage[english]{babel}
\usepackage{amsmath,amscd,amssymb,amsthm}
\usepackage{physics}
\usepackage{pb-diagram}
\usepackage{lineno}
\numberwithin{equation}{section}
\usepackage{graphics} 
\usepackage{bm}
\usepackage{tikz} 
\usepackage{verbatim}
\usepackage{centernot,cancel}
\newtheorem{de}{Definition}[section]

\newtheorem{thm}{Theorem}[section]

\newtheorem{lem}{Lemma}[section]

\newtheorem{prop}{Proposition}[section]

\newtheorem{q}{Question}[section]
\newtheorem{remark}{Remark}[section]
\newcommand{\be}{\begin{equation}}
\newcommand{\ee}{\end{equation}}
\newcommand{\bpr}{\begin{proof}}
\newcommand{\epr}{\end{proof}}
\newcommand{\baln}{\begin{align*}}
\newcommand{\ealn}{\end{align*}}
\begin{document}
\title{Manton's Exotic Vortex Equation}
\author{Takashi Ono\thanks{Research Institute for Mathematical Sciences, Kyoto University, Kyoto, Japan, takashio@kurims.kyoto-u.ac.jp}}
\date{}
\maketitle
\begin{abstract}
In this paper, we study the equation which we call \textit{Manton's exotic vortex equation}. The equation arose in Manton's study of a deformation of the classical vortex equation \cite{Ma}. We first investigate the existence problem for Manton's exotic vortex equation and establish existence results.
 We then construct the moduli space of solutions.
 Finally, we establish a dimensional reduction for Manton's exotic vortex equation.
 More precisely, we establish a correspondence between solutions of Manton's exotic vortex equation and pseudo-Hermitian-Yang--Mills connections with pseudo-Hermitian metrics of signature $(1,1)$.
\end{abstract}
\noindent
Keywords: Vortex equation, pseudo-Hermitian-Yang-Mills connection\\
2020 MSC: 53C07, 14D21, 58D27
\section*{Introduction}
\subsection*{Manton's Exotic Vortex Equation}
In this paper, we study the equation which we call \textit{Manton's exotic vortex equation}.
The equation arose in Manton's study of a deformation of the classical vortex equation \cite{Ma}.
Before we introduce Manton's exotic vortex equation, we first recall the vortex equation.
\par
Let $X$ be a compact Riemann surface with a K\"ahler form $\omega_X$.
Let $(L,\overline\partial_L)$ be a holomorphic line bundle over $X$, let $\phi$ be a holomorphic section of $(L,\overline\partial_L)$, and let $h$ be a Hermitian metric of $L$.
We denote as $\nabla_h$ the Chern connection with respect to $\overline\partial_L,h$, and as $F_h$ the curvature of $\nabla_h$.
Let $\tau>0$ be a positive constant.
We say that the triple $((L,\overline\partial_L),\phi,h)$ satisfies the $\tau$-vortex equation if 
\[
\sqrt{-1}\Lambda_{\omega_X}F_{h}+\frac12|\phi|^2_h-\frac{\tau}{2}=0.
\]
Here, $\Lambda_{\omega_X}$ is the contraction operator with respect to $\omega_X$.
The vortex equation was introduced by Ginzburg and Landau \cite{GL}, and has since been extensively studied in both mathematics and physics.
 Since it is impossible to give an exhaustive list of the literature, we mention only a few references \cite{JT, Br1, GP1, GP3}.
 \par
In \cite{Ma}, Manton deformed the vortex equation by changing the sign of the coefficients.
More precisely, he introduced the family of equations
\[
\sqrt{-1}\Lambda_{\omega_X}F_{h}+C_0\frac12|\phi|^2_h-C_1\frac{\tau}{2}=0,\quad C_0,C_1\in\{-1,0,1\}.
\]
Note that we still assume $\tau>0$.
Although there are nine possible pairs $(C_0,C_1)$, only the following five pairs
\[
(C_0,C_1)=(1,1),(-1,1),(-1,0),(-1,-1),(0,1)
\]
can occur.
We will explain why the other four pairs cannot occur in Section \ref{sec 1.1}.
Among the five possible pairs $(C_0,C_1)$, the $(C_0,C_1)=(1,1)$ case is the classical vortex equation and the $(C_0,C_1)=(0,1)$ case is the Hermitian-Yang-Mills equation.
While both cases are extensively studied, the remaining three cases  $(C_0,C_1)=(-1,-1),(-1,0),(-1,1)$ are relatively unexplored in the mathematical literature, and are what we are interested in.
\begin{remark}
Although the three cases  $(C_0,C_1)=(-1,-1),(-1,0),(-1,1)$ are relatively unexplored in the mathematical literature, they have been studied in the physics literature.
The $(C_0,C_1)=(-1,-1)$ case was studied by Popov \cite{Po1, Po2}, the $(C_0,C_1)=(-1,0)$ case was studied by Jackiw-Pi \cite{JaPi1, JaPi2}, and the $(C_0,C_1)=(-1,1)$ case was studied by Ambj{\o}rn-Olesen \cite{AmOl1, AmOl2}.
\end{remark}

Since we assume throughout that $\tau>0,$ and since $C_1$ takes the values $-1, 0,$ and $1$ for the three cases of interest, these three equations can be unified by allowing $\tau$ to vary over all real numbers.
This leads to the following definition of Manton's exotic $\tau$-vortex equation.
\begin{de}
Let $(X,\omega_X)$ be a compact Riemann surface with a K\"ahler form $\omega_X$.
Let $(L,\overline\partial_L)$ be a holomorphic line bundle over $X$, let $\phi$ be a holomorphic section of $(L,\overline\partial_L)$, and let $h$ be a Hermitian metric on $L$.
Let $\tau\in\mathbb{R}$ be a constant.\par
We say that the triple $((L,\overline\partial_L),\phi,h)$ satisfies Manton's exotic $\tau$-vortex equation if 
\[
\sqrt{-1}\Lambda_{\omega_X}F_{h}-\frac12|\phi|^2_h-\frac{\tau}{2}=0.
\]
\end{de}
In Section \ref{sec 1}, we study the existence problem for solutions of Manton's exotic vortex equation.
More precisely, given a holomorphic line bundle $(L,\overline{\partial}_L)$ and a holomorphic section $\phi,$ we ask whether there exists a Hermitian metric $h$ on $L$ such that the triple $((L,\overline{\partial}_L),\phi,h)$ satisfies Manton's exotic vortex equation.
To study this existence problem, we follow Bradlow's approach \cite{Br1} and apply a theorem of Kazdan and Warner \cite{KW}.
In particular, we show the following.
\begin{thm}[Theorem \ref{thm 1.2}]\label{thm 0.1}
Let $(X,\omega_X)$ be a compact Riemann surface with the K\"ahler form $\omega_X$.
We normalize $\omega_X$ so that $\mathrm{Vol}_X:=\int_X\omega_X=1.$
\begin{itemize}
\item[(1)] Let $\tau\in\mathbb{R}$ be a constant.
Let  $(L,\overline\partial_L)$ be a holomorphic line bundle over $X$, $\phi$ be a holomorphic section of $(L,\overline\partial_L)$, and $h$ be a Hermitian metric of $L$.
Let $v_{h}$ be the function defined in Section \ref{sec 1.3}.\par
If $\tau$ satisfies 
\[
4\pi\mathrm{deg}L-c(|\phi|^2_h e^{2v_h})<\tau<4\pi\mathrm{deg}L,
\]
there exists a Hermitian metric $k$ such that the triple $((L,\overline{\partial}_L),\phi,k)$ satisfies Manton's exotic $\tau$-vortex equation.
See Section \ref{sec 1.3} for details on the constant  $c(|\phi|^2_h e^{2v_h})$.
\item[(2)] For every holomorphic line bundle $(L,\overline\partial_L)$ over $X$ and for every holomorphic section $\phi$ of $(L,\overline\partial_L)$, if we take $\tau$ appropriately, there exists a Hermitian metric $h$ such that the triple $((L,\overline{\partial}_L),\phi,h)$ satisfies Manton's exotic $\tau$-vortex equation.
\end{itemize}
\end{thm}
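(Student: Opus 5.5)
Following Bradlow's approach, I would reduce the equation to a Kazdan--Warner problem. Fix the background metric $h$ and look for the solution in the form $k=he^{2u}$ with $u$ a real function on $X$. Then $\sqrt{-1}\Lambda_{\omega_X}F_k=\sqrt{-1}\Lambda_{\omega_X}F_h+\Delta u$, up to the paper's sign and normalization convention for the Laplacian $\Delta$ (with $\Delta=-\sqrt{-1}\Lambda_{\omega_X}\overline\partial\partial$ up to a factor), and $|\phi|^2_k=|\phi|^2_he^{2u}$. Since $\int_X\omega_X=1$, the function $\sqrt{-1}\Lambda_{\omega_X}F_h$ has mean $2\pi\deg L$. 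Let $v_h$ be the function from Section \ref{sec 1.3}, defined by $\Delta v_h=\sqrt{-1}\Lambda_{\omega_X}F_h-2\pi\deg L$ (with the appropriate sign). Put $w=u+v_h$. The equation
\[
\sqrt{-1}\Lambda_{\omega_X}F_k-\tfrac12|\phi|^2_k-\tfrac\tau2=0
\]
then becomes, after multiplying by $2$ and rescaling,
\[
\Delta(2w)=(|\phi|^2_he^{-2v_h})e^{2w}+\tau-4\pi\deg L .
\]
Depending on the sign convention, the weight may instead be $e^{2v_h}$; this matches the constant $c(|\phi|^2_he^{2v_h})$ in the statement. Setting $f=2w$, the problem is the Kazdan--Warner equation $\Delta f=c-Ke^{f}$ with $K=-|\phi|^2_he^{2v_h}\le 0$ and $c=4\pi\deg L-\tau$, the sign again fixed by the convention.

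For part (1) I would invoke the Kazdan--Warner theorem for $K\le0$, $K\not\equiv0$. If $\phi\equiv0$ the equation is Hermitian--Yang--Mills and is handled separately, but then the interval in (1) is empty when $c(0)=0$. The theorem says: when $c>0$ there exists a constant $c_-(K)\ge 0$ (possibly $0$ or $\infty$) such that $\Delta f=c-Ke^f$ is solvable for $0<c<c_-(K)$. In our setting $c=4\pi\deg L-\tau$ and $c(|\phi|^2_he^{2v_h})$ is exactly this Kazdan--Warner constant. The condition $4\pi\deg L-c(\cdot)<\tau<4\pi\deg L$ is precisely $0<c<c_-(K)$. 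Solvability of the scalar equation then gives $u=f/2-v_h$ and $k=he^{2u}$ solving Manton's exotic $\tau$-vortex equation.

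The main bookkeeping obstacle is matching the signs. The paper's equation has $-\tfrac12|\phi|^2$, whereas the classical vortex equation has $+\tfrac12|\phi|^2$. This flips the sign of $K$ relative to Bradlow's case, turning his $K\ge0$ problem into the harder $K\le0$ Kazdan--Warner problem, whose solvability region is $(0,c_-)$ rather than all $c<0$. I would carefully verify that integrating the equation forces $c>0$ (since $\int Ke^f<0$), consistent with $\tau<4\pi\deg L$.

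For part (2), I would use the part of the Kazdan--Warner theorem asserting $c_-(K)>0$ whenever $K\le0$ and $K\not\equiv0$. This is proved by a sub/supersolution argument for small $c>0$. With that, the interval in (1) is nonempty, and choosing any $\tau$ in it gives a solution. The degenerate case $\phi\equiv0$ is treated directly: take $\tau=4\pi\deg L$ and a Hermitian--Einstein (constant-curvature) metric on $L$, which always exists on a compact Riemann surface by solving a linear Poisson equation. Thus for every $(L,\overline\partial_L,\phi)$ a suitable $\tau$ exists.
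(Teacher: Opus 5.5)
Your reduction is the paper's: $k=he^{2u}$, a function $v_h$ absorbing $\sqrt{-1}\Lambda_{\omega_X}F_h-2\pi\deg L$, constant $c=4\pi\deg L-\tau$, and weight $|\phi|^2_he^{2v_h}$. The gap is in the next step. You place the problem in the wrong Kazdan--Warner case, and the theorem you invoke is false as stated.

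Write the equation in Kazdan--Warner's normalization $\Delta f=c-Ke^{f}$; their Laplacian is minus the paper's $\triangle$. Integrating over $X$ gives $c=\int_X Ke^{f}$. Hence $K\le 0$, $K\not\equiv 0$ is incompatible with $c>0$. Indeed, the check you propose, $\int_X Ke^f<0$, forces $c<0$, not $c>0$. For such $K$, Kazdan--Warner give solvability for every $c<0$ and for no $c\ge 0$. That is Bradlow's situation, \cite[Theorem 10.1]{KW}, not a ``$K\ge 0$'' problem as you say. So your $c_-(K)$ would be $0$, the interval in (1) would be empty, and (2) would fail.

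The correct bookkeeping: in that normalization the paper's equation \eqref{eq 1.5} becomes $\Delta w=c_{L,\tau}-|\phi|^2_he^{2v_h}e^{w}$. Thus $K=+|\phi|^2_he^{2v_h}\ge 0$, positive somewhere exactly when $\phi\not\equiv 0$, and $c=c_{L,\tau}$. This is the case $c>0$ of \cite[Theorem 7.2]{KW} (Theorem \ref{thm 1.1}(2) in the paper). It gives $c(K)>0$ and solvability for $0<c<c(K)$. Then (1) is exactly the condition $0<c_{L,\tau}<c(|\phi|^2_he^{2v_h})$, and (2) follows from $c(K)>0$.

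Relatedly, positivity of $c(K)$ cannot come from a sub/supersolution argument. For $K\ge 0$ and $c>0$, an ordered pair $u_-\le u_+$ gives, after integration, $c\le\int_XKe^{u_-}\le\int_XKe^{u_+}\le c$. So each is already a solution and the method produces nothing new. In \cite{KW}, existence for small $c>0$, together with the uniform lower bound $\beta$, comes from a variational argument based on a Moser--Trudinger-type inequality; you should simply cite it.

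Once the sign is corrected, your argument coincides with the paper's. Your separate treatment of $\phi\equiv 0$ is correct: take $\tau=4\pi\deg L$ and a constant-curvature metric obtained from a Poisson equation. It also covers a case the paper's proof passes over, since Theorem \ref{thm 1.1}(2) requires the weight to be positive somewhere.
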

In Section \ref{sec 2}, we construct the moduli space of solutions of Manton's exotic $\tau$-vortex equation.
 By Theorem \ref{thm 0.1}, the moduli space is non-empty for suitable values of $\tau$.
We construct the moduli space in the Sobolev setting.
We denote by $\mathcal{M}_\tau$ the moduli space of solutions of Manton's exotic $\tau$-vortex equation.
See Section \ref{sec 2.1} and \ref{sec 2.2} for the definition of $\mathcal{M}_\tau$.
We show the following.
\begin{thm}[Theorem \ref{thm 2.1}]
Let $\mathcal{M}^\ast_\tau$ be a subset of $\mathcal{M}_\tau$ that parametrizes irreducible and regular solutions of Manton's exotic $\tau$-vortex equation.
See Section \ref{sec 2.2} for the definition of irreducible and regular solutions.
Then, $\mathcal{M}^\ast_\tau$ is a smooth manifold of real dimension $2\mathrm{deg}L$.
\end{thm}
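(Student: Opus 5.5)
The plan is the standard gauge-theoretic construction of vortex moduli spaces, adapted to the sign change. Fix a $C^\infty$ Hermitian line bundle $(L,h)$ of degree $d=\mathrm{deg}L$. In the Sobolev setting, let $\mathcal{C}=\mathcal{A}_{k}\times \Gamma_{k}(L)$ be the configuration space of pairs $(A,\phi)$, with $A$ a unitary connection of class $L^2_k$ and $\phi$ a section of class $L^2_k$, $k$ large. The unitary gauge group $\mathcal{G}=L^2_{k+1}(X,U(1))$ acts on $\mathcal{C}$. Manton's exotic $\tau$-vortex equation, together with holomorphicity of $\phi$, becomes the zero set of
\[
\Psi(A,\phi)=\Big(\sqrt{-1}\Lambda_{\omega_X}F_A-\tfrac12|\phi|^2-\tfrac{\tau}{2},\ \overline\partial_A\phi\Big)\in L^2_{k-1}(X,\mathbb{R})\oplus L^2_{k-1}(\Omega^{0,1}(L)).
\]
We then set $\mathcal{M}_\tau=\Psi^{-1}(0)/\mathcal{G}$. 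Here $\mathcal{M}^\ast_\tau$ consists of the classes where the stabilizer is trivial (irreducible, i.e.\ $\phi\not\equiv0$) and where the linearized operator is surjective (regular).

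At a solution $(A,\phi)$ we form the deformation complex
\[
0\to \Omega^0(\sqrt{-1}\mathbb{R})\xrightarrow{d_1}\Omega^1(\sqrt{-1}\mathbb{R})\oplus\Omega^0(L)\xrightarrow{d_2}\Omega^0(\mathbb{R})\oplus\Omega^{0,1}(L)\to0,
\]
with $d_1(\xi)=(-d\xi,\xi\phi)$ the infinitesimal gauge action and $d_2=D\Psi$. The linearization is
\[
d_2(a,\dot\phi)=\Big(\sqrt{-1}\Lambda d a-\mathrm{Re}\,h(\dot\phi,\phi),\ \overline\partial_A\dot\phi+a^{0,1}\phi\Big).
\]
Gauge invariance of $\Psi$ gives $d_2\circ d_1=0$. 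Irreducibility is equivalent to injectivity of $d_1$: if $d_1\xi=0$ then $\xi$ is constant and $\xi\phi=0$, so $\xi=0$ when $\phi\neq0$. Regularity is the surjectivity of $d_2$.

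Next we build a slice. The Coulomb slice at $(A,\phi)$ is $\ker d_1^\ast$ intersected with a neighbourhood. By the standard slice theorem (valid because $\mathcal{G}$ acts freely and properly near irreducible points, and $d_1$ is injective with closed range), $\mathcal{M}^\ast_\tau$ is locally modelled on the zero set of $\Psi$ restricted to the slice. Consider the operator
\[
\mathcal{D}=d_1^\ast\oplus d_2:\Omega^1(\sqrt{-1}\mathbb{R})\oplus\Omega^0(L)\to\Omega^0\oplus\Omega^0\oplus\Omega^{0,1}(L).
\]
This is elliptic and Fredholm between the Sobolev completions. Regularity plus irreducibility means $\mathcal{D}$ is surjective, so the implicit function theorem on the Banach slice makes $\mathcal{M}^\ast_\tau$ a smooth manifold with tangent space $\ker\mathcal{D}=H^1$. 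Its dimension is $\mathrm{ind}\,\mathcal{D}$.

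The final step is computing the index. Drop the zeroth-order terms ($\xi\phi$, $\mathrm{Re}\,h(\dot\phi,\phi)$, $a^{0,1}\phi$). These are compact perturbations, and dropping them is also where the sign change relative to the classical vortex equation becomes irrelevant. What remains splits into two pieces.

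The first piece is the real operator $a\mapsto(d^\ast a,\sqrt{-1}\Lambda da)$. Identifying $\Omega^1(\sqrt{-1}\mathbb{R})\cong\Omega^{0,1}$ and $\Omega^0\oplus\Omega^0\cong\Omega^0(\mathbb{C})$, it becomes $\overline\partial^\ast$ on $\Omega^{0,1}$, up to isomorphism. Its real index is $-2\big(1-g-(1-g)\big)\cdot$ appropriately, namely $-\mathrm{ind}_{\mathbb{R}}\overline\partial=-2(1-g)$ corrected by $2g$. Concretely, the operator $\Omega^1\to\Omega^0\oplus\Omega^0$ has real index $2g-2$.

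The second piece is $\overline\partial_A:\Omega^0(L)\to\Omega^{0,1}(L)$. By Riemann--Roch its real index is $2(d+1-g)$.

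Summing gives $2g-2+2d+2-2g=2d$.

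The main obstacle is the careful functional-analytic justification: proving the slice theorem in this setting and checking that $\mathcal{G}$ acts properly. I would try to cite the standard arguments (e.g.\ as in Bradlow and García-Prada's treatments of vortices), since they do not use the sign of the quartic term. Where the sign does matter is regularity, which cannot be derived from a Weitzenböck vanishing argument as in the classical case. This is exactly why regularity is built into the definition of $\mathcal{M}^\ast_\tau$.
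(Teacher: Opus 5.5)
Your proposal is correct and follows essentially the paper's route. You use the same deformation complex and Coulomb slice $\ker d_1^\ast$, and you note that irreducibility and regularity kill $H^0$ and $H^2$, so the local model is $\mathbb{H}^1$. Where you apply the implicit function theorem on the slice, the paper uses the Kuranishi map $k(\alpha)=\alpha+d_2^\ast G_2(\widetilde{\alpha\wedge\alpha})$; these are equivalent when $H^2=0$. Your index count after dropping zeroth-order terms, $(2g-2)+2(\mathrm{deg}L+1-g)=2\mathrm{deg}L$ with $g$ the genus, is also the paper's.

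The only blemish is your justification of the first index. The operator $\bar\partial^\ast$ on $\Omega^{0,1}$ has real index $-2(1-g)=2g-2$ with no ``correction by $2g$''. Equivalently, the kernel is the $2g$-dimensional space of harmonic $1$-forms and the cokernel is the $2$-dimensional space of constants. So your stated value $2g-2$ is right.
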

\subsection*{Dimensional Reduction}
In general, dimensional reduction is a procedure for constructing a solution of a gauge-theoretic equation from a symmetric solution of a gauge-theoretic equation on a higher-dimensional base space.
It therefore provides a natural way to relate different gauge-theoretic equations.
\par
In \cite{GP1}, Garc\'ia-Prada established a dimensional reduction for the vortex equation on a compact K\"ahler manifold $M$.
He showed that solutions of the vortex equation on $M$ can be obtained from $SU(2)$-equivariant Hermitian-Einstein metrics on $M\times \mathbb{P}^1$.
Here $\mathbb{P}^1$ is the projective line.
\par
In the case of Manton's exotic vortex equation, Contatto and Dunajski also obtained a dimensional reduction in \cite{CD}.
We would like to explain their result since it was the main motivation of this work.\par
Let $X$ be a compact Riemann surface.
Contatto and Dunajski showed that solutions of Manton's exotic $\tau$-vortex equation can be obtained from anti-self-dual connections with symmetry.
Recall that the parameter $\tau$ is allowed to take any real value. 
Accordingly, the corresponding dimensional reduction depends on whether $\tau>0$, $\tau=0$, or $\tau<0$.
 For $\tau>0$, it is obtained from $SU(2)$-equivariant anti-self-dual connections on $X\times\mathbb{P}^1$; for $\tau<0$, from $SU(1,1)$-equivariant anti-self-dual connections on $X\times\Delta$; and for $\tau=0$, from $SE(2)$-equivariant anti-self-dual connections on $X\times\mathbb{C}$.
 Here $\Delta$ is the unit disk in the complex plane and $SE(2)$ is the semi-direct product $\mathbb{C}\rtimes U(1)$.
It is worth noting that the structure group of the connections is $SU(1,1)$ in all three cases. 
Therefore, Manton's exotic vortex equation is naturally related to gauge theory with the non-compact gauge group $SU(1,1)$.
We also note that the dimensional reduction for the case $\tau<0$ and $X=S^2$ was established earlier by Popov \cite{Po2}.
\par
While the dimensional reduction established by Contatto and Dunajski \cite{CD} is described in terms of local anti-self-dual connections, inspired by the work of Garc\'ia-Prada \cite{GP1}, in Section \ref{sec 3} we establish a global dimensional reduction in terms of holomorphic vector bundles and connections.
More precisely, we derive solutions of Manton's exotic vortex equation from pseudo-Hermitian-Yang-Mills connections (see Definition \ref{def 3.2}) with pseudo-Hermitian metrics of signature (1,1) (see Definition \ref{def 3.1}), rather than from anti-self-dual connections.
We also construct a pseudo-Hermitian-Yang-Mills connection together with a pseudo-Hermitian metric of signature $(1,1)$ from solutions of Manton's exotic vortex equation.
\par
We now describe our dimensional reduction result more precisely.
For brevity, we only state the $\tau>0$ case, which we treated in Section \ref{sec 3.2}.
The $\tau<0$ case is treated in Section \ref{sec 3.3} and the $\tau=0$ case is treated in Section \ref{sec 3.4}.
Before we state the result, we prepare some notations.
We denote by 
\begin{align*}
&p:X\times \mathbb{P}^1\to X\\
&q:X\times \mathbb{P}^1\to \mathbb{P}^1
\end{align*}
the natural projections, and denote by $\underline{\mathbb{C}}_{X\times \mathbb{P}^1}$ the trivial line bundle over $X\times\mathbb{P}^1$.
We first state the result that constructs a pseudo-Hermitian-Yang-Mills connection together with a pseudo-Hermitian metric of signature $(1,1)$ from solutions of Manton's exotic vortex equation.
\begin{thm}[Theorem \ref{thm 3.1}]
Let $X$ be a compact connected Riemann surface and let  $\omega_X$ be a K\"ahler form of $X$ satisfying $\int_X\omega_X=1$.
Let $(L,\overline\partial_L)$ be a holomorphic line bundle over $X$, $\phi$ a holomorphic section  of $(L,\overline\partial_L)$, and $h_L$ a Hermitian metric on $L$.
Assume that the triple $((L,\overline\partial_L),\phi,h_L)$ satisfies Manton's exotic $\tau$-vortex equation with $\tau>0$.
Then 
\begin{align*}
E:=&p^\ast L\otimes q^\ast \mathcal{O}(-2)\oplus \underline{\mathbb{C}}_{X\times\mathbb{P}^1},\\
\overline\partial_{E}:=&
\begin{pmatrix}
p^\ast \overline\partial_L\otimes \mathrm{Id}+\mathrm{Id}\otimes q^\ast \overline\partial_{\mathcal{O}(-2)}& \sqrt{\frac{1}{\tau}}\cdot p^\ast \phi\otimes q^\ast \alpha\\
0&\overline\partial_{X\times\mathbb{P}^1}
\end{pmatrix}.
\end{align*}
is an $SU(2)$-equivariant holomorphic bundle over $X\times \mathbb{P}^1$.
See Section \ref{sec 3.2.1} for the definition of $\alpha$.
We fix a K\"ahler form on $X\times\mathbb{P}^1$ given by
\be\label{eq 0.1}
\Omega_\tau=\bigg(\frac{\tau}{8\pi}p^\ast\omega_X\bigg)\oplus q^\ast \omega_{\mathrm{FS}}.
\ee
Here $\omega_{\mathrm{FS}}$ is the K\"ahler form of $\mathbb{P}^1$ associated to the Fubini-Study metric.
Then there exists a Hermitian metric $h_X$ on $\underline{\mathbb{C}}_X$ such that the pseudo-Hermitian metric of signature (1,1)
\begin{align*}
h_E=
\begin{pmatrix}
p^\ast (h_Xh_L)\otimes q^\ast h^{(-2)}&0\\
0& -p^\ast h_X
\end{pmatrix}
\end{align*}
is a pseudo-Hermitian-Einstein metric of $(E,\overline\partial_E)$ with respect to the K\"ahler form $\Omega_\tau$.
Equivalently, the unique connection $\nabla_{h_E}$ which is compatible with $h_E$ and $\nabla^{0,1}_{h_E}=\bar\partial_E$ is a pseudo-Hermitian-Yang-Mills connection.
\end{thm}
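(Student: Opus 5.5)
The plan is to follow Garc\'ia-Prada's dimensional reduction computation for extensions of the form $p^\ast L\otimes q^\ast\mathcal{O}(-2)\to E\to\underline{\mathbb{C}}$, modifying the signs to account for the indefinite metric.

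\textbf{Holomorphic structure.} First I check that $\overline\partial_E$ is an $SU(2)$-equivariant holomorphic structure. Since $\overline\partial_E$ is upper triangular, $\overline\partial_E^2=0$ reduces to the off-diagonal term $p^\ast\phi\otimes q^\ast\alpha$ being $\overline\partial$-closed. Here I take $\alpha$ to be the $SU(2)$-invariant $(0,1)$-form with values in $\mathcal{O}(-2)$ from Section 3.2.1, whose class generates $H^1(\mathbb{P}^1,\mathcal{O}(-2))\cong\mathbb{C}$. Closedness then follows because $\phi$ is holomorphic and $\alpha$ is $\overline\partial$-closed on a curve. For equivariance, I let $SU(2)$ act trivially on $X$ and on $\underline{\mathbb{C}}$, and by the standard lift on $\mathcal{O}(-2)$. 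Invariance of $\alpha$ and of $h^{(-2)}$ (with $h^{(-2)}$ chosen to be $SU(2)$-invariant) then gives equivariance of $\overline\partial_E$.

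\textbf{Curvature of $h_E$.} Next I compute the Chern connection of $h_E$ using the second fundamental form formalism. Writing $\beta=\sqrt{1/\tau}\,p^\ast\phi\otimes q^\ast\alpha$, the connection of the pseudo-Hermitian metric $h_E=\mathrm{diag}(H_1,-H_2)$ is
\[
\nabla_{h_E}=\begin{pmatrix}\nabla_{H_1}&\beta\\ \pm\beta^{\ast}&\nabla_{H_2}\end{pmatrix},
\]
where the adjoint is taken with respect to $h_E$. Because of the minus sign on the second block, the lower-left entry is $+\beta^{\ast_{H}}$ rather than the usual $-\beta^{\ast}$. Consequently the $\beta\wedge\beta^\ast$ terms in the diagonal curvature blocks change sign relative to the positive-definite case. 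This sign flip is precisely what converts the vortex equation into Manton's exotic equation.

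I then contract with $\Omega_\tau$, using $\Lambda_{\Omega_\tau}=\frac{8\pi}{\tau}\Lambda_X+\Lambda_{\mathbb{P}^1}$, together with the facts that $\sqrt{-1}\Lambda_{\mathrm{FS}}F_{h^{(-2)}}$ is a constant multiple of $-2$ and that $\sqrt{-1}\Lambda(\alpha\wedge\alpha^\ast)$ is a constant by invariance. With these normalizations, the first diagonal entry of the pseudo-Hermitian-Einstein equation $\sqrt{-1}\Lambda F_{h_E}=\lambda\,\mathrm{Id}$ becomes
\[
\frac{8\pi}{\tau}\Big(\sqrt{-1}\Lambda F_{h_L}+\sqrt{-1}\Lambda F_{h_X}\Big)+c_1-\frac{c_2}{\tau}|\phi|^2_{h_L}=\lambda .
\]
The second diagonal entry becomes
\[
\frac{8\pi}{\tau}\sqrt{-1}\Lambda F_{h_X}+\frac{c_2}{\tau}|\phi|^2_{h_L}=\lambda .
\]
The off-diagonal entries vanish: they are $\Lambda$ of $(1,1)$-parts of $\nabla\beta$, which vanish by holomorphicity and harmonicity of $\alpha$.

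\textbf{Choosing $h_X$ and $\lambda$.} Subtracting the two equations gives, with constants fixed by the normalization of $\alpha$ and by the choice $\tau/8\pi$ in \eqref{eq 0.1}, exactly $\sqrt{-1}\Lambda F_{h_L}-\frac12|\phi|^2-\frac\tau2=0$. This holds by hypothesis. It remains to solve the second equation for $h_X=e^{2u}$, i.e. a Poisson equation $\Delta u=f$ with $f=\frac{\tau\lambda}{8\pi}-\frac{c_2}{8\pi}|\phi|^2$. I pick $\lambda$ so that $\int_X f\,\omega_X=0$, which is possible since $\mathrm{Vol}_X=1$. Solvability on the compact connected surface then gives $h_X$.

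\textbf{Main obstacle.} I expect the main difficulty to be bookkeeping of constants: the normalization of $\alpha$ and of $\omega_{\mathrm{FS}}$ must make the coefficients come out to precisely $\frac12$ and $\frac\tau2$ with the factor $\frac{\tau}{8\pi}$. I would fix these first, by computing on $\mathbb{P}^1$ with $\int\omega_{\mathrm{FS}}$ normalized and $|\alpha|^2$ constant, matching Garc\'ia-Prada's computation.
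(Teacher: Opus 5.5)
Your proposal is correct and follows essentially the same route as the paper: the paper also computes the $(1,0)$-part of $\nabla_{h_E}$ with the sign-flipped lower-left entry and obtains exactly your two diagonal equations, with $c_1=-4\pi$ and $c_2=2\pi$ (using $\int_{\mathbb{P}^1}\omega_{\mathrm{FS}}=1$); their difference is Manton's equation, and the off-diagonal blocks drop out as you say. The only difference is that the paper fixes $\lambda$ in advance from $\mathrm{deg}_{\Omega_\tau}E=\mathrm{deg}L-\frac{\tau}{4\pi}$ and solves the summed equation for $h_X$, whereas you take $\lambda$ from the solvability condition of the $F_{22}$-equation — integrating the vortex equation shows both give $\lambda=\frac{8\pi^2}{\tau}\mathrm{deg}L-2\pi$.
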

We now state our dimensional reduction result.
\begin{thm}[Theorem \ref{thm 3.2}]
Let $X$ be a compact connected Riemann surface and let  $\omega_X$ be a K\"ahler form of $X$ satisfying $\int_X\omega_X=1$.
Let $(L,\overline\partial_L)$ be a holomorphic line bundle over $X$.
Let $E$ be a holomorphic bundle over $X\times \mathbb{P}^1$ which is an extension of $\underline{\mathbb{C}}_{X\times \mathbb{P}^1}$ by $p^\ast L\otimes q^\ast\mathcal{O}(-2)$.
Let $\overline\partial_E$ be the Dolbeault operator of $E$.
On $X\times\mathbb{P}^1$, we consider the K\"ahler form $\Omega_\tau$ in \eqref{eq 0.1}.
\par
If $(E,\bar\partial_E)$ is an $SU(2)$-equivariant holomorphic bundle, then there exists a holomorphic section $\phi$ of $(L,\bar\partial_L)$ such that 
\[
\overline\partial_E=
\begin{pmatrix}
p^\ast \overline\partial_L\otimes\mathrm{Id}+\mathrm{Id}\otimes q^\ast \overline\partial_{\mathcal{O}(-2)}& \sqrt{\frac{1}{\tau}}\cdot p^\ast\phi\otimes q^\ast\alpha\\
0&\overline\partial_{X\times\mathbb{P}^1}
\end{pmatrix}.
\]
Let $h_E$ be a pseudo-Hermitian metric of signature $(1,1)$ on $E.$
If $h_E$ is $SU(2)$-invariant, then there exist Hermitian metrics $h_L$ on $L$ and $h_X$ on $\underline{\mathbb{C}}_X$ such that 
\[
h_E
=
\begin{pmatrix}
C_0\cdot p^\ast (h_Xh_L)\otimes q^\ast h^{(-2)}&0\\
0& C_1\cdot p^\ast h_X
\end{pmatrix}
\]
where $(C_0,C_1)=(1,-1)$ or $(-1,1)$.\par
Let $h_E$ and $(E,\overline\partial_E)$ be as above, and $\nabla_{h_E}$ be the connection compatible with $h_E$ and $\nabla^{0,1}_{h_E}=\overline\partial_E$.
If $\nabla_{h_E}$ is a pseudo-Hermitian-Yang-Mills connection, then the triple 
 $((L,\overline\partial_L),\phi,h_L)$ satisfies Manton's exotic $\tau$-vortex equation with $\tau>0$.
\end{thm}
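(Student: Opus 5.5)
The proof follows García-Prada's dimensional reduction argument for the classical vortex equation, adapted to a metric of signature $(1,1)$. There are three parts: classify the equivariant holomorphic structures, classify the invariant pseudo-Hermitian metrics, and compute the pseudo-Hermitian-Einstein condition blockwise.

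\emph{Holomorphic structure.} An extension of $\underline{\mathbb{C}}$ by $p^\ast L\otimes q^\ast\mathcal{O}(-2)$ has a Dolbeault operator that is upper triangular. Its off-diagonal term is a $(0,1)$-form $\beta$ with values in $p^\ast L\otimes q^\ast\mathcal{O}(-2)$. $SU(2)$-equivariance allows us to take $\beta$ $SU(2)$-invariant, possibly after an invariant gauge change. Averaging the gauge over the compact group $SU(2)$ makes this legitimate. Next, decompose $\beta$ into its $X$-type and $\mathbb{P}^1$-type parts.

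\begin{itemize}
\item The $\mathbb{P}^1$-part represents an invariant class in $H^0(X,L)\otimes H^{0,1}(\mathbb{P}^1,\mathcal{O}(-2))$ by the Künneth formula. Since $H^{1}(\mathbb{P}^1,\mathcal{O}(-2))\cong\mathbb{C}$ is the trivial representation, generated by the invariant form $\alpha$ of Section~3.2.1, this part equals $p^\ast\phi'\otimes q^\ast\alpha$.
\item The $X$-part is an invariant $(0,1)$-form with values in $q^\ast\mathcal{O}(-2)$. Restricted to a fibre $\{x\}\times\mathbb{P}^1$, it is an invariant section of $\mathcal{O}(-2)$. Such a section must vanish, since $H^0(\mathbb{P}^1,\mathcal{O}(-2))=0$; equivalently, there are no nonzero invariant vectors in a nontrivial homogeneous bundle.
\end{itemize}

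The integrability condition $\overline\partial_E^2=0$ then forces $\overline\partial_L\phi'=0$. Setting $\phi=\sqrt{\tau}\,\phi'$ gives the displayed form of $\overline\partial_E$.

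\emph{Invariant metrics.} An $SU(2)$-invariant pseudo-Hermitian form on $E$ decomposes according to isotypic components. Fibrewise over a point of $\mathbb{P}^1$, the isotropy group $U(1)$ acts on the two summands with different weights, because $\mathcal{O}(-2)$ carries weight $2$ while the trivial bundle carries weight $0$. Hence the off-diagonal entries vanish, and each diagonal entry is an invariant Hermitian form on a line bundle.

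On $q^\ast\mathcal{O}(-2)$, invariance forces the entry to be a function on $X$ times the homogeneous metric $h^{(-2)}$. Likewise the entry on the trivial summand is a function on $X$. Nondegeneracy with signature $(1,1)$ says exactly one of the two entries is negative, which gives $(C_0,C_1)=(1,-1)$ or $(-1,1)$. We then write the positive pieces as $h_Xh_L$ and $h_X$, which defines $h_L$ and $h_X$.

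\emph{Curvature computation.} For the triangular $\overline\partial_E$ and diagonal $h_E$ of signature $(1,1)$, the Chern connection is the diagonal connection plus second fundamental form terms $\beta$ and $-\beta^{\ast_{h_E}}$. Because one metric entry is negative, the adjoint $\beta^{\ast}$ acquires a sign flip compared with the Hermitian case. This sign flip is exactly what turns $+\tfrac12|\phi|^2$ into $-\tfrac12|\phi|^2$.

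Contracting with $\Omega_\tau$ and using $\Lambda_{\omega_{FS}}$ applied to $\alpha\wedge\alpha^\ast$ together with the Fubini–Study curvature of $\mathcal{O}(-2)$, the pseudo-Hermitian-Einstein equation $\sqrt{-1}\Lambda F=\lambda\,\mathrm{Id}$ splits into two scalar equations on $X$. Subtracting them eliminates $h_X$ and $\lambda$. The result is
\[
\sqrt{-1}\Lambda_{\omega_X}F_{h_L}-\tfrac12|\phi|^2_{h_L}-\tfrac{\tau}{2}=0,
\]
with the factor $\tau/8\pi$ in $\Omega_\tau$ and the $\sqrt{1/\tau}$ in $\overline\partial_E$ producing the correct constants. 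The computation is symmetric in the overall sign, since replacing $h_E$ by $-h_E$ preserves the Einstein condition, so both cases $(C_0,C_1)=(1,-1)$ and $(-1,1)$ give the same equation.

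The main obstacle is the first step: proving rigorously that every equivariant extension class has the stated form. This means checking the equivariant Künneth decomposition and verifying that no $X$-type component survives. I would first try to handle this via the isotropy-weight argument on the fibres, mimicking García-Prada's treatment.
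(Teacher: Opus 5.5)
Your three steps match the paper's proof. Proposition~\ref{prop 3.5} gives the shape of $\overline\partial_E$. Proposition~\ref{prop 3.6} with Lemma~\ref{lem 3.7} handles the invariant metrics, by the same isotropy-weight argument for block-diagonality and then invariance on each block. Proposition~\ref{prop 3.7} subtracts the two diagonal equations \eqref{eq 3.10}--\eqref{eq 3.11} to eliminate $h_X$ and $\lambda$. Your remark that $h_E$ and $-h_E$ have the same compatible connection (compatibility is linear in $h_E$) is what justifies using the formula of Lemma~\ref{lem 3.5} when $(C_0,C_1)=(-1,1)$; the paper does this without comment.

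The step that does not work as written is the $\mathbb{P}^1$-part of the off-diagonal term (the paper's $\psi$, your $\beta$). The Künneth formula only identifies the Dolbeault class of $\psi$ in $H^1(X\times\mathbb{P}^1,p^\ast L\otimes q^\ast\mathcal{O}(-2))\cong H^0(X,L)\otimes H^1(\mathbb{P}^1,\mathcal{O}(-2))$. That gives $\psi=p^\ast\phi'\otimes q^\ast\alpha+\overline\partial s$ for some section $s$, whereas the theorem asserts an equality of operators. Also, the $\mathbb{P}^1$-part alone is $\overline\partial$-closed only once the $X$-part is under control, so your two bullets should be taken in the opposite order.

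The paper avoids cohomology here. Since $\psi$ is invariant, for each $x\in X$ the restriction $\psi|_{\{x\}\times\mathbb{P}^1}$ is an invariant section of $T^{0,1\ast}\mathbb{P}^1\otimes\mathcal{O}(-2)$. This is a weight-zero homogeneous line bundle, trivialized by the nowhere-vanishing invariant form $\alpha$. The ratio is an invariant function, hence constant on the fibre by transitivity, so $\psi|_{\{x\}\times\mathbb{P}^1}=\phi_x\alpha$ (Proposition~\ref{prop 3.2}(2)). Holomorphicity of $\phi$ then follows from $\overline\partial_E^2=0$, as you say.

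Your route can also be closed. Since $\overline\partial$ commutes with the action and $\overline\partial s$ is invariant, the $SU(2)$-average $s_{\mathrm{av}}$ of $s$ satisfies $\overline\partial s_{\mathrm{av}}=\overline\partial s$. Moreover $s_{\mathrm{av}}=0$, because invariant sections of $p^\ast L\otimes q^\ast\mathcal{O}(-2)$ vanish fibrewise.

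Similarly, for the $X$-part, $H^0(\mathbb{P}^1,\mathcal{O}(-2))=0$ is not a valid reason, because the fibre restriction is only a smooth section. Only the weight argument kills it (Lemma~\ref{lem 3.2}, Proposition~\ref{prop 3.2}(1)); the two statements you call equivalent are not.
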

\begin{remark}
The Manton equation for $\tau< 0$ and its generalization were studied in the unpublished paper by Torres de Lizaur \cite{Tdl} - produced as part of his ENS Master program visit at ICMAT (Madrid) under the supervision of Garc\'ia-Prada-, where the moment map interpretation and the dimensional reduction were established.
In the present paper, we also construct pseudo-Hermitian-Yang-Mills connections from solutions of Manton's exotic vortex equation.
\end{remark}
\begin{remark}
There is also a relation between Cartan geometry and vortex equations; see \cite{R}.
\end{remark}
 \subsubsection*{Acknowledgment}
 The author would like to thank Oscar Garc\'ia-Prada, Nicholas Manton, Calum Ross, and Alexander Popov for comments on this paper.
 The author also thanks Oscar Garc\'ia-Prada for kindly providing a copy of the paper \cite{Tdl}.
  \par
 The author presented some parts of this paper at the "Algebraic Geometry Seminar" at Kyoto University, "Seminar on Geometric Complex Analysis" at Tokyo University, and "New developments in Kobayashi-Hitchin correspondence and Higgs bundles 2" at Osaka Metropolitan University.
The author appreciates the organizers for the opportunity.
\par
The author was supported by JSPS KAKENHI Grant Number JP26KJ0189.
\subsubsection*{Notations}
Let $X$ be a complex manifold and $E$ be a complex vector bundle over $X$. 
We denote the space of smooth sections as $A(E)$. 
We denote the dual bundle of $E$ as $E^\vee$.
 We denote the space of smooth $i$-forms which take values in $E$ as $A^i(E)$ and smooth $(p,q)$-forms as $A^{p,q}(E)$.
\par
We denote the trivial holomorphic line bundle $X\times \mathbb{C}$ over $X$ as $\underline{\mathbb{C}}_X$.
\section{Vortex Equations}\label{sec 1}
In Section \ref{sec 1.1}, we introduce Manton's exotic vortex equation.
In Section \ref{sec 1.2}, we introduce a functional and show that solutions of the equation are critical points of the functional.
In Section \ref{sec 1.3}, we study the existence of solutions to the equation.
Using a result of Kazdan and Warner \cite{KW}, we prove the existence theorem under suitable assumptions.
\subsection{Vortex Equation and Manton's Exotic Vortex Equation}\label{sec 1.1}
In this section, we introduce Manton's exotic $\tau$-vortex equation.
\par
Let $(X,\omega_X)$ be a compact Riemann surface with a K\"ahler form $\omega_X$.
Let $L$ be a complex line bundle with a holomorphic structure $\overline\partial_L$.
Let $\phi$ be a holomorphic section of $(L,\overline\partial_L)$ and $h$ be a Hermitian metric on $L$.
We denote by $\nabla_h$ the Chern connection of $L$ with respect to $\overline\partial_L$ and $h$, i.e., the unique unitary connection of $h$ such that the $(0,1)$-part of $\nabla_h$ is $\overline\partial_L$.
We denote by $F_{h}$ the curvature of $\nabla_h$. 
\par
Let $\tau>0$ be a positive constant.
In his study of the deformation of the vortex equation \cite{Ma}, Manton introduced the family of equations
\be\label{eq 1.1}
\sqrt{-1}\Lambda_{\omega_X}F_{h}+C_0\frac12|\phi|^2_h-C_1\frac{\tau}{2}=0,\quad (C_0,C_1\in\{-1,0,1\}).
\ee
Although there are nine possible pairs $(C_0,C_1)$, only the following five 
\[
(C_0,C_1)=(1,1),(-1,1),(-1,0),(-1,-1),(0,1)
\]
pairs can occur.
We explain why the pair $(C_0,C_1)=(1,-1)$ cannot occur. 
The remaining cases can be treated similarly.
Suppose 
\[
\sqrt{-1}\Lambda_{\omega_X}F_{h}+\frac12|\phi|^2_h+\frac{\tau}{2}=0.
\]
Since we are interested in the case where $\phi$ is a non-trivial holomorphic section, the degree of $L$ is positive.
Therefore, after integrating both sides over $X$, the left-hand side is strictly positive, whereas the right-hand side is zero, yielding a contradiction.
\par
Since the $(C_0,C_1)=(1,1)$ and $(C_0,C_1)=(0,1)$ cases have already been extensively studied, in this paper we focus on the remaining three cases,
\[
(C_0,C_1)=(-1,-1),(-1,0),(-1,1).
\]
Since the parameter $\tau$ is assumed to be positive in \eqref{eq 1.1}, and since $C_1$ takes the values $-1, 0,$ and $1$ for the three cases of interest, these three equations can be unified by allowing $\tau$ to vary over all real numbers.
 This leads to the following definition of Manton's exotic $\tau$-vortex equation.
\begin{de}
Let $(X,\omega_X)$ be a compact Riemann surface with a K\"ahler form $\omega_X$.
Let $L$ be a complex line bundle with a holomorphic structure $\overline\partial_L$, $\phi$ be a holomorphic section of $(L,\overline\partial_L)$, and $h$ be a Hermitian metric on $L$.
Let $\tau\in\mathbb{R}$ be a constant.\par
We say that the triple $((L,\overline\partial_L),\phi,h)$ satisfies Manton's exotic $\tau$-vortex equation if 
\be\label{eq 1.2}
\sqrt{-1}\Lambda_{\omega_X}F_{h}-\frac12|\phi|^2_h-\frac{\tau}{2}=0.
\ee
\end{de}
 \subsection{Functional}\label{sec 1.2}
Let $(X,\omega_X)$ be a compact Riemann surface with the K\"ahler form $\omega_X$.
We normalize $\omega_X$ so that 
\[
\mathrm{Vol}_X:=\int_X\omega_X=1.
\]
Let $L$ be a complex line bundle with a fixed Hermitian metric $h$.
We denote by $\mathcal{A}_h$ the space of unitary connections of $L$ with respect to $h$.
Let $\nabla_h\in\mathcal{A}_h$.
We denote by $F_{h}$ the curvature of $\nabla_h$ and by $\nabla^{0,1}_h$ the $(0,1)$-part of $\nabla_h$.
\par
Let $\phi\in A(L)$ be a holomorphic section of $(L,\nabla^{0,1}_h)$.
We now define a functional and show, at least formally, that if $((L,\nabla^{0,1}_h),\phi,h)$ satisfies Manton's exotic vortex equation, then $(\nabla_h,\phi)$ is a critical point of the functional.
\begin{de}
Let $\tau\in\mathbb{R}$ be a constant.
We define the Yang-Mills-Higgs functional
\[
\mathrm{YMH}_\tau:\mathcal{A}_h\times A(L)\to \mathbb{R} 
\]
as 
\[
\mathrm{YMH}_\tau(\nabla_h,\phi):=\| F_{h}\|^2-\|\nabla_h\phi\|^2+\frac{1}{4}\||\phi|^2_h+\tau\|^2.
\]
Here $\|\cdot\|$ denotes the $L^2$-norm defined by the metric on $X$ and $h$.
\end{de}
Using the result of \cite{Br1}, we have
\begin{prop}\label{prop 1.1}
\[
\mathrm{YMH}_\tau(\nabla_h,\phi)=\bigg\|\sqrt{-1}\Lambda_\omega F_{h}-\frac{1}{2}|\phi|^2_h-\frac{\tau}{2}\bigg\|^2-2\|\nabla^{0,1}_h\phi\|^2+2\pi\tau\mathrm{deg}L.
\]
\end{prop}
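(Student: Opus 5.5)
The plan is to expand the squared norm on the right-hand side and match it term by term with the definition of $\mathrm{YMH}_\tau$. Put $A:=\sqrt{-1}\Lambda_\omega F_h$, a real function since $\nabla_h$ is unitary, and $B:=\tfrac12(|\phi|^2_h+\tau)$. Then
\[
\|A-B\|^2=\|A\|^2+\tfrac14\||\phi|^2_h+\tau\|^2-\int_X A\,|\phi|^2_h\,\omega_X-\tau\int_X A\,\omega_X .
\]
Two of these terms are immediate. Since $X$ is a Riemann surface, $F_h$ is a $2$-form, so $F_h=(\Lambda_\omega F_h)\,\omega_X$ and $\|F_h\|^2=\|A\|^2$. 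By Chern--Weil,
\[
\int_X A\,\omega_X=\int_X\sqrt{-1}F_h=2\pi\deg L,
\]
so the last term equals $-2\pi\tau\deg L$. This cancels the constant $+2\pi\tau\deg L$ in the claimed formula.

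It therefore remains to show
\[
\|\nabla_h\phi\|^2=2\|\nabla^{0,1}_h\phi\|^2+\int_X \sqrt{-1}\Lambda_\omega F_h\,|\phi|^2_h\,\omega_X .
\]
This is the Weitzenböck-type identity used by Bradlow \cite{Br1}. To prove it, first split $\|\nabla_h\phi\|^2=\|\nabla^{1,0}_h\phi\|^2+\|\nabla^{0,1}_h\phi\|^2$, using orthogonality of types. Then apply the Kähler identities to get
\[
(\nabla^{1,0}_h)^\ast\nabla^{1,0}_h-(\nabla^{0,1}_h)^\ast\nabla^{0,1}_h=\sqrt{-1}\Lambda_\omega F_h
\]
on sections of $L$. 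Pairing this with $\phi$ and integrating gives $\|\nabla^{1,0}_h\phi\|^2=\|\nabla^{0,1}_h\phi\|^2+\int A|\phi|^2_h$. Equivalently, one can write $\sqrt{-1}\,\partial\overline\partial\langle\phi,\phi\rangle_h$ in terms of $F_h$, $\nabla^{1,0}_h\phi$ and $\nabla^{0,1}_h\phi$ by Leibniz and $\nabla_h^2=F_h$, and then integrate the exact form to zero.

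Substituting this identity, the right-hand side becomes
\[
\|F_h\|^2+\tfrac14\||\phi|^2_h+\tau\|^2-\|\nabla_h\phi\|^2=\mathrm{YMH}_\tau(\nabla_h,\phi),
\]
as required. The main obstacle is only bookkeeping the signs and normalizations in the Weitzenböck step: the sign convention for $\Lambda_\omega$ and the factor relating $\|\nabla^{1,0}\|$ to $\|\nabla\|$. I would fix these by checking against the holomorphic case $\nabla^{0,1}_h\phi=0$ on a positive-degree bundle. There the identity must read $\|\nabla_h\phi\|^2=\int A|\phi|^2_h\ge0$, which is consistent with $\int A\,\omega_X=2\pi\deg L>0$.
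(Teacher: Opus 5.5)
Your proof is correct and follows the paper's argument: expand the square, use $\|F_h\|=\|\Lambda_\omega F_h\|$ on a surface, apply Chern--Weil to the $\tau$-term, and invoke the identity $\langle\sqrt{-1}\Lambda_\omega F_h,|\phi|^2_h\rangle=\|\nabla^{1,0}_h\phi\|^2-\|\nabla^{0,1}_h\phi\|^2$. The only difference is that you derive this last identity from the K\"ahler identities, with the sign confirmed by the holomorphic case, whereas the paper simply cites \cite[page 5]{Br1}.
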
 
\bpr
We expand 
\be\label{eq 1.3}
\bigg\|\sqrt{-1}\Lambda_\omega F_{h}-\frac{1}{2}|\phi|^2_h-\frac{\tau}{2}\bigg\|^2=\|\Lambda_\omega F_{h}\|^2+\frac{1}{4}\||\phi|^2_h+\tau\|^2-
\langle\sqrt{-1}\Lambda_\omega F_{h},|\phi|^2_h\rangle
-\langle\sqrt{-1}\Lambda_\omega F_{h},\tau\rangle.
\ee
We have 
\begin{align*}
\langle\sqrt{-1}\Lambda_\omega F_{h},\tau\rangle&=2\pi\tau\mathrm{deg}L,\\
\|\Lambda_\omega F_{h}\|^2&=\| F_{h}\|^2
\end{align*}
and by \cite[page 5]{Br1}, we also have
\begin{align*}
\langle\sqrt{-1}\Lambda_\omega F_{h},|\phi|^2_h\rangle=-\|\nabla^{0,1}_h\phi\|^2+\|\nabla^{1,0}_h\phi\|^2.
\end{align*} 
Hence equation (\ref{eq 1.3}) becomes 
\begin{align*}
(\ref{eq 1.3})=\| F_{h}\|^2+\frac{1}{4}\||\phi|^2_h+\tau\|^2+\|\nabla^{0,1}_h\phi\|^2-\|\nabla^{1,0}_h\phi\|^2-2\pi\tau\mathrm{deg}L.
\end{align*}
Therefore 
\begin{align*}
&\bigg\|\sqrt{-1}\Lambda_\omega F_{h}-\frac{1}{2}|\phi|^2_h-\frac{\tau}{2}\bigg\|^2-2\|\nabla^{0,1}_h\phi\|^2+2\pi\tau\mathrm{deg}L\\
=&\| F_{h}\|^2+\frac{1}{4}\||\phi|^2_h+\tau\|^2+\|\nabla^{0,1}_h\phi\|^2-\|\nabla^{1,0}_h\phi\|^2-2\pi\tau\mathrm{deg}L-2\|\nabla^{0,1}_h\phi\|^2+2\pi\tau\mathrm{deg}L\\
=&\| F_{h}\|^2+\frac{1}{4}\||\phi|^2_h+\tau\|^2-\|\nabla_h\phi\|^2\\
=&\mathrm{YMH}_\tau(\nabla_h,\phi).
\end{align*}
\epr
Now, suppose that the triple $(\nabla_h,\phi,h)$ satisfies the $\tau$-vortex equation, and let $(\nabla_h(t),\phi(t)):(-\epsilon,\epsilon)\to \mathcal{A}_h\times A(L)$ be a curve such that $(\nabla_h(0),\phi(0))=(\nabla_h,\phi).$
We set 
\begin{align*}
A(t)&:=\sqrt{-1}\Lambda_\omega F_h(t)-\frac{1}{2}|\phi(t)|^2_h-\frac{\tau}{2},\\
B(t)&:=\nabla^{0,1}_h(t)\phi(t).
\end{align*}
Here $ F_h(t)$ is the curvature of $\nabla_h(t)$.
We note that $A(0)=B(0)=0$ holds by the assumption of $(\nabla_h,\phi,h)$.
Then by Proposition \ref{prop 1.1}, we have 
\begin{align*}
\frac{d}{dt}\mathrm{YMH}_\tau(\nabla_h(t),\phi(t))\bigg|_{t=0}&=\frac{d}{dt}\bigg( \langle A(t),A(t) \rangle-2\langle B(t),B(t)\rangle+2\pi\tau\mathrm{deg}L\bigg)\bigg|_{t=0}\\
&=2\bigg\langle A(0),\frac{d}{dt}A(0)\bigg\rangle-4\bigg\langle B(0),\frac{d}{dt}B(0)\bigg\rangle\\
&=0.
\end{align*}
Hence $(\nabla_h,\phi)$ is a critical point of $\mathrm{YMH}_\tau.$
\begin{remark}
The functional $\mathrm{YMH}_\tau:\mathcal{A}_h\times A(L)\to \mathbb{R} $, introduced in \cite{Br1} and also considered in \cite{GP1}, is 
\[
\mathrm{YMH}_\tau(\nabla_h,\phi):=\| F_{h}\|^2+\|\nabla_h\phi\|^2+\frac{1}{4}\||\phi|^2_h-\tau\|^2.
\]
The functional in \cite{Br1, GP1} is bounded below, whereas ours is not.
\end{remark}
\subsection{Existence of Solutions}\label{sec 1.3}
Let $(X,\omega_X)$ be a compact Riemann surface with the K\"ahler form $\omega_X$.
We normalize $\omega_X$ so that 
\[
\mathrm{Vol}_X:=\int_X\omega_X=1.
\]
In this section, we study the existence of solutions to Manton's exotic vortex equation.
In particular, we would like to answer the following question.
\begin{q}\label{q 1}
Let $\tau\in\mathbb{R}$ be a constant.
Let $(L,\overline\partial_L)$ be a holomorphic line bundle over $X$, and let $\phi\in A(L)$ be a holomorphic section of $(L,\overline\partial_L)$.
Then, when does there exist a Hermitian metric $h$ on $L$ such that the triple $(\overline\partial_L,\phi,h)$ satisfies Manton's exotic $\tau$-vortex equation \eqref{eq 1.2}?
\end{q}
We answer this question using the result of Kazdan and Warner \cite{KW}, which was also used in \cite{Br1}.\par
Let $(L,\overline\partial_L)$ be a holomorphic line bundle over $X$, and let $h$ be a Hermitian metric on $L$. 
For every Hermitian metric $k$ of $L$, we have a smooth function $u:X\to \mathbb{R}$ such that 
\[
k=he^{2u}.
\]
\begin{prop}\label{prop 1.2}
The triple $((L,\overline\partial_L),\phi,k)$ satisfies Manton's exotic $\tau$-vortex equation if and only if 
\be\label{eq 1.4}
\triangle u-\frac{|\phi|^2_h}{2}e^{2u}+\sqrt{-1}\Lambda_\omega F_{h}-\frac{\tau}{2}=0.
\ee
Here $\triangle$ is the Hodge Laplacian of $X$.
\end{prop}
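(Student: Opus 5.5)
The plan is a direct computation. We express each term of \eqref{eq 1.2} for the metric $k=he^{2u}$ in terms of data for $h$ and the function $u$, then substitute. Two quantities change when $h$ is replaced by $k$: the pointwise norm of $\phi$ and the curvature of the Chern connection. Everything else in \eqref{eq 1.2} is independent of the metric.

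\textbf{Step 1 (the norm of $\phi$).} Since $k=he^{2u}$ pointwise on the fibres of $L$, we get immediately $|\phi|^2_k=e^{2u}|\phi|^2_h$.

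\textbf{Step 2 (the curvature).} We use the standard local description of the Chern connection of a holomorphic line bundle. In a local holomorphic frame $e$, write $H=h(e,e)$. The connection form is $\partial\log H$ and the curvature is $\overline\partial\partial\log H$. Replacing $H$ by $He^{2u}$ gives
\[
\nabla_k=\nabla_h+2\partial u,\qquad F_k=F_h+2\overline\partial\partial u .
\]
Here $2\partial u$ is a globally defined $1$-form, because $u$ is a global function. Hence the formula is independent of the chosen frame.

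\textbf{Step 3 (contracting with $\omega$).} We apply $\sqrt{-1}\Lambda_\omega$ and identify $2\sqrt{-1}\Lambda_\omega\overline\partial\partial u$ with the Hodge Laplacian $\triangle u=d^\ast du$ on functions, which is the nonnegative operator. We check the constant and the sign in a local holomorphic coordinate $z=x+\sqrt{-1}y$ where, at a point, $\omega=\frac{\sqrt{-1}}{2}dz\wedge d\bar z$. Since $\Lambda_\omega(dz\wedge d\bar z)=-2\sqrt{-1}$, we get
\[
\sqrt{-1}\Lambda_\omega\partial\overline\partial u=2u_{z\bar z}=\tfrac12(u_{xx}+u_{yy})=-\tfrac12\triangle u .
\]
Therefore $\sqrt{-1}\Lambda_\omega\overline\partial\partial u=\frac12\triangle u$. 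This is a pointwise identity on any Kähler curve, since one can always choose such a coordinate at a given point.

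\textbf{Step 4 (substitution).} Combining the steps gives
\[
\sqrt{-1}\Lambda_\omega F_k-\tfrac12|\phi|^2_k-\tfrac{\tau}{2}
=\triangle u-\tfrac{|\phi|^2_h}{2}e^{2u}+\sqrt{-1}\Lambda_\omega F_h-\tfrac{\tau}{2}.
\]
So $((L,\overline\partial_L),\phi,k)$ satisfies \eqref{eq 1.2} exactly when $u$ satisfies \eqref{eq 1.4}. Both directions of the equivalence come from this single identity.

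\textbf{Main obstacle.} The only delicate point is bookkeeping of sign and normalization conventions. These include the sign of $\triangle$ (Hodge versus analyst's Laplacian), the normalization of $\Lambda_\omega$, and the factor $2$ coming from writing $e^{2u}$ rather than $e^{u}$. I would settle all of them once via the local computation in Step 3. As a consistency check, integrating \eqref{eq 1.4} over $X$ gives $\int_X\triangle u\,\omega_X=0$ and $\int_X\sqrt{-1}\Lambda_\omega F_h\,\omega_X=2\pi\mathrm{deg}L$. The resulting constraint $\int_X|\phi|_h^2e^{2u}\,\omega_X=4\pi\mathrm{deg}L-\tau$ matches the upper bound $\tau<4\pi\mathrm{deg}L$ in Theorem \ref{thm 0.1}.
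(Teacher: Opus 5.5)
Your proposal is correct and follows the same route as the paper's proof: $|\phi|^2_k=e^{2u}|\phi|^2_h$, $F_k=F_h+2\overline\partial\partial u$, the identity $2\sqrt{-1}\Lambda_\omega\overline\partial\partial u=\triangle u$, and then substitution into \eqref{eq 1.2}. The only difference is that you check the last identity, including its sign, by a local-coordinate computation, whereas the paper just cites the K\"ahler identities.
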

\bpr
Since $F_{k}$ and $F_{h}$ are the curvatures of the Chern connections, and $L$ is a line bundle, we have
\[
F_{k}=\overline\partial\partial \mathrm{log}k=\overline\partial\partial \mathrm{log}h+2\overline\partial\partial u=F_{h}+2\overline\partial\partial u.
\]
Then by the K\"ahler identities
\begin{align*}
\sqrt{-1}\Lambda_\omega F_{k}&=\sqrt{-1}\Lambda_\omega F_{h}+2\sqrt{-1}\Lambda_\omega\overline\partial\partial u \\
&=\sqrt{-1}\Lambda_\omega F_{h}+\triangle u.
\end{align*}
See \cite{Wells} for details.
Since $k=he^{2u}$, we have
\[
|\phi|^2_k=k(\phi,\phi)=e^{2u}h(\phi,\phi)=e^{2u}|\phi|^2_h.
\]
Hence we have 
\begin{align*}
\sqrt{-1}\Lambda_\omega F_{k}-\frac{1}{2}|\phi|^2_k-\frac{\tau}{2}=\sqrt{-1}\Lambda_\omega F_{h}+\triangle u-\frac{|\phi|^2_h}{2}e^{2u}-\frac{\tau}{2}.
\end{align*}
The claim follows from the equation.
\epr
Proposition \ref{prop 1.2} provides a way to answer Question \ref{q 1}:
Let $h$ be a Hermitian metric on $L$, which always exists.
If we can find an  $u\in C^{\infty}(X,\mathbb{R})$ which satisfies the equation \eqref{eq 1.4}, then the triple $((L,\overline\partial_L),\phi, k=he^{2u})$ satisfies Manton's exotic $\tau$-vortex equation.
Thus, it suffices to solve equation \eqref{eq 1.4}.
For this purpose, we introduce an equation equivalent to \eqref{eq 1.4}.
This allows us to apply the result of Kazdan and Warner \cite{KW}.\par

We first define a constant
\[
c_{L,\tau}:=2\int_X\bigg(\sqrt{-1}\Lambda_\omega F_{h}-\frac{\tau}{2}\bigg)=4\pi\mathrm{deg}L-\tau.
\]
The second equation follows since we normalized the volume of $X$ to 1.
Since 
\[
\int_X \bigg(\sqrt{-1}\Lambda_\omega F_{h}-\frac{\tau}{2}-\frac{c_{L,\tau}}{2}\bigg)=0,
\]
there exists a smooth function $v_h:X\to \mathbb{R}$ that satisfies
\[
-\triangle v_h=\sqrt{-1}\Lambda_\omega F_{h}-\frac{\tau}{2}-\frac{c_{L,\tau}}{2}.
\] 
This follows from the standard Hodge theory (See \cite[p. 223]{Wa}).

\begin{prop}\label{prop 1.3}
Let $v_h$ and $c_{L,\tau}$ be as above.
Let $u\in C^{\infty}(X,\mathbb{R})$ and define a function
\[
w:=2(u-v_h).
\]
Then $u$ satisfies the equation \eqref{eq 1.4} if and only if $w$ satisfies the equation
\be\label{eq 1.5}
-\triangle w+|\phi|^2_h e^{2v_h}e^w-c_{L,\tau}=0.
\ee
\end{prop}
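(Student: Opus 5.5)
This is a direct substitution argument. Since $w=2(u-v_h)$, we have $u=\frac{w}{2}+v_h$, so $u\mapsto w$ is a bijection of $C^\infty(X,\mathbb{R})$. It therefore suffices to show that the left-hand side of \eqref{eq 1.4}, written in terms of $w$, equals $-\frac12$ times the left-hand side of \eqref{eq 1.5}. Multiplying by the nonzero constant $-2$ does not change the zero set, so the two equations are then equivalent.

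We compute term by term. By linearity of the Hodge Laplacian, $\triangle u=\frac12\triangle w+\triangle v_h$. By the defining equation of $v_h$,
\[
\triangle v_h=-\sqrt{-1}\Lambda_\omega F_{h}+\frac{\tau}{2}+\frac{c_{L,\tau}}{2}.
\]
For the exponential term, $e^{2u}=e^{w+2v_h}=e^{2v_h}e^{w}$. Substituting these into \eqref{eq 1.4}, the terms $\sqrt{-1}\Lambda_\omega F_h$ and $-\frac{\tau}{2}$ cancel against their counterparts coming from $\triangle v_h$. What remains is
\[
\frac12\triangle w-\frac{|\phi|^2_h}{2}e^{2v_h}e^{w}+\frac{c_{L,\tau}}{2}=0,
\]
and multiplying by $-2$ gives exactly \eqref{eq 1.5}. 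Every step is reversible, which gives both directions of the equivalence.

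There is no real obstacle here. The one point needing care is the sign convention for $\triangle$: it must be the same Laplacian (with the same sign) used both in Proposition \ref{prop 1.2} and in the definition of $v_h$, so that the cancellation above is exact. Since the paper uses the Hodge Laplacian $\triangle$ consistently in both places, the computation goes through as written.
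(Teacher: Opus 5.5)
Your proof is correct and follows the paper's argument: substitute $\triangle u=\frac12\triangle w+\triangle v_h$, use the defining equation of $v_h$ to cancel the $\sqrt{-1}\Lambda_\omega F_h$ and $\frac{\tau}{2}$ terms, and use $e^{2u}=e^{2v_h}e^{w}$. The only difference is cosmetic. You write it as one identity (the left side of \eqref{eq 1.4} equals $-\frac12$ times the left side of \eqref{eq 1.5}), which gives both directions at once, whereas the paper checks the two implications separately.
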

\bpr
Assume that $u$ satisfies \eqref{eq 1.4}.
Then we have 
\begin{align*}
-\triangle w&=-2\triangle u+2\triangle v_h\\
&=-|\phi|^2_he^{2u}+2\sqrt{-1}\Lambda_\omega F_{h}-\tau-2\sqrt{-1}\Lambda_\omega F_{h}+\tau+c_{L,\tau}\\
&=-|\phi|^2_he^{2u}+c_{L,\tau}.
\end{align*}
Hence $w$ satisfies \eqref{eq 1.5}.
Conversely, assume that $w$ satisfies \eqref{eq 1.5}.
Then we have 
\begin{align*}
0&=-\triangle w+|\phi|^2_h e^{2v_h}e^w-c_{L,\tau}\\
&=-2\triangle u+2\triangle v_h+|\phi|^2_h e^{2u}-c_{L,\tau}\\
&=-2\triangle u-2\sqrt{-1}\Lambda_\omega F_{h}+\tau+c_{L,\tau}+|\phi|^2_h e^{2u}-c_{L,\tau}\\
&=-2\triangle u-2\sqrt{-1}\Lambda_\omega F_{h}+\tau+|\phi|^2_h e^{2u}.
\end{align*}
It is clear that $u$ satisfies (\ref{eq 1.4}).
\epr
As a summary of Proposition \ref{prop 1.2} and \ref{prop 1.3}, we have
\begin{prop}\label{prop 1.4}
Let $w\in C^\infty(X,\mathbb{R})$.
Then the triple $((L,\overline\partial_L),\phi, he^{v_h+2w})$ satisfies Manton's exotic $\tau$-vortex equation if and only if $w$ satisfies \eqref{eq 1.5}.
\end{prop}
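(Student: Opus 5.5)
The plan is to compose Proposition \ref{prop 1.2} with Proposition \ref{prop 1.3}; no new analysis should be needed. First, write the metric in the statement in the form $k=he^{2u}$ required by Proposition \ref{prop 1.2}, i.e. take $u$ with $2u=v_h+2w$. Proposition \ref{prop 1.2} then says that $((L,\overline\partial_L),\phi,k)$ solves Manton's exotic $\tau$-vortex equation \eqref{eq 1.2} if and only if this $u$ solves \eqref{eq 1.4}. All that step uses is $F_k=F_h+2\overline\partial\partial u$, the K\"ahler identity, and $|\phi|^2_k=e^{2u}|\phi|^2_h$.

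Second, apply Proposition \ref{prop 1.3}. It says that $u$ solves \eqref{eq 1.4} if and only if $\tilde w:=2(u-v_h)$ solves \eqref{eq 1.5}. Combining the two propositions gives the chain
\[
k \text{ solves } \eqref{eq 1.2}\iff u \text{ solves } \eqref{eq 1.4}\iff 2(u-v_h) \text{ solves } \eqref{eq 1.5}.
\]
Both propositions are genuine equivalences, so the combined statement is an ``if and only if'' with no extra hypotheses on $w$ beyond smoothness.

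The only real point, and the step I expect to be the obstacle, is matching $\tilde w=2(u-v_h)$ with the $w$ that appears in the statement. For this, $u$ and $w$ must be related by $u=v_h+\tfrac{w}{2}$, which forces the metric to be $he^{2u}=he^{2v_h+w}$. With the exponent written literally as $v_h+2w$, one gets $2u=v_h+2w$ and hence $\tilde w=2w-v_h$. That equals $w$ only when $w=v_h$, so the composition does not produce \eqref{eq 1.5} for the given $w$ itself. Before writing the proof I would recheck the normalization of the conformal factor (the $2u$ convention in $k=he^{2u}$ and the factor $2$ in $w=2(u-v_h)$). The intended content of the statement is the substitution $u=v_h+w/2$ fed into Propositions \ref{prop 1.2} and \ref{prop 1.3}, and I would prove it in that form. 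I would record explicitly that the exponent must equal $2u=2v_h+w$, since this is exactly the relation that makes the two propositions chain together.
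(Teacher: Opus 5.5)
Your proof is the paper's: the paper gives no separate argument for Proposition \ref{prop 1.4} and simply presents it as the composition of Propositions \ref{prop 1.2} and \ref{prop 1.3}. Your diagnosis of the exponent is also correct. Since $k=he^{2u}$ and $w=2(u-v_h)$, the metric must be $he^{2v_h+w}$, so that $|\phi|^2_he^{2u}=|\phi|^2_he^{2v_h}e^{w}$ matches \eqref{eq 1.5}; the literal $he^{v_h+2w}$, which recurs in the proof of Theorem \ref{thm 1.2}, is a misprint that makes the stated equivalence false in general, and proving it with $u=v_h+w/2$ as you propose is the right fix.
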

Hence, by Proposition \ref{prop 1.2} and \ref{prop 1.3}, the existence problem (Question \ref{q 1}) is equivalent to solving (\ref{eq 1.5}).
Since \eqref{eq 1.5} is exactly the equation considered by Kazdan and Warner \cite{KW}, we are able to apply their result.
We recall their results.
Note that we only state the parts that will be used in this paper.
\begin{thm}[{\cite[Theorem 5.3, Theorem 7.2, Theorem 10.1]{KW}}]\label{thm 1.1}
Let $(X,\omega)$ be a compact Riemann surface with $\mathrm{Vol}_X=\int_X\omega=1$.
Let $\alpha\in \mathbb{R}$ be a constant and $f\in C^{\infty}(X,\mathbb{R})$.
We consider the equation
\be\label{eq 1.6}
-\triangle w+fe^w-\alpha=0.
\ee
The existence of solutions to equation \eqref{eq 1.6} depends on the value of $\alpha$.
We therefore distinguish several cases according to the value of $\alpha$.
\begin{itemize}
\item[(1)] $\alpha=0.$ 
In this case, $f$ changes sign and $\int_Xf<0$ are necessary and sufficient conditions for the existence of a solution $w\in C^{\infty}(X,\mathbb{R})$ to equation \eqref{eq 1.6}.
\item[(2)] $\alpha>0.$
\begin{itemize}
\item[(a)] A necessary condition for the existence of a solution to \eqref{eq 1.6} is that $f$ be positive somewhere on $X$.
\item[(b)] If $f$ is positive somewhere, there is a positive constant $c(f)>0$ depending on $f$ such that \eqref{eq 1.6} has a solution $w\in C^{\infty}(X,\mathbb{R})$ if $0<\alpha<c(h)$. 
Moreover there exists a constant $\beta$ which does not depend on $f$ but possibly depends on $X$ such that 
\[
\beta\leq c(f).
\]
\end{itemize}
\item[(3)] $\alpha<0$. In this case, $\int_Xf<0$ is a necessary condition for the existence of a solution $w\in C^{\infty}(X,\mathbb{R})$ to equation \eqref{eq 1.6}.
\end{itemize}
\end{thm}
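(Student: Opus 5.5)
Theorem \ref{thm 1.1} is quoted from \cite{KW}, so here is how I would reconstruct it. The necessary conditions all come from integrating \eqref{eq 1.6} over $X$, possibly after multiplying by $e^{-w}$. The existence statements come from a constrained variational argument controlled by the Moser--Trudinger inequality. Throughout, $\triangle$ is the nonnegative Hodge Laplacian, so $\int_X\triangle w=0$ and $\int_X g\,\triangle w=\int_X\langle dg,dw\rangle$.

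\emph{Necessary conditions.} Integrating \eqref{eq 1.6} gives $\int_X fe^w=\alpha$.
\begin{itemize}
\item[(2a)] If $\alpha>0$, then $f$ must be positive somewhere.
\item[(1)] If $\alpha=0$ and $f\not\equiv 0$, then $f$ must change sign. (If $f\equiv0$, then $w$ is a harmonic constant and the stated condition $\int_X f<0$ fails.) For the integral condition, rewrite \eqref{eq 1.6} as $f=e^{-w}(\triangle w+\alpha)$ and integrate:
\[
\int_X f=\int_X\langle d e^{-w},dw\rangle+\alpha\int_X e^{-w}=-\int_X e^{-w}|dw|^2+\alpha\int_Xe^{-w}.
\]
For $\alpha=0$ this is $\le 0$. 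Equality forces $w$ to be constant, hence $f\equiv 0$, which is excluded. So $\int_Xf<0$.
\item[(3)] If $\alpha<0$, both terms in the same identity are nonpositive and the second is strictly negative. Hence $\int_Xf<0$.
\end{itemize}

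\emph{Sufficiency for $\alpha=0$.} Write $w=u+c$ with $c$ a constant; the equation becomes $\triangle u=fe^ce^u$. I would minimize the Dirichlet energy $\int_X|du|^2$ over
\[
\{u\in H^1:\ \int_X u=0,\ \int_X fe^u=0\}.
\]
This set is nonempty. Indeed, since $f$ changes sign, one can choose $u$ large on a region where $f>0$ to balance the negative part.

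By Moser--Trudinger, $u\mapsto\int_X fe^u$ is continuous on $H^1$, so the constraint set is weakly closed and a minimizer exists. The Lagrange multiplier equation reads $\triangle u=\lambda fe^u+\mu$. Integrating against $1$ gives $\mu=0$. Integrating against $e^{-u}$ gives $-\int_X e^{-u}|du|^2=\lambda\int_X f$. Since $\int_X f<0$, this forces $\lambda\ge0$, and $\lambda=0$ forces $u$ constant, which is impossible. So $\lambda>0$, and we absorb it by setting $e^c=\lambda$. Elliptic regularity then gives smoothness.

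\emph{Sufficiency for $0<\alpha<c(f)$.} I would minimize
\[
J(w)=\tfrac12\int_X|dw|^2+\alpha\int_Xw
\]
on $\{\int_Xfe^w=\alpha\}$, which is nonempty because $f>0$ somewhere. The Euler--Lagrange equation is $\triangle w+\alpha=\lambda fe^w$, and integrating shows $\lambda=1$, which is exactly \eqref{eq 1.6}.

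The main obstacle is coercivity of $J$. Split $w=\bar w+\tilde w$ into its mean and mean-zero part. The constraint gives $\alpha\le\sup f\cdot e^{\bar w}\int_X e^{\tilde w}$. Moser--Trudinger, $\int_Xe^{\tilde w}\le C\exp\bigl(\tfrac{1}{16\pi}\|d\tilde w\|^2\bigr)$, then bounds $\bar w$ from below by $\log(\alpha/(C\sup f))-\tfrac{1}{16\pi}\|d\tilde w\|^2$. Substituting into $J$ yields
\[
J(w)\ge\bigl(\tfrac12-\tfrac{\alpha}{16\pi}\bigr)\|d\tilde w\|^2-C'.
\]
This is coercive when $\alpha<8\pi$, which gives a uniform lower bound $\beta$ for $c(f)$ independent of $f$, as claimed. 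The existence of the optimal threshold $c(f)$ itself would follow from a sub/supersolution argument in $\alpha$. Because this threshold analysis is delicate, I would content myself with the explicit $8\pi$-type bound, which is all that is used later.
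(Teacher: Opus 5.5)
The paper gives no proof of this statement; it imports the result from Kazdan--Warner. Your reconstruction is essentially their argument: integration identities with the multiplier $e^{-w}$ for the necessary conditions, constrained minimization of the Dirichlet energy for $\alpha=0$, and Moser--Trudinger coercivity for small $\alpha>0$. It is correct, subject to three remarks.

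(i) Your parenthetical about $f\equiv 0$ is really a counterexample to case (1) as printed. For $\alpha=0$ and $f\equiv0$, every constant solves \eqref{eq 1.6}, yet $f$ does not change sign and $\int_X f=0$. The condition should read ``either $f\equiv 0$, or $f$ changes sign and $\int_X f<0$''. You should say this explicitly rather than quietly excluding $f\equiv 0$. It is harmless for the paper, which only uses case (2).

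(ii) You undersell your treatment of (2)(b). The statement does not ask for an optimal threshold. Your argument gives solvability for every $0<\alpha<8\pi$ whenever $\sup_X f>0$. That already proves (2)(b) with $c(f):=8\pi$ (or the supremum of admissible thresholds) and $\beta=8\pi$, so no sub/supersolution step is needed. The constant $\frac{1}{16\pi}$ on an arbitrary closed surface is the sharp Moser--Trudinger inequality (Fontana). A non-sharp version would simply give a smaller $\beta(X)>0$, which is why the statement allows $\beta$ to depend on $X$.

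(iii) A few justifications should be tightened.
\begin{itemize}
\item Weak closedness of the constraint sets needs weak sequential continuity of $u\mapsto\int_X fe^u$ on $H^1$, not mere norm continuity. This follows from Moser--Trudinger together with the compact embedding $H^1\hookrightarrow L^2$ and the inequality $|e^a-e^b|\le|a-b|(e^a+e^b)$.
\item The Lagrange multiplier rule needs nondegenerate constraints. This holds: if $fe^u$ were constant with $\int_X fe^u=0$, then $f\equiv0$; and $\int_X fe^w=\alpha\neq0$ forces $fe^w\not\equiv0$.
\item Boundedness in $H^1$ of a minimizing sequence for $J$ also uses the upper bound $\alpha\bar w\le J(w)$, not only your lower bound on $\bar w$.
\item Elliptic regularity should be applied before testing the Euler--Lagrange equation against $e^{-u}$, since $e^{-u}$ need not lie in $H^1$ a priori.
\end{itemize}
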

See \cite[p. 21-23]{KW} for the details of $\beta$.
\begin{remark}
The Laplacian we used has the opposite sign from the one used in \cite{KW}.
\end{remark}
\begin{remark}
The result of \cite{KW} was also used by Bradlow \cite{Br1}.
We note that the case $c<0$ (\cite[Theorem 10.1]{KW}) was used in \cite{Br1} to construct solutions of the vortex equation, while we will use the case $c>0$ (\cite[Theorem 7.2]{KW}).
\end{remark}
We state two existence results.
The first concerns a fixed $\tau\in\mathbb{R}$. 
Under suitable conditions on the triple $((L,\overline{\partial}_L),\phi,h)$, there exists a Hermitian metric $k$ such that the triple $((L,\overline{\partial}_L),\phi,k)$ satisfies Manton's exotic $\tau$-vortex equation.
The second states that for every holomorphic line bundle $(L,\overline{\partial}_L)$ and every holomorphic section $\phi$ of $(L,\overline{\partial}_L)$, if $\tau$ is chosen appropriately, then there exists a Hermitian metric $h$ such that the triple $((L,\overline{\partial}_L),\phi,h)$ satisfies Manton's exotic $\tau$-vortex equation.
\begin{thm}\label{thm 1.2}
Let $(X,\omega_X)$ be a compact Riemann surface with the K\"ahler form $\omega_X$.
We normalize $\omega_X$ so that $\mathrm{Vol}_X:=\int_X\omega_X=1.$
\begin{itemize}
\item[(1)] Let $\tau\in\mathbb{R}$ be a constant.
Let  $(L,\overline\partial_L)$ be a holomorphic line bundle over $X$, $\phi$ be a holomorphic section of $(L,\overline\partial_L)$, and $h$ be a Hermitian metric of $L$.
Let $v_{h}$ be the function defined above.\par
If $\tau$ satisfies 
\[
4\pi\mathrm{deg}L-c(|\phi|^2_h e^{2v_h})<\tau<4\pi\mathrm{deg}L,
\]
there exists a Hermitian metric $k$ such that the triple $((L,\overline{\partial}_L),\phi,k)$ satisfies Manton's exotic $\tau$-vortex equation.
Here $c(|\phi|^2_h e^{2v_h})$ is the constant which appeared in Theorem \ref{thm 1.1}.
\item[(2)] For every holomorphic line bundle $(L,\overline\partial_L)$ over $X$ and a holomorphic section $\phi$ of $(L,\overline\partial_L)$, if we take $\tau$ appropriately, there exists a Hermitian metric $h$ such that the triple $((L,\overline{\partial}_L),\phi,h)$ satisfies Manton's exotic $\tau$-vortex equation.
\end{itemize}
\end{thm}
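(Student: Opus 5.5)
The plan is to reduce everything to Proposition \ref{prop 1.4} and Theorem \ref{thm 1.1}(2)(b). By Propositions \ref{prop 1.2} and \ref{prop 1.3}, it suffices to produce a smooth solution $w$ of \eqref{eq 1.5}, that is, of \eqref{eq 1.6} with
\[
f:=|\phi|^2_he^{2v_h},\qquad \alpha:=c_{L,\tau}=4\pi\mathrm{deg}L-\tau .
\]
Recall that $v_h$ is built from $h$ and $\tau$. Once $w$ is found, we set $u:=v_h+w/2$ and $k:=he^{2u}$, and this $k$ solves Manton's exotic $\tau$-vortex equation.

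For part (1), the hypothesis $4\pi\mathrm{deg}L-c(f)<\tau<4\pi\mathrm{deg}L$ is exactly $0<\alpha<c(f)$. We may assume $\phi\not\equiv0$, which is implicit since $c(f)$ is only defined when $f$ is positive somewhere. Then $f\geq0$ and $f>0$ away from the finitely many zeros of $\phi$, so $f$ is positive somewhere. Theorem \ref{thm 1.1}(2)(b) then gives a smooth solution $w$, and we conclude as above.

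One subtlety is that $v_h$ depends on $\tau$ through $c_{L,\tau}$. However, the right-hand side $\sqrt{-1}\Lambda_\omega F_h-\frac{\tau}{2}-\frac{c_{L,\tau}}{2}=\sqrt{-1}\Lambda_\omega F_h-2\pi\mathrm{deg}L$ is independent of $\tau$. So we can fix $v_h$ once, normalized to have mean zero, and $f$ is then independent of $\tau$. This makes the interval condition in (1) meaningful, and it will also be needed for (2).

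For part (2), given $(L,\overline\partial_L)$ and $\phi$, we fix any Hermitian metric $h$ and the $\tau$-independent $v_h$, hence the fixed function $f$. If $\phi\not\equiv 0$, then $f$ is positive somewhere and $c(f)>0$, so the interval $(4\pi\mathrm{deg}L-c(f),4\pi\mathrm{deg}L)$ is nonempty. Choosing any $\tau$ in it and applying part (1) finishes the proof. If $\phi\equiv0$, the equation becomes the Hermitian--Yang--Mills equation $\sqrt{-1}\Lambda F_h=\tau/2$. Taking $\tau=4\pi\mathrm{deg}L$, it is solved by solving $\triangle u=2\pi\mathrm{deg}L-\sqrt{-1}\Lambda_\omega F_h$, which is possible by Hodge theory since the right-hand side has zero integral.

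The main obstacle I expect is the bookkeeping of the $\tau$-dependence of $v_h$ noted above. It must be settled first so that $c(f)$ is a genuine constant not depending on $\tau$; otherwise the choice of $\tau$ in (2) would be circular.
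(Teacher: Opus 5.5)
Your proposal is correct and follows the paper's route: reduce via Propositions \ref{prop 1.2}--\ref{prop 1.4} to the Kazdan--Warner equation with $f=|\phi|^2_he^{2v_h}$ and $\alpha=c_{L,\tau}=4\pi\mathrm{deg}L-\tau$, then invoke Theorem \ref{thm 1.1}(2)(b). Your additional points tighten things the paper leaves implicit, and they are sound: $v_h$ can be chosen independent of $\tau$, so $c(f)$ is a genuine constant; the metric is $k=he^{2v_h+w}$, whereas the paper's Proposition \ref{prop 1.4} writes $he^{v_h+2w}$; and the case $\phi\equiv 0$ in (2), where the paper's appeal to Theorem \ref{thm 1.1} does not apply, is handled by Hodge theory with $\tau=4\pi\mathrm{deg}L$.
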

\begin{proof}
We first show $(1).$
By Theorem \ref{thm 1.1}, if 
\[
0<c_{L,\tau}<c(|\phi|^2_h e^{2v_h})
\]
holds, which is equivalent to 
\[
4\pi\mathrm{deg}L-c(|\phi|^2_h e^{2v_h})<\tau<4\pi\mathrm{deg}L
\]
by the definition of $c_{L,\tau}$, there exists a $w\in C^{\infty}(X,\mathbb{R})$ such that $w$ is a solution of \eqref{eq 1.5}.
Then, by Proposition \ref{prop 1.4}, the triple $((L,\overline\partial_L),\phi, he^{v_h+2w})$ satisfies Manton's exotic $\tau$-vortex equation.\par
We next prove $(2)$.
Let $h$ be a Hermitian metric of $L$.
If we take $\tau$ appropriately
\[
4\pi\mathrm{deg}L-c(|\phi|^2_h e^{2v_h})<\tau<4\pi\mathrm{deg}L
\]
holds.
This is equivariant to 
\[
0<c_{L,\tau}<c(|\phi|^2_h e^{2v_h}).
\]
Then, by Theorem \ref{thm 1.1}, there exsits $w\in C^{\infty}(X,\mathbb{R})$ such that $w$ is a solution of \eqref{eq 1.5}.
 The triple $((L,\overline\partial_L),\phi, he^{v_h+2w})$ satisfies Manton's exotic $\tau$-vortex equation by Proposition \ref{prop 1.4}.
\end{proof}
\section{Moduli Space}\label{sec 2}
We use the notation we used in the previous section.\par
In Section \ref{sec 2.1}, we define two spaces which can be regarded as moduli spaces of solutions of Manton's exotic vortex equation.
We show that there is a bijective map between them.
\par
In Section \ref{sec 2.2}, we endow the moduli space with a topology induced by Sobolev norms.
We also show that a certain subset of the moduli space is a smooth manifold, and its real dimension is twice the degree of the line bundle.
\par
In Section \ref{sec 2.3}, we show that Manton's vortex equation can be interpreted as a moment map equation.
The argument in Section \ref{sec 2.3} is purely formal.
We also note that the moment map interpretation of Manton's exotic vortex equation and its generalization was already obtained in the unpublished paper \cite{Tdl}.
Since this provides a natural geometric interpretation of the equation, we included the discussion for completeness.
\subsection{Definition of the Moduli Space}\label{sec 2.1}
In this section, we define two sets that can naturally be regarded as the moduli space of the vortex equation, and construct a bijection between them.\par
We prepare some notations.
Let $X$ be a compact Riemann surface.
Let $L$ be a complex line bundle, $\mathcal{A}_{hol,L}$ be the space of holomorphic structures on $L$, and $\mathrm{Herm}(L)$ be the space of Hermitian metrics of $L$.
Let $h\in \mathrm{Herm}(L)$.
We denote the space of $h$-unitary connections as $\mathcal{A}_h$, the spaces of smooth $\mathbb{C}^\ast$- and $S^1$-valued functions as $C^{\infty}(X,\mathbb{C}^\ast)$ and $C^{\infty}(X,S^1)$, and the spaces of smooth $i$-forms ($i\geq 1$) as $A^i(X, \mathbb{C}^\ast)$ and $A^i(X, S^1)$.
We note that $C^{\infty}(X,\mathbb{C}^\ast)$ and $C^{\infty}(X,S^1)$ are groups.\par
Let $\overline\partial_L\in \mathcal{A}_{hol,L}, h\in \mathrm{Herm}(L).$
In this section, we denote by $\partial_{\overline\partial_L,h}$, the $(1,0)$-part of the Chern connection with respect to $\overline\partial_L$ and $h$, and by $\nabla_{\overline\partial_L,h}=\partial_{\overline\partial_L,h}+\overline\partial_L$,the corresponding Chern connection.
We also denote by $F_{\nabla_{h}}$ the curvature of $\nabla_h\in \mathcal{A}_h$. 
These notations differ from those used previously and are introduced to emphasize the dependence of $\nabla_{\overline\partial_L,h}, \partial_{\overline\partial_L,h}$ on $\overline\partial_L,h$, and the dependence of $F_{\nabla_h}$on $\nabla_h$.
\par
The group $C^{\infty}(X,\mathbb{C}^\ast)$ acts on $\mathcal{A}_{hol,L}\times A(L)\times \mathrm{Herm}(L)$ as
\[
(\overline\partial_L,\phi,h)\cdot g \mapsto (g^{-1}\overline\partial_Lg, g^{-1}\phi, |g|^2h),
\]
where $g\in A(X,\mathbb{C}^\ast), (\overline\partial_L,\phi,h)\in \mathcal{A}_{hol,L}\times A(L)\times \mathrm{Herm}(L)$, and the group $C^{\infty}(X,S^1)$ acts on $\mathcal{A}_h\times A(L)$ as  
\be\label{eq 2.1}
(\nabla_h,\phi)\cdot g\mapsto (g^{-1}\nabla_hg,g^{-1}\phi),
\ee
where $g\in C^{\infty}(X,S^1), (\nabla_h,\phi)\in \mathcal{A}_h\times A(L).$
\par

Let $\tau\in\mathbb{R}$ be a constant. 
We define a subset of $ \mathcal{A}_{hol,L}\times A(L)\times \mathrm{Herm}(L)$ as
\begin{align*}
\mathrm{Tri}_\tau:
=\left\{
(\overline\partial_L,\phi,h)\in \mathcal{A}_{hol,L}\times A(L)\times \mathrm{Herm}(L)
\;\middle|\;
\begin{array}{l}
\centerdot\,\, \overline\partial_L\phi=0,\\
\centerdot\,\, (L,(\overline\partial_L),\phi,h))\, \text{satisfies Manton's exotic $\tau$-vortex equation}
\end{array}
 \right\}.
\end{align*}
See the beginning of Section \ref{sec 1.2} for the definition of $(\overline\partial_L,\phi,h)$ satisfying the $\tau$-vortex equation.\par
Let $h$ be a Hermitian metric on $L$.
We define a subset of $\mathcal{A}_h\times A(L)$ as
\begin{align*}
\mathrm{Vor}_\tau(h):=\left\{ (\nabla_h,\phi)\in \mathcal{A}_h\times A(L) \;\middle|\;
\begin{array}{l}
\centerdot \,\,\nabla^{0,1}_h\phi=0,\\
\centerdot\,\,((L,\nabla^{0,1}_h),\phi,h) \,\text{satisfies Manton's exotic $\tau$-vortex equation}
\end{array}
 \right\}.
\end{align*}
\begin{lem}\label{lem 2.1}
$\mathrm{Vor}_\tau(h)$ is a $C^{\infty}(X,S^1)$-invariant subset.
\end{lem}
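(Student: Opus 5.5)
The plan is to take $(\nabla_h,\phi)\in\mathrm{Vor}_\tau(h)$ and $g\in C^{\infty}(X,S^1)$, and check directly that $(g^{-1}\nabla_h g,g^{-1}\phi)$ satisfies the two defining conditions of $\mathrm{Vor}_\tau(h)$.

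First, membership in the ambient space. Since $|g|=1$, the connection $g^{-1}\nabla_h g$ is again $h$-unitary. Indeed, for sections $s,t$ we have
\[
d\,h(g^{-1}\nabla_h g\, s,\cdot)\text{-compatibility: } d\,h(s,t)=d\,h(gs,gt),
\]
and expanding via the unitarity of $\nabla_h$ gives the claim. Hence the new pair lies in $\mathcal{A}_h\times A(L)$.

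Second, holomorphicity. Taking $(0,1)$-parts, $(g^{-1}\nabla_h g)^{0,1}=g^{-1}\nabla_h^{0,1}g$. Therefore
\[
(g^{-1}\nabla_h g)^{0,1}(g^{-1}\phi)=g^{-1}\nabla^{0,1}_h\phi=0 .
\]

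Third, the vortex equation, which is the only step with content. For a line bundle, conjugation acts on the connection by
\[
g^{-1}\nabla_h g=\nabla_h+g^{-1}dg ,
\]
where $g^{-1}dg$ is an abelian, purely imaginary $1$-form. The curvature changes by $d(g^{-1}dg)=0$, so $F$ is unchanged. The other term is also unchanged, since
\[
|g^{-1}\phi|_h^2=|g|^{-2}|\phi|^2_h=|\phi|^2_h .
\]
Thus the left-hand side of \eqref{eq 1.2} is the same for both pairs and still vanishes.

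The main point to be careful about is the transformation formula for the connection and the vanishing of $d(g^{-1}dg)$. This is immediate locally; globally, $g^{-1}dg$ is a well-defined $1$-form on $X$, and its closedness can be checked on local logarithms of $g$. No earlier result beyond the definitions is needed.
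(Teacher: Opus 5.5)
Your proposal is correct and follows essentially the same route as the paper: holomorphicity of $g^{-1}\phi$ via $(g^{-1}\nabla_h g)^{0,1}=g^{-1}\nabla_h^{0,1}g$, invariance of the curvature because $L$ is a line bundle, and $|g^{-1}\phi|_h^2=|\phi|_h^2$. The differences are cosmetic. You additionally check that $g^{-1}\nabla_h g$ stays $h$-unitary, which the paper leaves implicit. You also phrase curvature invariance as $d(g^{-1}dg)=0$, whereas the paper writes $F_{g^{-1}\nabla_h g}=g^{-1}F_{\nabla_h}g=F_{\nabla_h}$.
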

\bpr
Let $(\nabla_h,\phi)\in\mathrm{Vor}_\tau(h) $ and $g\in C^{\infty}(X,S^1)$.
We show that $(\nabla_h,\phi)\cdot g\in \mathrm{Vor}_\tau(h).$ 
Since the $(0,1)$-part of $g^{-1}\nabla_h g$ is $g^{-1}\nabla^{0,1}_h g$, we have 
\[
g^{-1}\nabla^{0,1}_h g(g^{-1}\phi)=g^{-1}\nabla^{0,1}_h \phi=0.
\]
Hence $g^{-1}\phi$ is a holomorphic section of $(L,g^{-1}\nabla^{0,1}_h g)$.
\par
In this section, we denote the curvature of $\nabla_h\in\mathcal{A}_h$ by $F_{\nabla_h}$ to emphasize its dependence on $\nabla_h$.
The curvatures of $g^{-1}\nabla_h g$ and $\nabla_h$ are related as
\[
F_{g^{-1}\nabla_h g}=g^{-1}F_{\nabla_h}g=F_{\nabla_{h}}.
\]
The last equation holds since $L$ is a line bundle and hence $F_{g^{-1}\nabla_h g},F_{\nabla_h}$ are smooth 2-forms.
Since $g\in C^{\infty}(X,S^1)$, we have
\[
|g^{-1}\phi|^2_h=h(g^{-1}\phi,g^{-1}\phi)=g^{-1}\overline{g^{-1}}h(\phi,\phi)=|\phi|^2_h.
\]
Therefore 
\begin{align*}
&\sqrt{-1}\Lambda_\omega F_{g^{-1}\nabla_h g}-\frac{1}{2}|g^{-1}\phi|^2_h-\frac{\tau}{2}\\
=&\sqrt{-1}\Lambda_\omega F_{\nabla_h}-\frac{1}{2}|\phi|^2_h-\frac{\tau}{2}\\
=&0.
\end{align*}
Hence, the claim is proved.
\epr
We next prove that $\mathrm{Tri}_\tau$ is a $C^{\infty}(X,\mathbb{C}^\ast)$-invariant subset.
Before proceeding, we prepare a Lemma.
\begin{lem}\label{lem 2.2}
Let $(L,\overline\partial_L)$ be a holomorphic line bundle, $h$ a Hermitian metric on $L$, and $g\in C^{\infty}(X,\mathbb{C}^\ast)$.
Since $g\in C^{\infty}(X,\mathbb{C}^\ast)$, $|g|^2h$ is also a Hermitian metric on $L$.
Then we have 
\begin{itemize}
\item[1.] The $(1,0)$-part of the Chern connection with respect to $g^{-1}\overline\partial_L g$ and $|g|^2h$ has the form
\[
\partial_{g^{-1}\overline\partial_L g, |g|^2h}=\partial_{\overline\partial_L,h}+g^{-1}\partial g.
\]
\item[2.] The Chern connection $\nabla_{g^{-1}\overline\partial_L g, |g|^2h}$ with respect to $\overline\partial_L$ and $h$ has the form
\[
\nabla_{g^{-1}\overline\partial_L g, |g|^2h}=g^{-1}\nabla_{\overline\partial_L,h}g.
\]
\end{itemize}
\end{lem}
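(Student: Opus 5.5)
We verify the two formulas by checking the defining properties of the Chern connection, which is unique. Write $\overline\partial'_L:=g^{-1}\overline\partial_L g$ and $h':=|g|^2h$. The Chern connection $\nabla_{\overline\partial'_L,h'}$ is the unique connection whose $(0,1)$-part is $\overline\partial'_L$ and which is compatible with $h'$. So for item 2 we show that $g^{-1}\nabla_{\overline\partial_L,h}\,g$ has both properties. Item 1 then follows by taking $(1,0)$-parts.

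\textbf{The $(0,1)$-part.} Since $g$ is a function, the operator $g^{-1}\nabla_{\overline\partial_L,h}\,g$ (meaning $s\mapsto g^{-1}\nabla_{\overline\partial_L,h}(gs)$) splits by type. Its $(0,1)$-part is $g^{-1}\overline\partial_L g=\overline\partial'_L$, as required. Its $(1,0)$-part is
\[
g^{-1}\partial_{\overline\partial_L,h}(gs)=\partial_{\overline\partial_L,h}s+g^{-1}(\partial g)\,s,
\]
by the Leibniz rule. This is exactly the formula claimed in item 1.

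\textbf{Metric compatibility.} For sections $s,t$ we compute $d\,h'(s,t)=d\,h(gs,gt)$. Since $\nabla_{\overline\partial_L,h}$ is $h$-unitary,
\[
d\,h(gs,gt)=h(\nabla_{\overline\partial_L,h}(gs),gt)+h(gs,\nabla_{\overline\partial_L,h}(gt)).
\]
Set $\nabla':=g^{-1}\nabla_{\overline\partial_L,h}\,g$, so that $\nabla_{\overline\partial_L,h}(gs)=g\nabla' s$, and similarly for $t$. Substituting, the right-hand side becomes $|g|^2\big(h(\nabla's,t)+h(s,\nabla't)\big)=h'(\nabla's,t)+h'(s,\nabla't)$. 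So $\nabla'$ is $h'$-compatible.

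By uniqueness of the Chern connection, $\nabla_{\overline\partial'_L,h'}=g^{-1}\nabla_{\overline\partial_L,h}\,g$, which proves item 2. Taking $(1,0)$-parts gives item 1.

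The only point needing care is the convention in the metric step: $h$ must be conjugate-linear in the second slot, so that $h(gs,gt)=|g|^2h(s,t)$ and $h(g\nabla's,gt)=|g|^2h(\nabla's,t)$. I expect no real obstacle beyond this.
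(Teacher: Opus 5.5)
Your proof is correct and follows essentially the same route as the paper: both rest on uniqueness of the Chern connection and check that the candidate $\partial_{\overline\partial_L,h}+g^{-1}\partial g+g^{-1}\overline\partial_L g=g^{-1}\nabla_{\overline\partial_L,h}g$ has the right $(0,1)$-part and is $|g|^2h$-compatible. The differences are only in presentation. The paper proves item 1 first by expanding the compatibility identity term by term, using $\overline g\,\partial g+g\,\partial\overline g=\partial|g|^2$, and then adds the $(0,1)$-parts to get item 2. You prove item 2 first and get compatibility in one line from $|g|^2h(s,t)=h(gs,gt)$, which is slightly cleaner and treats the $\partial$ and $\overline\partial$ components at once.
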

\bpr
We first prove $2.$, assuming $1$.
Suppose $1.$ holds.
Then 
\begin{align*}
\nabla_{g^{-1}\overline\partial_L g, |g|^2h}&=\partial_{g^{-1}\overline\partial_L g, |g|^2h}+g^{-1}\overline\partial_Lg\\
&=\partial_{\overline\partial_L,h}+g^{-1}\partial g+\overline\partial_L+g^{-1}\overline\partial g\\
&=\nabla_{\overline\partial_L,h}+g^{-1}dg\\
&=g^{-1}\nabla_{\overline\partial_L,h}g.
\end{align*}
We now prove $1.$
Since the Chern connection is unique, it suffices to show that 
\[
\partial_{\overline\partial_L,h}+g^{-1}\partial g + g^{-1}\overline\partial_L g
\]
is a metric connection with respect to $|g|^2h$.
Let $u,v\in A(L)$.
\begin{align*}
&|g|^2h((\partial_{\overline\partial_L,h}+g^{-1}\partial g)u,v)+|g|^2h(u,g^{-1}\overline\partial_L g v)\\
=&|g|^2h(\partial_{\overline\partial_L,h}u,v)+g^{-1}\partial g|g|^2h(u,v)+|g|^2h(u, \overline\partial_L v+g^{-1}\overline\partial g v)\\
=&|g|^2h(\partial_{\overline\partial_L,h}u,v)+|g|^2h(u,\overline\partial_L v)+\overline{g}\partial g h(u,v)+\overline{g}^{-1}\partial \overline{g}|g|^2h(u,v)\\
=&|g|^2\partial h(u,v)+\overline{g}\partial g h(u,v)+g\partial \overline{g}h(u,v)\\
=&|g|^2\partial h(u,v)+\partial |g|^2 h(u,v)\\
=&\partial(|g|^2 h(u,v)).
\end{align*}
By a similar argument, we can show that 
\[
\overline\partial(|g|^2 h(u,v))=|g|^2h(g^{-1}\overline\partial_L g u,v)+|g|^2h(u,(\partial_{\bar\partial_L,h}+g^{-1}\partial g)v)
\]
holds.
Thus 
\[
\partial_{\bar\partial_L,h}+g^{-1}\partial g + g^{-1}\overline\partial_L g
\]
is a metric connection with respect to $|g|^2h$, and therefore 
\[
\partial_{g^{-1}\bar\partial_L g, |g|^2h}=\partial_{\bar\partial_L,h}+g^{-1}\partial g.
\]
\epr
\begin{lem}\label{lem 2.3}
$\mathrm{Tri}_\tau$ is a $C^{\infty}(X,\mathbb{C}^\ast)$-invariant subset.
\end{lem}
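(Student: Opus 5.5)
We take $(\overline\partial_L,\phi,h)\in\mathrm{Tri}_\tau$ and $g\in C^{\infty}(X,\mathbb{C}^\ast)$, and show that $(g^{-1}\overline\partial_L g, g^{-1}\phi, |g|^2h)$ again lies in $\mathrm{Tri}_\tau$. This means checking the two defining conditions in turn, imitating the proof of Lemma \ref{lem 2.1} but with Lemma \ref{lem 2.2} in place of the elementary unitary computation.

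First, holomorphicity is immediate:
\[
(g^{-1}\overline\partial_L g)(g^{-1}\phi)=g^{-1}\overline\partial_L\phi=0,
\]
so $g^{-1}\phi$ is holomorphic for the new holomorphic structure. Next, the pointwise norm of the section is unchanged:
\[
|g^{-1}\phi|^2_{|g|^2h}=|g|^2\, g^{-1}\overline{g^{-1}}\,h(\phi,\phi)=|\phi|^2_h .
\]

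The only real step is the curvature term. By Lemma \ref{lem 2.2}(2), the Chern connection of the new pair is
\[
\nabla_{g^{-1}\overline\partial_L g,|g|^2h}=g^{-1}\nabla_{\overline\partial_L,h}\,g=\nabla_{\overline\partial_L,h}+g^{-1}dg .
\]
Since $L$ is a line bundle, the curvature changes by $d(g^{-1}dg)$. Locally $g^{-1}dg=d\log g$, and globally this 2-form vanishes because $d(g^{-1}dg)=-g^{-2}dg\wedge dg=0$. Alternatively, as in Lemma \ref{lem 2.1}, conjugation by a function commutes with the $\mathrm{End}(L)$-valued curvature, so $F$ is unchanged. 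In either case
\[
\sqrt{-1}\Lambda_\omega F_{\nabla_{g^{-1}\overline\partial_L g,|g|^2h}}=\sqrt{-1}\Lambda_\omega F_{\nabla_{\overline\partial_L,h}} .
\]

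Substituting these identities into \eqref{eq 1.2}, the left-hand side for the transformed triple equals that for the original triple, which is zero. The main point requiring care is correctly invoking Lemma \ref{lem 2.2}, namely that the Chern connection transforms by conjugation even though $g$ is not unitary. Once that is granted, the argument is a direct computation. The same reasoning shows that the action preserves $\mathrm{Tri}_\tau$ in both directions, since $g^{-1}$ also acts.
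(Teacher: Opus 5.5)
Your proof is correct and is the paper's argument: you check that $g^{-1}\phi$ is holomorphic, that $|g^{-1}\phi|^2_{|g|^2h}=|\phi|^2_h$, and that the curvature is unchanged via Lemma~\ref{lem 2.2}(2). Your extra computation, that the curvature changes by $d(g^{-1}dg)=0$, is only a more explicit version of the paper's remark that conjugating a line-bundle curvature by a function leaves it unchanged.
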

\bpr
Let $(\overline\partial_L,\phi,h)\in\mathrm{Tri}_\tau$ and $g\in C^{\infty}(X,\mathbb{C}^\ast)$.
We show $(\overline\partial_L,\phi,h)\cdot g=(g^{-1}\overline\partial_Lg, g^{-1}\phi, |g|^2h) \in \mathrm{Tri}_\tau.$
It is clear that $g^{-1}\phi$ is a holomorphic section of $(L,g^{-1}\overline\partial_Lg)$.\par
We use the same notation as in Lemma \ref{lem 2.2}.
By 2. of Lemma \ref{lem 2.2} , the curvatures of $\nabla_{g^{-1}\overline\partial_L g, |g|^2h}$ and $\nabla_{\overline\partial_L,h}$ are related as
\[
F_{\nabla_{g^{-1}\bar\partial_L g, |g|^2h}}=g^{-1}F_{\nabla_{\bar\partial_L,h}}g=F_{\nabla_{\bar\partial_L,h}}.
\]
We also have
\[
|g^{-1}\phi|_{|g|^2h}=|g|^2h(g^{-1}\phi,g^{-1}\phi)=|g|^2|g^{-1}|^2h(\phi,\phi)=|\phi|^2_h.
\]
Hence 
\begin{align*}
&\sqrt{-1}\Lambda_\omega F_{\nabla_{g^{-1}\bar\partial_L g, |g|^2h}}-\frac{|g^{-1}\phi|_{|g|^2h}}{2}-\frac{\tau}{2}\\
=&\sqrt{-1}\Lambda_\omega F_{\nabla_{\bar\partial_L,h}}-\frac{|\phi|^2_h}{2}-\frac{\tau}{2}\\
=&0.
\end{align*}
The claim is proved.
\epr
By Lemma \ref{lem 2.1} and \ref{lem 2.3}, the quotient spaces 
\[
\mathrm{Vor}_\tau(h) /C^{\infty}(X,S^1),\, \mathrm{Tri}_\tau /C^{\infty}(X,\mathbb{C}^\ast)
\]
are well-defined.
We regard the above two quotient spaces as moduli spaces of the $\tau$-vortex equation.
We now construct a bijection between them.
We define a map
\[
A:\mathrm{Vor}_\tau(h)\to \mathrm{Tri}_\tau /C^{\infty}(X,\mathbb{C}^\ast)
\]
as
\[
A((\nabla_h,\phi)):=[(\nabla^{0,1}_h,\phi,h)].
\]
\begin{lem}\label{lem 2.4}
$A$ is a $C^{\infty}(X, S^1)$-equivalent map.
That is, for $(\nabla_h,\phi)\in \mathrm{Vor}_\tau(h)$ and $g\in C^{\infty}(X, S^1)$
\[
A((\nabla_h,\phi)\cdot g)=A((\nabla_h,\phi))
\]
holds.
As a consequence, $A$ induces a well-defined map 
\[
\overline A:\mathrm{Vor}_\tau(h) /C^{\infty}(X,S^1)\to \mathrm{Tri}_\tau /C^{\infty}(X,\mathbb{C}^\ast)
\]
such that 
\[
\overline A([(\nabla_h,\phi)]):=[(\nabla^{0,1}_h,\phi,h)].
\]
\end{lem}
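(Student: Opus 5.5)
The plan is to show directly that, for $(\nabla_h,\phi)\in\mathrm{Vor}_\tau(h)$ and $g\in C^{\infty}(X,S^1)$, the triples $(\nabla^{0,1}_h,\phi,h)$ and $((g^{-1}\nabla_h g)^{0,1},g^{-1}\phi,h)$ lie in the same $C^{\infty}(X,\mathbb{C}^\ast)$-orbit in $\mathrm{Tri}_\tau$. I expect them to be related by the element $g$ itself, regarded as an element of $C^{\infty}(X,\mathbb{C}^\ast)$ via the inclusion $S^1\subset\mathbb{C}^\ast$.

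First I check that $A$ is well defined, i.e.\ that $(\nabla^{0,1}_h,\phi,h)\in\mathrm{Tri}_\tau$. Since $\nabla_h$ is $h$-unitary, it is the Chern connection of $(\nabla^{0,1}_h,h)$ by uniqueness of the Chern connection. Hence the curvature appearing in the definition of $\mathrm{Vor}_\tau(h)$ is exactly the curvature of $\nabla_{\nabla^{0,1}_h,h}$, and the two defining conditions of $\mathrm{Vor}_\tau(h)$ and $\mathrm{Tri}_\tau$ coincide.

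Next, take $g\in C^{\infty}(X,S^1)$. The $(0,1)$-part of $g^{-1}\nabla_h g$ is $g^{-1}\nabla^{0,1}_h g$, so
\[
A((\nabla_h,\phi)\cdot g)=[(g^{-1}\nabla^{0,1}_h g,\,g^{-1}\phi,\,h)].
\]
Because $|g|=1$, we have $|g|^2h=h$. Therefore
\[
(\nabla^{0,1}_h,\phi,h)\cdot g=(g^{-1}\nabla^{0,1}_h g,\,g^{-1}\phi,\,|g|^2h)=(g^{-1}\nabla^{0,1}_h g,\,g^{-1}\phi,\,h)
\]
under the $C^{\infty}(X,\mathbb{C}^\ast)$-action. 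The two triples are thus in the same orbit, which gives $A((\nabla_h,\phi)\cdot g)=A((\nabla_h,\phi))$. Membership of the translated triple in $\mathrm{Tri}_\tau$ follows either from Lemma \ref{lem 2.3} or from Lemma \ref{lem 2.1} combined with the previous paragraph.

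Finally, equivariance means $A$ is constant on $C^{\infty}(X,S^1)$-orbits, so it factors through the quotient $\mathrm{Vor}_\tau(h)/C^{\infty}(X,S^1)$ and yields $\overline A$ with the stated formula. I do not anticipate any real obstacle here. The only point needing care is identifying $\nabla_h$ with the Chern connection of $(\nabla^{0,1}_h,h)$, which is where uniqueness of the Chern connection is used. Everything else reduces to the fact that unitary gauge transformations preserve $h$.
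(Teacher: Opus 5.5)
Your proposal is correct and matches the paper's proof: regard $g$ as an element of $C^{\infty}(X,\mathbb{C}^\ast)$, use $|g|^2h=h$, and conclude that $(\nabla^{0,1}_h,\phi,h)$ and $(g^{-1}\nabla^{0,1}_h g,g^{-1}\phi,h)$ lie in the same orbit. Your additional check that $(\nabla^{0,1}_h,\phi,h)\in\mathrm{Tri}_\tau$, using uniqueness of the Chern connection, is a step the paper leaves implicit, and it is a welcome addition.
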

\bpr
Let $(\nabla_h,\phi)\in \mathrm{Vor}_\tau(h)$ and $g\in C^{\infty}(X, S^1)$.
We note that $g\in C^{\infty}(X, \mathbb{C}^\ast)$ and $|g|^2=1$.
We have
\begin{align*}
A((\nabla_h,\phi))=&[(\nabla^{0,1}_h,\phi,h)]\\
=&[(\nabla^{0,1}_h,\phi,h)\cdot g]\\
=&[(g^{-1}\nabla^{0,1}_hg,g^{-1}\phi,|g|^2h)]\\
=&[(g^{-1}\nabla^{0,1}_hg,g^{-1}\phi,h)]\\
=&A((\nabla_h,\phi)\cdot g).
\end{align*}
\epr
Let $(\overline\partial_L,\phi,k)\in\mathrm{Tri}_\tau$.
Then, by the general theory of vector bundles, $k^{-1}h$ is smooth positive function
\[
k^{-1}h:X\to \mathbb{R}_{>0}.
\]
We define $g\in C^{\infty}(X,\mathbb{C}^\ast)$ as
\[
g:=\mathrm{exp}\bigg(\frac{1}{2}\mathrm{log}(k^{-1}h)\bigg).
\]
Once we act $g$ on $(\overline\partial_L,\phi,k)$, we have 
\begin{align*}
(\overline\partial_L,\phi,k)\cdot g&=(g^{-1}\overline\partial_L g, g^{-1}\phi,|g|^2k)\\
&=(g^{-1}\overline\partial_L g, g^{-1}\phi,h).
\end{align*}
We note that $(g^{-1}\overline\partial_L g, g^{-1}\phi,h)\in\mathrm{Tri}_\tau$ by Lemma \ref{lem 2.3}.
Then by 2. of Lemma \ref{lem 2.2}, we have 
\[
\nabla_{g^{-1}\bar\partial_L g,h(=|g|^2k)}=g^{-1}\nabla_{\bar\partial_L,k}g.
\]
By the definition of $\mathrm{Tri}_\tau$, $(\nabla_{g^{-1}\bar\partial_L g,h},g^{-1}\phi)\in\mathrm{Vor}_\tau(h)$.  
We define a map
\[B: \mathrm{Tri}_\tau\to \mathrm{Vor}_\tau(h) /C^{\infty}(X,S^1).
\]
as
\[
B((\overline\partial_L,\phi,k))=[(\nabla_{g^{-1}\bar\partial_L g,h},g^{-1}\phi)].
\]
\begin{lem}\label{lem 2.5}
B is a $C^{\infty}(X,\mathbb{C}^\ast)$-equivalent map.
That is, for $(\overline\partial_L,\phi,h)\in\mathrm{Tri}_\tau$ and $g\in C^{\infty}(X,\mathbb{C}^\ast)$
\[
B((\overline\partial_L,\phi,h)\cdot g)=B((\overline\partial_L,\phi,h))
\]
holds.
As a consequence, $B$ defines a well-defined map 
\[
\overline B:\mathrm{Tri}_\tau /C^{\infty}(X,\mathbb{C}^\ast)\to\mathrm{Vor}_\tau(h) /C^{\infty}(X,S^1)
\]
such that 
\[
\overline B([(\overline\partial_L,\phi,k)])=[(\nabla_{g^{-1}\overline\partial_L g,h},g^{-1}\phi)].
\]
\end{lem}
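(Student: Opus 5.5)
The plan is to fix notation and compare the representatives that $B$ produces from $(\overline\partial_L,\phi,k)$ and from $(\overline\partial_L,\phi,k)\cdot g'$, where $g'\in C^{\infty}(X,\mathbb{C}^\ast)$. For $(\overline\partial_L,\phi,k)$, the map $B$ uses $g=\mathrm{exp}(\frac12\mathrm{log}(k^{-1}h))$, which is real, positive, and satisfies $|g|^2k=h$. The acted triple is $(g'^{-1}\overline\partial_L g',g'^{-1}\phi,|g'|^2k)$. The corresponding element is
\[
\tilde g=\mathrm{exp}\bigg(\frac12\mathrm{log}\big((|g'|^2k)^{-1}h\big)\bigg)=|g'|^{-1}g,
\]
which satisfies $|\tilde g|^2|g'|^2k=h$.

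Next I write down both images. We have $B((\overline\partial_L,\phi,k))=[(\nabla_{g^{-1}\overline\partial_Lg,h},g^{-1}\phi)]$. For the acted triple, the Dolbeault operator is $\tilde g^{-1}g'^{-1}\overline\partial_L g'\tilde g$ and the section is $\tilde g^{-1}g'^{-1}\phi$. Put $\gamma:=g'\tilde g\,g^{-1}=g'/|g'|$, using that everything is a function on a line bundle, so all factors commute. Then $\gamma\in C^{\infty}(X,S^1)$, and $g'\tilde g=g\gamma$. Hence the second image is
\[
[(\nabla_{\gamma^{-1}(g^{-1}\overline\partial_L g)\gamma,\,h},\ \gamma^{-1}g^{-1}\phi)].
\]

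The key step is to apply part 2 of Lemma \ref{lem 2.2} to the holomorphic structure $g^{-1}\overline\partial_Lg$, the metric $h$, and the gauge transformation $\gamma$. Since $|\gamma|^2h=h$, the lemma gives
\[
\nabla_{\gamma^{-1}(g^{-1}\overline\partial_Lg)\gamma,h}=\gamma^{-1}\nabla_{g^{-1}\overline\partial_Lg,h}\gamma .
\]
So the second representative equals $(\nabla_{g^{-1}\overline\partial_Lg,h},g^{-1}\phi)\cdot\gamma$ under the action \eqref{eq 2.1}. It therefore defines the same class in $\mathrm{Vor}_\tau(h)/C^{\infty}(X,S^1)$, and this proves the equivariance. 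Lemma \ref{lem 2.1} guarantees that the class is well defined in the quotient, and Lemma \ref{lem 2.3} guarantees that the acted triple still lies in $\mathrm{Tri}_\tau$. The induced map $\overline B$ then follows formally.

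The only point needing care, and the main (mild) obstacle, is the bookkeeping. One must confirm that the normalizing element $g$ in the definition of $B$ depends only on the metric component, so that it transforms as $\tilde g=|g'|^{-1}g$. One must also confirm that the resulting discrepancy $\gamma=g'/|g'|$ is unitary, which is exactly the residual $S^1$-gauge freedom quotiented out on the target.
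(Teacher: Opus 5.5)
Your proposal is correct and follows essentially the same route as the paper. Your unitary discrepancy $\gamma=g'/|g'|$ is exactly the paper's element $g_1^{-1}\cdot g\cdot g_2\in C^{\infty}(X,S^1)$, and both arguments use part 2 of Lemma \ref{lem 2.2} to show that the two $B$-representatives differ by the action of this $S^1$-valued gauge transformation. The only cosmetic difference is that you compute $\gamma$ explicitly and apply the lemma at the fixed metric $h$, whereas the paper deduces $|g_1^{-1}gg_2|=1$ from the metric relations and applies the lemma to the original Chern connections $\nabla_{\overline\partial_{L_i},k_i}$.
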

\bpr
Let $(\overline\partial_{L_1},\phi_1,k_1),(\overline\partial_{L_2},\phi_2,k_2)\in\mathrm{Tri}_\tau$.
We assume that there exists a $g\in C^{\infty}(X,\mathbb{C}^\ast)$ such that 
\[
(\overline\partial_{L_1},\phi_1,k_1)\cdot g=(g^{-1}\overline\partial_{L_1}g,g^{-1}\phi_1,|g|^2k_1)=(\overline\partial_{L_2},\phi_2,k_2).
\]
We show that 
\[
B((\overline\partial_{L_1},\phi_1,k_1))=B((\overline\partial_{L_2},\phi_2,k_2))
\]
which proves the claim.
Let 
\[
g_i:=\mathrm{exp}\bigg(\frac{1}{2}\mathrm{log}(k_i^{-1}h)\bigg), \,(i=1,2).
\]
Then, by the definition of $B$, we have 
\[
B((\overline\partial_{L_i},\phi_i,k_i))=[(g^{-1}_i\nabla_{\bar\partial_{L_i,k_i}}g_i,g^{-1}_i\phi_i)],\, (i=1,2).
\]
Since we have 
\begin{align*}
k_2=k_1|g|^2, k_1|g_1|^2=h=k_2|g_2|^2,
\end{align*}
we obtain
\[
k_1|g_1|^2=k_1|g|^2|g_2|^2.
\]
Since $k_1$ is everywhere non-zero on $X$, we have 
\[
|g_1|^{-2}|g|^2|g_2|^2=1
\]
which implies
\[
g_1^{-1}\cdot g\cdot g_2\in C^\infty(X,S^1).
\]
By 2. of Lemma \ref{lem 2.2}, we have 
\[
\nabla_{\bar\partial_{L_2},k_2}=\nabla_{g^{-1}\bar\partial_{L_1}g, |g|^2k_1}=g^{-1}\nabla_{\bar\partial_{L_1},k_1}g.
\]
We now finish the proof of the claim.
The gauge group element $g_1^{-1}\cdot g\cdot g_2\in C^\infty(X,S^1)$ acts on $(g^{-1}_1\nabla_{\bar\partial_{L_1,k_1}}g_1,g^{-1}_1\phi_1)$ and maps to 
\begin{align*}
&(g^{-1}_1\nabla_{\bar\partial_{L_1,k_1}}g_1,g^{-1}_1\phi_1)\cdot (g_1^{-1}\cdot g\cdot g_2)\\
=&(g_2^{-1}g^{-1}\nabla_{\bar\partial_{L_1,k_1}}gg_2, g^{-1}_2g^{-1}\phi_1)\\
=&(g_2^{-1}\nabla_{\bar\partial_{L_2},k_2}g_2, g^{-1}_2\phi_2).
\end{align*}
This implies 
\[
[(g^{-1}_1\nabla_{\bar\partial_{L_1,k_1}}g_1,g^{-1}_1\phi_1)]=[(g^{-1}_2\nabla_{\bar\partial_{L_2,k_2}}g_2,g^{-1}_2\phi_2)]
\]
which also implies
\[
B((\overline\partial_{L_1},\phi_1,k_1))=B((\overline\partial_{L_2},\phi_2,k_2)).
\]
\epr
The following is clear from the definition of $A,B$.
\begin{prop}
The maps 
\begin{align*}
&\overline A:\mathrm{Vor}_\tau(h) /C^{\infty}(X,S^1)\to \mathrm{Tri}_\tau /C^{\infty}(X,\mathbb{C}^\ast),\\
&\overline B:\mathrm{Tri}_\tau /C^{\infty}(X,\mathbb{C}^\ast)\to\mathrm{Vor}_\tau(h) /C^{\infty}(X,S^1)
\end{align*}
are bijective maps.
\end{prop}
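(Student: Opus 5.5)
The natural strategy is to show that $\overline A$ and $\overline B$ are mutually inverse; bijectivity of both then follows at once. Both maps are already known to be well defined by Lemma \ref{lem 2.4} and Lemma \ref{lem 2.5}. So it remains to check the two compositions $\overline A\circ\overline B=\mathrm{Id}$ and $\overline B\circ\overline A=\mathrm{Id}$ on representatives.

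\emph{First composition.} Take $(\overline\partial_L,\phi,k)\in\mathrm{Tri}_\tau$ and set $g=\exp(\frac12\log(k^{-1}h))$. By definition,
\[
\overline B([(\overline\partial_L,\phi,k)])=[(\nabla_{g^{-1}\overline\partial_L g,h},g^{-1}\phi)].
\]
Since the $(0,1)$-part of a Chern connection is the holomorphic structure, $\nabla^{0,1}_{g^{-1}\overline\partial_L g,h}=g^{-1}\overline\partial_L g$. Applying $\overline A$ therefore gives
\[
[(g^{-1}\overline\partial_L g,\,g^{-1}\phi,\,h)]=[(g^{-1}\overline\partial_L g,\,g^{-1}\phi,\,|g|^2k)]=[(\overline\partial_L,\phi,k)\cdot g],
\]
where the first equality uses $|g|^2k=h$. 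This last class equals $[(\overline\partial_L,\phi,k)]$ in $\mathrm{Tri}_\tau/C^\infty(X,\mathbb{C}^\ast)$.

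\emph{Second composition.} Take $(\nabla_h,\phi)\in\mathrm{Vor}_\tau(h)$, so that $\overline A([(\nabla_h,\phi)])=[(\nabla^{0,1}_h,\phi,h)]$. To evaluate $\overline B$ on this representative, the metric is already $h$, so $k^{-1}h=1$ and $g=1$. Hence
\[
\overline B([(\nabla^{0,1}_h,\phi,h)])=[(\nabla_{\nabla^{0,1}_h,h},\phi)].
\]
The main point is the identification $\nabla_{\nabla^{0,1}_h,h}=\nabla_h$. It follows from uniqueness of the Chern connection: $\nabla_h$ is $h$-unitary and its $(0,1)$-part is $\nabla^{0,1}_h$. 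So the composition returns $[(\nabla_h,\phi)]$.

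I expect no genuine obstacle here; the argument only needs care with representatives. In particular, one must use that $\overline B$ is independent of the chosen representative (Lemma \ref{lem 2.5}), so that evaluating it on the representative with metric $h$ is legitimate.
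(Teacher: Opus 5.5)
Your proposal is correct and is exactly the argument the paper leaves implicit when it says the result is ``clear from the definition of $A,B$.'' You verify that $\overline A$ and $\overline B$ are mutually inverse, using $|g|^2k=h$ for $\overline A\circ\overline B$ and uniqueness of the Chern connection (with $g=1$ on the representative with metric $h$) for $\overline B\circ\overline A$.
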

Thus, the two descriptions of the moduli space are equivalent.
In the following sections, we study the set
\[
\mathrm{Vor}_\tau(h) /C^{\infty}(X,S^1)
\]
after introducing Sobolev norms.
\subsection{Construction of the Moduli Space}\label{sec 2.2}
We use the same notation as in the previous section.
\par
In this section, we construct the moduli space of the solutions of the vortex equation.
Our construction is modelled on \cite{Ko, Nico, Par}.
To construct the moduli space, we first introduce the complex obtained by the linearization of the action of $C^{\infty}(X,S^1)$ and the vortex equation.
Later, we introduce $L^2_k$-Sobolev norms and construct the $L^2_k$-solution of the vortex equation for sufficiently large $k$ and argue that the moduli space does not depend on $k$.
We also study the smoothness of the moduli space by using the deformation complex. \par
Let $X$ be a compact Riemann surface with a K\"ahler form $\omega_X$.
Let $L$ be a smooth line bundle on $X$ with a Hermitian metric $h$.
From now on, we fix $\tau\in\mathbb{R}$.
\par
Let $(\nabla_h,\phi)\in \mathcal{A}_h\times A(L)$.
The linearization of the action of of $C^{\infty}(X,S^1)$ defines a linear map
\[
d_1:A(X,\sqrt{-1}\mathbb{R})\to A^1(X,\sqrt{-1}\mathbb{R})\oplus A(L)
\]
such that 
\[
d_1(X)=(dX, -X\phi).
\]
Let $(\nabla_h,\phi)\in\mathrm{Vor}_\tau$.
The linearization of $(\nabla_h,\phi)$ in $\mathrm{Vor}_\tau(h)$ defines a linear map
\[
d_2: A^1(X,\sqrt{-1}\mathbb{R})\oplus A(L) \to A^2(X,\sqrt{-1}\mathbb{R})\oplus A^{(0,1)}(L)
\]
such that 
\[
d_2(A, \dot{\phi})=(dA+\sqrt{-1}\mathrm{Re}h(\dot{\phi},\phi)\omega_X,\nabla^{0,1}_{h}\dot{\phi}+A^{0,1}\phi).
\]
Here $\mathrm{Re}h(\cdot,\cdot)$ is the real part of $h(\cdot,\cdot)$.
We denote by
\begin{align*}
C^0&:= A(X,\sqrt{-1}\mathbb{R}),\\
C^1&:= A^1(X,\sqrt{-1}\mathbb{R})\oplus A(L),\\
C^2&:= A^2(X,\sqrt{-1}\mathbb{R})\oplus A^{0,1}(L).
\end{align*}
Let $X\in L^0$.
Since 
\begin{align*}
d_2d_1(X)&=d_2(dX,-X\phi)\\
&=(ddX+\sqrt{-1}\mathrm{Re}h(-X\phi,\phi)\omega_X,\nabla^{0,1}_h(-X\phi)+\bar\partial X\phi)\\
&=(0,0).
\end{align*}
The last equation follows since $X\in C^0=A(X,\sqrt{-1}\mathbb{R})$ and $\phi$ is a holomorphic section with respect to $(L,\nabla^{0,1}_h)$.
Hence, we obtain a complex
\be\label{eq 2.2}
0\longrightarrow C^0\overset{d_1}{\longrightarrow} C^1\overset{d_2}{\longrightarrow}C^2\longrightarrow 0.
\ee
By the definition of $d_1$ and $d_2$, this is an elliptic complex.
We define a $L^2$-metric on $A^i(X,\sqrt{-1}\mathbb{R})$ as
\[
(\alpha,\beta)_{L^2}:=-\int_X\alpha\wedge\ast\beta
\]
and on $A^i(L)$ as 
\[
(u,v)_{L^2}:=\int_X\mathrm{Re}(u,v)_{h,g}.
\]
Here $(\cdot,\cdot)_{h,g}$ is the Hermitian metric on $A^i(L)$ defined by $h$ and a Riemannian metric $g$ on $X$.
We then define a $L^2$-metric on $C^i$ by using the $L^2$-metric on $A^i(X,\sqrt{-1}\mathbb{R})$ and $A^i(L)$, and denote by $d_1^\ast, d_2^\ast$ the formal adjoint of $d_1$ and $d_2$ with respect to the defined $L^2$-metric.
\par
We now fix $k>>1$.
We denote by $\mathcal{A}_{h,k}$ the space of $L^2_k$-unitary connections and by $A(L)_k$ the space of completion of $A(L)$ by $L^2_k$-Sobolev norm.
We define 
\begin{align*}
\mathcal{A}_k&:=\mathcal{A}_{h,k}\times A(L)_k,\\
\mathcal{G}_k&:=\bigg\{ s\in L^2_k(X,\mathbb{C})\,\,\bigg|\,\, |s(x)|=1,\quad \forall x\in X\bigg\}.
\end{align*}
Since we took $k$ large enough, $s\in L^2_k(X,\mathbb{C})$ is a continious map and hence $|s(x)|$ is well-defined.
By the Sobolev multiplication theorem, $\mathcal{G}_k$ is a group, and by \cite{Nico}, it is also a Hilbert Lie group.
The Hilbert Lie group $\mathcal{G}_{k+1}$ acts smoothly on $\mathcal{A}_k$ and the quotient space
\[
\mathcal{B}_k:=\mathcal{A}_k/\mathcal{G}_{k+1}
\]
is Hausdorff in the quotient topology (See \cite{Nico}).
\par
Let $C^i_k$ be the completion of $C^i$ by the $L^2_k$-Sobolev norm.
We denote the $L^2_k$-Sobolev norm by $\|\cdot\|_{L^2_k}$.
We define 
\[
\mathcal{A}^{\mathrm{irr}}_k:=\bigg\{(\nabla_h,\phi)\in \mathcal{A}_k\,\bigg|\, \mathrm{Ker}\big(d_1:C^0_{k+1}\to C^1_{k}\big)=0\bigg\}.
\]
Let $\epsilon>0$.
 For  $(\nabla_h,\phi)\in \mathcal{A}_k$, we define 
\[
S_{(\nabla_h,\phi),\epsilon}:=\bigg\{\alpha\in C^1_k\,\bigg|\, d_1^\ast\alpha=0, \|\alpha\|_{L^2_k}< \epsilon\bigg\}.
\]
Then by the slice theorem \cite{Nico, Par}, if $(\nabla_h,\phi)\in \mathcal{A}^{\mathrm{irr}}_k$ and $\epsilon$ is small enough, $S_{(\nabla_h,\phi),\epsilon}$ defines a local coordinate of 
\[
\mathcal{B}^{\mathrm{irr}}_k:=\mathcal{A}^{\mathrm{irr}}_k/\mathcal{G}_{k+1}
\]
and hence defines a Banach manifold structure on $\mathcal{B}^{\mathrm{irr}}_k$.
We define 
\begin{align*}
\mathrm{Vor}_\tau(h)_k:=\left\{ (\nabla_h,\phi)\in \mathcal{A}_k \;\middle|\;
\begin{array}{l}
\centerdot \,\nabla^{0,1}_h\phi=0,\\
\centerdot\,(\nabla_h,\phi) \,\text{satisfies Manton's exotic $\tau$-vortex equation}
\end{array}
 \right\}.
\end{align*}
We note that $\mathrm{Vor}_\tau(h)_k\subset \mathcal{A}_k $ and it is preserved under the action of $\mathcal{G}_{k+1}$.
We now define the moduli space of solutions of Manton's exotic vortex equation as
\[
\mathcal{M}_{\tau,k}:=\mathrm{Vor}_\tau(h)_k/\mathcal{G}_{k+1}
\]
and the moduli space of \textit{irreducible} solutions of Manton's exotic vortex equation as
\[
\mathcal{M}^{\mathrm{irr}}_{\tau,k}:=\mathrm{Vor}_\tau(h)_k\cap\mathcal{A}^{\mathrm{irr}}_k/\mathcal{G}_{k+1}.
\]
Let $[(\nabla_h,\phi)]\in \mathcal{M}_{\tau,k}$.
Then, by the bootstrap argument as in \cite{Nico, Par}, there exists a $g\in \mathcal{G}_{k+1}$ such that $(\nabla_h,\phi)\cdot g$ is smooth.
From now on, we assume that the point in $\mathcal{M}^{\mathrm{irr}}_{\tau,k}$ and $\mathcal{M}_{\tau,k}$ are represented by a smooth pair.\par
We study the local structure of $\mathcal{M}^{\mathrm{irr}}_{\tau,k}.$
Before we proceed, we prepare some notations.
Let $(\nabla_h,\phi)\in\mathrm{Vor}_\tau(h)$.
We denote by $H^i(C^\bullet)$ the $i$-th cohomology of \eqref{eq 2.2}, and by $\triangle_i$ the $i$-th Laplacian, i.e.
$\triangle_0=d_1^\ast d_1,
\triangle_1=d_1d_1^\ast+d_2^\ast d_2,
\triangle_2=d_2^\ast d_2$.
Then, since \eqref{eq 2.2} is an elliptic complex, $H^i(C^\bullet)\simeq\mathbb{H}^i:= \mathrm{Ker}\triangle_i$ by the classical Hodge theory \cite{Wells}.
We denote by $H^i$ the Harmonic projection and $G^i$ the Green operator of $\triangle_i$.
We note that $\triangle_i, H^i$ and $G^i$ can be extended to $C^i_k$.\par
Let $[(\nabla_h,\phi)]\in\mathcal{M}^{\mathrm{irr}}_{\tau,k}$.
Since $S_{(\nabla_h,\phi),\epsilon},\,\,0<\epsilon<<1$ is a local coordinate of $\mathcal{B}^{\mathrm{irr}}_k$, $\mathcal{M}^{\mathrm{irr}}_{\tau,k}$ is locally modeled on $S_{(\nabla_h,\phi),\epsilon}\cap \mathrm{Vor}_\tau(h)$.
Let $\alpha=(A,\dot{\phi})\in A^{1}(X,\sqrt{-1}\mathbb{R})\times A(L)$.
Then, $(\nabla_h,\phi)+\alpha\in \mathrm{Vor}_\tau(h)$ if and only if 
\begin{align*}
&dA+\sqrt{-1}\mathrm{Re}h(\dot{\phi},\phi)\omega_X+\frac{\sqrt{-1}}{2}\mathrm{Re}h(\dot{\phi},\dot{\phi})\omega_X=0,\\
&\nabla_h^{0,1}\dot{\phi}+A^{0,1}\phi+A^{0,1}\dot{\phi}.
\end{align*}
We define 
\[
\widetilde{\alpha\wedge\alpha}:=
\begin{pmatrix}
\frac{\sqrt{-1}}{2}\mathrm{Re}h(\dot{\phi},\dot{\phi})\omega_X\\
A^{0,1}\dot{\phi}
\end{pmatrix}.
\]
Therefore, we have 
\[
U_{(\nabla_h,\phi),\epsilon}:=S_{(\nabla_h,\phi),\epsilon}\cap \mathrm{Vor}_\tau(h)_k=\bigg\{\alpha\in S_{(\nabla_h,\phi),\epsilon}\,\bigg|\, d_2\alpha+\widetilde{\alpha\wedge\alpha}=0\bigg\}.
\]
We show that if $H^2(C^\bullet)=0$, $U_{(\nabla_h,\phi),\epsilon}$ is homeomorphic to the neighborhood of $0\in\mathbb{H}^1$ by using the \textit{kuranishi map}.
We define the Kuranishi map
\[
k:C^1_k\to C^1_k
\]
as
\[
k(\alpha):=\alpha+d_2^\ast G_2(\widetilde{\alpha\wedge\alpha}).
\]
Since the derivative of $k$ at the origin is the identity, we can apply the inverse function theorem and show that there exist open neighborhoods $V_1,V_2$ of 0 in $C^1_k$ such that $k:V_1\to V_2$ is a smooth isomorphism.
We assume that $\epsilon$ is small enough so that $S_{(\nabla_h,\phi),\epsilon}\subset V_1$.
Let $\alpha\in C^1_k$.
By the Hodge decomposition, 
\[
d_2\alpha+\widetilde{\alpha\wedge\alpha}=0
\]
is equivalent to 
\begin{align*}
d_2\alpha+d_2d_2^\ast G_2\widetilde{\alpha\wedge\alpha}&=0,\\
H^2(\widetilde{\alpha\wedge\alpha})&=0.
\end{align*}
Therefore, if $\alpha\in U_{(\nabla_h,\phi),\epsilon}$,  we have
\[
d_2 k(\alpha)=d_2\alpha+d_2d_2^\ast G_2\widetilde{\alpha\wedge\alpha}=0.
\]
Moreover, by the definition of $U_{(\nabla_h,\phi),\epsilon}$, $d_1^\ast k(\alpha)=0.$
Hence
\[
k(U_{(\nabla_h,\phi),\epsilon})\subset \mathbb{H}^1.
\]
Let $\beta\in \mathbb{H}^1$ be small enough so that $\alpha=k^{-1}(\beta)$ exist and $\|\alpha\|_{L^2_k}<\epsilon$.
From now on, we assume $H^2(C^\bullet)=0$.
Under this assumption, we show $\alpha\in U_{(\nabla_h,\phi),\epsilon}$.
Since $k(\alpha)=\beta$ and $\beta\in  \mathbb{H}^1$ we have 
\begin{align}
0&=d_1^\ast\beta=d_1^\ast k(\alpha)=d_1^\ast\alpha,\label{eq 2.3}\\
0&=d_2\beta=d_2 k(\alpha)=d_2\alpha+d_2d_2^\ast G_2\widetilde{\alpha\wedge\alpha}\label{eq 2.4}.
\end{align}
Moreover $\alpha$ is smooth since 
\begin{align*}
0=\triangle_1\beta&=\triangle_1(k(\alpha))\\
&=\triangle_1\alpha+d_2^\ast \widetilde{\alpha\wedge\alpha}
\end{align*}
and the elliptic regularity.
Then, by the equation \eqref{eq 2.4} and the Hodge decomposition, we obatin
\be\label{eq 2.5}
d_2\alpha+\widetilde{\alpha\wedge\alpha}=d_2\alpha+d_2d_2^\ast G_2\widetilde{\alpha\wedge\alpha}+ H^2(\widetilde{\alpha\wedge\alpha})=0.
\ee
The last equation follows from the assumption.
Hence by \eqref{eq 2.3} and \eqref{eq 2.5}, $\alpha\in U_{(\nabla_h,\phi),\epsilon}$.
We then conclude that if $[(\nabla_h,\phi)]\in\mathcal{M}^{\mathrm{irr}}_k$ is $H^2(C^\bullet)=0$, then $[(\nabla_h,\phi)]$ has a neighborhood homeomorphic to an Euclidiean space.
\par
So far, we fixed $k$ and did not discuss the dependence of the topology of the moduli space on it.
Let $\mathcal{M}_{k+1}\to\mathcal{M}_k$ be the natrual map.
Although we do not discuss it here, an argument similar to the one above describes the local structure of $\mathcal{M}_k$, which is generally singular. See \cite{DK, Nico}.
Then, we have 
\begin{prop}\label{prop 2.2}
If $k$ is sufficiently large, the natural map $\mathcal{M}_{\tau,k+1}\to\mathcal{M}_{\tau,k}$ is a homemoprhism.
\end{prop}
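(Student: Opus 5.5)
The plan is to show that the natural map $\iota:\mathcal{M}_{\tau,k+1}\to\mathcal{M}_{\tau,k}$, induced by the inclusions $\mathrm{Vor}_\tau(h)_{k+1}\subset\mathrm{Vor}_\tau(h)_k$ and $\mathcal{G}_{k+2}\subset\mathcal{G}_{k+1}$, is a continuous bijection whose inverse is continuous. I will use the fact already quoted in the text: by the bootstrap argument of \cite{Nico, Par}, every class in $\mathcal{M}_{\tau,k}$ has a smooth representative.

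\textbf{Continuity and surjectivity.} The inclusion $\mathcal{A}_{k+1}\hookrightarrow\mathcal{A}_k$ is continuous and equivariant along $\mathcal{G}_{k+2}\hookrightarrow\mathcal{G}_{k+1}$, so $\iota$ is well defined and continuous for the quotient topologies. Surjectivity is immediate: any $[(\nabla_h,\phi)]\in\mathcal{M}_{\tau,k}$ has a smooth representative, and that representative lies in $\mathrm{Vor}_\tau(h)_{k+1}$.

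\textbf{Injectivity.} Suppose $(\nabla_1,\phi_1),(\nabla_2,\phi_2)\in\mathrm{Vor}_\tau(h)_{k+1}$ and $(\nabla_1,\phi_1)\cdot g=(\nabla_2,\phi_2)$ for some $g\in\mathcal{G}_{k+1}$. Writing $\nabla_i=\nabla_0+A_i$, the connection component gives
\[
g^{-1}dg=A_2-A_1\in L^2_{k+1}.
\]
Hence $dg=g(A_2-A_1)$. Because $k$ is large, $L^2_{k+1}$ is an algebra and $g\in L^2_{k+1}$, so $dg\in L^2_{k+1}$ and therefore $g\in L^2_{k+2}$. Thus $g\in\mathcal{G}_{k+2}$, and the two pairs already agree in $\mathcal{M}_{\tau,k+1}$.

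\textbf{Continuity of the inverse.} This is the main obstacle. I would prove that $\iota^{-1}$ is continuous via the local slice description. Take a smooth representative $(\nabla_h,\phi)$. In the Coulomb slice $S_{(\nabla_h,\phi),\epsilon}$, a solution $\alpha$ satisfies
\[
d_1^\ast\alpha=0,\qquad d_2\alpha+\widetilde{\alpha\wedge\alpha}=0.
\]
Applying $d_2^\ast$ to the second equation and adding $d_1 d_1^\ast\alpha=0$ gives the elliptic equation
\[
\triangle_1\alpha+d_2^\ast\widetilde{\alpha\wedge\alpha}=0.
\]
From this I aim for the estimate
\[
\|\alpha\|_{L^2_{k+1}}\le C\bigl(\|\alpha\|_{L^2_k}+\|\alpha\|_{L^2_k}^2\bigr).
\]
It follows from elliptic regularity for $\triangle_1$, together with the Sobolev multiplication theorem applied to the quadratic term $\widetilde{\alpha\wedge\alpha}$, which lies in $L^2_k$ when $\alpha\in L^2_k$ and $k$ is large.

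Consequently, solutions in the slice that are $L^2_k$-close to $0$ are also $L^2_{k+1}$-close to $0$. So $L^2_k$-neighbourhoods of a point of $\mathcal{M}_{\tau,k}$, read in the slice, are contained in $L^2_{k+1}$-neighbourhoods. One point needs care: a sequence converging in $\mathcal{M}_{\tau,k}$ can first be gauged into the $L^2_k$-slice by the slice theorem, and only then does the estimate apply. With that, $\iota^{-1}$ is continuous and $\iota$ is a homeomorphism.
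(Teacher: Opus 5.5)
Your proof is correct, but it follows a different route from the one the paper points to. The paper gives no argument of its own. It says only that a variant of the Kuranishi construction preceding the proposition describes the local structure of $\mathcal{M}_{\tau,k}$ in general, and refers to \cite{DK, Nico}. The idea there is that near a smooth solution the moduli space is modelled, modulo the stabilizer, on the zero set of a map from a neighbourhood of $0$ in $\mathbb{H}^1$ to $\mathbb{H}^2$. The harmonic spaces consist of smooth elements, and by the elliptic regularity already used in the text the Kuranishi inverse of a small $\beta\in\mathbb{H}^1$ is smooth, so these local models are the same for every $k$.

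You instead prove bijectivity by hand: smooth representatives give surjectivity, and the bootstrap $dg=g(A_2-A_1)$ gives injectivity. You then get continuity of $\iota^{-1}$ from the tame estimate $\|\alpha\|_{L^2_{k+1}}\le C(\|\alpha\|_{L^2_k}+\|\alpha\|^2_{L^2_k})$ for Coulomb-gauge solutions. Your argument is more elementary and treats all points uniformly, including reducible ($\phi\equiv 0$) and obstructed ($H^2(C^\bullet)\neq 0$) solutions; at those points the paper offers only the citation. It also avoids the usual ``continuous bijection from a compact space onto a Hausdorff space'' shortcut. That shortcut is not available here, because unlike the classical vortex equation the maximum principle gives no $C^0$ bound on $\phi$. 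The Kuranishi route, once worked out in general, gives more: an identification of the local models themselves.

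Two details deserve a sentence each in a write-up. First, the paper states its slice theorem only at irreducible points, but you only need Coulomb gauge fixing near $x$. That holds at every point by the implicit function theorem applied to $\xi\mapsto d_1^\ast(y\cdot e^{\xi}-x)$ on $(\ker d_1)^\perp$. Second, lifting a convergent sequence in $\mathcal{M}_{\tau,k}$ to $L^2_k$-convergent representatives uses that the quotient map $\mathcal{A}_k\to\mathcal{B}_k$ is open and that the spaces are first countable.
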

Hereafter, we omit the subscript $k$ and simply write $\mathcal{M}_\tau$ (resp. $\mathcal{M}^{\mathrm{irr}}_\tau$) for $\mathcal{M}_{\tau,k}$ (resp. $\mathcal{M}^{\mathrm{irr}}_{\tau,k}$).
Let $[(\nabla_h,\phi)]\in \mathcal{M}_\tau$.
Following \cite{Nico}, we say that $(\nabla_h,\phi)$ is \textit{regular} if $H^2(C^\bullet)=0.$
We define 
\[
\mathcal{M}^\ast_\tau:=\bigg\{[(\nabla_h,\phi)]\in \mathcal{M}^{\mathrm{irr}}_\tau\,\bigg|\,(\nabla_h,\phi)\,\, \textrm{is regular}\bigg\}.
\]
Then, by the discussion above, $\mathcal{M}^\ast_\tau$ is a smooth manifold.
Let $[(\nabla_h,\phi)]\in \mathcal{M}^\ast_\tau$.
The dimension of $\mathcal{M}^\ast_\tau$ around $[(\nabla_h,\phi)]$ is $\mathrm{dim}\mathbb{H}^1=\mathrm{dim}H^1(C^\bullet)$.
By the definition of $ \mathcal{M}^\ast_\tau$, $\mathrm{dim}H^1(C^\bullet)$ is the Fredholm index of $d_1^\ast+d_2$.
Since the Fredholm index only depends on the principal part of the differential operator (\cite{Nico 1}), the Fredholm index of $d_1^\ast+d_2$ is the same as the real Fredholm index of 
\[
D:A^1(X,\sqrt{-1}\mathbb{R})\oplus A(L)\to A(X,\sqrt{-1}\mathbb{R})\oplus A^2(X,\sqrt{-1}\mathbb{R})\oplus A^{0,1}(L)\
\]
defined by
\[
D(A,\phi):=(d^\ast A,dA,\nabla^{0,1}_{h}\phi).
\]
The real Fredholm index of $D$ is 
\begin{align*}
\mathrm{ind}_{\mathbb{R}}(D)&=\mathrm{ind}_{\mathbb{R}}(d^\ast\oplus d)+\mathrm{ind}_{\mathbb{R}}(\nabla^{0,1}_h)\\
&=2\mathrm{genus}(X)-2+2(\mathrm{deg}L-\mathrm{genus}(X)+1)\\
&=2\mathrm{deg}L.
\end{align*}
Here, genus($X$) is the genus of $X$.
In the second equation, we used $\mathrm{ind}_{\mathbb{R}}(\nabla^{0,1}_h)=2\cdot \mathrm{ind}_{\mathbb{C}}(\nabla^{0,1}_h)$ and the Riemann-Roch.
Therefore, if $[(\nabla_h,\phi)]\in \mathcal{M}^\ast_\tau$, the dimension of the real vector space $H^1(C^\bullet)$ is $2\mathrm{deg}L$.
As a consequence, we obtain
\begin{thm}\label{thm 2.1}
$\mathcal{M}^\ast_\tau$ is a smooth manifold of real dimension $2\mathrm{deg}L$.
\end{thm}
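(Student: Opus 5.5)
The plan is to assemble the local analysis already carried out in this section into a global statement. First I will check that the set $\mathcal{M}^\ast_\tau$ is open in $\mathcal{M}^{\mathrm{irr}}_\tau$, so that it makes sense to give it a manifold structure. Both conditions defining it are open. Irreducibility means that $d_1$ is injective, i.e.\ that $\phi\not\equiv 0$, and this is open in $\mathcal{A}_k$. Regularity means that $d_2$ is surjective, which is equivalent to $H^2(C^\bullet)=0$; the family $d_2$ depends continuously on $(\nabla_h,\phi)$ as a family of Fredholm operators, so surjectivity is open by upper semicontinuity of $\dim\mathrm{Coker}$.

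Next I will build charts. At each $[(\nabla_h,\phi)]\in\mathcal{M}^\ast_\tau$ I will use the slice $S_{(\nabla_h,\phi),\epsilon}$, which is a chart of the Banach manifold $\mathcal{B}^{\mathrm{irr}}_k$. The Kuranishi map $k(\alpha)=\alpha+d_2^\ast G_2(\widetilde{\alpha\wedge\alpha})$ then identifies $U_{(\nabla_h,\phi),\epsilon}$ with a neighbourhood of $0$ in $\mathbb{H}^1$, using the hypothesis $H^2(C^\bullet)=0$ exactly as in the argument above.

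A cleaner route, which I would actually write, is to observe that on the slice the equation $\Phi(\alpha)=d_2\alpha+\widetilde{\alpha\wedge\alpha}=0$ is the zero set of a smooth map $S\to C^2_{k-1}$ whose differential at $0$ is $d_2|_{\mathrm{Ker}\,d_1^\ast}$. When $H^2=0$ this differential is surjective with finite-dimensional kernel $\mathbb{H}^1$. The implicit function theorem for Banach spaces then gives a smooth submanifold structure near every point of the zero set, not only near the base point. Transition maps between overlapping charts are smooth because they are induced by the smooth $\mathcal{G}_{k+1}$-action on $\mathcal{B}^{\mathrm{irr}}_k$. By Proposition \ref{prop 2.2}, the resulting structure does not depend on $k$.

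Finally, for the dimension I will use that the tangent space at each point is $\mathbb{H}^1\cong H^1(C^\bullet)$. Since $H^0=0$ by irreducibility and $H^2=0$ by regularity, $\dim H^1$ equals minus the index of the complex, i.e.\ the index of the operator $d_1^\ast+d_2:C^1\to C^0\oplus C^2$. This index is invariant under compact (lower-order) perturbations, so I can deform away the zeroth-order terms $-X\phi$, $\mathrm{Re}\,h(\dot\phi,\phi)\omega_X$ and $A^{0,1}\phi$. That leaves $(d^\ast,d)\oplus\nabla^{0,1}_h$.

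For the remaining operator, $d^\ast+d$ on $1$-forms has real index $b_1-b_0-b_2=2g-2$. By Riemann--Roch, $\nabla^{0,1}_h$ has real index $2(\deg L-g+1)$. Summing gives $2\deg L$, which is the claimed dimension. The main point needing care is keeping the sign conventions of the index consistent, namely $\dim H^1-\dim H^0-\dim H^2$ for the complex versus the index of the rolled-up operator. Justifying the smoothness of transition maps between slices is routine but is the most technical step.
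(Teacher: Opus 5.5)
Your proposal is correct and follows essentially the paper's route. The slice plus Kuranishi map gives charts modelled on $\mathbb{H}^1$; when $H^2(C^\bullet)=0$ this amounts to your implicit-function-theorem argument on $\mathrm{Ker}\,d_1^\ast$. The dimension is the index of $d_1^\ast+d_2$, computed after dropping zeroth-order terms as $(2g-2)+2(\deg L-g+1)=2\deg L$.

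Your added check that regularity is an open condition fills a point the paper leaves implicit. However, $d_2$ itself is not Fredholm, since its kernel contains $\mathrm{Im}\,d_1$. Phrase that step instead through the Fredholm operator $d_1^\ast+d_2$, whose cokernel at an irreducible solution is $\mathbb{H}^2$, or through openness of surjectivity for bounded operators.
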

\subsection{Hamiltonian Structure and Moment Map}\label{sec 2.3}
In this section, we interpret the exotic vortex equation as a moment map equation and describe the moduli space as the quotient of the zero level set of the moment map by the gauge group. 
The discussion in this section is purely formal.
We note that the moment map interpretation of Manton's exotic vortex equation and its generalization was already obtained in the unpublished paper \cite{Tdl}.
Since this is a natural viewpoint on the equation, we include the discussion here for completeness.
\par
Let $X$ be a compact Riemann surface with a K\"ahler form $\omega_X$ and $L$ be a line bundle over $X$ with a Hermitian metric $h$.
We set
\begin{align*}
\mathcal{A}&:=\mathcal{A}_h\times A(L),\\
\mathcal{G}&:=A(X,S^1).
\end{align*}
Recall that we defined a right $\mathcal{G}$ action on $\mathcal{A}$ in \eqref{eq 2.1}.
Let $(\nabla_h,\phi)\in \mathcal{A}$. 
Then, $T_{(\nabla_h,\phi)}\mathcal{A}=A^1(X,\sqrt{-1}\mathbb{R})\times A(L)$.
Let $\alpha=(A,\dot{\phi}), \beta:=(B,\dot{\psi})\in T_{(\nabla_h,\phi)}\mathcal{A}$.
We define a bilinear form $\omega$ on $T_{(\nabla_h,\phi)}\mathcal{A}$ as
\[
\omega(\alpha,\beta):=-\int_XA\wedge B+\int_X\mathrm{Im}h(\dot{\phi},\dot{\psi}).
\]
We note that the signature of the term $\int_X\mathrm{Im}h(\dot{\phi},\dot{\psi})$ differs from that in the symplectic structure considered in \cite{GP3}.
 This difference reflects the difference in sign between Manton's exotic vortex equation and the classical vortex equation.
 $\omega$ is a non-degenerate skew-symmetric form.
 Since $\omega$ does not depend on the point $(\nabla_h,\phi)$, it is closed, and hence it is a symplectic form on $\mathcal{A}$.
 The action on $\mathcal{G}$ on $(\mathcal{A},\omega)$ is symplectic.
 This follows since $g\in \mathcal{G}$ maps $\alpha=(A,\dot{\phi})\in T_{(\nabla_h,\phi)}\mathcal{A}$ to $\alpha'=(g^{-1}Ag,g^{-1}\phi)\in T_{(\nabla_h,\phi)\cdot g}\mathcal{A}.$\par
 The Lie algebra $\mathfrak{g}$ of $\mathcal{G}$ is naturally identified with $A(X,\mathfrak{u}(1))=A(X,\sqrt{-1}\mathbb{R})$.
 We identify the dual of $\mathfrak{g}$ with $A^{2}(X,\sqrt{-1}\mathbb{R})$ by the pairing 
 \[
 <A,\alpha>:=-\int_X A\alpha,\qquad A\in A(X,\sqrt{-1}\mathbb{R}), \alpha\in A^2(X,\sqrt{-1}\mathbb{R}).
 \]
 We show that the action of $\mathcal{G}$ on $(\mathcal{A},\omega)$ is Hamiltonian.
  Let $\tau\in\mathbb{R}$.
 We define a map
 \[
 \mu:\mathcal{A}\to A^2(X,\sqrt{-1}\mathbb{R})
 \]
 as
 \[
 \mu(\nabla_h,\phi):=F_{\nabla_h}+\frac{\sqrt{-1}}{2}|\phi|^2_h+\frac{\sqrt{-1}}{2}\tau.
 \]
 We show that $\mu$ is a moment map.
We first show that $\mu$ is $\mathcal{G}$-equivariant.
 Let $g\in\mathcal{G}$.
 Then, we have  
 \begin{align*}
  \mu ((\nabla_h,\phi)\cdot g)&=\mu(g^{-1}\nabla_h g,g^{-1}\phi)\\
  &=g^{-1}F_{\nabla_h}g+\frac{\sqrt{-1}}{2}|g^{-1}\phi|^2_h+\frac{\sqrt{-1}}{2}\tau\\
  &=F_{\nabla_h}+\frac{\sqrt{-1}}{2}|\phi|^2_h+\frac{\sqrt{-1}}{2}\tau\\
  &=\mu(\nabla_h,\phi).
       \end{align*}
 Let $X\in \mathfrak{g}=A(X,\sqrt{-1}\mathbb{R})$ and $X^\#$ be the fundamental vector field on $\mathcal{A}$ generated by $X$.
 We show that
 \[
 \mu^X:\mathcal{A}\to \mathbb{R},\qquad \mu^X((\nabla_h,\phi))=<\mu((\nabla_h,\phi)),X>
 \] 
 is Hamiltonian, i.e.
 \[
 d\mu^X=\iota_{X^\#}\omega
 \]
Here $\iota_{X^\#}\omega$ is contraction of $\omega$ with respect to $X^\#$.
By the definition of the action of $\mathcal{G}$ on $\mathcal{A}$, we have 
\begin{align*}
X^\#_{(\nabla_h,\phi)}&=\dfrac{d}{dt}(\exp(-tX)\nabla_h\exp(tX),\exp(-tX)\phi)\bigg|_{t=0}\\
&=(dX,-X\phi).
\end{align*}
Let $(A,\dot{\phi})\in T_{(\nabla_h,\phi)}\mathcal{A}=A^1(X,\sqrt{-1}\mathbb{R})\times A(L)$.
We have 
\begin{align*}
\iota_{X^\#}\omega((A,\dot{\phi}))&=\omega(X^\#_{(\nabla_h,\phi)}, (A,\dot{\phi}))\\
&=-\int_XdX\wedge\alpha +\int_X \mathrm{Im}h(-X\phi,\dot{\phi})\\
&=\int_X XdA+\sqrt{-1}\int_X X\mathrm{Re}h(\phi,\dot{\phi}).
\end{align*}
The last equation follows from $X\in A(X,\sqrt{-1}\mathbb{R})$.
We also have
\begin{align*}
d\mu^X((A,\dot{\phi}))&=\frac{d}{dt}<\mu(\nabla_h+tA,\phi+t\dot{\phi}),X>\bigg|_{t=0}\\
&=\frac{d}{dt}\int_X\bigg\{F_{\nabla_h+tA}+\frac{\sqrt{-1}}{2}|\phi+t\dot{\phi}|^2_h\omega_X+\frac{\sqrt{-1}\tau}{2}\omega_X\bigg\}X\bigg|_{t=0}\\
&=\int_X \bigg\{ dA+\sqrt{-1}\mathrm{Re}h(\phi,\dot{\phi})\bigg\}X.
\end{align*}
Hence 
\[
 d\mu^X=\iota_{X^\#}\omega
 \]
holds and therefore $\mu$ is a moment map.
We define a subset of $\mathcal{A}$ as 
\[
\mathcal{N}:=\bigg\{ (\nabla_h,\phi)\in\mathcal{A}\,\,\bigg|\,\, \nabla^{0,1}_{h}\phi=0\bigg\}.
\]
The moduli space of Manton's exotic vortex equation can also be described as
\[
\mathcal{M}_\tau=\mathcal{N}\cap\mu^{-1}(0)/\mathcal{G}.
\]
\section{Dimensional Reduction}\label{sec 3}
In this section, we establish the dimensional reduction.
We derive solutions of Manton's exotic vortex equation from pseudo-Hermitian-Yang-Mills connections (see Definition \ref{def 3.2}) with pseudo-Hermitian metrics of signature (1,1) (see Definition \ref{def 3.1}).
We also construct a pseudo-Hermitian-Yang-Mills connection together with a pseudo-Hermitian metric of signature $(1,1)$ from solutions of Manton's exotic vortex equation.
\par
In Section \ref{sec 3.1}, we collect several preliminary results.
We establish the dimensional reduction for $\tau>0$ in Section \ref{sec 3.2}, for $\tau<0$ in Section \ref{sec 3.3}, for $\tau=0$ in Section \ref{sec 3.4}.
\subsection{Preliminary}\label{sec 3.1}
We prepare some notations and collect results for later use.
\subsubsection{Pseudo-Hermitian Metric of Signature $(p,q)$}\label{sec 3.1.1}
Let $M$ be a complex manifold with the K\"ahler form $\omega_M$, $E\to M$ be a complex vector bundle of rank $r$, and $GL(E)$ be the principal $GL(r)$-bundle associated to $E$.
We denote the complex conjugate of $E$ as $\overline E.$
\begin{de}\label{def 3.1}
A Hermitian form $h$ on $E$ is a section $h\in A(E^\vee\otimes \overline{E}^\vee)$ such that for every $x\in M$, $h|_{E_x}:E_x\times E_x\to \mathbb{C}$ is a Hermitian form.\par
We say $h$ is a pseudo-Hermitian metric if it is a non-degenerate Hermitian form on $E$.
We say that $h$ is a pseudo-Hermitian metric of signature $(p,q)$ (with $(p+q=r)$) if for each $x\in M$, $h|_{E_x}$ has signature $(p,q)$. 
Equivalently, $h$ defines a $U(p,q)$-reduction of $GL(E)$.
Here $U(p,q)$ is the indefinite unitary group.
\end{de}
We note that a Hermitian metric on $E$ is a pseudo-Hermitian metric of signature $(r,0)$.\par
Let $h$ be a pseudo-Hermitian metric of signature $(p,q)$.
We say that a connection $D$ of $E$ is compatible with $h$ if 
\[
dh(\cdot,\cdot)=h(D\cdot,\cdot)+h(\cdot,D\cdot).
\]
Let $\overline\partial_E$ be a holomorphic structure of $E$ and $h$ be a Hermitian metric on $E$.
Recall that a Chern connection $\nabla_h$ is the unique $h$-unitary connection such that $\nabla^{0,1}_h=\overline\partial_E$.
Since the construction of Chern connections (see \cite[Proposition 1.4.9]{Ko} for example) only uses the non-degeneracy of $h$, we have 
\begin{prop}\label{prop 3.1}
Let $\overline\partial_E$ be a holomorphic structure of $E$ and $h$ be a pseudo-Hermitian metric of signature $(p,q)$.
Then there exists a unique connection $\nabla_h$ such that 
\begin{itemize}
\item[(1)] $\nabla_h$ is a connection compatible with $h$,
\item[(2)] $\nabla^{0,1}_h=\overline\partial_E$.
\end{itemize}
\end{prop}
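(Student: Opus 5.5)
We follow the standard construction of the Chern connection verbatim, checking at each step that only the non-degeneracy of $h$ is used, never its positivity. We first prove uniqueness, and then obtain existence by working in a local holomorphic frame and showing that the resulting local connections glue.

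For uniqueness, suppose $\nabla$ and $\nabla'$ both satisfy (1) and (2). Their difference $a=\nabla-\nabla'$ is an $\mathrm{End}(E)$-valued $1$-form. By (2), its $(0,1)$-part vanishes, so $a$ is of type $(1,0)$. Subtracting the two compatibility identities gives
\[
h(a s,t)+h(s,a t)=0 \qquad \text{for all } s,t.
\]
Now take the $(1,0)$-part of this identity. The term $h(s,a t)$ is conjugate-linear in its second slot, so it is of type $(0,1)$. Hence $h(a s,t)=0$ for all $t$. Non-degeneracy of $h$ then forces $a s=0$, so $a=0$.

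For existence, fix a local holomorphic frame $e_1,\dots,e_r$ on an open set $U$ and set $H=(h(e_i,e_j))$. This is a Hermitian matrix, and it is invertible because $h$ is non-degenerate; its signature plays no role. Define $\nabla$ on $U$ by declaring its connection matrix to be $\theta=\overline{H}^{-1}\partial\overline{H}$, or more precisely the transpose appropriate to the convention $h(\nabla\cdot,\cdot)$. This is exactly the formula of \cite[Proposition 1.4.9]{Ko}. Since $\theta$ is of type $(1,0)$, we have $\nabla^{0,1}e_i=0$, and hence $\nabla^{0,1}=\overline\partial_E$ on $U$.

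Compatibility is then a direct check on frame elements: one needs $dH_{ij}=h(\nabla e_i,e_j)+h(e_i,\nabla e_j)$. The $(1,0)$-part of this identity holds by the choice of $\theta$. The $(0,1)$-part is its complex conjugate, because $H$ is Hermitian. Both verifications use only $H^{-1}$, never positivity.

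Finally, the local connections agree on overlaps by the uniqueness statement already proved. They therefore glue to a global connection $\nabla_h$ satisfying (1) and (2).

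The main obstacle is bookkeeping of conventions: with $h$ linear in the first slot, one has to fix correctly which of $H$, $\overline H$ or $H^{T}$ enters the formula for $\theta$. This is routine and can be settled once and for all using the uniqueness argument.
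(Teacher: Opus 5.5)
Your proposal is correct and follows the paper's approach. The paper simply invokes the standard Chern connection construction of \cite[Proposition 1.4.9]{Ko} and remarks that it uses only the non-degeneracy of $h$. You carry out that check explicitly: uniqueness via the type decomposition plus non-degeneracy, local existence via $\theta=\overline H^{-1}\partial\overline H$ (which needs only invertibility of $H$), and gluing by uniqueness.
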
 
\begin{de}\label{def 3.2}
Let $\overline\partial_E$ be a holomorphic structure on $E$, and $h$ be a pseudo-Hermitian metric of signature $(p,q)$.
Let $\nabla_h$ be the connection which is compatible with $h$ and $\nabla_h^{0,1}=\overline\partial_E$, and let $F_h:=(\nabla_h)^2$ be the curvature.
We say that $h$ is a pseudo-Hermitian-Einstein metric of signature $(p,q)$ on $(E,\overline\partial_E)$ if there exists a constant $\lambda$ such that 
\be\label{eq 3.1}
\sqrt{-1}\Lambda_{\omega_M} F_h=\lambda \mathrm{Id}_E.
\ee
In this case, we call $\nabla_h$ a pseudo-Hermitian-Yang-Mills connection.
We call the equation \eqref{eq 3.1} the Hermitian-Yang-Mills equation.
\end{de}
\begin{remark}
Let $(M,J)$ be an almost complex manifold and $E\to M$ be a complex vector bundle.
Let $g$ be a Riemannian metric of $M$ compatible with $I$, and $\omega$ be the associated fundamental form.
Let $\nabla$ be a connection of $E$ and $F=(\nabla)^2$ be its curvature.
In \cite{Bry}, the connection $\nabla$ is defined to be
pseudo-Hermitian-Yang-Mills if $F^{0,2}=0$ and
 \[
\sqrt{-1}\Lambda_{\omega}F=\lambda\operatorname{Id}_E
\]
for some constant $\lambda$.\par
In our setting, $F^{0,2}=0$ is automatically satisfied since $\nabla_h^{0,1}=\overline\partial_E$.
\end{remark}
\subsubsection{Equivariant Line Bundles}\label{sec 3.1.2}
We collect some results of $G$-equivariant bundles over homogeneous spaces $G/K$.
The particular Lie groups which we will use later are $G=SU(2), SE(2), SU(1,1)$ and $K=U(1)$.
The result of this section is well-known, and there are no new results.
For a thorough treatment of the results in this section, see \cite{Akh, KN, Se}.\par
We first recall the definition of $G$-equivariant vector bundles.
We also recall the definition of $G$-equivariant holomorphic vector bundle.
Although we do not encounter $G$-equivariant holomorphic bundles in this section, we will use them later.
\begin{de}
Let $M$ be a smooth manifold with a Lie group $G$ acting on it.
Let $\pi:E\to M$ be a smooth vector bundle over $M$.
We say that $E$ is a $G$-equivariant vector bundle if 
\begin{itemize}
\item[(1)] $G$ acts on $E$, 
\item[(2)] $\pi(g\cdot v)=g\cdot \pi(v)$, 
\item[(3)] for every $g\in G$, $g\cdot : \pi^{-1}(x)\to \pi^{-1}(g\cdot x)$ is a linear isomorphism.
\end{itemize}
\par
Now suppose $M$ is a complex manifold and $G$ is acting holomorphically (i.e every $g\in G$ induces a biholomorphic map $g:M\to M$).
Let $E$ be a holomorphic bundle over $M$ and also a $G$-equivariant bundle.
We say that $E$ is a  $G$-equivariant holomorphic bundle if every $g:E\to E$ is a biholomorphic map.
\end{de}
We also recall the definition of $G$-invariant Hermitian forms.
\begin{de}
Let $M$ be a smooth manifold with a Lie group $G$ acting on it, and $E\to M$ is a $G$-equivariant vector bundle.
Let $h$ be a Hermitian form of $E$ (See Definition \ref{def 3.1}).
Then, we say that $h$ is a $G$-invariant Hermitian form if
\[
h_{gx}(g\cdot,g\cdot)=h_x(\cdot,\cdot).
\] 
\end{de}
Let $G$ be a Lie group with a compact subgroup $U(1)\subset G$, and $\mathbb{C}^\ast:=\mathbb{C}-{0}$.
Let $\chi:U(1)\to \mathbb{C}^\ast$ be a continuous (or equivalently, smooth) representation.
Then $k\in U(1)$ acts on $(g,v)\in G\times\mathbb{C}$ from the right as 
\[
 (g,v)\cdot k:=(gk,\chi^{-1}(k)v).
\]
We denote the quotient space by
\[
 G\times_\chi\mathbb{C}:=G\times \mathbb{C}/U(1).
\]
Then $G\times_\chi\mathbb{C}$ is a $G$-equivariant complex line bundle over $G/U(1)$.
Conversely, if $L$ is a  $G$-equivariant line bundle over $G/U(1)$, then there exists a continuous representation 
$\chi:U(1)\to\mathbb{C}^\ast$ such that 
\[
L\simeq G\times_\chi\mathbb{C}
\]
as $G$-equivariant vector bundles.
This follows from the following construction.
Let $e\in G$ be the unit and $L_e$ be the fibre of $L$ at $eU(1)$.
Since $U(1)$ stabilizes $eU(1)$, it acts on $L_e$ and hence defines a representation $\chi_L:U(1)\to GL(L_e)=\mathbb{C}^\ast$.
Then we obtain a line bundle $G\times_{\chi_L}L_e$ over $G/U(1)$.\par
We denote the equivalent class of $(g,v)\in G\times L_e$ in $G\times_{\chi_L}L_e$ as $[g,v]$.
Let $g_0\in G$ and $[g,v]\in G\times_{\chi_L}L_e$.
$G$ acts on $G\times_{\chi_L}L_e$ as 
\[
g_0\cdot [g,v]:=[g_0g,v].
\]
$G\times_{\chi_L}L_e$ becomes a $G$-equivariant line bundle with respect to this action.
We define a map 
\[
\Phi: G\times_{\chi_L}L_e\to L
\]
as 
\[
\Phi([g,v]):=g\cdot v.
\]

$\Phi$ is a smooth $G$-equivariant vector bundle isomorphism.
Hence, every $G$-equivariant line bundle over $G/U(1)$ is determined by a representation $\chi:U(1)\to\mathbb{C}^\ast$.
It is a classical fact that every continuous representation $\chi: U(1) \to \mathbb{C}^*$ is of the form
\[
\chi(e^{i\theta}) = e^{im\theta}
\]
for some integer $m \in \mathbb{Z}$ (See \cite{Ha}, for example).
The integer $m$ is called the weight of $\chi.$
We denote the weight $m$ representation as $\chi_m$.
Hence, every $G$-equivariant line bundle $L$ is determined by a weight $m$, which we call the weight of $L$.
We denote the $G$-equivariant line bundle of weight $m$ over $G/U(1)$ as $L_m$.\par
Let $L_n$ and $L_m$ be the weight $n$ and $m$ $G$-equivariant line bundle over $G/U(1)$.
From the construction of $L_n$ and $L_m$, the tensor product $L_n\otimes L_m$ is a $G$-equivariant line bundle with the weight $n+m$.
Let $L^\vee_n$ be the dual bundle of $L_n$. 
Then $L_n^\vee$ carries the canonical $G$-equivariant structure induced by that of $L_n$.
Moreover, since the dual representation has the inverse weight, $L_n^\vee$ has weight $-n$.
\par
We give a summary of the result by the discussion so far for our convenience.
We again note that the result is nothing new and is well-known.
\begin{lem}\label{lem 3.1}
Let $G$ be a Lie group with compact subgroup $U(1)\subset G$.
For every $m\in\mathbb{Z}$, we denote $\chi_m:U(1)\to \mathbb{C}^\ast$ to be the weight $m$ representation.\par
Let $L$ be a $G$-equivariant line bundle over $G/U(1)$.
Then, there exists a integral $m\in\mathbb{Z}$ such that 
\[
L\simeq G\times_{\chi_m}\mathbb{C}
\]
as $G$-equivariant line bundles.
We call $m$ the weight of $L$.\par
Let $L_n$ and $L_m$ be the $G$-equivariant bundle over $G/U(1)$ with weight $n$ and $m$.
Then the tensor product $L_n\otimes L_m$ is a $G$-equivariant bundle with the weight $n+m$.\par
Let $L^\vee_n$ be the dual bundle of $L_n$.
Then $L_n^\vee$ is a $G$-equivariant bundle with weight $-n$.
\end{lem}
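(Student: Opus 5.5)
The plan is to assemble Lemma \ref{lem 3.1} from the construction given just before it. There are three claims: the classification of $G$-equivariant line bundles over $G/U(1)$ by a weight, additivity of weights under tensor product, and negation of the weight under dualization.

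For the classification, let $L$ be a $G$-equivariant line bundle over $G/U(1)$ and let $L_e$ be its fibre over the base point $eU(1)$. Since $U(1)$ fixes $eU(1)$, condition (3) of the definition of $G$-equivariant bundles makes each $k\in U(1)$ act linearly on $L_e$. This gives a representation $\chi_L:U(1)\to GL(L_e)\cong\mathbb{C}^\ast$, continuous because the action is smooth.

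We then check that $\Phi([g,v])=g\cdot v$ is an isomorphism $G\times_{\chi_L}L_e\to L$, in four steps.
\begin{itemize}
\item[(a)] $\Phi$ is well defined: $\Phi([gk,\chi_L(k)^{-1}v])=gk\cdot(k^{-1}\cdot v)=g\cdot v$.
\item[(b)] $\Phi$ is $G$-equivariant and covers the identity of $G/U(1)$, directly from the definition.
\item[(c)] $\Phi$ is a linear isomorphism on each fibre. Over $gU(1)$ it is the map $v\mapsto g\cdot v$ from $L_e$ to $L_{gU(1)}$, which is a linear isomorphism by condition (3).
\item[(d)] $\Phi$ is smooth with smooth inverse. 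This follows by working in local sections of $G\to G/U(1)$.
\end{itemize}
By the classical fact that every continuous $\chi:U(1)\to\mathbb{C}^\ast$ has the form $\chi_m$, we can write $\chi_L=\chi_m$ for a unique $m\in\mathbb{Z}$. This gives $L\simeq G\times_{\chi_m}\mathbb{C}$.

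For tensor products, the map
\[
[g,v]\otimes[g,w]\mapsto[g,v\otimes w]
\]
from $(G\times_{\chi_n}\mathbb{C})\otimes(G\times_{\chi_m}\mathbb{C})$ to $G\times_{\chi_n\chi_m}\mathbb{C}$ is well defined, $G$-equivariant and fibrewise an isomorphism. Since $\chi_n\chi_m=\chi_{n+m}$, the weight is $n+m$. The same conclusion can be read off more intrinsically: the isotropy representation of $U(1)$ on $(L_n\otimes L_m)_e=(L_n)_e\otimes(L_m)_e$ is $\chi_n\otimes\chi_m$.

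For the dual bundle, the induced action is $(g\cdot\xi)(v)=\xi(g^{-1}\cdot v)$. The isotropy representation on $(L_n)_e^\vee$ is therefore the contragredient of $\chi_n$, namely $\chi_n^{-1}=\chi_{-n}$. Applying the classification gives weight $-n$.

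None of these steps is a real obstacle. The only point needing care is the smoothness of $\Phi^{-1}$ in step (d), which we handle with local sections of the principal $U(1)$-bundle $G\to G/U(1)$.
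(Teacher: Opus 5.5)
Your proposal is correct and takes essentially the same approach as the paper. It uses the isotropy character $\chi_L$ on the fibre over $eU(1)$, the map $\Phi([g,v])=g\cdot v$, the classification $\chi=\chi_m$ of characters of $U(1)$, and $\chi_n\chi_m=\chi_{n+m}$ and $\chi_n^{-1}=\chi_{-n}$ for tensor products and duals; you also supply the routine checks the paper only asserts (well-definedness of $\Phi$, fibrewise bijectivity, and smoothness via local sections of $G\to G/U(1)$).
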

\par
We prepare a result for $G$-equivariant sections for later use.
We first recall the definition of it.
\begin{de}
Let $M$ be a smooth manifold with a Lie group $G$ acting on it, and $E$ be a $G$-equivariant vector bundle over $M$.
We say that a section $s\in A(E)$ is $G$-equivariant if for every $g\in G$ and for every $x\in M$
\[
g\cdot s(x)=s(g\cdot x)
\]
holds.
We denote the space of $G$-equivariant section of $E$ as $A(E)^G$.
\end{de}
\begin{lem}\label{lem 3.2}
Let $G$ be a Lie group with a compact subgroup $U(1)\subset G$, and  $L_m$ be the weight $m$ $G$-equivariant line bundle over $G/U(1)$.
Then 
\begin{itemize}
\item[(1)] If $m\neq 0$, then $A(L_m)^G$ only contains zero section.
\item[(2)] If $m=0$, then $A(L_0)\simeq \mathbb{C}$.
\end{itemize} 
\end{lem}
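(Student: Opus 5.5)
The plan is to reduce everything to the fibre over the base point $o:=eU(1)$. First I would show that an equivariant section is determined by its value there. Since $G$ acts transitively on $G/U(1)$, we have $s(g\cdot o)=g\cdot s(o)$ for every $g\in G$. Hence the evaluation map
\[
\mathrm{ev}:A(L_m)^G\to (L_m)_o,\qquad s\mapsto s(o),
\]
is injective and $\mathbb{C}$-linear. The problem then becomes identifying its image inside the one-dimensional fibre $(L_m)_o\simeq\mathbb{C}$.

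Next I would find the constraint that equivariance imposes on $s(o)$. Take $k\in U(1)$. Since $k\cdot o=o$, equivariance gives $k\cdot s(o)=s(k\cdot o)=s(o)$, so $s(o)$ must be fixed by the isotropy action. By Lemma \ref{lem 3.1} we may take $L_m=G\times_{\chi_m}\mathbb{C}$, and there the isotropy acts on the fibre at $o$ through $\chi_m$. Concretely, $k\cdot[e,v]=[k,v]=[e,\chi_m(k)v]$, using $[gk,\chi_m^{-1}(k)v]=[g,v]$; the sign convention does not matter for what follows. The condition is therefore $\chi_m(k)v=v$ for all $k$, that is, $e^{im\theta}v=v$ for all $\theta$. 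If $m\neq 0$, choosing $\theta$ with $e^{im\theta}\neq1$ forces $v=0$, and injectivity of $\mathrm{ev}$ gives $s=0$, which proves (1).

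For (2) I would establish surjectivity of $\mathrm{ev}$ when $m=0$. Given $v\in\mathbb{C}$, define $s(gU(1)):=[g,v]$. This is well defined because $[gk,v]=[g,\chi_0(k)^{-1}v]=[g,v]$. It is smooth, since it is the descent of the smooth map $g\mapsto (g,v)$ through the submersion $G\to G/U(1)$. It is equivariant, since $g_0\cdot s(gU(1))=[g_0g,v]=s(g_0gU(1))$. Thus $A(L_0)^G\simeq\mathbb{C}$. I read the statement $A(L_0)\simeq\mathbb{C}$ as referring to the $G$-equivariant sections, because for noncompact $G/U(1)$ the full section space is infinite-dimensional.

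The argument is routine. The only delicate point is bookkeeping: checking that the isotropy action on $(L_m)_o$ is exactly multiplication by $\chi_m^{\pm1}(k)$ under the identification $\Phi$ of Lemma \ref{lem 3.1}. Either sign yields the same conclusion, so this step causes no real difficulty.
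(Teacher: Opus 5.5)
Your proof is correct and follows essentially the same route as the paper. For (1) you use that an equivariant section is determined by its value at $eU(1)$, together with the isotropy constraint $s([e])=e^{im\theta}s([e])$. For (2), your explicit section $gU(1)\mapsto[g,v]$ is just the constant section under the paper's identification of $L_0$ with the trivial bundle carrying the canonical $G$-action. Your reading of (2) as $A(L_0)^G\simeq\mathbb{C}$ matches the paper's proof. There is a harmless slip in $[gk,v]=[g,\chi_0(k)^{-1}v]$: with the paper's convention it should read $[g,\chi_0(k)v]$, but this is irrelevant since $\chi_0$ is trivial.
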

\begin{proof}
Let $s\in A(L_m)^G$.
We first note that if there exists an $x\in G/U(1)$ such that $s(x)=0$, then $s$ is the zero section.
This follows from the following argument: Let $x\in G/U(1)$ and assume $s(x)=0.$
Since the action of $G$ on $G/U(1)$ is transtive, for every $y\in G/U(1)$, there exists a $g\in G$ such that $y=g\cdot x$.
By the $G$-equivariance of $s$, we have 
\[
s(y)=s(g\cdot x)=g^{-1}\cdot s(x)=0.
\]
Therefore, $s$ is the zero section.\par
We now assume that $s$ is not the zero section.
Let $e$ be the unit of $G$.
By the argument above, $s$ is everywhere non-zero.
In particular $s([e])\neq 0$.
Let $e^{i\theta}\in U(1)$.
By the discussion in this section, $e^{i\theta}$ acts on $s([e])$ as 
\[
e^{i\theta}\cdot s([e])=\chi_m(e^{i\theta})s([e])=e^{im\theta}s([e]).
\]
Since $s$ is $G$-equivariant and $U(1)$ stabilize $[e]$, we have 
\[
s([e])=e^{i\theta}\cdot s([e])=e^{im\theta}s([e])
\]
However, this equality does not hold unless $m=0$.
Hence (1) is proved.\par
We next prove $(2)$.
Since the weight $0$ representation $\chi_0$ is the trivial representation, $L_0$ is the trivial line bundle with the canonical $G$-action over $G/U(1)$.
Therefore $A(L_0)^G$ is a vector space of $G$-equivariant smooth function of $G/U(1)$.
Since the action of $G$ on $G/U(1)$ is transitive, a $G$-equivariant smooth function is just a constant function.
Hence $(2)$ is proved.
\end{proof}

We also prepare a result for $G$-invariant Hermitian forms.

\begin{lem}\label{lem 3.3}
Let $G$ be a Lie group with a compact subgroup $U(1)\subset G$, and  $L_m$ be the weight $m$ $G$-equivariant line bundle over $G/U(1)$.
If there exists a $G$-invariant pseudo-Hermitian metric on $L_m$, then it is unique up to a non-zero real constant.
\end{lem}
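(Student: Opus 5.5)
Given two $G$-invariant pseudo-Hermitian metrics $h_1,h_2$ on $L_m$, the plan is to compare them through their ratio and reduce the claim to Lemma \ref{lem 3.2}(2). Since $L_m$ is a line bundle, each $h_i$ at a point $x$ is a non-degenerate Hermitian form on the one-dimensional space $(L_m)_x$. Such a form is determined by the real number $h_i(v,v)$ for any nonzero $v$, and that number is nonzero. So I will define the function
\[
f(x):=\frac{(h_2)_x(v,v)}{(h_1)_x(v,v)},\qquad v\in (L_m)_x\setminus\{0\}.
\]
Its value does not depend on $v$, since replacing $v$ by $cv$ multiplies both numerator and denominator by $|c|^2$. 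The function $f$ is real-valued and nowhere zero, and it is smooth because locally we can take $v$ to be a nonvanishing smooth local section. By construction, $h_2=f\,h_1$ as Hermitian forms on $L_m$.

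Next I show that $f$ is $G$-invariant. For $g\in G$, $x\in G/U(1)$ and $v\in(L_m)_x$ nonzero, the vector $g\cdot v\in (L_m)_{gx}$ is nonzero because $g$ acts by a linear isomorphism on fibres. Applying the invariance $h_{i,gx}(g\cdot,g\cdot)=h_{i,x}(\cdot,\cdot)$ to both metrics gives
\[
f(gx)=\frac{(h_2)_{gx}(gv,gv)}{(h_1)_{gx}(gv,gv)}=\frac{(h_2)_x(v,v)}{(h_1)_x(v,v)}=f(x).
\]
Thus $f$ is a $G$-invariant smooth function on $G/U(1)$, equivalently a $G$-equivariant section of the trivial bundle $L_0$.

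By the argument in Lemma \ref{lem 3.2}(2), transitivity of $G$ on $G/U(1)$ forces $f$ to be constant. That constant is real and nonzero, so $h_2=c\,h_1$ with $c\in\mathbb{R}\setminus\{0\}$, which is the claim.

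There is no serious obstacle here. The only points needing care are that the ratio is well defined, which uses the one-dimensionality of the fibres and non-degeneracy, and that it may be negative: this is why the uniqueness is up to a nonzero real constant rather than a positive one.
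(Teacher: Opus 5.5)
Your proof is correct and is essentially the paper's argument. The paper fixes the scalar $a\in\mathbb{R}^\times$ with $h_{1,eU(1)}=a\,h_{2,eU(1)}$ on the fibre over the base point and transports it to every $gU(1)$ using $G$-invariance, which is the same as your observation that the ratio $f$ is $G$-invariant and therefore constant by transitivity. Your smoothness check on $f$ is harmless but unnecessary, since invariance plus transitivity alone make $f$ constant.
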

\begin{proof}
Suppose $h_1$ and $h_2$ are $G$-invariant pseudo-Hermitian metric on $L_m$.
We denote by $e$ the identity element of $G$.
Since $L_m$ is a line bundle,  there exists an $a\in\mathbb{R}^\times$ such that 
\[
h_{1,eU(1)}=ah_{2,eU(1)}.
\]
Then, by $G$-invariance of $h_1$ and $h_2,$ we have 
\begin{align*}
h_{1,gU(1)}(\cdot,\cdot)&=h_{1,gU(1)}(g\cdot g^{-1}\cdot, g\cdot g^{-1}\cdot)\\
&=h_{1,eU(1)}(g^{-1}\cdot,g^{-1}\cdot)\\
&=ah_{2,eU(1)}(g^{-1}\cdot,g^{-1}\cdot)\\
&=ah_{2,gU(1)}(\cdot,\cdot).
\end{align*}
Therefore 
\[
h_1=ah_2.
\]
\end{proof}
 \subsubsection{Degree of Vector Bundles}\label{sec 3.1.4}
 In this section, we recall the degree of complex vector bundles over compact K\"ahler manifolds based on \cite{Wells}.
 Let $M$ be a compact K\"ahler manifold with a K\"ahler form $\omega_M$, and $E\to M$ be a complex vector bundle.
 Let $D$ be a connection of $E$, and $F_D$ be the curvature of $D$.
 Then 
  \[
 \frac{\sqrt{-1}}{2\pi}\mathrm{Tr}F_D
 \]
 is a closed 2-form on $M$, and the cohomology class of it does not depend on the choice of a connection.
 We denote this class as
 \[
  c_1(E):=\bigg[\frac{\sqrt{-1}}{2\pi}\mathrm{Tr}F_D\bigg]\in H^2(M,\mathbb{C}).
  \]
We define the degree of $E$ as
 \begin{align*}
 \mathrm{deg}_{\omega_M}E:&=\frac{1}{(\mathrm{dim}M-1)!}\int_Mc_1(E)\wedge \omega^{\mathrm{dimM-1}}_M\\
 &=\frac{\sqrt{-1}}{2\pi}\int_M\Lambda_{\omega_M}\mathrm{Tr}F_D\mathrm{Vol}_{\omega_M},
  \end{align*}
 where
 \[
 \mathrm{Vol}_{\omega_M}:=\frac{\omega^{\mathrm{dim}M}}{(\mathrm{dim}M)!}.
  \]
We note that the degree is independent of the choice of the connection $D$, but it depends on the K\"ahler class $[\omega_M]$.  

\subsection{$\tau>0$ case}\label{sec 3.2}
\subsubsection{$SU(2)$-equivariant line bundle over $\mathbb{P}^1$}\label{sec 3.2.1}
In this section, we review the $SU(2)$-action on $\mathbb{P}^1$ and $\mathcal{O}_{\mathbb{P}^1}(n)$.\par
Let $M_2(\mathbb{C})$ be the set of $2\times 2$ complex valued matrixes.
 Recall that 
\begin{equation*}
SU(2)=\bigg\{
\begin{pmatrix}
a&-\overline{b}\\
b&\overline{a}
\end{pmatrix}\in M_2(\mathbb{C})
\,\,
\bigg|
\,\,
 |a|^2+|b|^2=1\bigg\}.
\end{equation*}
$SU(2)$ acts on $\mathbb{P}^1$ as 
\begin{equation*}
\begin{array}{rccc}
& SU(2)\times\mathbb{P}^1  &\longrightarrow& \mathbb{P}^1                     \\
        & \rotatebox{90}{$\in$}&               & \rotatebox{90}{$\in$} \\
        &\bigg( \begin{pmatrix}
a&-\overline{b}\\
b&\overline{a}
\end{pmatrix}   , [z_0:z_1]   \bigg)              & \longmapsto   &  [az_0-\overline{b}z_1:bz_0+\overline{a}z_1].
\end{array}
\end{equation*}
It is well known that this action is transitive and we can regard  $\mathbb{P}^1$ as a homogeneous space $SU(2)/U(1)$.\par
We now regard $\mathbb{P}^1$ as a two-copy of complex planes $\mathbb{C}_z$ and $\mathbb{C}_w$ glued with the relation $w=1/z$.
Then the action of $g= \begin{pmatrix}
a&-\overline{b}\\
b&\overline{a}
\end{pmatrix}$
on $\mathbb{P}^1$ can be realized as 
$z\mapsto\frac{az-\overline{b}}{bz+\overline{a}}$.\par
Let $\mathbb{C}_z\times\mathbb{C}\to\mathbb{C}_z$ and $\mathbb{C}_w\times\mathbb{C}\to\mathbb{C}_w$ be the trivial line bundle on $\mathbb{C}_z$ and $\mathbb{C}_w$.
Let $e_z$ and $e_w$ be the canonical global sections of them. 
The line bundle $\mathcal{O}(-1)$ is obtained by gluing the two trivial line bundles by the relation $e_w=\frac{e_z}{z}$.
$\mathcal{O}(-1)$ is called the taotological bundle.
From the construction, $\mathcal{O}(-1)$ is a holomorphic bundle, and we denote the Dolbeaut operator of it as $\overline\partial_{\mathcal{O}(1)}$.
\par
Since $\mathcal{O}(-1)$ is also a sub-bundle of the rank 2 trivial bundle $\mathbb{P}^1\times \mathbb{C}^2\to\mathbb{P}^1$,
the standard action of $SU(2)$ on $\mathbb{C}^2$ induces an $SU(2)$-action on $\mathcal{O}(-1)$ which is compatible with the $SU(2)$-action on $\mathbb{P}^1$. 
We write down the action explicitly.  
For each 
$g= \begin{pmatrix}
a&-\overline{b}\\
b&\overline{a}
\end{pmatrix}$,
it induces a map $\Psi_g:\mathcal{O}(-1)\to \mathcal{O}(-1)$ such that 
\begin{equation*}
\Psi_g(e_z)=(bz+\overline{a})e_{g\cdot z}.
\end{equation*}
We define a hermitian metric $h^{(-1)}$ on $\mathcal{O}(-1)$ such that 
\begin{equation*}
h^{(-1)}(e_z,e_z):=1+|z|^2, h^{(-1)}(e_w,e_w):=1+|w|^2.
\end{equation*}
This metric is $SU(2)$-invariant: $h^{(-1)}(\Psi_g\cdot,\Psi_g\cdot)=h^{(-1)}(\cdot,\cdot)$ holds for every for every $g\in SU(2)$.\par
We define $\mathcal{O}(1)$ to be the dual bundle of $\mathcal{O}(-1)$. 
For every $n\in\mathbb{Z}_{\geq 0}$, we define $\mathcal{O}(n):=\mathcal{O}(1)^{\otimes n}$ and $\mathcal{O}(-n):=\mathcal{O}(-1)^{\otimes n}$. 
Each $\mathcal{O}(n)\, (n\in\mathbb{Z})$ has a holomorphic bundle structure which we denote as $\overline\partial_{\mathcal{O}(n)},$ and has an $SU(2)$-action which are both induced by $\mathcal{O}(-1)$.
We note that $(\mathcal{O}(n),\overline\partial_{\mathcal{O}(n)})$ is an $SU(2)$-equivariant holomorphic bundle.\par
The metric $h^{(-1)}$ induces a Hermitian metric $h^{(n)}$ on $\mathcal{O}(n)$ and therefore it is $SU(2)$-invariant.
 We note that $h^{(2)}$ is the Fubini-Study metric.\par
 For later use, we recall the $SU(2)$-invariant differential form valued on $\mathcal{O}(-2)$ and $\mathcal{O}(2)$.
  Before we proceed, we prepare some notations. 
  Let $T^{1,0\ast}\mathbb{P}^1$ be the holomorphic cotangent bundle on $\mathbb{P}^1$. 
  Note that $\mathcal{O}(-2)\simeq T^{1,0\ast}\mathbb{P}^1$ as holomorphic bundles.
  Let $T^{0,1\ast}\mathbb{P}$ be the anti-holomorphic cotangent bundle.
  
  \begin{lem}\label{lem 3.4}
  \begin{equation*}
  T^{0,1\ast}\mathbb{P}\simeq \mathcal{O}(2)
    \end{equation*}
    as smooth bundles.
  \end{lem}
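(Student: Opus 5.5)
The plan is to build an explicit fibrewise conjugate-linear isomorphism $T^{0,1\ast}\mathbb{P}^1\to\mathcal{O}(2)$ by combining two facts. First, complex conjugation sends $(1,0)$-forms to $(0,1)$-forms, so $T^{0,1\ast}\mathbb{P}^1\simeq\overline{T^{1,0\ast}\mathbb{P}^1}$ as smooth complex line bundles. Second, a Hermitian metric on a complex line bundle identifies its conjugate with its dual. Putting these together gives the chain
\[
T^{0,1\ast}\mathbb{P}^1\simeq \overline{T^{1,0\ast}\mathbb{P}^1}\simeq \overline{\mathcal{O}(-2)}\simeq \mathcal{O}(-2)^\vee=\mathcal{O}(2).
\]
The middle isomorphism is the holomorphic isomorphism $\mathcal{O}(-2)\simeq T^{1,0\ast}\mathbb{P}^1$ recalled just before the lemma. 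For the last step I use the metric $h^{(-2)}$, which sends $\bar v\mapsto h^{(-2)}(\cdot,v)$. This map is complex linear from $\overline{\mathcal{O}(-2)}$ to $\mathcal{O}(-2)^\vee$, and it is an isomorphism because $h^{(-2)}$ is nondegenerate.

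To make the isomorphism concrete, and $SU(2)$-compatible for later use, I will write it in the charts $\mathbb{C}_z$ and $\mathbb{C}_w$.

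On $\mathbb{C}_z$, the bundle $\mathcal{O}(-2)$ has the frame $e_z^{\otimes2}$, and the isomorphism with $T^{1,0\ast}$ sends it to $dz$. The transition rule $e_w=e_z/z$ gives $e_w^{\otimes 2}=z^{-2}e_z^{\otimes2}$. Since $dw=-z^{-2}dz$, this agrees with the frame change for $dz$ up to a sign, which I fix by sending $e_w^{\otimes2}\mapsto -dw$.

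Let $e_z^\vee$ denote the frame of $\mathcal{O}(2)$ dual to $e_z^{\otimes 2}$. I then define the map by
\[
d\bar z\mapsto (1+|z|^2)^2\,e_z^\vee .
\]
The factor $(1+|z|^2)^2$ equals $h^{(-2)}(e_z^{\otimes2},e_z^{\otimes2})$, so this is exactly the metric identification applied to $\overline{dz}$.

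The key check is that the definitions on the two charts agree on the overlap, and this is where I expect the only real work to lie. On the overlap we have $d\bar w=-\bar z^{-2}d\bar z$ and $e_w^\vee=z^{2}e_z^\vee$. We also have
\[
(1+|w|^2)^2=(1+|z|^2)^2|z|^{-4}.
\]
Substituting these into the definition on $\mathbb{C}_w$ should reproduce the definition on $\mathbb{C}_z$, with the sign $-1$ from $d\bar w$ cancelling the sign chosen for $e_w^{\otimes2}\mapsto -dw$. If a stray phase $z^2/\bar z^2$ remains, it signals that the identification must be conjugate-linear, as the abstract chain says. In that case I absorb the phase by using the metric isomorphism $\overline{\mathcal{O}(-2)}\to\mathcal{O}(2)$ rather than a naive frame matching.

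Since each step in the chain is a smooth bundle isomorphism, the lemma follows. No global existence argument is needed beyond the nondegeneracy of $h^{(-2)}$.
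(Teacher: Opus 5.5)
Your proof is correct, but it takes a different route from the paper. The paper argues topologically. By Chern--Weil, conjugating a connection conjugates its curvature, so $\deg T^{0,1\ast}\mathbb{P}^1=-\deg T^{1,0\ast}\mathbb{P}^1=2$. It then appeals to the classification of smooth complex line bundles on $\mathbb{P}^1$ by degree. You instead construct the isomorphism directly, using only the general fact that a Hermitian metric identifies $\overline{\mathcal{O}(-2)}$ with $\mathcal{O}(-2)^\vee$. This needs no classification theorem and works over any base.

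Your construction also gives more than the paper's. Three facts together make your isomorphism $SU(2)$-equivariant:
\begin{itemize}
\item $e_z^{\otimes 2}\mapsto dz$ intertwines the $SU(2)$-actions, since both pick up the factor $(bz+\overline a)^2$;
\item conjugation commutes with the action;
\item $h^{(-2)}$ is invariant.
\end{itemize}
Proposition \ref{prop 3.2} tacitly needs exactly this equivariance when it takes $SU(2)$-invariants in \eqref{eq 3.2}, and the degree argument alone does not supply it. Concretely, your map sends the paper's $\alpha$ to the constant function $1$ on both charts. This also explains the minus sign in $\alpha|_{\mathbb{C}_w}$.

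There are two small corrections.

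First, your opening calls the map conjugate-linear, but the composite is complex-linear. It is the composite of two conjugate-linear maps: conjugation, and $v\mapsto h^{(-2)}(\cdot,v)$. A genuinely conjugate-linear fibrewise isomorphism would amount to $T^{0,1\ast}\mathbb{P}^1\simeq\overline{\mathcal{O}(2)}\simeq\mathcal{O}(-2)$, which is false.

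Second, the overlap check closes without any fallback, and the sign in $e_w^{\otimes2}\mapsto -dw$ is forced by the transition rule rather than chosen. The chain gives $d\bar w\mapsto -(1+|w|^2)^2e_w^\vee$. Substituting $d\bar z=-\bar z^{2}d\bar w$, $e_w^\vee=z^2e_z^\vee$ and $(1+|w|^2)^2=(1+|z|^2)^2|z|^{-4}$ returns $(1+|z|^2)^2e_z^\vee$, because $\bar z^{2}z^{2}|z|^{-4}=1$. No stray phase appears.
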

  \begin{proof}
  Although this is well known, we give a proof for convenience.\par
 Let $D$ be a connection of $T^{1,0\ast}\mathbb{P}^1$ and $F_D$ be the curvature of $D$. Then
\begin{equation*}
\mathrm{deg}(T^{1,0\ast}\mathbb{P}^1)=\frac{\sqrt{-1}}{2\pi}\int_{\mathbb{P}^1}\mathrm{Tr}\Lambda F_D.
\end{equation*}
This is independent of $D$ (See \cite{Wells}, for example). 
Since $T^{0,1\ast}\mathbb{P}$ is the complex conjugate of $T^{1,0\ast}\mathbb{P}^1$,
the connection $D$ induces a connection $\overline{D}$ on $T^{0,1\ast}\mathbb{P}$. 
The curvatures of $D$ and $\overline{D}$ are related as $F_{\overline{D}}=\overline{F_D}$.
 Hence
 \begin{align*}
 \mathrm{deg}(T^{0,1\ast}\mathbb{P})&=\frac{\sqrt{-1}}{2\pi}\int_{\mathbb{P}^1}\mathrm{Tr}\Lambda F_{\overline{D}}\\
 &=\frac{\sqrt{-1}}{2\pi}\int_{\mathbb{P}^1}\mathrm{Tr}\Lambda\overline{F_D}\\
 &=-\overline{\frac{\sqrt{-1}}{2\pi}\int_{\mathbb{P}^1}\mathrm{Tr}\Lambda F_D}\\
 &=-\overline{\mathrm{deg}(T^{1,0\ast}\mathbb{P}^1)}.
     \end{align*}
Since $\mathrm{deg}(\mathcal{O}(-1))=-1$ and $\mathcal{O}(-2)\simeq T^{1,0\ast}\mathbb{P}^1$, $\mathrm{deg}(T^{1,0\ast}\mathbb{P}^1)=-2$ and hence $\mathrm{deg}(T^{0,1\ast}\mathbb{P})=2$.
Recall that a smooth complex line bundle over $\mathbb{P}^1$ is uniquely determined by its degree. 
Since $\mathrm{deg}(\mathcal{O}(2))=2$, $T^{0,1\ast}\mathbb{P}\simeq \mathcal{O}(2)$ as smooth bundles.
  \end{proof}
We denote the space of smooth $SU(2)$-invariant section of $\mathcal{O}(n)$ as $A(\mathcal{O}(n))^{SU(2)}$, smooth $SU(2)$-invariant $\mathcal{O}(n)$-valued 1-form as $A^1(\mathcal{O}(n))^{SU(2)}$.

\begin{prop}\label{prop 3.2}
\begin{itemize}
\item[(1)] For $n\neq 0$, $A(\mathcal{O}_{\mathbb{P}^1}(n))^{SU(2)}$ only contains zero section. 
\item[(2)] $\mathrm{dim}A^1(\mathcal{O}_{\mathbb{P}^1}(-2))^{SU(2)}=\mathrm{dim}A^1(\mathcal{O}_{\mathbb{P}^1}(2))^{SU(2)}=1$. 
In particular, let $\alpha$ and $\beta$ be the basis of $A^1(\mathcal{O}_{\mathbb{P}^1}(-2))^{SU(2)}$ and $A^1(\mathcal{O}_{\mathbb{P}^1}(2))^{SU(2)}$. 
Then $\alpha\in A^{0,1}(\mathcal{O}_{\mathbb{P}^1}(-2))$ and $\beta\in A^{1,0}(\mathcal{O}_{\mathbb{P}^1}(2)).$
\end{itemize}
\end{prop}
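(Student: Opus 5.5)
Part (1) follows from Lemma \ref{lem 3.2} once we know the $SU(2)$-equivariant structure on $\mathcal{O}(n)$ has weight $\pm n$; I would compute the weight directly. The stabilizer of $z=0$ (i.e. $[1:0]$) is the diagonal subgroup $k_\theta=\mathrm{diag}(e^{i\theta},e^{-i\theta})\cong U(1)$, acting by $z\mapsto e^{2i\theta}z$. From $\Psi_g(e_z)=(bz+\overline a)e_{g\cdot z}$ with $b=0$, $a=e^{i\theta}$, we get $\Psi_{k_\theta}(e_0)=e^{-i\theta}e_0$. So $\mathcal{O}(-1)$ has weight $-1$ (up to the sign convention for $\chi$), and by Lemma \ref{lem 3.1} $\mathcal{O}(n)$ has weight $n$ or $-n$. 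In either case the weight is nonzero for $n\neq0$, and Lemma \ref{lem 3.2}(1) gives (1).

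For (2), I would reduce invariant bundle-valued 1-forms to invariant sections of equivariant line bundles. Write
\[
A^1(\mathcal{O}(n))=A(T^{1,0\ast}\mathbb{P}^1\otimes\mathcal{O}(n))\oplus A(T^{0,1\ast}\mathbb{P}^1\otimes\mathcal{O}(n)).
\]
This splitting is $SU(2)$-equivariant because $SU(2)$ acts holomorphically, so the invariant forms split accordingly. Both cotangent bundles are $SU(2)$-equivariant line bundles over $SU(2)/U(1)$. Their weights come from the derivative of $z\mapsto e^{2i\theta}z$ at $0$: $dz$ at $0$ transforms with weight $\mp2$ and $d\bar z$ with weight $\pm2$, under the same convention as before. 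This is consistent with $T^{1,0\ast}\mathbb{P}^1\simeq\mathcal{O}(-2)$ and with Lemma \ref{lem 3.4}, which also shows these identifications are compatible as equivariant bundles.

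By Lemma \ref{lem 3.1}, the four bundles have the following weights:
\begin{itemize}
\item[] $T^{1,0\ast}\otimes\mathcal{O}(-2)$: weight $-4$;
\item[] $T^{0,1\ast}\otimes\mathcal{O}(-2)$: weight $0$;
\item[] $T^{1,0\ast}\otimes\mathcal{O}(2)$: weight $0$;
\item[] $T^{0,1\ast}\otimes\mathcal{O}(2)$: weight $4$.
\end{itemize}
These are all up to one global sign. Lemma \ref{lem 3.2} then says the invariant sections of the weight $\pm4$ summands vanish, and each weight-$0$ summand has exactly a one-dimensional space of invariant sections. Hence the spaces have dimension one. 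A basis $\alpha$ lies in $A^{0,1}(\mathcal{O}(-2))$ and a basis $\beta$ lies in $A^{1,0}(\mathcal{O}(2))$, as claimed.

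One point needs care. Lemma \ref{lem 3.2}(2) says $A(L_0)\simeq\mathbb{C}$ but should be read as $A(L_0)^G\simeq\mathbb{C}$. Its proof applies to any weight-$0$ equivariant line bundle, because a weight-$0$ bundle is equivariantly trivial by Lemma \ref{lem 3.1}.

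The main obstacle is keeping the sign conventions consistent: the weights of $\mathcal{O}(-1)$, $dz$ and $d\bar z$ must all be computed with the same convention for $\chi$. Only relative signs matter, since we only need the tensor weights to be $0$ versus $\pm4$. To be safe, I would also exhibit $\alpha$ explicitly. Using the trivialization $e_z^{\otimes2}$ of $\mathcal{O}(-2)$, the candidate is
\[
\alpha=\frac{d\bar z}{(1+|z|^2)^2}\otimes e_z^{\otimes2},
\]
and I would verify its invariance under $\Psi_g$ by checking that $d\bar z$ pulls back by the factor $\overline{(bz+\bar a)}^{-2}$. The form $\beta$ would be constructed dually, using the metric $h^{(2)}$.
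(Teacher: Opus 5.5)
Your proof is correct, but it takes a different route from the paper's.

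\emph{The paper's route.} The paper proves (1) topologically. By transitivity, a nonzero invariant section is nowhere zero, so it would trivialize $\mathcal{O}(n)$, contradicting $\deg\mathcal{O}(n)=n\neq 0$. For (2) it uses $T^{1,0\ast}\mathbb{P}^1\simeq\mathcal{O}(-2)$ and Lemma~\ref{lem 3.4} to write $A^1(\mathcal{O}(-2))\cong A(\mathcal{O}(-4))\oplus C^\infty(\mathbb{P}^1)$. It kills the first summand by (1), uses $C^\infty(\mathbb{P}^1)^{SU(2)}=\mathbb{C}$ for the second, and then exhibits $\alpha,\beta$ explicitly.

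\emph{Your route.} You instead compute stabilizer weights and apply Lemmas~\ref{lem 3.1} and \ref{lem 3.2}. The gain is that every identification you use is genuinely $SU(2)$-equivariant. Lemma~\ref{lem 3.4} is only a smooth isomorphism obtained from a degree count. On its own it does not justify replacing $A(T^{0,1\ast}\mathbb{P}^1\otimes\mathcal{O}(-2))^{SU(2)}$ by $C^\infty(\mathbb{P}^1)^{SU(2)}$. In the paper that step is really carried by the explicit nowhere-vanishing invariant $\alpha$: any invariant $s$ is then $f\alpha$ with $f$ an invariant, hence constant, function. In your proof the weight-$0$ computation carries it instead.

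Your method also works over $\Delta$ and $\mathbb{C}$, where all the bundles are topologically trivial and degree arguments say nothing. It is exactly what the paper itself uses in Lemmas~\ref{lem 3.8} and \ref{lem 3.11}. The paper's degree argument, in exchange, gets (1) with no sign bookkeeping at all.

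Two harmless slips:
\begin{itemize}
\item In the paper's chart $z=z_0/z_1$, the point $z=0$ is $[0:1]$, not $[1:0]$. This does not matter, since you compute in the $z$-chart.
\item Lemma~\ref{lem 3.4} proves nothing equivariant. It is your own computation, with $\mathcal{O}(-1)$ and $dz$ both of negative weight at $z=0$, that makes $T^{1,0\ast}\mathbb{P}^1\simeq\mathcal{O}(-2)$ and $T^{0,1\ast}\mathbb{P}^1\simeq\mathcal{O}(2)$ equivariant isomorphisms.
\end{itemize}
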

\begin{proof}
This is also well known, but we give a proof for convenience.\par
(1) Suppose we have a non-zero section $a\in A(\mathcal{O}_{\mathbb{P}^1}(n))^{SU(2)}$. 
Since $a$ is $SU(2)$-invariant and $SU(2)$ acts transitive on $\mathbb{P}^1$, $a$ is everywhere non-zero.
However, this is a contradiction because if such $a$ exists, then $a$ trivialize $\mathcal{O}(n)$ but this contradicts to $\mathrm{deg}(\mathcal{O}_{\mathbb{P}^1}(n))=n$.\par
(2) First, we have 
\begin{equation}\label{eq 3.2}
\begin{split}
A^1(\mathcal{O}(-2))&=A^{1,0}(\mathcal{O}(-2))\oplus A^{0,1}(\mathcal{O}(-2))\\
&=A(T^{1,0\ast}\mathbb{P}^1\otimes\mathcal{O}(-2))\oplus A(T^{0,1\ast}\mathbb{P}\otimes \mathcal{O}(-2))\\
&=A(\mathcal{O}(-2)\otimes\mathcal{O}(-2))\oplus A(\mathcal{O}(2)\otimes \mathcal{O}(-2))\\
&=A(\mathcal{O}(-4))\oplus C^{\infty}(\mathbb{P}^1).
\end{split}
\end{equation}
Here $C^{\infty}(\mathbb{P}^1)$ is the space of the complex valued smooth functions on $\mathbb{P}^1$.
Let $C^{\infty}(\mathbb{P}^1)^{SU(2)}$ be the space of $SU(2)$-invariant smooth functions. 
Then
\begin{align*}
A^1(\mathcal{O}(-2))^{SU(2)}&=A(\mathcal{O}(-4))^{SU(2)}\oplus C^{\infty}(\mathbb{P}^1)^{SU(2)}\\
&=C^{\infty}(\mathbb{P}^1)^{SU(2)}.
\end{align*}
The second equation follows from $(1)$.
Since $SU(2)$ acts transitive on $\mathbb{P}^1$, $C^{\infty}(\mathbb{P}^1)^{SU(2)}=\mathbb{C}$ and therefore $\mathrm{dim} A^1(\mathcal{O}(-2))^{SU(2)}=1$. 
By using a similar argument, we can also prove $A^1(\mathcal{O}(-2))^{SU(2)}=C^{\infty}(\mathbb{P}^1)^{SU(2)}$ and hence $\mathrm{dim} A^1(\mathcal{O}(2))^{SU(2)}=1$.
By the equation (\ref{eq 3.2}), we have 
\begin{equation*}
A^1(\mathcal{O}(-2))^{SU(2)}=A(T^{0,1\ast}\mathbb{P}\otimes \mathcal{O}(-2))^{SU(2)}=A^{0,1}(\mathcal{O}(-2))^{SU(2)}.
\end{equation*}
We also have
\begin{equation*}
A^1(\mathcal{O}(2))^{SU(2)}=A(T^{1,0\ast}\mathbb{P}\otimes \mathcal{O}(2))^{SU(2)}=A^{1,0}(\mathcal{O}(2))^{SU(2)}.
\end{equation*}
Hence, we proved (2).\par
We give explicit realizations $\alpha$ and $\beta$. 
Let $\mathbb{C}_z\times\mathbb{C}\to\mathbb{C}_z$ and $\mathbb{C}_w\times\mathbb{C}\to\mathbb{C}_w$ be the trivial line bundle on $\mathbb{C}_z$ and $\mathbb{C}_w$.
Let $e_{n,z}$ and $e_{n,w}$ be the canonical global sections.
$\mathcal{O}(n)$ is constructed by gluing the two trivial line bundles by $e_{n,w}=z^ne_{n,z}$. 
$\alpha$ and $\beta$ are realized as follows:
\begin{align*}
&\alpha|_{\mathbb{C}_z}=\frac{1}{(1+|z|^2)^2}d\overline{z}\otimes e_{-2,z},\,\,\alpha|_{\mathbb{C}_w}=-\frac{1}{(1+|w|^2)^2}d\overline{w}\otimes e_{-2,w}.\\
&\beta|_{\mathbb{C}_z}=dz\otimes e_{2,z},\,\,\beta|_{\mathbb{C}_w}=-dw\otimes e_{2,w}.
\end{align*}
Then $\alpha$ and $\beta$ are indeed bases of $A^1(\mathcal{O}(-2))^{SU(2)}$ and $A^1(\mathcal{O}(2))^{SU(2)}$. This follows from $\alpha$ and $\beta$ are everywhere non-zero and $SU(2)$-invariant.
\end{proof}

\subsubsection{Dimensional Reduction}\label{sec 3.2.2}
Let $X$ be a compact Riemann surface, and $\omega_X$ be a K\"ahler form on $ X$.
We normalize $\omega_X$ so that 
\[
\mathrm{Vol}_{\omega_X}=\int_X\omega_X=1.\]
We now consider the action of $SU(2)$ on $X\times \mathbb{P}^1$ which is trivial on $X$ and standard on $\mathbb{P}^1$.
In the first half of this section, starting from a solution of Manton's exotic $\tau$-vortex equation with $\tau>0$ on $X$, we construct an $SU(2)$-equivariant holomorphic bundle over $X\times \mathbb{P}^1$ and an $SU(2)$-invariant pseudo-Hermitian-Einstein metric of signature $(1,1)$ on it.
In the second half, we establish the converse.
\par
From now on, we assume $\tau>0$, and we denote by 
\begin{align*}
&p:X\times \mathbb{P}^1\to X\\
&q:X\times \mathbb{P}^1\to \mathbb{P}^1
\end{align*}
the natural projections.\par
Let $(L,\overline\partial_L)$ be a holomorphic line bundle over $X$, $\phi \in A(L)$, and $h_L$ be a Hermitian metric on $L$.
We assume the triple $((L,\overline\partial_L),\phi,h_L)$ satisifies Manton's exotic $\tau$-vortex equation with $\tau>0$.
We construct an $SU(2)$-equivariant holomorphic bundle over $X\times \mathbb{P}^1$ and an $SU(2)$-invariant pseudo-Hermitian-Einstein metric of signature $(1,1)$ from $(\overline\partial_L,\phi,h)$.
We first construct an $SU(2)$-equivariant holomophic bundle over $X\times\mathbb{P}^1$.
Recall that we denote the trivial line bundle over $X$ as $\mathbb{C}_X$.
The Dolbeault operator $\overline\partial_{X\times\mathbb{P}^1}$ of $X\times\mathbb{P}^1$ defines a holomorphic bundle structure of $\underline{\mathbb{C}}_{X\times \mathbb{P}^1}$.
Let $c\in\mathbb{C}$ be a constant.
We define
\begin{align}
E:=&p^\ast L\otimes q^\ast \mathcal{O}(-2)\oplus \underline{\mathbb{C}}_{X\times \mathbb{P}^1},\nonumber\\
\overline\partial_{E,c}:=&
\begin{pmatrix}
p^\ast \overline\partial_L\otimes \mathrm{Id}+\mathrm{Id}\otimes q^\ast \overline\partial_{\mathcal{O}(-2)}& c\cdot p^\ast \phi\otimes q^\ast \alpha\\
0&\overline\partial_{X\times\mathbb{P}^1}\label{eq 3.3}
\end{pmatrix}.
\end{align}
Note that $c\cdot  p^\ast \phi\otimes q^\ast \alpha\in A^{0,1}(\mathrm{Hom}(\underline{\mathbb{C}}_{X\times \mathbb{P}}, p^\ast L\otimes q^\ast \mathcal{O}(-2)))$.
The $SU(2)$-action on $\mathcal{O}(-2)$ induces an $SU(2)$-action on $E$, where $SU(2)$ acts trivially on the factors pulled back from $X$.
By the definition of the action of $SU(2)$ on $X\times \mathbb{P}^1$, $E$ is also an $SU(2)$-equivariant bundle over $X\times \mathbb{P}^1$.
We also have
\begin{prop}\label{prop 3.3}
$(E,\overline\partial_{E,c})$ is an $SU(2)$-equivariant holomorphic bundle over $X\times\mathbb{P}^1$.
\end{prop}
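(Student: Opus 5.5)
We need to show two things: that $\overline\partial_{E,c}$ is a genuine holomorphic structure, i.e. $\overline\partial_{E,c}^2=0$, and that each $g\in SU(2)$ acts on $E$ by holomorphic bundle maps, i.e. commutes with $\overline\partial_{E,c}$. The $SU(2)$-equivariance of $E$ as a smooth bundle was already noted before the statement, so these are the only remaining points.

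\textbf{Integrability.} Write $\overline\partial_{E,c}=\overline\partial_0+\eta$, where $\overline\partial_0$ is the diagonal operator and $\eta=c\,p^\ast\phi\otimes q^\ast\alpha$ is the strictly upper triangular part. Then
\[
\overline\partial_{E,c}^2=\overline\partial_0^2+\overline\partial_0\eta+\eta\wedge\eta .
\]
We treat the three terms in turn.
\begin{itemize}
\item[(1)] $\overline\partial_0^2=0$, because the diagonal entries are tensor products of pulled-back holomorphic structures.
\item[(2)] $\eta\wedge\eta=0$, because $\eta$ is nilpotent (upper triangular with zero diagonal).
\item[(3)] For the off-diagonal term we must compute $\overline\partial_0\eta$, where $\overline\partial_0$ now acts on $\mathrm{Hom}(\underline{\mathbb{C}},p^\ast L\otimes q^\ast\mathcal{O}(-2))$-valued forms. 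By the Leibniz rule,
\[
\overline\partial(p^\ast\phi\otimes q^\ast\alpha)=p^\ast(\overline\partial_L\phi)\wedge q^\ast\alpha\pm p^\ast\phi\otimes q^\ast(\overline\partial_{\mathcal{O}(-2)}\alpha).
\]
The first term vanishes since $\phi$ is holomorphic. The second vanishes because $\overline\partial_{\mathcal{O}(-2)}\alpha$ is a $(0,2)$-form on the curve $\mathbb{P}^1$, and such forms are zero.
\end{itemize}
Hence $\overline\partial_{E,c}^2=0$, and by Newlander--Nirenberg (Koszul--Malgrange) $\overline\partial_{E,c}$ defines a holomorphic structure for every $c\in\mathbb{C}$.

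\textbf{Equivariance.} For $g\in SU(2)$, let $\Psi_g$ denote its action on $E$. This is $\mathrm{Id}_{p^\ast L}\otimes q^\ast\Psi_g$ on the first summand and the tautological pullback on the trivial summand. We must show $\Psi_g\circ\overline\partial_{E,c}=g^\ast\overline\partial_{E,c}\circ\Psi_g$ in the appropriate pullback sense, and we check it entry by entry.
\begin{itemize}
\item[(1)] Diagonal entries: $(\mathcal{O}(-2),\overline\partial_{\mathcal{O}(-2)})$ is an $SU(2)$-equivariant holomorphic bundle, so $\Psi_g$ is biholomorphic there. Since $g$ acts trivially on $X$, it commutes with $p^\ast\overline\partial_L$. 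On the trivial summand, $g$ acts holomorphically on $X\times\mathbb{P}^1$, so it commutes with $\overline\partial_{X\times\mathbb{P}^1}$.
\item[(2)] Off-diagonal entry: here the condition reduces to $g\cdot\eta=\eta$, where $g$ acts on $\mathrm{Hom}(\underline{\mathbb{C}},p^\ast L\otimes q^\ast\mathcal{O}(-2))$-valued forms. The factor $p^\ast\phi$ is invariant because the action on $X$ is trivial. The factor $q^\ast\alpha$ is invariant because $\alpha\in A^1(\mathcal{O}(-2))^{SU(2)}$ by Proposition \ref{prop 3.2}.
\end{itemize}

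The one point I expect to need care is the convention in the off-diagonal step. Invariance of $\alpha$ must be taken in the sense $g\cdot(\alpha_x)=\alpha_{gx}$, meaning pullback of the form combined with $\Psi_g$ on the coefficient, and this must be exactly the condition needed to commute $\Psi_g$ past the off-diagonal part of $\overline\partial_{E,c}$. I would verify this in the local trivialization over $\mathbb{C}_z$, using $\Psi_g(e_z)=(bz+\overline a)e_{g\cdot z}$ and the explicit formula $\alpha|_{\mathbb{C}_z}=(1+|z|^2)^{-2}d\overline z\otimes e_{-2,z}$. Once the conventions match, $g$ is biholomorphic on $(E,\overline\partial_{E,c})$.
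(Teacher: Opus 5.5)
Your proposal is correct and follows essentially the same route as the paper. Integrability comes from the block computation of $\overline\partial_{E,c}^2$, using $\overline\partial_L\phi=0$ and $\overline\partial_{\mathcal{O}(-2)}\alpha=0$; you justify the latter by type, since $(0,2)$-forms vanish on $\mathbb{P}^1$. Equivariance comes from the $SU(2)$-equivariance of $(\mathcal{O}(-2),\overline\partial_{\mathcal{O}(-2)})$ together with the $SU(2)$-invariance of $\alpha$. The convention check you flag does go through in the chart $\mathbb{C}_z$: there $1+|g\cdot z|^2=(1+|z|^2)/|bz+\overline a|^2$, and the factors $\overline{(bz+\overline a)}^{\,2}$ from $d\overline z$ and $(bz+\overline a)^2$ from $e_{-2,z}$ combine to exactly $|bz+\overline a|^4$.
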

\begin{proof}
We first prove $(\overline\partial_{E,c})^2=0$, and hence it defines a holomorphic bundle structure on $E$.
By the definition of $\overline\partial_{E,c}$, we have 
\begin{align*}
(\overline\partial_{E,c})^2=
\begin{pmatrix}
(p^\ast \overline\partial_L\otimes \mathrm{Id}+\mathrm{Id}\otimes q^\ast \overline\partial_{\mathcal{O}(-2)})^2 & (\ast)\\
0&(\overline\partial_{X\times\mathbb{P}^1})^2
\end{pmatrix}
\end{align*}
where
\[
(\ast)=(p^\ast \overline\partial_L\otimes \mathrm{Id}+\mathrm{Id}\otimes q^\ast \overline\partial_{\mathcal{O}(-2)})(c\cdot p^\ast \phi\otimes q^\ast \alpha)+(c\cdot p^\ast \phi\otimes q^\ast \alpha)\cdot \bar\partial_{X\times\mathbb{P}^1}.
\]
The diagonal part of $(\overline\partial_{E,c})^2$ vanish since $\overline\partial_L,\overline\partial_{\mathcal{O}(-2)}$, and $\overline\partial_{X\times\mathbb{P}^1}$ are Dolbeaut operators.
Since $\phi$ is a holomorphic section of $(L,\overline\partial_L)$ and $\overline\partial_{\mathcal{O}(-2)}\alpha=0$, $(\ast)=0$ holds and hence $(\overline\partial_E)^2=0$.\par
$(E,\overline\partial_{E,c})$ is an $SU(2)$-equivariant holomorphic bundle since $(\mathcal{O}(-2),\overline\partial_{\mathcal{O}(-2)})$ is an $SU(2)$-equivariant holomorphic bundle over $\mathbb{P}^1$ and $\alpha$ is an $SU(2)$-equivariant section.
\end{proof}
Hence 
\[
0\longrightarrow p^\ast L\otimes q^\ast \mathcal{O}(-2)\longrightarrow E\longrightarrow\underline{\mathbb{C}}_{X\times \mathbb{P}^1} \longrightarrow0.
\]
is a holomorphic bundle extension.\par
Let $h^{(-2)}$ be the $SU(2)$-invariant Hermitian metric of $\mathcal{O}(-2)$ (See Section \ref{sec 3.2.1}), and $h_X$ be a Hermitian metric of $\underline{\mathbb{C}}_X$.
We note that $h_X$ is a smooth positive function on $X$ and therefore $h_Xh_L$ is also a Hermitian metric on $L$.
We define a pseudo-Hermitian metric $h_E$ on $E$ as 
\begin{align}\label{eq 3.4}
h_E:=
\begin{pmatrix}
p^\ast (h_Xh_L)\otimes q^\ast h^{(-2)}&0\\
0& -p^\ast h_X
\end{pmatrix}.
\end{align}
It follows from the construction that $h_E$ is a pseudo-Hermitian metric of signature $(1,1)$.
Since $h^{(-2)}$ is $SU(2)$-invariant, and $SU(2)$ action on $E$ is trivial on the factors pulled back from $X$, the pseudo-Hermitian metric $h_E$ is also $SU(2)$-invariant.
\par
We denote by $\partial_{h_Xh_L},\partial_{h^{(-2)}}$, and $\partial_{h_X}$ the $(1,0)$-part of the Chern connection associated to $(L,\overline\partial_L,h_Xh_L),$ $(\mathcal{O}(-2),\overline\partial_{h^{(-2)}},h^{(-2)})$, and $(\mathbb{C}_X,\overline\partial_X,h_X)$ respectively.
Let $\nabla_{h_E,c}$ be the unique connection that is compatible with $h_E$ and $\nabla_{h_E,c}^{0,1}=\overline\partial_{E,c}$ (See Proposition \ref{prop 3.1}).
We denote by $\partial_{h_E,c}$ the $(1,0)$-part of $\nabla_{h_E}$.
\begin{lem}\label{lem 3.5}
$\partial_{h_E,c}$ has the form
\begin{equation}\label{eq 3.5}
\partial_{h_E,c}
=\begin{pmatrix}
p^\ast\partial_{h_Xh_L}\otimes\mathrm{Id} +\mathrm{Id}\otimes p^\ast\partial_{h^{(-2)}}&0\\
\overline c \cdot p^\ast (h_L\overline \phi)\otimes q^\ast \beta& p^\ast \partial_{h_X}
\end{pmatrix}.
\end{equation}
\end{lem}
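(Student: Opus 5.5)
By the uniqueness in Proposition \ref{prop 3.1}, it suffices to check one thing: the connection $\nabla:=\partial+\overline\partial_{E,c}$, with $\partial$ the operator on the right-hand side of \eqref{eq 3.5}, is compatible with $h_E$. Its $(0,1)$-part is $\overline\partial_{E,c}$ by construction. Since $h_E$ is a Hermitian form, compatibility splits by type. For sections $u,v$ of $E$ we must show
\[
\partial\big(h_E(u,v)\big)=h_E(\partial u,v)+h_E(u,\overline\partial_{E,c}v),
\]
and the $\overline\partial$-identity is its complex conjugate with $u,v$ swapped. So only the $(1,0)$-identity needs checking.

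We write $u=(u_1,u_2)$ and $v=(v_1,v_2)$ according to the splitting $E=p^\ast L\otimes q^\ast\mathcal{O}(-2)\oplus\underline{\mathbb{C}}$. We expand both sides using the block forms. The diagonal blocks contribute
\[
h_1(\partial_1 u_1,v_1)+h_1(u_1,\overline\partial_1 v_1)-h_X(\partial_{h_X}u_2,v_2)-h_X(u_2,\overline\partial v_2),
\]
where $h_1=p^\ast(h_Xh_L)\otimes q^\ast h^{(-2)}$. These sum to $\partial h_1(u_1,v_1)-\partial(h_X u_2\bar v_2)=\partial h_E(u,v)$, because $\partial_{h_Xh_L}$, $\partial_{h^{(-2)}}$ and $\partial_{h_X}$ are the $(1,0)$-parts of the respective Chern connections, and the Chern connection of a tensor product is the tensor product connection. (The $\mathrm{Id}\otimes p^\ast\partial_{h^{(-2)}}$ in the statement should read $q^\ast$.)

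It then remains to show that the two off-diagonal contributions cancel. They are
\[
-h_X\big(\overline c\, p^\ast(h_L\overline\phi)\otimes q^\ast\beta\,(u_1),\,v_2\big)
\]
from the lower-left entry of $\partial$, and
\[
h_1\big(u_1,\,c\,p^\ast\phi\otimes q^\ast\alpha\, v_2\big)
\]
from the upper-right entry of $\overline\partial_{E,c}$. Conjugate-linearity in the second slot turns $c\,\phi\otimes\alpha$ into $\bar c\,\bar\phi\otimes\bar\alpha$. So the second term equals
\[
\bar c\, h_Xh_L(u_L,\phi)\; h^{(-2)}(u_{\mathcal O},\alpha)\,\bar v_2,
\]
a $(1,0)$-form valued in the appropriate bundle. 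Here $\bar\alpha$ is a $(1,0)$-form, consistent with the lower-left entry being of type $(1,0)$.

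The main point, and the only genuine computation, is the identity on $\mathbb{P}^1$
\[
h^{(-2)}(\,\cdot\,,\alpha)=\beta(\,\cdot\,)\quad\text{as maps }\mathcal{O}(-2)\to T^{1,0\ast}\mathbb{P}^1,
\]
where $\beta$ is viewed as an $\mathcal{O}(2)=\mathcal{O}(-2)^\vee$-valued $(1,0)$-form. I would prove it by the explicit formulas of Proposition \ref{prop 3.2}. On $\mathbb{C}_z$ we have $h^{(-2)}(e_{-2,z},e_{-2,z})=(1+|z|^2)^2$. Hence
\[
h^{(-2)}(e_{-2,z},\alpha)=(1+|z|^2)^2\cdot\frac{1}{(1+|z|^2)^2}\,dz=dz,
\]
which is exactly $\beta(e_{-2,z})$. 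The $\mathbb{C}_w$ chart works the same way, since both signs flip. Alternatively, both sides are $SU(2)$-invariant elements of the one-dimensional space $A^1(\mathcal{O}(2))^{SU(2)}$, so they agree up to a constant, and one evaluation at $z=0$ fixes that constant.

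Similarly, on $X$, the term $h_L(\cdot,\phi)$ is what the paper denotes $h_L\bar\phi$. With $h_X$ factored out, the first term becomes $-h_X\,\bar c\,h_L(u_L,\phi)\beta(u_{\mathcal O})\bar v_2$, which cancels the second. The sign bookkeeping between $-h_X$ in the metric and the $+$ sign in the lower-left block is the only place to be careful. I would double-check it, and with it the placement of the factor $h_X$, by redoing the computation on rank-one local frames.
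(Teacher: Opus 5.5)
Your proposal is correct and takes the same route as the paper. Both arguments check that the candidate $\partial+\overline\partial_{E,c}$ is compatible with $h_E$ and then invoke the uniqueness in Proposition~\ref{prop 3.1}. The paper packages your off-diagonal cancellation as $B=-h_E^{-1}\overline A^{t}h_E$, and it uses the identity $h^{(-2)}(\cdot,\alpha)=\beta$ only implicitly, citing ``the definition of $h^{(-2)}$ and $\alpha$''; you verify that identity explicitly.

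Your sign bookkeeping is already complete: the two off-diagonal terms cancel exactly as you wrote them, and the minus sign in $h_E$ is what forces the $+\overline c$ sign in the lower-left entry. You are also right that $p^\ast\partial_{h^{(-2)}}$ should read $q^\ast\partial_{h^{(-2)}}$.
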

\begin{proof}
We first set 
\begin{align*}
\overline\partial_{E,c,\mathrm{diag}}&:=
\begin{pmatrix}
p^\ast \overline\partial_L\otimes \mathrm{Id}+\mathrm{Id}\otimes q^\ast \overline\partial_{\mathcal{O}(-2)}& 0\\
0&p^\ast\overline\partial_X
\end{pmatrix},
\\
A&:=
\begin{pmatrix}
0& c\cdot p^\ast \phi\otimes q^\ast \alpha\\
0&0
\end{pmatrix}.
\end{align*}
We set 
\begin{align*}
\partial_{h_E,c,\mathrm{diag}}
:&=
\begin{pmatrix}
p^\ast\partial_{h_Xh_L}\otimes \mathrm{Id} +\mathrm{Id}\otimes p^\ast\partial_{h^{(-2)}}&0\\
0& p^\ast \partial_{h_X},
\end{pmatrix}\\
B:&=-h_E^{-1} \overline A^t h_E\\
&=
-\begin{pmatrix}
p^\ast (h_Xh_L)^{-1}\otimes q^\ast h^{(-2)}&0\\
0& -p^\ast h_X^{-1}
\end{pmatrix}
\cdot 
\begin{pmatrix}
0& 0\\
\overline{c\cdot p^\ast \phi\otimes q^\ast \alpha}&0
\end{pmatrix}
\cdot 
\begin{pmatrix}
p^\ast (h_Xh_L)\otimes q^\ast h^{(-2)}&0\\
0& -p^\ast h_X
\end{pmatrix}\\
&=
-\begin{pmatrix}
0&0\\
-\overline c \cdot p^\ast (h_L\overline\phi)\otimes q^\ast \beta&0
\end{pmatrix}\\
&=
\begin{pmatrix}
0&0\\
\overline c \cdot p^\ast (h_L\overline\phi)\otimes q^\ast \beta&0
\end{pmatrix}
\end{align*}
The last equation follows from the definition of $h^{(-2)}$ and $\alpha$ (See Section \ref{sec 3.2.1}).
\par
By the definition of the Chern connection
\[
\partial h_E(\cdot ,\cdot)=h_E(\partial_{h_E,c,\mathrm{diag}}\cdot,\cdot)+h(\cdot, \overline\partial_{E,c,\mathrm{diag}})
\]
holds, and by the definition of $B$
\[
h(A\cdot,\cdot)+h(\cdot, B\cdot)=0.
\]
Since 
\[
\overline\partial_{E,c}=\overline\partial_{E,c,\mathrm{diag}}+A,
\]
the connection
\[
\nabla_{h_E,c}:=\partial_{h_E,c,\mathrm{diag}}+B+\overline\partial_E
\]
is $h_E$.
Moreover
\[
\nabla^{0,1}_{h_E,c}=\overline\partial_{E,c}
\]
follows from the construction.
Therefore
\begin{align*}
\partial_{h_E,c}=\nabla^{1,0}_{h_E,c}&=\partial_{h_E,c,\mathrm{diag}}+B\\
&=\begin{pmatrix}
p^\ast\partial_{h_Xh_L}\otimes \mathrm{Id} +\mathrm{Id}\otimes p^\ast\partial_{h^{(-2)}}&0\\
\overline c \cdot p^\ast (h_L\overline \phi)\otimes q^\ast \beta& p^\ast \partial_{h_X}
\end{pmatrix}.
\end{align*}
\end{proof}
We show that for a suitable $c\in \mathbb{C}$ and a suitable K\"ahler form on $X\times \mathbb{P}^1$, there exists a Hermitian metric $h_X$ of $\underline{\mathbb{C}}_X$ such that $h_E$ is a pseudo-Hermitian-Einstein metric of $(E,\overline\partial_{E,c})$ (Definition \ref{def 3.2}).\par
Let $\omega_{\mathrm{FS}}$ be the Fubini-Study K\"ahler form on $\mathbb{P}^1$.
We define a K\"ahler form of $X\times \mathbb{P}^1$ as 
\be\label{eq 3.6}
\Omega_\tau:=\bigg(\frac{\tau}{8\pi}p^\ast\omega_X\bigg)\oplus q^\ast \omega_{\mathrm{FS}}.
\ee
For later use, we calculate the degree of $E$ with respect to the K\"ahler form $\Omega_\tau$.
\begin{lem}\label{lem 3.6}
Let $\mathrm{deg}_{\Omega_\tau} E$ be the degree of $E$ with respect to $\Omega_\tau$.
Then
\[
\mathrm{deg}_{\Omega_\tau} E=\mathrm{deg}L-\frac{\tau}{4\pi}.
\]
\end{lem}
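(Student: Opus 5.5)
We will compute $\mathrm{deg}_{\Omega_\tau}E$ directly from the definition in Section \ref{sec 3.1.4}. Here $\mathrm{dim}_{\mathbb{C}}(X\times\mathbb{P}^1)=2$, so
\[
\mathrm{deg}_{\Omega_\tau}E=\int_{X\times\mathbb{P}^1}c_1(E)\wedge\Omega_\tau .
\]
The first step is to use additivity of $c_1$ under direct sums and tensor products. This can be realized at the level of forms by choosing a block-diagonal connection on $E$ built from Chern connections of $h_L$, $h^{(-2)}$ and the trivial connection on $\underline{\mathbb{C}}_{X\times\mathbb{P}^1}$; recall that the degree is independent of the connection. Since the trivial summand contributes nothing, this gives
\[
c_1(E)=p^\ast c_1(L)+q^\ast c_1(\mathcal{O}(-2)),
\]
represented by $\frac{\sqrt{-1}}{2\pi}p^\ast F_{h_L}+\frac{\sqrt{-1}}{2\pi}q^\ast F_{h^{(-2)}}$.

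Next we expand the wedge product with $\Omega_\tau=\frac{\tau}{8\pi}p^\ast\omega_X+q^\ast\omega_{\mathrm{FS}}$, which produces four terms. The two terms $p^\ast c_1(L)\wedge p^\ast\omega_X$ and $q^\ast c_1(\mathcal{O}(-2))\wedge q^\ast\omega_{\mathrm{FS}}$ vanish, because each is the pullback of a $4$-form from a real $2$-dimensional manifold. For the two mixed terms we apply Fubini's theorem on the product:
\[
\int_{X\times\mathbb{P}^1}p^\ast c_1(L)\wedge q^\ast\omega_{\mathrm{FS}}=\mathrm{deg}L\cdot\int_{\mathbb{P}^1}\omega_{\mathrm{FS}},
\]
\[
\int_{X\times\mathbb{P}^1}q^\ast c_1(\mathcal{O}(-2))\wedge \tfrac{\tau}{8\pi}p^\ast\omega_X=\tfrac{\tau}{8\pi}\cdot(-2)\cdot\int_X\omega_X=-\tfrac{\tau}{4\pi}.
\]
The second line uses $\mathrm{deg}\,\mathcal{O}(-2)=-2$ (as in Lemma \ref{lem 3.4}) and the normalization $\int_X\omega_X=1$. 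Both $2$-forms are even, so there are no sign issues in reordering the factors.

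The only point needing care, and the main potential obstacle, is the normalization of $\omega_{\mathrm{FS}}$. The stated formula requires $\int_{\mathbb{P}^1}\omega_{\mathrm{FS}}=1$, i.e.\ $\omega_{\mathrm{FS}}$ must be taken to be the curvature form $\frac{\sqrt{-1}}{2\pi}F_{h^{(1)}}$ representing $c_1(\mathcal{O}(1))$. I would state this normalization explicitly, and check it against the metric $h^{(-1)}$ of Section \ref{sec 3.2.1} by computing $\frac{\sqrt{-1}}{2\pi}\partial\overline\partial\log(1+|z|^2)$ and integrating over $\mathbb{C}_z$. Under this normalization the mixed terms sum to $\mathrm{deg}L-\frac{\tau}{4\pi}$, which is the claim. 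If the paper's $\omega_{\mathrm{FS}}$ instead had a different total volume $V$, the same computation would give $V\,\mathrm{deg}L-\frac{\tau}{4\pi}$. Therefore the proof will fix $V=1$ at the outset.
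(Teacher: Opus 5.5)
Your proof is correct and is essentially the paper's own argument: split $c_1(E)=p^\ast c_1(L)+q^\ast c_1(\mathcal{O}(-2))$, keep only the two mixed terms against $\Omega_\tau$, and apply Fubini with $\int_X\omega_X=1$. The normalization $\int_{\mathbb{P}^1}\omega_{\mathrm{FS}}=1$ that you flag is exactly what the paper uses implicitly when it writes $\int_X c_1(L)\int_{\mathbb{P}^1}\omega_{\mathrm{FS}}=\mathrm{deg}L$, and it is the normalization compatible with the identity $\alpha\wedge\beta=2\pi\sqrt{-1}\,\omega_{\mathrm{FS}}$ used in Proposition \ref{prop 3.4}.
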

\begin{proof}
By the definition of $E$, we have 
\begin{align*}
\mathrm{deg}_\tau E&=\mathrm{deg}_\tau (p^\ast L)+\mathrm{deg}_\tau (q^\ast \mathcal{O}(-2))+\mathrm{deg}_\tau (p^\ast\underline{\mathbb{C}}_X)\\
&=\mathrm{deg}_\tau (p^\ast L)+\mathrm{deg}_\tau (q^\ast \mathcal{O}(-2)).
\end{align*}
Let $c_1(L),c_1(\mathcal{O}(-2))$ be the first Chern classes of $L,\mathcal{O}(-2)$.
Then by \cite{Wells}, we have 
\begin{align*}
c_1(p^\ast L)&=p^\ast c_1(L),\\
c_1(q^\ast \mathcal{O}(-2))&=q^\ast c_1(\mathcal{O}(-2)).
\end{align*}
Therefore 
\begin{align*}
\mathrm{deg}_\tau (p^\ast L)&=
\int_{X\times \mathbb{P}^1}c_1(p^\ast L)\wedge \Omega_\tau\\
&=\int_{X\times \mathbb{P}^1}p^\ast c_1( L)\wedge q^\ast\omega_{FS}\\
&=\int_X c_1(L)\int_{\mathbb{P}^1}\omega_{FS}\\
&=\mathrm{deg}L\\
\mathrm{deg}_\tau(q^\ast\mathcal{O}(-2))&=
\int_{X\times \mathbb{P}^1}c_1(q^\ast \mathcal{O}(-2))\wedge\Omega_\tau\\
&=\int_{X\times \mathbb{P}^1}q^\ast c_1(\mathcal{O}(-2))\wedge\frac{\tau}{8\pi}q^\ast\omega_X\\
&=\frac{\tau}{8\pi}\int_X\omega_X\int_{\mathbb{P}^1} q^\ast c_1(\mathcal{O}(-2))\\
&=-\frac{\tau}{4\pi}
\end{align*}
The last equation holds since we normalized $\omega_X$.
Hence, we obtain the desired equality.
\end{proof}
We now set 
\[
c=\sqrt{\frac{1}{\tau}}
\]
and consider $(E,\overline\partial_{E,\sqrt{\frac{1}{\tau}}})$.
For simplicity, we write
\[
\overline\partial_E:=\overline\partial_{E,\sqrt{1/\tau}}.
\]
We also write 
\begin{align*}
\nabla_{h_E}&:=\nabla_{h_E,\sqrt{1/\tau}},\\
\partial_{h_E}&:=\partial_{h_E, \sqrt{1/\tau}}.
\end{align*}
We denote the curvature of $\nabla_{h_E}$ as
\[
F_{h_E}:=(\nabla_{h_E})^2.
\]
We also denote by $F_{h_Xh_L}, F_{h^{(-2)}}$, and $F_{h_X}$ the curvature of the Chern connections of $(L,\overline\partial_L,h_Xh_L),$ $(\mathcal{O}(-2),\overline\partial_{\mathcal{O}(-2)},h^{(-2)})$, and $(\underline{\mathbb{C}}_X,\overline\partial_X, h_X)$ respectively.
\begin{prop}\label{prop 3.4}
There exists a Hermitian metric $h_X$ of $\underline{\mathbb{C}}_X$ such that the pseudo-Hermitian metric $h_E$ defined by \eqref{eq 3.4} is a pseudo-Hermitian-Einstein metric with respect to the K\"ahler form $\Omega_\tau$;
that is, there exists a constant $\lambda\in\mathbb{C}$ such that
\[
\sqrt{-1}\Lambda_{\Omega_\tau}F_{h_E}=\lambda \mathrm{Id}_E.
\]
Equivalently, the connection $\nabla_{h_E}$ is a pseudo-Hermitian-Yang-Mills connection.
\end{prop}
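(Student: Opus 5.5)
The plan is to compute $\sqrt{-1}\Lambda_{\Omega_\tau}F_{h_E}$ explicitly from Lemma \ref{lem 3.5}, in the spirit of Garc\'ia-Prada's computation in \cite{GP1}. Write $\nabla_{h_E}=\nabla_{\mathrm{diag}}+A+B$, where $\nabla_{\mathrm{diag}}=\partial_{h_E,\mathrm{diag}}+\overline\partial_{E,\mathrm{diag}}$ and $A,B$ are as in the proof of Lemma \ref{lem 3.5}. Then
\[
F_{h_E}=F_{\mathrm{diag}}+\nabla_{\mathrm{diag}}(A+B)+(A+B)\wedge(A+B).
\]
Since $A$ is strictly upper triangular and $B$ strictly lower triangular, $A\wedge A=B\wedge B=0$. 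The diagonal blocks are therefore
\[
F_{h_Xh_L}+F_{h^{(-2)}}+A\wedge B \quad\text{and}\quad F_{h_X}+B\wedge A,
\]
with the pullbacks understood. The off-diagonal blocks are $\nabla_{\mathrm{diag}}A$ and $\nabla_{\mathrm{diag}}B$.

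\textbf{Off-diagonal blocks.} The $(0,1)$-derivative parts vanish by Proposition \ref{prop 3.3}. This leaves terms of the form $p^\ast\partial_{h_Xh_L}\phi\wedge q^\ast\alpha$ and $p^\ast\phi\otimes q^\ast(\partial_{h^{(-2)}}\alpha)$, plus the analogous terms for $B$.
\begin{itemize}
\item The first kind is a mixed form, $dz_X\wedge d\bar z_{\mathbb{P}^1}$. Because $\Omega_\tau$ is a product, $\Lambda_{\Omega_\tau}$ kills it.
\item For the second kind, $\partial_{h^{(-2)}}\alpha$ is an $SU(2)$-invariant $(1,1)$-form with values in $\mathcal{O}(-2)$. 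Dividing by the invariant form $\omega_{\mathrm{FS}}$ gives an $SU(2)$-invariant section of $\mathcal{O}(-2)$, which vanishes by Proposition \ref{prop 3.2}(1).
\end{itemize}
Alternatively, one can check this directly in the coordinate formula for $\alpha$. So $\sqrt{-1}\Lambda_{\Omega_\tau}F_{h_E}$ is diagonal.

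\textbf{Diagonal blocks.} Use $\Lambda_{\Omega_\tau}=\frac{8\pi}{\tau}\Lambda_{\omega_X}+\Lambda_{\omega_{\mathrm{FS}}}$ on the respective factors, and note that $\sqrt{-1}\Lambda_{\omega_{\mathrm{FS}}}F_{h^{(-2)}}$ is an explicit constant, proportional to $-2$ in the normalization of $\omega_{\mathrm{FS}}$. From the coordinate formulas for $\alpha,\beta$, the form $\alpha\wedge\beta$ is $SU(2)$-invariant, so $\sqrt{-1}\Lambda_{\omega_{\mathrm{FS}}}(\alpha\wedge\beta)=\kappa$ is a constant. With $c=\tau^{-1/2}$, the $\tau$ factors cancel against $\frac{8\pi}{\tau}$ and the equation becomes
\begin{align*}
\tfrac{8\pi}{\tau}\sqrt{-1}\Lambda_{\omega_X}(F_{h_L}+F_{h_X})+\kappa_{(-2)}+\epsilon\kappa\tfrac{1}{\tau}|\phi|^2_{h_L}&=\lambda,\\
\tfrac{8\pi}{\tau}\sqrt{-1}\Lambda_{\omega_X}F_{h_X}-\epsilon\kappa\tfrac{1}{\tau}|\phi|^2_{h_L}&=\lambda .
\end{align*}
Here $\epsilon=\pm1$ is the sign coming from the $-p^\ast h_X$ entry; that minus sign in $B$ is exactly what produces the exotic sign. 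Subtracting the two lines gives
\[
\sqrt{-1}\Lambda F_{h_L}+\tfrac{\epsilon\kappa}{4\pi}|\phi|^2_{h_L}+\tfrac{\tau}{8\pi}\kappa_{(-2)}=0,
\]
which should be exactly Manton's exotic $\tau$-vortex equation \eqref{eq 1.2} once the constants are checked, and hence holds by hypothesis. This constant check is the main obstacle: one must verify with the Fubini--Study normalization that $\epsilon\kappa/4\pi=-1/2$ and $\frac{\tau}{8\pi}\kappa_{(-2)}=-\tau/2$. This is what fixes $c=\sqrt{1/\tau}$ and the factor $\tau/8\pi$ in $\Omega_\tau$, so I would compute $\kappa$ and $\kappa_{(-2)}$ first in the chart $\mathbb{C}_z$.

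\textbf{Choosing $h_X$.} It remains to satisfy the second line. Pick $\lambda$ so that $\frac{\tau}{8\pi}\lambda+\frac{\epsilon\kappa}{8\pi}|\phi|^2_{h_L}$ has integral $0$ over $X$; this choice of $\lambda$ is consistent with $2\lambda$ matching $\deg_{\Omega_\tau}E$ in Lemma \ref{lem 3.6}. Then write $h_X=e^{2u}$, so that $\sqrt{-1}\Lambda F_{h_X}=\triangle u$ as in Proposition \ref{prop 1.2}. The equation $\triangle u=\text{(zero-mean function)}$ is solvable by Hodge theory, as in the construction of $v_h$ in Section \ref{sec 1.3}. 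With this $h_X$ both diagonal entries equal $\lambda$, which proves the proposition.
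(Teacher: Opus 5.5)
Your proposal is correct and follows essentially the paper's route: the block computation of $\sqrt{-1}\Lambda_{\Omega_\tau}F_{h_E}$ from Lemma \ref{lem 3.5}, subtracting the diagonal entries to recover \eqref{eq 1.2}, and Hodge theory for $h_X$. The constants you defer do check out with $\int_{\mathbb{P}^1}\omega_{\mathrm{FS}}=1$: $\epsilon=1$ because of the $-p^\ast h_X$ entry, $\alpha\wedge\beta=-\frac{2\pi}{\sqrt{-1}}\omega_{\mathrm{FS}}$ gives $\kappa=-2\pi$, and $\kappa_{(-2)}=-4\pi$, exactly as in \eqref{eq 3.8}--\eqref{eq 3.9}.

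Solving your second diagonal equation directly with $\lambda=\frac{2\pi}{\tau}\int_X|\phi|^2_{h_L}$ is equivalent to the paper's use of the summed equation with $\lambda$ fixed by Lemma \ref{lem 3.6}; the two values agree after integrating \eqref{eq 1.2}. Your handling of the off-diagonal blocks is actually more careful than the paper's, which only remarks that $F_{12},F_{21}$ are mixed: you note that the pure $\mathbb{P}^1$ term $p^\ast\phi\otimes q^\ast\partial_{h^{(-2)}}\alpha$ vanishes by invariance.
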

\begin{proof}
We first show that the constant $\lambda$ is uniquely determined when$\nabla_{h_E}$ is pseudo-Hermitian-Yang-Mills.
By Section \ref{sec 3.1.4} we have
\begin{align*}
\mathrm{deg}_{\Omega_\tau} E&=\frac{\sqrt{-1}}{2\pi}\int_{X\times\mathbb{P}^1}\Lambda_{\Omega_\tau}\mathrm{Tr}F_{h_E}\frac{\Omega_\tau^2}{2}\\
&=\frac{1}{2\pi}\int_{X\times \mathbb{P}^1} 2\lambda\cdot\frac{\tau}{8\pi} p^\ast\omega_{X}\wedge q^\ast \omega_{FS}\\
&=\frac{1}{\pi}\frac{\lambda\cdot \tau}{8\pi}.
\end{align*}
Then by Lemma \ref{lem 3.6}
\[
\mathrm{deg}_{\Omega_\tau} E=\mathrm{deg}L-\frac{\tau}{4\pi}.
\]
Therefore
\be\label{eq 3.7}
\frac{\lambda\cdot \tau}{8\pi}=\pi\cdot \bigg(\mathrm{deg}L-\frac{\tau}{4\pi}\bigg)
\ee
\par
Let $h_X$ be a Hermitian metric on $\mathbb{C}_X$ and let $h_E$ be a pseudo-Hermitian metric of signature (1,1) defined as \eqref{eq 3.4}.
Since $(\overline\partial_E)^2=0$, $(\partial_{h_E})^2=0$ and therefore
\[
F_{h_E}=\overline\partial_E\partial_{h_E}+\partial_{h_E}\overline\partial_E.
\]
We set
\[
F_{h_E}=
\begin{pmatrix}
F_{11}&F_{12}\\
F_{21}&F_{22}
\end{pmatrix}.
\]
Then by \eqref{eq 3.3} and \eqref{eq 3.5} we have 
\[
\begin{split}
F_{11}&=p^\ast F_{h_Xh_L}+q^\ast F_{h^{(-2)}}+\frac{1}{\tau}p^\ast|\phi|^2_{h_L}\otimes q^\ast(\alpha\wedge \beta)\\
&=p^\ast F_{h_L}+p^\ast F_{h_X}+q^\ast F_{h^{(-2)}}-\frac{2\pi}{\sqrt{-1}}\cdot \frac{1}{\tau}p^\ast|\phi|^2_{h_L}q^\ast\omega_{FS}.
\end{split}
\]
The last equation follows from the definition of $\alpha$ and $\beta$ (See Section \ref{sec 3.1.1}).
Therefore we have 
\be\label{eq 3.8}
\begin{split}
\sqrt{-1}\Lambda_{\Omega_\tau}F_{11}&=\sqrt{-1}\frac{8\pi}{\tau}p^\ast \Lambda_{\omega_X}(F_{h_L}+F_{h_X})-4\pi-\frac{2\pi}{\tau}p^\ast|\phi|^2_{h_L}.
\end{split}
\ee
We also have 
\[
\begin{split}
F_{22}&=p^\ast F_{h_X}-\frac{1}{\tau}p^\ast|\phi|^2_{h_L}\otimes q^\ast(\beta\wedge \alpha)\\
&=p^\ast F_{h_X}+\frac{2\pi}{\sqrt{-1}}\cdot \frac{1}{\tau}|\phi|^2_{h_L}q^\ast \omega_{\mathrm{FS}}.
\end{split}
\]
Therefore
\be\label{eq 3.9}
\sqrt{-1}\Lambda_{\Omega_\tau}F_{22}=\sqrt{-1}\frac{8\pi}{\tau}p^\ast \Lambda_{\omega_X}F_{h_X}+\frac{2\pi}{\tau}p^\ast|\phi|^2_{h_L}.
\ee

Since $F_{12},F_{21}$ has mixed contribution from $X$ and $\mathbb{P}^1$ we have

\[
\sqrt{-1}\Lambda_{\Omega_\tau}F_{h_E}
=
\begin{pmatrix}
\sqrt{-1}\Lambda_{\Omega_\tau}F_{11}&0\\
0&\sqrt{-1}\Lambda_{\Omega_\tau}F_{22} \\
\end{pmatrix}.
\]
Hence by \eqref{eq 3.8} and \eqref{eq 3.9} the equation
\[
\sqrt{-1}\Lambda_{\Omega_\tau}F_{h_E}=\lambda \mathrm{Id}_E
\]
is equivalent to 
\begin{align}
\sqrt{-1}\frac{8\pi}{\tau}\Lambda_{\omega_X}(F_{h_L}+F_{h_X})-4\pi-\frac{2\pi}{\tau}|\phi|^2_{h_L}&=\lambda,\label{eq 3.10}
\\
\sqrt{-1}\frac{8\pi}{\tau} \Lambda_{\omega_X}F_{h_X}+\frac{2\pi}{\tau}|\phi|^2_{h_L}&=\lambda.\label{eq 3.11}
\end{align}
The equations \eqref{eq 3.10} and \eqref{eq 3.11} are equivalent to 
\begin{align}
\sqrt{-1}\frac{8\pi}{\tau}\Lambda_{\omega_X}F_{h_L}-4\pi-\frac{4\pi}{\tau}|\phi|^2_{h_L}&=0,\label{eq 3.12}
\\
\sqrt{-1}\frac{8\pi}{\tau} \Lambda_{\omega_X}F_{h_L}+\sqrt{-1}\frac{16\pi}{\tau} \Lambda_{\omega_X}F_{h_X}-4\pi&=2\lambda,\label{eq 3.13}
\end{align}
since \eqref{eq 3.12} is obtained by \eqref{eq 3.10}-\eqref{eq 3.11} and  \eqref{eq 3.13} is obtained by \eqref{eq 3.10}+\eqref{eq 3.11}.
\par
\eqref{eq 3.12} holds since $(\overline\partial_L,\phi,h_L)\in \mathrm{Tri}_\tau$.
We show that there exists a Hermitian metric $h_X$ such that  \eqref{eq 3.13}  holds.
Recall that 
\[
\sqrt{-1}\Lambda_{\omega_X}F_{h_X}=\frac{1}{2}\triangle\mathrm{log}h_X.
\]
Here $\triangle$ is the Hodge Laplacian of $X$.
Since $h_X$ is a smooth positive function on $X$, there exists a function $f$ such that $h_X=e^{f}$.
Hence, the existence of the Hermitian metric $h_X$ that satisfies  \eqref{eq 3.13}  is equivalent to the existence of a function $f$ that satisfies 
\be\label{eq 3.14}
\triangle f=\frac{\lambda\cdot \tau}{8\pi}+\frac{\tau}{4}-\frac{1}{2}\cdot\sqrt{-1}\Lambda_{\omega_X}F_{h_L}.
\ee 
By the classical Hodge theory \cite{Wa}, the existence of the solution of \eqref{eq 3.14} is equivalent to
\[
\int_{X}\bigg(\frac{\lambda\cdot \tau}{8\pi}+\frac{\tau}{4}-\frac{1}{2}\cdot\sqrt{-1}\Lambda_{\omega_X}F_{h_L}\bigg)=0.
\]
This equation holds since 
\begin{align*}
\int_{X}\bigg(\frac{\lambda\cdot \tau}{8\pi}+\frac{\tau}{4}-\frac{1}{2}\cdot\sqrt{-1}\Lambda_{\omega_X}F_{h_L}\bigg)&=\frac{\lambda\cdot \tau}{8\pi}+\frac{\tau}{4}-\frac{1}{2}\int_X\sqrt{-1}\Lambda_{\omega_X}F_{h_L}\\
&=\pi\cdot\mathrm{deg}L-\frac{\tau}{4}+\frac{\tau}{4}-\pi\cdot\mathrm{deg}L\\
&=0.
\end{align*}
The second equation follows from \eqref{eq 3.7}.
Let $f$ be a solution of \eqref{eq 3.14}.
Then $h_X:=e^f$ is the desired Hermitian metric.
\end{proof}
As a summary of the above discussion, we have 
\begin{thm}\label{thm 3.1}
Let $X$ be a compact connected Riemann surface and let  $\omega_X$ be a K\"ahler form of $X$ satisfying $\int_X\omega_X=1$.
Let $(L,\overline\partial_L)$ be a holomorphic line bundle over $X$, $\phi$ a holomorphic section  of $(L,\overline\partial_L)$, and $h_L$ a Hermitian metric on $L$.
Assume that the triple $((L,\overline\partial_L),\phi,h_L)$ satisfies Manton's exotic $\tau$-vortex equation with $\tau>0$.
Then 
\begin{align*}
E:=&p^\ast L\otimes q^\ast \mathcal{O}(-2)\oplus \underline{\mathbb{C}}_{X\times\mathbb{P}^1},\\
\overline\partial_{E}:=&
\begin{pmatrix}
p^\ast \overline\partial_L\otimes \mathrm{Id}+\mathrm{Id}\otimes q^\ast \overline\partial_{\mathcal{O}(-2)}& \sqrt{\frac{1}{\tau}}\cdot p^\ast \phi\otimes q^\ast \alpha\\
0&\bar\partial_{X\times\mathbb{P}^1}
\end{pmatrix}.
\end{align*}
is an $SU(2)$-equivariant holomorphic bundle over $X\times \mathbb{P}^1$.
We fix a K\"ahler form on $X\times\mathbb{P}^1$ given by
\[
\Omega_\tau=\bigg(\frac{\tau}{8\pi}p^\ast\omega_X\bigg)\oplus q^\ast \omega_{\mathrm{FS}}.
\]
Then there exists a Hermitian metric $h_X$ on $\underline{\mathbb{C}}_X$ such that the pseudo-Hermitian metric of signature (1,1)
\begin{align*}
h_E=
\begin{pmatrix}
p^\ast (h_Xh_L)\otimes q^\ast h^{(-2)}&0\\
0& -p^\ast h_X
\end{pmatrix}
\end{align*}
is a pseudo-Hermitian-Einstein metric of $(E,\overline\partial_E)$ with respect to the K\"ahler form $\Omega_\tau$.
Equivalently, the unique connection $\nabla_{h_E}$ which is compatible with $h$ and $\nabla^{0,1}_{h_E}=\bar\partial_E$ is a pseudo-Hermitian-Yang-Mills connection.
\end{thm}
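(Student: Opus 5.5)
This theorem is a packaging of results already proved in Section \ref{sec 3.2.2}, so the plan is to assemble them in order rather than to do new analysis.

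\textbf{Step 1: equivariant holomorphic bundle.} For the first assertion, that $(E,\overline\partial_E)$ is an $SU(2)$-equivariant holomorphic bundle, we apply Proposition \ref{prop 3.3} with $c=\sqrt{1/\tau}$. This value is well defined and real because $\tau>0$. The content there is that $(\overline\partial_E)^2=0$. This holds because $\overline\partial_L\phi=0$ and $\overline\partial_{\mathcal{O}(-2)}\alpha=0$, and because $\alpha$ is $SU(2)$-invariant by Proposition \ref{prop 3.2}.

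\textbf{Step 2: the metric and its connection.} We write down $h_E$ as in \eqref{eq 3.4}. It has signature $(1,1)$ because $h_Xh_L\otimes h^{(-2)}>0$ and $-h_X<0$. It is $SU(2)$-invariant because $h^{(-2)}$ is. Proposition \ref{prop 3.1} gives the unique compatible connection $\nabla_{h_E}$ with $\nabla^{0,1}_{h_E}=\overline\partial_E$. Lemma \ref{lem 3.5} gives its $(1,0)$-part explicitly. The off-diagonal term there carries a sign flip coming from the $-h_X$ entry, and this is what produces the exotic sign.

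\textbf{Step 3: reduce the Einstein condition to two scalar equations.} We compute $F_{h_E}=\overline\partial_E\partial_{h_E}+\partial_{h_E}\overline\partial_E$ blockwise. The off-diagonal blocks are of mixed type $X\times\mathbb{P}^1$, so they are killed by $\Lambda_{\Omega_\tau}$. The diagonal blocks then give \eqref{eq 3.10} and \eqref{eq 3.11}, where $\lambda$ is forced by the degree computation of Lemma \ref{lem 3.6} to satisfy \eqref{eq 3.7}. Taking the difference and the sum turns these into \eqref{eq 3.12} and \eqref{eq 3.13}.

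\textbf{Step 4: solve for $h_X$.} Multiplying \eqref{eq 3.12} by $\tau/(8\pi)$ gives exactly Manton's exotic $\tau$-vortex equation \eqref{eq 1.2} for $h_L$, so it holds by hypothesis. Equation \eqref{eq 3.13} is a linear Poisson equation \eqref{eq 3.14} for $f=\log h_X$. We solve it by Hodge theory, as in Proposition \ref{prop 3.4}. The solvability condition is that the right-hand side integrates to zero, and with $\int_X\omega_X=1$ this is precisely the value of $\lambda$ fixed by \eqref{eq 3.7}. Setting $h_X=e^f$ then gives the metric claimed in the theorem.

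\textbf{Main obstacle.} The delicate point is the bookkeeping of constants and signs in Step 3. One must check that $\alpha\wedge\beta$ equals $-(2\pi/\sqrt{-1})\,\omega_{\mathrm{FS}}$ under the chosen normalizations of $\alpha$, $\beta$ and $h^{(-2)}$. One must also check that the factor $8\pi/\tau$ in $\Omega_\tau$ together with $c^2=1/\tau$ makes the $|\phi|^2$ and $\tau$ coefficients in \eqref{eq 3.12} appear in the exact ratio of \eqref{eq 1.2}. If a constant were off, we would adjust $c$ or the scaling of $\Omega_\tau$. With the stated choices, however, Proposition \ref{prop 3.4} already verifies this, so the theorem follows.
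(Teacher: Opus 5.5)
Your proposal is correct and is essentially the paper's own proof. Theorem~\ref{thm 3.1} is stated there as a summary of Proposition~\ref{prop 3.3}, Lemma~\ref{lem 3.5}, Lemma~\ref{lem 3.6} and Proposition~\ref{prop 3.4}, and you assemble exactly these: \eqref{eq 3.12} is the vortex equation, and \eqref{eq 3.13} is a Poisson equation for $\log h_X$ whose solvability condition is \eqref{eq 3.7}. One step that you, like the paper, leave implicit is that the off-diagonal blocks of $F_{h_E}$ really are of mixed type. $F_{12}$ also contains the pure $\mathbb{P}^1$ term $c\,p^\ast\phi\otimes q^\ast(\partial_{h^{(-2)}}\alpha)$, and $F_{21}$ its $h_E$-adjoint. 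This term vanishes because $\partial_{h^{(-2)}}\alpha$ is an $SU(2)$-invariant section of $\Lambda^{1,1}\otimes\mathcal{O}(-2)\cong\mathcal{O}(-2)$, hence zero by Proposition~\ref{prop 3.2}(1); it can also be checked directly from the explicit $\alpha$ and $h^{(-2)}$.
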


We next construct a solution of Manton's exotic $\tau$-vortex ($\tau>0$) equation from an $SU(2)$-equivariant holomorphic bundle and a $SU(2)$-invariant pseudo-Hermitian-Einstein metric of signature (1,1).
We call this procedure dimensional reduction.\par
Let $(L,\bar\partial_L)$ be a holomorphic line bundle over $X$.
Let $E$ be a holomorphic bundle on $X\times\mathbb{P}^1$ that is an extension of $\underline{\mathbb{C}}_{X\times \mathbb{P}^1}$ by $p^\ast L \otimes q^\ast\mathcal{O}(-2)$.
Thus, there exists a short exact sequence of holomorphic vector bundles
\[
0\longrightarrow p^\ast L \otimes q^\ast\mathcal{O}(-2)\longrightarrow E\longrightarrow \underline{\mathbb{C}}_{X\times \mathbb{P}^1}\longrightarrow 0.
\]
Since this short exact sequence is also a short exact sequence of smooth vector bundles, the sequence splits in the smooth category, i.e.
\[
E\simeq p^\ast L \otimes q^\ast\mathcal{O}(-2)\oplus\underline{\mathbb{C}}_{X\times \mathbb{P}^1}
\]
as smooth bundles.
From now on, we identify $E$ and $p^\ast L \otimes q^\ast\mathcal{O}(-2)\oplus \underline{\mathbb{C}}_{X\times \mathbb{P}^1}$.
Let $\overline\partial_E$ be the Dolbeault operator of $E$.
Since $E$ given by an extension, $\overline\partial_E$ can be written in the form of 
\[
\overline\partial_E=
\begin{pmatrix}
p^\ast \overline\partial_L\otimes \mathrm{Id}+\mathrm{Id}\otimes q^\ast \overline\partial_{\mathcal{O}(-2)}& \psi\\
0&\overline\partial_{X\times\mathbb{P}^1}
\end{pmatrix}
\]
where $\psi\in A^{0,1}(\mathrm{Hom}(\underline{\mathbb{C}}_{X\times \mathbb{P}^1}, p^\ast L \otimes q^\ast\mathcal{O}(-2))$.\par
The bundle $E$ is $SU(2)$-equivariant with respect to the $SU(2)$-action induced by that on $\mathcal{O}(-2)$.
Moreover, if $(E,\bar\partial_E)$ is an $SU(2)$-equivariant holomorphic bundle, then $\psi$ has a specific form as follows.
\begin{prop}\label{prop 3.5}
If $(E,\bar\partial_E)$ is an $SU(2)$-equivariant holomorphic bundle, then
\begin{itemize}
\item[(1)] there exists a $\phi\in A(L)$ such that 
\[
\psi=p^\ast\phi\otimes q^\ast\alpha.
\]
\item[(2)] $\phi$ is a holomorphic section of $(L,\bar\partial_L)$.
\end{itemize}
\end{prop}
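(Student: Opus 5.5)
The plan is to read off the $SU(2)$-equivariance of $\overline\partial_E$ as an invariance condition on the off-diagonal term $\psi$. Then $\psi$ can be decomposed along $X\times\mathbb{P}^1$ and Proposition \ref{prop 3.2} applied fibrewise.

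\emph{Step 1: invariance of $\psi$.} Every $g\in SU(2)$ acts holomorphically on $E$ and preserves the smooth splitting, since it acts diagonally: through $\Psi_g$ on $q^\ast\mathcal{O}(-2)$ and trivially on the factors pulled back from $X$. Because the diagonal part of $\overline\partial_E$ is already $SU(2)$-equivariant, $g\circ\overline\partial_E\circ g^{-1}=\overline\partial_E$ reduces to $g\cdot\psi=\psi$. Here $\psi$ is viewed as a section of
\[
\bigl(T^{\ast 0,1}X\oplus T^{\ast 0,1}\mathbb{P}^1\bigr)\otimes p^\ast L\otimes q^\ast\mathcal{O}(-2),
\]
since $\mathrm{Hom}(\underline{\mathbb{C}},\cdot)$ is the identity. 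We decompose $\psi=\psi_X+\psi_{\mathbb{P}^1}$ into its $p^\ast A^{0,1}(X)$ and $q^\ast A^{0,1}(\mathbb{P}^1)$ components. Each component is separately $SU(2)$-invariant, because $SU(2)$ preserves the product decomposition.

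\emph{Step 2: fibrewise reduction.} Fix $x\in X$ and restrict to $\{x\}\times\mathbb{P}^1$. After choosing a basis of $L_x\otimes T^{\ast0,1}_xX$, the component $\psi_X|_{\{x\}\times\mathbb{P}^1}$ becomes an $SU(2)$-invariant section of $\mathcal{O}(-2)$, so it vanishes by Proposition \ref{prop 3.2}(1). Likewise, $\psi_{\mathbb{P}^1}|_{\{x\}\times\mathbb{P}^1}$ is an invariant element of $L_x\otimes A^{0,1}(\mathcal{O}(-2))$. By Proposition \ref{prop 3.2}(2) it equals $\phi(x)\,\alpha$ for a unique $\phi(x)\in L_x$. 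Concretely, $\phi(x)$ can be obtained by pairing with the everywhere nonvanishing $\alpha$, so $\phi(x)=\psi_{\mathbb{P}^1}/\alpha$ evaluated at any point of $\mathbb{P}^1$. This shows that $\phi$ depends smoothly on $x$, and hence $\phi\in A(L)$ with $\psi=p^\ast\phi\otimes q^\ast\alpha$. This proves (1).

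\emph{Step 3: holomorphicity.} Integrability $(\overline\partial_E)^2=0$ forces the off-diagonal entry $(\ast)$ computed in Proposition \ref{prop 3.3} to vanish, i.e.
\[
(p^\ast\overline\partial_L\otimes\mathrm{Id}+\mathrm{Id}\otimes q^\ast\overline\partial_{\mathcal{O}(-2)})(p^\ast\phi\otimes q^\ast\alpha)=0.
\]
Since $\overline\partial_{\mathcal{O}(-2)}\alpha=0$ (it is a $(0,2)$-form on a curve), this reduces to $p^\ast(\overline\partial_L\phi)\wedge q^\ast\alpha=0$. The form $\alpha$ is nowhere zero and the two factors are $(0,1)$-forms in independent directions, so $\overline\partial_L\phi=0$. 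This proves (2).

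The main obstacle I expect is Step 1: justifying that equivariance of the holomorphic structure translates exactly into invariance of $\psi$. This requires fixing the identification of $E$ with the split bundle compatibly with the $SU(2)$-action, which may require averaging the smooth splitting over the compact group $SU(2)$. I would handle it by choosing an $SU(2)$-invariant splitting first, via averaging a projection, and then arguing as above.
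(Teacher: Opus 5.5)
Your proposal is correct and follows essentially the same route as the paper. It shows $\psi$ is invariant, splits it along $p^\ast T^{0,1\ast}X\oplus q^\ast T^{0,1\ast}\mathbb{P}^1$, kills the $X$-component and identifies the $\mathbb{P}^1$-component with $\phi_x\alpha$ fibrewise via Proposition~\ref{prop 3.2}, and reads off $\overline\partial_L\phi=0$ from the off-diagonal entry of $(\overline\partial_E)^2=0$ using that $\alpha$ is nowhere zero.

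The averaging step you anticipate is unnecessary here: the paper defines the $SU(2)$-action directly on the split bundle $p^\ast L\otimes q^\ast\mathcal{O}(-2)\oplus\underline{\mathbb{C}}_{X\times\mathbb{P}^1}$, so the splitting is invariant by construction. Your explicit $\phi(x)=\psi_{\mathbb{P}^1}/\alpha$ argument for smoothness is a slightly cleaner justification than the paper's one-line remark.
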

\begin{proof}
$(1)$ was essentially proved in \cite[3.5. Proposition. (b)]{GP2}.
We give a proof for convenience.
Since we assumed that $(E,\overline\partial_E)$ is an $SU(2)$-equivariant holomorphic bundle, we have 
\begin{align*}
 \psi\in A^{0,1}(\mathrm{Hom}(\underline{\mathbb{C}}_{X\times \mathbb{P}^1}, p^\ast L \otimes q^\ast\mathcal{O}(-2))^{SU(2)}.
 \end{align*}
 Then by
 \[
 T^{0,1\ast}(X\times \mathbb{P}^1)=p^\ast T^{0,1\ast}X\oplus q^\ast T^{0,1\ast}\mathbb{P}^1
 \]
 we have 
 \begin{align*}
 A^{0,1}(\mathrm{Hom}(\underline{\mathbb{C}}_{X\times \mathbb{P}^1}, p^\ast L \otimes q^\ast\mathcal{O}(-2))^{SU(2)}=&A((p^\ast T^{0,1\ast}X\oplus q^\ast T^{0,1\ast}\mathbb{P}^1)\otimes (p^\ast L \otimes q^\ast\mathcal{O}(-2)))^{SU(2)}\\
 =&A(p^\ast T^{0,1\ast}X\otimes p^\ast L \otimes q^\ast\mathcal{O}(-2))^{SU(2)}\\
 &\oplus A(q^\ast T^{0,1\ast}\mathbb{P}^1\otimes p^\ast L \otimes q^\ast\mathcal{O}(-2))^{SU(2)}.
 \end{align*}
Let 
\[
v\in A(p^\ast T^{0,1\ast}X\otimes p^\ast L \otimes q^\ast\mathcal{O}(-2))^{SU(2)}.
\]
For arbitrarily $x\in X$, $v|_{\{x\}\times\mathbb{P}^1}$ is an $SU(2)$-equivariant section of $\mathcal{O}(-2)$.
However, every such section is the zero section and therefore $v=0$.
Hence 
\[
A^{0,1}(\mathrm{Hom}(\underline{\mathbb{C}}_{X\times \mathbb{P}^1}, p^\ast L \otimes q^\ast\mathcal{O}(-2))^{SU(2)}=A(q^\ast T^{0,1\ast}\mathbb{P}^1\otimes p^\ast L \otimes q^\ast\mathcal{O}(-2))^{SU(2)}
\]
and therefore
\[
\psi\in A(q^\ast T^{0,1\ast}\mathbb{P}^1\otimes p^\ast L \otimes q^\ast\mathcal{O}(-2))^{SU(2)}.
\]
For arbitrarily $x\in X$
\[
\psi|_{\{x\}\times\mathbb{P}^1}\in A(T^{0,1\ast}\mathbb{P}^1\otimes \mathcal{O}(-2))^{SU(2)}=A^{0,1}(\mathcal{O}(-2))^{SU(2)}.
\]
Then by Proposition \ref{prop 3.2}, there exist a constant $\phi_x\in\mathbb{C}$ such that 
\[
\psi|_{\{x\}\times\mathbb{P}^1}=\phi_x\otimes q^\ast \alpha.
\]
Since $\psi$ is a smooth section, $\phi_x$ varys smooth with respect to $x$ and defines a section $\phi\in A(L)$.
Hence
\[
\psi=p^\ast\phi\otimes q^\ast\alpha.
\]
We now show $(2).$
Since $(\bar\partial_E)^2=0$, the off diagonal part of it is 
\[
(p^\ast \overline\partial_L\otimes \mathrm{Id}+\mathrm{Id}\otimes q^\ast \overline\partial_{\mathcal{O}(-2)})(p^\ast \phi\otimes q^\ast \alpha)+( p^\ast \phi\otimes q^\ast \alpha) \bar\partial_{X\times\mathbb{P}^1}=0.
\]
This equation is equivalent to 
\[
p^\ast(\overline\partial_L\phi)\otimes q^\ast\alpha=0.
\]
Since $\alpha$ is everywhere non-zero, we have
\[
\overline\partial_L\phi=0.
\]
Therefore $\phi$ is a holomorphic section of $(L,\overline\partial_L)$.
\end{proof}
Since multiplying a holomorphic section by a nonzero constant preserves holomorphicity, $\sqrt{\tau}\phi$ is also holomorphic.
Thus, after replacing $\sqrt{\tau}\phi$ by $\phi$, we may write
\[
\psi
=
\sqrt{\frac{1}{\tau}}\cdot p^\ast\phi\otimes q^\ast\alpha.
\]
We keep the normalization of $\alpha$ fixed.\par

We next study the structure of $SU(2)$-invariant pseudo-Hermitian metric on $E$.
\begin{prop}\label{prop 3.6}
Let $h_E$ be an $SU(2)$-invariant pseudo-Hermitian metric on $E$.
Then there exists a Hermitian metrics $h_L$ of $L$, $h_X$ of $\mathbb{C}_X$ such that
\[
h_E
=
\begin{pmatrix}
C_0\cdot p^\ast h_L\otimes q^\ast h^{(-2)}&0\\
0& C_1\cdot p^\ast h_X
\end{pmatrix},\,\,C_0,C_1\in\{1,-1\}.
\]
\end{prop}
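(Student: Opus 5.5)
Regard $h_E$ as an $SU(2)$-invariant section of $E^\vee\otimes\overline{E}^\vee$. Write $E=E_1\oplus E_2$ with $E_1=p^\ast L\otimes q^\ast\mathcal{O}(-2)$ and $E_2=\underline{\mathbb{C}}_{X\times\mathbb{P}^1}$, and decompose $h_E$ into blocks $h_{ij}\in A(E_i^\vee\otimes\overline{E_j}^\vee)$. Each block is $SU(2)$-invariant, because the $SU(2)$-action on $E$ preserves this splitting: it acts on $\mathcal{O}(-2)$ and trivially on everything pulled back from $X$. The plan is to restrict to each slice $\{x\}\times\mathbb{P}^1$ and apply Lemma \ref{lem 3.2} and Lemma \ref{lem 3.3} there.

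For the off-diagonal block $h_{12}$, fix $x\in X$. The restriction $h_{12}|_{\{x\}\times\mathbb{P}^1}$ is an $SU(2)$-invariant section of
\[
(L_x^\vee\otimes\mathcal{O}(2))\otimes\overline{\underline{\mathbb{C}}}^\vee .
\]
The conjugate of the trivial bundle is trivial, so after fixing a basis of $L_x$ this is an $SU(2)$-equivariant section of a bundle of weight $\pm 2\neq 0$. By Lemma \ref{lem 3.2}(1), equivalently Proposition \ref{prop 3.2}(1), it vanishes. Hence $h_{12}=0$, and $h_{21}=\overline{h_{12}}^t=0$ as well.

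For the diagonal blocks, $h_{11}$ and $h_{22}$ are then non-degenerate, since $h_E$ is, and Hermitian. So they are pseudo-Hermitian metrics on line bundles, and therefore real-valued and nowhere zero in any local frame. Consider $h_{22}$ on $\{x\}\times\mathbb{P}^1$: it is an $SU(2)$-invariant function, hence constant by transitivity (Lemma \ref{lem 3.2}(2)), equal to some real nonzero $a(x)$. Smoothness in $x$ follows from smoothness of $h_E$. Put $C_1=\mathrm{sign}(a)$, which is constant because $X$ is connected and $a$ never vanishes, and $h_X:=|a|$. For $h_{11}$, both $h_{11}|_{\{x\}\times\mathbb{P}^1}$ and $h^{(-2)}$ (tensored with any nonzero Hermitian form on the line $L_x$) are $SU(2)$-invariant pseudo-Hermitian metrics on the same weight bundle. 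Lemma \ref{lem 3.3} therefore gives $h_{11}|_{\{x\}\times\mathbb{P}^1}=b_x\, h^{(-2)}$, where $b_x$ is a nonzero Hermitian form on $L_x$, i.e. a nonzero real multiple of a fixed positive one. Letting $x$ vary, $b$ is a smooth real nondegenerate Hermitian form on $L$. Set $C_0=\mathrm{sign}(b)$, again constant by connectedness, and $h_L:=C_0 b$, which is a Hermitian metric on $L$. This yields the stated form. If one wants the form appearing in Theorem \ref{thm 3.2}, rewrite $h_L$ as $h_X\cdot(h_X^{-1}h_L)$.

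The main obstacle is making the slicewise argument rigorous with smooth dependence on $x$. I would handle it by working in a local holomorphic frame of $L$ over an open set $U\subset X$, which turns $E|_{U\times\mathbb{P}^1}$ into $\mathcal{O}(-2)\oplus\underline{\mathbb{C}}$ with parameters. The coefficients $a(x)$ and $b(x)$ are then obtained by evaluating the blocks at a fixed point of $\mathbb{P}^1$, which makes their smoothness automatic. A secondary point is to make sure Lemma \ref{lem 3.2} applies to $\mathrm{Hom}$-type bundles with conjugates; this reduces to noting that the conjugate of a weight $m$ bundle has weight $-m$.
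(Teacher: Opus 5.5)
Your proposal is correct and follows essentially the paper's route. The paper proves the off-diagonal vanishing by the same stabilizer-weight computation at the base point $eU(1)$, transported by $SU(2)$, which is exactly the content of Lemma \ref{lem 3.2}(1) that you invoke on each slice; it then handles $h_2$ by transitivity and $h_1$ by Lemma \ref{lem 3.3} slice by slice, as you do. You also fill in points the paper leaves implicit: connectedness of $X$ is what makes $C_0,C_1$ global constants, and the coefficient of $h^{(-2)}$ on each slice is a Hermitian form on $L_x$ rather than a number.
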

\begin{proof}
We first prove that $p^\ast L\otimes q^\ast \mathcal{O}(-2)$ and $\underline{\mathbb{C}}_{X\times \mathbb{P}^1}$ are orthogonal with respect to $h_E$.
We use the same argument as in \cite{GP2}.
From now on, we view $\mathbb{P}^1$ as a homogeneous manifold $SU(2)/U(1)$ and $\mathcal{O}(-2)$ as the weight $-2$ $SU(2)$-equivariant line bundle.\par
Let $x\in X$ and $g\in SU(2)$.
We denote by $(p^\ast L\otimes q^\ast \mathcal{O}(-2))_{(x,gU(1))}$ and $\underline{\mathbb{C}}_{X\times \mathbb{P}^1,(x,gU(1))}$ the fibers of $p^\ast L\otimes q^\ast \mathcal{O}(-2)$ and $\underline{\mathbb{C}}_{X\times \mathbb{P}^1}$ over $(x,gU(1))\in X\times\mathbb{P}^1$, respectively. 
We also denote by $e$ the identity element of $SU(2)$.\par
Let $p^\ast v\otimes q^\ast u_{(x,eU(1))}$ and $w_{(x,eU(1))}$ be nonzero vectors, hence bases of $(p^\ast L\otimes q^\ast \mathcal{O}(-2))_{(x,eU(1))}$ and $\underline{\mathbb{C}}_{X\times \mathbb{P}^1,(x,eU(1))}$ respectively.
Since $h_E$ is $SU(2)$-invariant and $e^{i\theta}\in U(1)$ stabilizes $eU(1)$, we have 
\begin{align*}
h_{E,(x,eU(1))}(p^\ast v\otimes q^\ast u, w)&=h_{E,(x,eU(1))}(e^{i\theta}\cdot p^\ast v\otimes q^\ast u, e^{i\theta}\cdot w)\\
&=h_{E,(x,eU(1))}(e^{-2i\theta}p^\ast v\otimes q^\ast u_{(x,eU(1))},w_{(x,eU(1))})\\
&=e^{-2i\theta}h_{E,(x,eU(1))}(p^\ast v\otimes q^\ast u_{(x,eU(1))}, w_{(x,eU(1))}).
\end{align*}
Since this holds for all \(\theta\in\mathbb{R}\), we obtain
\[
h_{E,(x,eU(1))}(p^\ast v\otimes q^\ast u_{(x,eU(1))}, w_{(x,eU(1))})=0.
\]
Hence $(p^\ast L\otimes q^\ast \mathcal{O}(-2))_{(x,eU(1))}$ and $\underline{\mathbb{C}}_{X\times \mathbb{P}^1,(x,eU(1))}$ are orthogonal.
Let $p^\ast v\otimes q^\ast u_{(x,gU(1))}$ and $w_{(x,gU(1))}$ be bases of $(p^\ast L\otimes q^\ast \mathcal{O}(-2))_{(x,gU(1))}$ and $\underline{\mathbb{C}}_{X\times \mathbb{P}^1,(x,gU(1))}$ respectively. 
Since $g^{-1}\cdot p^\ast v\otimes q^\ast u_{(x,gU(1))}\in p^\ast L\otimes q^\ast \mathcal{O}(-2)_{(x,eU(1))}, g^{-1}\cdot w_{(x,gU(1))}\in \underline{\mathbb{C}}_{X\times \mathbb{P}^1,(x,eU(1))}$,
we have 
\begin{align*}
h_{E,(x,gU(1))}(p^\ast v\otimes q^\ast u_{(x,gU(1))}, w_{(x,gU(1))})&=h_{E,(x,gU(1))}(g\cdot g^{-1}\cdot p^\ast v\otimes q^\ast u_{(x,gU(1))},g\cdot g^{-1}\cdot  w_{(x,gU(1))})\\
&=h_{E,(x,eU(1))}(g^{-1}\cdot p^\ast v\otimes q^\ast u_{(x,gU(1))}, g^{-1}\cdot  w_{(x,gU(1))})\\
&=0.
\end{align*}
Hence $p^\ast L\otimes q^\ast \mathcal{O}(-2)$ and $\underline{\mathbb{C}}_{X\times \mathbb{P}^1}$ are orthogonal with respect to $h_E$.
Therefore, $h_E$ can be written in the form
\[
h_E
=
\begin{pmatrix}
h_1&0\\
0& h_2
\end{pmatrix}
\]
where $h_1$ and $h_2$ are Hermitian forms on $p^\ast L\otimes q^\ast \mathcal{O}(-2)$ and $\underline{\mathbb{C}}_{X\times \mathbb{P}^1}$.
Since $h_E$ is non-degenerate and $SU(2)$-invariant, $h_1$ and $h_2$ are also non-degenerate and $SU(2)$-invariant.
We first show that there exists a Hermitian metric $h_X$ on $\underline{\mathbb{C}}_X$ such that 
\[
h_2=C\cdot p^\ast h_X, \,\, C\in\{1,-1\}. 
\]
$C$ depends on whether $h_2$ is positive or negative.
Since $h_2$ is $SU(2)$-invariant and the $SU(2)$-action on $\underline{\mathbb{C}}_{X\times\mathbb{P}^1}$ is induced by the action on $\mathbb{P}^1$, for every $x\in X$ there exists a constant $a_x\in \mathbb{R}$ such that  
\[
h_2|_{\{x\}\times\mathbb{P}^1}=a_x.
\]
Since $h_2$ is non-degenerate, $a_x\in \mathbb{R}^\times$ and $a_x$ varies smoothly with respect to $x$.
Moreover, if $h_2$ is positive, then $a_x>0$ and if $h_2$ is negative, then $a_x<0$ for every $x\in X$.
If $h_2$ is positive, we define a smooth positive function on $X$ as
\[
h_X(x):=a_x 
\]
and if $h_2$ is negative, we define a smooth positive function on $X$ as
\[
h_X(x):=-a_x.
\]
We note that $h_X$ defines a Hermitian metric on $\underline{\mathbb{C}}_X$ since it is a positive function.
By construction, we obtain 
\[
h_2=C\cdot p^\ast h_X, \,\, C\in\{1,-1\}.
\]
We finally show that there exists a Hermitian metric $h_L$ of $L$ such that 
\[
h_1=C\cdot p^\ast h_L\otimes q^\ast h^{(-2)},\,\, C\in \{1,-1\}.
\]
Again, $C$ depends on whether $h_1$ is positive or negative. 
Since $h_1$ is $SU(2)$-invariant
\[
h_1|_{\{x\}\times\mathbb{P}^1}
\]
 is an $SU(2)$-invariant pseudo-Hermitian metric on $p^\ast L \otimes q^\ast\mathcal{O}(-2)|_{\{x\}\times\mathbb{P}^1}$.
 Then, since $h^{(-2)}$ is an $SU(2)$-invariant Hermitian metric on $\mathcal{O}(-2)$ by Lemma \ref{lem 3.3}, there exists an $a_x\in\mathbb{R}^\times$
 \[
 h_1|_{\{x\}\times\mathbb{P}^1}=a_x \otimes q^\ast h^{(-2)}.
 \]
 Then, by the same argument as above, $a_x$ is positive for all $x\in X$ if $h_1$ is positive and negative for all $x\in X$ if $h_1$ is negative.
 If $h_1$ is positive, then the family $(a_x)_{x\in X}$ defines a Hermitian metric $h_L$ on $L$; if $h_1$ is negative, then the family $(-a_x)_{x\in X}$ defines a Hermitian metric $h_L$ on $L$.
 By the construction of $h_L$, we have 
 \[
h_1=C\cdot p^\ast h_L\otimes q^\ast h^{(-2)},\,\, C\in \{1,-1\}.
\]
This completes the proof.
\end{proof}
The following is a consequence of Proposition \ref{prop 3.6} and a rescaling of the Hermitian metric on $L$.
\begin{lem}\label{lem 3.7}
Let $h_E$ be an $SU(2)$-invariant pseudo-Hermitian metric of signature $(1,1)$ on $E$.
Then there exist Hermitian metrics $h_L$ on $L$ and $h_X$ on $\underline{\mathbb{C}}_X$ such that 
\[
h_E
=
\begin{pmatrix}
C_0\cdot p^\ast (h_Xh_L)\otimes q^\ast h^{(-2)}&0\\
0& C_1\cdot p^\ast h_X
\end{pmatrix}
\]
where $(C_0,C_1)=(1,-1)$ or $(-1,1)$.
\end{lem}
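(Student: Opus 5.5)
We will deduce Lemma \ref{lem 3.7} directly from Proposition \ref{prop 3.6}. That proposition gives Hermitian metrics $h'_L$ on $L$ and $h_X$ on $\underline{\mathbb{C}}_X$, together with signs $C_0,C_1\in\{1,-1\}$, such that
\[
h_E=\begin{pmatrix}
C_0\cdot p^\ast h'_L\otimes q^\ast h^{(-2)}&0\\
0&C_1\cdot p^\ast h_X
\end{pmatrix}.
\]
The two remaining tasks are to restrict the sign pair using the signature hypothesis, and to absorb $h_X$ into the upper-left entry by renormalizing the metric on $L$.

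\emph{Signs.} The summands $p^\ast L\otimes q^\ast\mathcal{O}(-2)$ and $\underline{\mathbb{C}}_{X\times\mathbb{P}^1}$ are $h_E$-orthogonal, and each carries a definite form. On the first summand this form is $C_0$ times a positive definite form, since $p^\ast h'_L\otimes q^\ast h^{(-2)}$ is a Hermitian metric. On the second it is $C_1$ times the positive function $p^\ast h_X$. At each point, therefore, the signature of $h_E$ is the number of $+1$'s among $(C_0,C_1)$ paired with the number of $-1$'s. If $h_E$ has signature $(1,1)$, exactly one sign is $+1$ and the other is $-1$, so $(C_0,C_1)=(1,-1)$ or $(-1,1)$. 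The pairs $(1,1)$ and $(-1,-1)$ would give signatures $(2,0)$ and $(0,2)$ respectively.

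\emph{Rescaling.} Since $h_X$ is a smooth positive function on $X$, we define $h_L:=h_X^{-1}h'_L$. This is again a Hermitian metric on $L$, because a positive smooth multiple of a Hermitian metric is a Hermitian metric. By construction $h_Xh_L=h'_L$, so $p^\ast(h_Xh_L)\otimes q^\ast h^{(-2)}=p^\ast h'_L\otimes q^\ast h^{(-2)}$, and substituting into the matrix above gives the claimed form. There is no real obstacle here. The only point needing care is that the signs $C_0$ and $C_1$ are constant over all of $X\times\mathbb{P}^1$. This is already built into Proposition \ref{prop 3.6}, which handles it via the connectedness of $X$ and non-degeneracy: $a_x$ never vanishes, so it cannot change sign.
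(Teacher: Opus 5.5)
Your proposal is correct and takes the same route as the paper, which says only that Lemma \ref{lem 3.7} follows from Proposition \ref{prop 3.6} together with a rescaling of the Hermitian metric on $L$. You fill in the two steps the paper leaves implicit: the signature $(1,1)$ hypothesis forces $C_0=-C_1$, and setting $h_L:=h_X^{-1}h'_L$ gives $h_Xh_L=h'_L$.
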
 
We now establish the dimensional reduction.
Throughout this argument, we work with the K\"ahler form $\Omega_\tau$ constructed in \eqref{eq 3.6}.
\par
Suppose that $\overline\partial_E$ defines an $SU(2)$-equivariant holomorphic bundle structure on $E$.
Then by Proposition \ref{prop 3.5}, there exists a holomorphic section $\phi$ of $(L,\bar\partial_L)$ such that 
\[
\overline\partial_E=
\begin{pmatrix}
p^\ast \overline\partial_L\otimes \mathrm{Id}+\mathrm{Id}\otimes q^\ast \overline\partial_{\mathcal{O}(-2)}& \sqrt{\frac{1}{\tau}}\cdot p^\ast\phi\otimes q^\ast\alpha\\
0&\overline\partial_{X\times\mathbb{P}^1}
\end{pmatrix}.
\]
Let $h_E$ be as in Lemma \ref{lem 3.7}, and let $\nabla_{h_E}$ be the connection compatible with $h_E$ and satisfying $\nabla^{0,1}_{h_E}=\overline\partial_E$.
Then, by Lemma \ref{lem 3.5}, $\partial_{h_E}=\nabla^{1,0}_{h_E}$ has the form 
\[
\partial_{h_E}=
\begin{pmatrix}
p^\ast\partial_{h_Xh_L}\otimes\mathrm{Id}+\mathrm{Id}\otimes p^\ast\partial_{h^{(-2)}}&0\\
\sqrt{\frac{1}{\tau}} \cdot p^\ast (h_L\overline \phi)\otimes q^\ast \beta& p^\ast \partial_{h_X}
\end{pmatrix}.
\]
We denote the curvature of $\nabla_{h_E}$ as $F_{h_E}$.
\begin{prop}\label{prop 3.7}
If $h_E$ is a pseudo-Hermitian-Einstein metric of $(E,\overline\partial_E)$ with respect to the K\"ahler form $\Omega_\tau$, that is,
\[
\sqrt{-1}\Lambda_{\Omega_\tau}F_{h_E}=\lambda \mathrm{Id}_E
\]
for some constant $\lambda\in\mathbb{C}$. 
Then, the triple $((L,\overline\partial_L),\phi,h_L)$ satisfies Manton's exotic $\tau$-vortex equation.
\end{prop}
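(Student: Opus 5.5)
The plan is to reverse the computation in the proof of Proposition \ref{prop 3.4}. I first dispose of the sign ambiguity in Lemma \ref{lem 3.7}. If $(C_0,C_1)=(-1,1)$, then $-h_E$ has the form \eqref{eq 3.4} with $(C_0,C_1)=(1,-1)$. A connection $D$ satisfies $dh_E=h_E(D\cdot,\cdot)+h_E(\cdot,D\cdot)$ if and only if it satisfies the same identity for $-h_E$. Hence, by the uniqueness in Proposition \ref{prop 3.1}, $\nabla_{h_E}=\nabla_{-h_E}$, and the curvature $F_{h_E}$ and the pseudo-Hermitian-Einstein condition are the same for $h_E$ and $-h_E$. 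So I may assume
\[
h_E=\begin{pmatrix} p^\ast(h_Xh_L)\otimes q^\ast h^{(-2)}&0\\ 0&-p^\ast h_X\end{pmatrix},
\]
which is exactly the metric \eqref{eq 3.4}. The relative minus sign in this metric is what produces the form of $\partial_{h_E}$ recorded in Lemma \ref{lem 3.5}, with $c=\sqrt{1/\tau}$.

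Next I compute $F_{h_E}=\overline\partial_E\partial_{h_E}+\partial_{h_E}\overline\partial_E$ in block form $(F_{ij})$, exactly as in the proof of Proposition \ref{prop 3.4}.

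The off-diagonal blocks are the following.
\begin{itemize}
\item $F_{12}$ is made of terms like $\partial(p^\ast\phi\otimes q^\ast\alpha)$.
\item $F_{21}$ is made of the corresponding $\overline\partial$-terms involving $\beta$.
\end{itemize}
Each of these is a mixed form, of type $dx\wedge d\bar w$ or $d\bar x\wedge dw$ with $x$ the coordinate on $X$ and $w$ the coordinate on $\mathbb{P}^1$. Since $\Omega_\tau$ is a product form, $\Lambda_{\Omega_\tau}$ kills such mixed forms. This gives $\Lambda_{\Omega_\tau}F_{12}=\Lambda_{\Omega_\tau}F_{21}=0$. (One should also check that the purely-$\mathbb{P}^1$ parts, such as $\partial_{h^{(-2)}}\alpha$, are absent. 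This follows because $\alpha$ is holomorphic-type invariant and $\partial_{h^{(-2)}}$-parallel in the relevant sense; I expect this to be the same verification already implicit in Proposition \ref{prop 3.4}.)

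The diagonal blocks give \eqref{eq 3.8} and \eqref{eq 3.9}. The main ingredients are:
\begin{itemize}
\item the identity $\alpha\wedge\beta=-\frac{2\pi}{\sqrt{-1}}\omega_{\mathrm{FS}}$ from the explicit formulas for $\alpha$ and $\beta$;
\item $\sqrt{-1}\Lambda_{\omega_{\mathrm{FS}}}F_{h^{(-2)}}=-4\pi$, using the normalization $\int_{\mathbb{P}^1}\omega_{\mathrm{FS}}=1$;
\item $\Lambda_{\Omega_\tau}p^\ast\eta=\frac{8\pi}{\tau}\Lambda_{\omega_X}\eta$ for $2$-forms $\eta$ on $X$.
\end{itemize}

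The hypothesis $\sqrt{-1}\Lambda_{\Omega_\tau}F_{h_E}=\lambda\,\mathrm{Id}_E$ then yields \eqref{eq 3.10} and \eqref{eq 3.11}. Subtracting \eqref{eq 3.11} from \eqref{eq 3.10} eliminates both $h_X$ and $\lambda$ and gives \eqref{eq 3.12}:
\[
\frac{8\pi}{\tau}\Big(\sqrt{-1}\Lambda_{\omega_X}F_{h_L}-\tfrac12|\phi|^2_{h_L}-\tfrac{\tau}{2}\Big)=0.
\]
Here $F_{h_Xh_L}=F_{h_L}+F_{h_X}$ because the metrics are on line bundles. Multiplying by $\tau/(8\pi)$, which is allowed since $\tau>0$, gives Manton's exotic $\tau$-vortex equation \eqref{eq 1.2} for $((L,\overline\partial_L),\phi,h_L)$. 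The triple is an honest one because $\phi$ is holomorphic by Proposition \ref{prop 3.5}.

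The main obstacle is bookkeeping rather than analysis. I must get the signs right in the lower-left entry $B=-h_E^{-1}\overline{A}^th_E$, since the minus sign of the second block turns $-\overline{c}$ into $+\overline{c}$. I must also track the constant in $\alpha\wedge\beta$ relative to $\omega_{\mathrm{FS}}$, since these determine that the $|\phi|^2$ term enters with the exotic sign $-\frac12$ rather than $+\frac12$. I will recheck these two constants against the explicit local expressions of $\alpha$, $\beta$ and $h^{(\pm2)}$ on $\mathbb{C}_z$ before concluding. Note that $\lambda$ never needs to be determined here, because it cancels in the subtraction.
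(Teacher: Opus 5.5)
Your proposal is correct and is essentially the paper's proof: you read off the diagonal entries of $\sqrt{-1}\Lambda_{\Omega_\tau}F_{h_E}=\lambda\,\mathrm{Id}_E$ using \eqref{eq 3.8}--\eqref{eq 3.9} and subtract, which eliminates both $\lambda$ and $h_X$ and leaves $\frac{8\pi}{\tau}$ times Manton's exotic vortex equation. Your reduction of the case $(C_0,C_1)=(-1,1)$ via $\nabla_{h_E}=\nabla_{-h_E}$ makes explicit a point the paper passes over silently; your discussion of the off-diagonal blocks (including $\partial_{h^{(-2)}}\alpha$) is not needed for this direction, since only the diagonal entries are used.
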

\begin{proof}
We set 
\[
F_{h_E}=
\begin{pmatrix}
F_{11}&F_{12}\\
F_{21}&F_{22}.
\end{pmatrix}
\]
The equaiton
\[
\sqrt{-1}\Lambda_{\Omega_\tau}F_{h_E}=\lambda Id_E
\]
implies 
\begin{align*}
\sqrt{-1}\Lambda_{\Omega_\tau}F_{11}&=\lambda,\\
\sqrt{-1}\Lambda_{\Omega_\tau}F_{22}&=\lambda.
\end{align*}
Then, by \eqref{eq 3.10}, \eqref{eq 3.11}, \eqref{eq 3.12}, and \eqref{eq 3.13}, 
we have 
\begin{align*}
\sqrt{-1}\frac{8\pi}{\tau}\Lambda_{\omega_X}(F_{h_L}+F_{h_X})-4\pi-\frac{2\pi}{\tau}|\phi|^2_{h_L}&=\lambda,
\\
\sqrt{-1}\frac{8\pi}{\tau} \Lambda_{\omega_X}F_{h_X}+\frac{2\pi}{\tau}|\phi|^2_{h_L}&=\lambda.
\end{align*}
Subtracting the second equation from the first, we obtain
\[
\sqrt{-1}\frac{8\pi}{\tau}\Lambda_{\omega_X}F_{h_L}-4\pi-\frac{4\pi}{\tau}|\phi|^2_{h_L}=0.
\]
Hence the triple $((L,\overline\partial_L),\phi,h_L)$ is a solution of Manton's exotic $\tau$-vortex equation.
\end{proof}
Combining the results obtained above, we obtain the following dimensional reduction theorem.
\begin{thm}\label{thm 3.2}
Let $X$ be a compact connected Riemann surface and let  $\omega_X$ be a K\"ahler form of $X$ satisfying $\int_X\omega_X=1$.
Let $(L,\overline\partial_L)$ be a holomorphic line bundle over $X$.
Let $E$ be a holomorphic bundle over $X\times \mathbb{P}^1$ which is an extension of $\underline{\mathbb{C}}_{X\times \mathbb{P}^1}$ by $p^\ast L\otimes q^\ast\mathcal{O}(-2)$.
Let $\overline\partial_E$ be the Dolbeault operator of $E$.
On $X\times\mathbb{P}^1$, we consider the K\"ahler form $\Omega_\tau$ constructed in \eqref{eq 3.6}.
\par
If $(E,\bar\partial_E)$ is an $SU(2)$-equivariant holomorphic bundle, then there exists a holomorphic section $\phi$ of $(L,\bar\partial_L)$ such that 
\[
\overline\partial_E=
\begin{pmatrix}
p^\ast \overline\partial_L\otimes\mathrm{Id}+\mathrm{Id}\otimes q^\ast \overline\partial_{\mathcal{O}(-2)}& \sqrt{\frac{1}{\tau}}\cdot p^\ast\phi\otimes q^\ast\alpha\\
0&\overline\partial_{X\times\mathbb{P}^1}
\end{pmatrix}.
\]
Let $h_E$ be a pseudo-Hermitian metric of signature $(1,1)$ on $E.$
If $h_E$ is $SU(2)$-invariant, then there exist Hermitian metrics $h_L$ on $L$ and $h_X$ on $\mathbb{C}_X$ such that 
\[
h_E
=
\begin{pmatrix}
C_0\cdot p^\ast (h_Xh_L)\otimes q^\ast h^{(-2)}&0\\
0& C_1\cdot p^\ast h_X
\end{pmatrix}
\]
where $(C_0,C_1)=(1,-1)$ or $(-1,1)$.\par
Let $h_E$ and $(E,\overline\partial_E)$ be as above, and $\nabla_{h_E}$ be the connection compatible with $h_E$ and $\nabla^{0,1}_{h_E}=\overline\partial_E$.
If $h_E$ is a pseudo-Hermitian-Einstein metric of $(E,\overline\partial_E)$ with respect to $\Omega_\tau$, or equivalently, if $\nabla_{h_E}$ is a pseudo-Hermitian-Yang-Mills connection, then the triple
 $((L,\overline\partial_L),\phi,h_L)$ satisfies Manton's exotic $\tau$-vortex equation with $\tau>0$.
\end{thm}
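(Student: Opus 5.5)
The plan is to assemble Theorem \ref{thm 3.2} from Propositions \ref{prop 3.5}, \ref{prop 3.6}, \ref{prop 3.7} and Lemma \ref{lem 3.7}, checking one statement at a time.

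\textbf{Holomorphic structure.} Since $E$ is an extension, I first fix a smooth splitting $E\simeq p^\ast L\otimes q^\ast\mathcal{O}(-2)\oplus\underline{\mathbb{C}}_{X\times\mathbb{P}^1}$. In this splitting, $\overline\partial_E$ is upper triangular with off-diagonal term $\psi$. If $(E,\overline\partial_E)$ is $SU(2)$-equivariant, Proposition \ref{prop 3.5} gives $\psi=p^\ast\phi\otimes q^\ast\alpha$ with $\phi$ holomorphic. Replacing $\phi$ by $\sqrt{\tau}\phi$, which is allowed because $\tau>0$, puts $\overline\partial_E$ in the stated form.

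\textbf{Metric.} For an $SU(2)$-invariant $h_E$ of signature $(1,1)$, I would argue as follows.
\begin{itemize}
\item The $U(1)$-weight argument of Proposition \ref{prop 3.6} shows that the two summands are $h_E$-orthogonal, since their weights are $-2$ and $0$.
\item Each diagonal block is invariant and non-degenerate, so by Lemma \ref{lem 3.3} it is a nowhere-vanishing real function of $x\in X$ times the standard invariant metric. By continuity and connectedness of $X$, each such function has constant sign.
\item Signature $(1,1)$ forces the two signs to be opposite.
\item Finally, I rescale: writing the first block as $C_0\,a\,h'_L$ with $a>0$, I set $h_L:=h'_L\cdot(\text{first coefficient})/h_X$. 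This puts $h_E$ in the form of Lemma \ref{lem 3.7}.
\end{itemize}

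\textbf{Main obstacle.} The hardest step is the last assertion, and specifically the case $(C_0,C_1)=(-1,1)$. Proposition \ref{prop 3.7} was derived via Lemma \ref{lem 3.5} with the sign pattern $(1,-1)$. For $(-1,1)$ I would note that $h_E$ and $-h_E$ have the same compatible connection, because the compatibility condition is linear in $h$. Hence $\nabla_{h_E}$, $F_{h_E}$ and the pseudo-Hermitian-Einstein condition are unchanged when $h_E$ is replaced by $-h_E$, which reduces this case to $(1,-1)$.

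\textbf{Curvature computation.} For $(1,-1)$, I would compute the diagonal curvature blocks as in \eqref{eq 3.8}–\eqref{eq 3.9}. The off-diagonal blocks $F_{12},F_{21}$ have mixed type and are killed by $\Lambda_{\Omega_\tau}$. Setting $\sqrt{-1}\Lambda_{\Omega_\tau}F_{ii}=\lambda$ for $i=1,2$ and subtracting the two equations eliminates both $h_X$ and $\lambda$. This leaves \eqref{eq 3.12}, which after multiplying by $\tau/(8\pi)$ is exactly Manton's exotic $\tau$-vortex equation for $((L,\overline\partial_L),\phi,h_L)$.
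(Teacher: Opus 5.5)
Your proposal is correct and follows the paper's proof. The paper likewise assembles Proposition \ref{prop 3.5}, the rescaling $\phi\mapsto\sqrt{\tau}\phi$, Proposition \ref{prop 3.6} with Lemma \ref{lem 3.7}, and the curvature computation of Proposition \ref{prop 3.7}, subtracting \eqref{eq 3.11} from \eqref{eq 3.10}. Your observation that $h_E$ and $-h_E$ have the same compatible connection is a useful addition: the paper applies Lemma \ref{lem 3.5}, which is derived only for the sign pattern $(1,-1)$, to the $(-1,1)$ case without comment, and your argument justifies that step.
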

\subsection{$\tau<0$ case}\label{sec 3.3}
\subsubsection{$SU(1,1)$-equivariant line bundle over $\Delta$}\label{sec 3.3.1}
Recall that the group $SU(1,1)$ which is the special unitary group of signature $(1,1)$ is a non-compact Lie group defined as
\begin{equation*}
SU(1,1)=\bigg\{
\begin{pmatrix}
a&b\\
\bar b&\bar a
\end{pmatrix}\in M_2(\mathbb{C})
\,\,
\bigg|
\,\,
 |a|^2-|b|^2=1\bigg\}.
\end{equation*}
Let $\Delta:=\{z\in\mathbb{C}\,|\, |z|<1\}.$
$SU(1,1)$ acts on $\Delta$ as 
\begin{equation*}
\begin{array}{rccc}
& SU(1,1)\times\Delta &\longrightarrow& \Delta                    \\
        & \rotatebox{90}{$\in$}&               & \rotatebox{90}{$\in$} \\
        &\bigg( \begin{pmatrix}
a&b\\
\overline b&\overline{a}
\end{pmatrix}   , z  \bigg)              & \longmapsto   &  \frac{az+b}{\overline b z+\bar a}.
\end{array}
\end{equation*}
It is clear from the definition that the action is holomorphic.
Let $z\in\Delta$.
The action is transitive since 
\[
\frac{1}{\sqrt{1-|z|^2}}
\begin{pmatrix}
1&z\\
\bar z&1
\end{pmatrix}\in SU(1,1)
\]
and 
\[
\frac{1}{\sqrt{1-|z|^2}}
\begin{pmatrix}
1&z\\
\bar z&1
\end{pmatrix}\cdot 0=z.
\]
The stabilizer subgroup of $0\in \Delta$ is
\begin{equation*}
\mathrm{Stab}_0=\bigg\{
\begin{pmatrix}
e^{i\theta}&0\\
0&e^{-i\theta}
\end{pmatrix}\
\,\,
\bigg|
\,\,
\theta\in\mathbb{R}\bigg\}\simeq U(1).
\end{equation*}
Therefore we identify $\Delta$ with the homogeneous space $SU(1,1)/U(1)$.
We denote by $\underline{\mathbb{C}}_\Delta$ the trivial line bundle over $\Delta$.
The standard Dolbeault operator $\overline\partial_\Delta$ of $\Delta$ endows $\underline{\mathbb{C}}_\Delta$ with its standard holomorphic structure.
Let $e$ be the canonical holomorphic frame of $\underline{\mathbb{C}}_\Delta$, and denote by $e_z:=e(z)$ its value at $z\in\Delta$.
\par
Let 
\[
g=
\begin{pmatrix}
a&b\\
\bar b&\bar a
\end{pmatrix}
\in SU(1,1).
\]
For every $n\in\mathbb{Z}$ we define an $SU(1,1)$-action on $\underline{\mathbb{C}}_\Delta$ as 
\[
g\cdot e_z:=\frac{1}{(\bar bz+\bar a)^n}e_{g\cdot z}.
\]
We show that this defines an $SU(1,1)$-action.
Let
\[
g_i=
\begin{pmatrix}
a_i&b_i\\
\overline b_i&\overline a_i
\end{pmatrix}
\quad (i=1,2),
\]
and write
\[
g_1g_2=
\begin{pmatrix}
a_{12}&b_{12}\\
\overline b_{12}&\overline a_{12}
\end{pmatrix}.
\]
A direct computation gives
\[
\overline b_{12}z+\overline a_{12}
=
\overline b_1(a_2z+b_2)
+
\overline a_1(\overline b_2z+\overline a_2).
\]
Therefore
\begin{align*}
g_1\cdot(g_2\cdot e_z)
&=
\frac{1}{(\overline b_2z+\overline a_2)^n}
\frac{1}{\left(\overline b_1\frac{a_2z+b_2}{\overline b_2z+\overline a_2}+\overline a_1\right)^n}
e_{g_1g_2\cdot z}\\
&=\frac{1}{\bigg(\overline b_1(a_2z+b_2)+\overline a_1(\overline b_2z+\overline a_2)\bigg)^n}e_{g_1g_2\cdot z}\\
&=\frac{1}{(\overline b_{12}z+\overline a_{12})^n}e_{g_1g_2\cdot z}\\
&=(g_1g_2)\cdot e_z.
\end{align*}
It is also clear that the identity element acts trivially.
Hence this formula defines an $SU(1,1)$-action on $\underline{\mathbb{C}}_\Delta$.
This action is holomorphic. 
Hence $(\underline{\mathbb{C}}_\Delta,\overline\partial_\Delta)$, endowed with this action, is an $SU(1,1)$-equivariant holomorphic line bundle.
\par
Moreover, for
\[
k_\theta=
\begin{pmatrix}
e^{i\theta}&0\\
0&e^{-i\theta}
\end{pmatrix}
\in \mathrm{Stab}_0,
\]
we have
\[
k_\theta\cdot e_0=\frac{1}{(e^{-i\theta})^n}e_0=e^{in\theta}e_0.
\]
Thus, the induced $U(1)$-representation on the fiber over $0$ has weight $n$.
\par
For later use, we denote this $SU(1,1)$-equivariant holomorphic line bundle by $L_n$. 
Then $L_n$ is the weight $n$ $SU(1,1)$-equivariant holomorphic line bundle over $\Delta\simeq SU(1,1)/U(1)$ (See Section \ref{sec 3.1.2}).
 Its underlying holomorphic line bundle is still
\[
(\underline{\mathbb{C}}_\Delta,\overline\partial_\Delta),
\]
and we continue to denote its Dolbeault operator by $\overline\partial_\Delta$.
Although the underlying holomorphic frame is the same canonical frame $e$ of $\underline{\mathbb{C}}_\Delta$, we denote it by $e_n$ when it is regarded as a frame of
$L_n$, in order to indicate the weight. Thus the $SU(1,1)$-action on $L_n$ is written as
\[
g\cdot (e_n)_z=\frac{1}{(\overline b z+\overline a)^n}(e_n)_{g\cdot z}.
\]
\par
Since the $SU(1,1)$-action on $\Delta$ is holomorphic, it induces a natural $SU(1,1)$-action on $T^{1,0}\Delta$ by push-forward as follows.
For $v_z\in T^{1,0}_{z}\Delta$, we define
\[
g\cdot v_z:=(dg)_z(v_z)\in T^{1,0}_{g\cdot z}\Delta.
\]
We describe this induced action explicitly.
Let
\[
g=
\begin{pmatrix}
a&b\\
\overline b&\overline a
\end{pmatrix}\in SU(1,1).
\]
Since
\[
g\cdot z=\frac{az+b}{\overline b z+\overline a},
\]
we have
\[
\frac{d}{dz}(g\cdot z)=\frac{1}{(\overline b z+\overline a)^2}.
\]
Therefore
\[
g\cdot\left(\frac{\partial}{\partial z}\bigg|_z\right)
=\frac{1}{(\overline b z+\overline a)^2}\frac{\partial}{\partial z}\bigg|_{g\cdot z}.
\]
Since $T^{1,0*}\Delta$ is the dual bundle of $T^{1,0}\Delta$, the action on $T^{1,0}\Delta$ naturally induces an $SU(1,1)$-action on $T^{1,0*}\Delta$.
With respect to the frame $dz$, this induced action is given by
\[
g\cdot (dz)_z
=
(\overline b z+\overline a)^2(dz)_{g\cdot z}.
\]
In particular, for
\[
k_\theta=
\begin{pmatrix}
e^{i\theta}&0\\
0&e^{-i\theta}
\end{pmatrix}
\in \mathrm{Stab}_0,
\]
we have
\[
k_\theta\cdot (dz)_0=e^{-2i\theta}(dz)_0.
\]
Thus the induced $U(1)$-representation on $T^{1,0*}_0\Delta$ has weight $-2$.
Therefore
\[
T^{1,0\ast}\Delta\simeq L_{-2}
\]
as $SU(1,1)$-equivariant holomorphic line bundles.\par
Taking the complex conjugate of the above $SU(1,1)$-action on $T^{1,0*}\Delta$, we obtain a smooth $SU(1,1)$-action on $T^{0,1*}\Delta$ given by
\[
g\cdot (d\overline z)_z
=
(b\overline z+a)^2(d\overline z)_{g\cdot z}.
\]
In particular, for
\[
k_\theta=
\begin{pmatrix}
e^{i\theta}&0\\
0&e^{-i\theta}
\end{pmatrix}
\in \mathrm{Stab}_0,
\]
we have
\[
k_\theta\cdot (d\overline z)_0
=
e^{2i\theta}(d\overline z)_0.
\]
Thus the induced $U(1)$-representation on $T^{0,1*}_0\Delta$ has weight $2$.
Therefore
\[
T^{0,1*}\Delta\simeq L_2
\]
as smooth $SU(1,1)$-equivariant complex line bundles.
Then by Lemma \ref{lem 3.2}, we have 
\begin{lem}\label{lem 3.8}
\begin{itemize}
\item[(1)] The space $A^{0,1}(L_{-2})^{SU(1,1)}$ is one-dimensional and is generated by  
\[
\alpha=\frac{1}{(1-|z|^2)^2}d\bar z\otimes e_{-2}.
\]
\item[(2)] The space $A^{1,0}(L_{2})^{SU(1,1)}$ is one-dimensional and is generated by  
\[
\beta=dz\otimes e_{2}.
\]
\end{itemize}
\end{lem}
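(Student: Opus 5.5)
Both parts follow the same pattern: identify the bundle of forms with an equivariant line bundle of weight $0$, invoke Lemma \ref{lem 3.2}, and then check the explicit generators by hand.

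\textbf{Part (1).} We have the identification of smooth $SU(1,1)$-equivariant bundles
\[
A^{0,1}(L_{-2})=A(T^{0,1*}\Delta\otimes L_{-2}).
\]
By the computation above, $T^{0,1*}\Delta\simeq L_2$ equivariantly. Lemma \ref{lem 3.1} then gives
\[
T^{0,1*}\Delta\otimes L_{-2}\simeq L_2\otimes L_{-2}
\]
of weight $0$. Lemma \ref{lem 3.2}(2) (read, as in its proof, for $G$-equivariant sections) shows that the invariant sections form a one-dimensional space. Such a section is determined by its value at $0$, and any nonzero value can be spread by transitivity.

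It remains to show that the explicit $\alpha$ is a nonzero invariant section; it then spans. It is nonvanishing on $\Delta$. For invariance we check $g\cdot\alpha(z)=\alpha(g\cdot z)$ using the transformation rules
\[
g\cdot (d\bar z)_z=(b\bar z+a)^2(d\bar z)_{g\cdot z},\qquad g\cdot (e_{-2})_z=(\bar b z+\bar a)^{2}(e_{-2})_{g\cdot z}.
\]
Together these give
\[
g\cdot\alpha(z)=\frac{|\bar b z+\bar a|^4}{(1-|z|^2)^2}\,(d\bar z\otimes e_{-2})_{g\cdot z}.
\]
The remaining ingredient is the standard identity for $g\in SU(1,1)$:
\[
1-|g\cdot z|^2=\frac{1-|z|^2}{|\bar b z+\bar a|^2}.
\]
This follows from $|az+b|^2-|\bar b z+\bar a|^2=(|a|^2-|b|^2)(|z|^2-1)$, where the cross terms cancel. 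Substituting it yields exactly $\alpha(g\cdot z)$.

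\textbf{Part (2).} Similarly,
\[
A^{1,0}(L_2)=A(T^{1,0*}\Delta\otimes L_2),
\]
and $T^{1,0*}\Delta\otimes L_2\simeq L_{-2}\otimes L_2$ has weight $0$, so Lemma \ref{lem 3.2}(2) again gives dimension one. For $\beta=dz\otimes e_2$ the rules
\[
g\cdot (dz)_z=(\bar b z+\bar a)^2 (dz)_{g\cdot z},\qquad g\cdot(e_2)_z=(\bar b z+\bar a)^{-2}(e_2)_{g\cdot z}
\]
multiply to give $g\cdot\beta(z)=\beta(g\cdot z)$ directly. Since $\beta$ is nowhere zero, it is a generator.

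The only real obstacle is the bookkeeping of conventions: whether the action on sections is $g\cdot s(x)=s(g\cdot x)$, and whether the conjugation on $d\bar z$ is consistent with the weights. The factor $(1-|z|^2)^{-2}$ in $\alpha$ is exactly what compensates $|\bar b z+\bar a|^4$, so the key check is the displayed identity for $1-|g\cdot z|^2$.
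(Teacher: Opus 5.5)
Your proposal is correct and follows the paper's own argument. You identify the relevant form bundle with a weight-$0$ equivariant line bundle to get dimension one from Lemma~\ref{lem 3.2}(2), then verify invariance of $\alpha$ and $\beta$ directly from the transformation rules for $d\bar z$, $dz$ and $e_n$. Beyond the paper, you spell out the identity $1-|g\cdot z|^2=(1-|z|^2)/|\bar b z+\bar a|^2$, which the paper uses tacitly, and you check part (2) explicitly where the paper only says it is similar.
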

\bpr
We only prove (1) since (2) follows from a similar argument.
\par
Since 
\[
T^{0,1*}\Delta\simeq L_2
\]
as smooth $SU(1,1)$-equivariant complex line bundles, we have 
\begin{align*}
 A^{0,1}(L_{-2})^{SU(1,1)}&=A(T^{0,1*}\Delta\otimes L_{-2})^{SU(1,1)}\\
 &=A(L_2\otimes L_{-2})^{SU(1,1)}\\
 &=A(L_0)^{SU(1,1)}\\
 &\simeq \mathbb{C}.
\end{align*}
The last follows from Lemma \ref{lem 3.3}.
Hence  $\dim A^{0,1}(L_{-2})^{SU(1,1)}=1.$ 
We show that 
\[
\alpha=\frac{1}{(1-|z|^2)^2}d\bar z\otimes e_{-2}\in A^{0,1}(L_{-2})
\]
is $SU(1,1)$-equivariant and hence the basis of $A^{0,1}(L_{-2})^{SU(1,1)}.$
Equivalently, 
for
\[
g=
\begin{pmatrix}
a&b\\
\overline b&\overline a
\end{pmatrix}\in SU(1,1),
\]
we show that 
\[
\alpha_{g\cdot z}=g\cdot\alpha_z.
\]
First, we have
\begin{align*}
\alpha_{g\cdot z}&=\frac{1}{(1-|g\cdot z|^2)^2}(d\bar z\otimes e_{-2})_{g\cdot z}\\
&=\frac{|\bar b z+\bar\alpha|^4}{(1-|z|^2)^2}(d\bar z\otimes e_{-2})_{g\cdot z}.
\end{align*}
On the other hand, by the actions on $T^{0,1*}\Delta$ and $L_{-2}$,
\begin{align*}
g\cdot \alpha_z&=g\cdot \frac{1}{(1-|z|^2)^2}(d\bar z\otimes e_{-2})_z\\
&=\frac{|\bar b z+\bar\alpha|^4}{(1-|z|^2)^2}(d\bar z\otimes e_{-2})_{g\cdot z}.
\end{align*}
Hence $\alpha\in A^{0,1}(L_{-2})^{SU(1,1)}.$
\epr
We next define an $SU(1,1)$-invariant Hermitian metric on $L_n$.
For every $n\in\mathbb{Z}$, we  define a Hermitian metric $h^{(n)}$ on $L_n$ as
\[
h^{(n)}_z(e_{n,z},e_{n,z}):=\frac{1}{(1-|z|^2)^n}.
\]
Since
\begin{align*}
h^{(n)}_{g\cdot z}(g\cdot e_{n,z},g\cdot e_{n,z})&=\frac{1}{|\bar bz+\bar a|^{2n}}\cdot h^{(n)}_{g\cdot z}( e_{n,g\cdot z},e_{n,g\cdot z})\\
&=\frac{1}{|\bar bz+\bar a|^{2n}}\cdot\frac{1}{(1-|g\cdot z|^2)^{n}}\\
&=\frac{1}{(1-|z|^2)^n}\\
&= h^{(n)}_z(e_{n,z},e_{n,z}),
\end{align*}
$h^{(n)}$ is $SU(1,1)$-invariant.
\par
For later use, we recall the Poincar\'e metric on $\Delta$ and compare its associated K\"ahler form with the curvature of the Chern connection associated with $(\overline\partial_\Delta,h^{(-2)})$ on $L_{-2}$.
The Poincar\'e metric on $\Delta$ is given by
\[
g_{\mathrm{Poi}}=\frac{4\,dz\,d\overline z}{(1-|z|^2)^2}.
\]
The associated K\"ahler form is
\[
\omega_{\mathrm{Poi}}
=
\frac{2\sqrt{-1}\,dz\wedge d\overline z}{(1-|z|^2)^2}.
\]
Let $F_{h^{(-2})}$ be the curvature of the Chern connection associated with $(\overline\partial_\Delta,h^{(-2)})$ on $L_{-2}$.
Then, with respect to the frame $e_{-2}$, $F_{h^{(-2)}}$ has the form 
\begin{equation}\label{eq 3.15}
\begin{split}
F_{h^{(-2)}}&=\bar\partial\partial\log h^{(-2)}(e_{-2},e_{-2})\\
&=\bar\partial\partial\log (1-|z|^2)^2\\
&=-2\bar\partial\frac{\bar z dz}{1-|z|^2}\\
&=-2\frac{d\bar z\wedge dz}{1-|z|^2}-\frac{2|z|^2 d\bar z\wedge dz}{(1-|z|^2)^2}\\
&=\frac{2dz\wedge d\bar z}{(1-|z|^2)^2}\\
&=-\sqrt{-1}\omega_{\mathrm{Poi}}.
\end{split}
\end{equation}
Let $\alpha$ and $\beta$ as in Lemma \ref{lem 3.9}.
Then we have 
\be\label{eq 3.16}
\alpha\wedge\beta=\frac{\sqrt{-1}}{2}\omega_{\mathrm{Poi}}
\ee
\subsubsection{Dimensional Reduction}\label{sec 3.3.2}
We use the same notation as in the previous section.
Let $X$ be a compact Riemann surface, and $\omega_X$ be a K\"ahler form on $ X$.
We normalize $\omega_X$ so that 
\[
\mathrm{Vol}_{\omega_X}=\int_X\omega_X=1.\]
We now consider the action of $SU(1,1)$ on $X\times \Delta$, which is trivial on $X$ and standard on $\Delta$.\par
The argument in this section closely parallels that in Section \ref{sec 3.2.1}.
However, whereas Section~3.2.2 deals with the $SU(2)$-equivariant geometry over $X\times \mathbb{P}^1$, we work here with the $SU(1,1)$-equivariant geometry over $X\times \Delta$.
\par
In the first half of this section, starting from a solution of Manton's exotic $\tau$-vortex equation with $\tau<0$ on $X$, we construct an $SU(1,1)$-equivariant holomorphic bundle over $X\times \Delta$ and an $SU(1,1)$-invariant pseudo-Hermitian-Einstein metric of signature $(1,1)$ on it.
In the second half, we establish the converse.
\par
From now on, we assume $\tau<0$, and we denote by 
\begin{align*}
&p:X\times \Delta\to X\\
&q:X\times \Delta\to \Delta
\end{align*}
the natural projections.\par
Let $(L,\overline\partial_L)$ be a holomorphic line bundle over $X$, $\phi \in A(L)$, and $h_L$ be a Hermitian metric on $L$.
We assume that the triple 
$((L,\overline\partial_L),\phi,h_L)$ satisfies Manton's exotic $\tau$-vortex equation with $\tau<0$.
We construct an $SU(1,1)$-equivariant holomorphic bundle over $X\times \Delta$ and an $SU(1,1)$-invariant pseudo-Hermitian-Einstein metric of signature $(1,1)$ from $((L,\overline\partial_L),\phi,h_L)$.
We first construct an $SU(1,1)$-equivariant holomophic bundle over $X\times\Delta$.
Recall that we denote the trivial line bundle over $X$ as $\underline{\mathbb{C}}_X$.
The Dolbeault operator $\overline\partial_{X\times\Delta}$ of $X\times\Delta$ defines a holomorphic bundle structure of $\underline{\mathbb{C}}_{X\times \Delta}$.
Let $c\in\mathbb{C}$ be a constant.
We define
\begin{align}
E:=&p^\ast L\otimes q^\ast L_{-2}\oplus \underline{\mathbb{C}}_{X\times \Delta},\nonumber\\
\overline\partial_{E,c}:=&
\begin{pmatrix}
p^\ast \overline\partial_L\otimes\mathrm{Id}+\mathrm{Id}\otimes q^\ast \overline\partial_{\Delta}& c\cdot p^\ast \phi\otimes q^\ast \alpha\\
0&\overline\partial_{X\times\Delta}
\end{pmatrix}.\label{eq 3.17}
\end{align}
See Lemma \ref{lem 3.8} for $\alpha$.
We also note that $c\cdot  p^\ast \phi\otimes q^\ast \alpha\in A^{0,1}(\mathrm{Hom}(\underline{\mathbb{C}}_{X\times \Delta}, p^\ast L\otimes q^\ast L_{-2})$.
The $SU(1,1)$-action on $L_{-2}$ induces an $SU(1,1)$-action on $E$, where $SU(1,1)$ acts trivially on the factors pulled back from $X$.
By the definition of the action of $SU(1,1)$ on $X\times \Delta$, $E$ is also an $SU(1,1)$-equivariant bundle over $X\times \Delta$.
By the same argument as in Proposition \ref{prop 3.3}, we obtain the following proposition.
\begin{prop}
$(E,\overline\partial_{E,c})$ is an $SU(1,1)$-equivariant holomorphic bundle over $X\times\Delta$.
\end{prop}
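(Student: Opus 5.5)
The plan is to follow the proof of Proposition \ref{prop 3.3} verbatim, replacing $\mathbb{P}^1$, $\mathcal{O}(-2)$ and $SU(2)$ by $\Delta$, $L_{-2}$ and $SU(1,1)$. There are two things to check. First, that $\overline\partial_{E,c}$ is integrable, i.e.\ $(\overline\partial_{E,c})^2=0$, so it defines a holomorphic structure on $E$. Second, that the $SU(1,1)$-action on $E$ induced from $L_{-2}$ (trivial on the factors pulled back from $X$) acts by biholomorphisms for this structure.

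For integrability, I would write $\overline\partial_{E,c}$ as the diagonal operator $\overline\partial_{\mathrm{diag}}=\mathrm{diag}(p^\ast\overline\partial_L\otimes\mathrm{Id}+\mathrm{Id}\otimes q^\ast\overline\partial_\Delta,\ \overline\partial_{X\times\Delta})$ plus the strictly upper triangular term $A=c\,p^\ast\phi\otimes q^\ast\alpha$. Then
\[
(\overline\partial_{E,c})^2=\overline\partial_{\mathrm{diag}}^2+\big(\overline\partial_{\mathrm{diag}}A+A\overline\partial_{\mathrm{diag}}\big)+A\wedge A .
\]
The diagonal term vanishes because each entry is a Dolbeault operator, and $A\wedge A=0$ because $A$ is nilpotent. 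The off-diagonal term is $c\,\overline\partial_{\mathrm{Hom}}(p^\ast\phi\otimes q^\ast\alpha)$. By the Leibniz rule this equals
\[
c\big(p^\ast(\overline\partial_L\phi)\otimes q^\ast\alpha\pm p^\ast\phi\otimes q^\ast(\overline\partial_\Delta\alpha)\big).
\]
The first summand vanishes since $\phi$ is holomorphic. For the second, $\alpha=(1-|z|^2)^{-2}d\bar z\otimes e_{-2}$ is a $(0,1)$-form on the one-dimensional $\Delta$, so $\overline\partial_\Delta\alpha$ is a $(0,2)$-form and hence zero.

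For equivariance, I would first note that $L_{-2}$ is an $SU(1,1)$-equivariant holomorphic line bundle: the action $g\cdot e_z=(\bar bz+\bar a)^{2}e_{g\cdot z}$ has holomorphic transition factor, and $z\mapsto g\cdot z$ is holomorphic. Consequently $p^\ast L\otimes q^\ast L_{-2}$ and $\underline{\mathbb{C}}_{X\times\Delta}$ are equivariant holomorphic bundles, so the diagonal part of $\overline\partial_{E,c}$ commutes with the action of each $g$. It then remains to see that the extension term is preserved, $g^\ast(p^\ast\phi\otimes q^\ast\alpha)=p^\ast\phi\otimes q^\ast\alpha$. This holds because $SU(1,1)$ acts trivially on $X$ and $\alpha$ is $SU(1,1)$-invariant by Lemma \ref{lem 3.8}. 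Together these give $g\circ\overline\partial_{E,c}=\overline\partial_{E,c}\circ g$, i.e.\ every $g$ acts biholomorphically.

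The only genuinely non-formal step is the invariance of $\alpha$, and that is supplied by Lemma \ref{lem 3.8}. One caveat: the computation there uses $1-|g\cdot z|^2=(1-|z|^2)/|\bar bz+\bar a|^2$, and I would double-check that identity, since the rest of the argument is purely formal.
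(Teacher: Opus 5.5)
Your proposal is correct and is essentially the paper's argument: the paper simply refers back to the proof of Proposition~\ref{prop 3.3}, namely $(\overline\partial_{E,c})^2=0$ because $\phi$ is holomorphic and $\overline\partial\alpha=0$, and equivariance because $L_{-2}$ is an $SU(1,1)$-equivariant holomorphic bundle and $\alpha$ is invariant (Lemma~\ref{lem 3.8}); you spell these steps out in more detail. The identity you wanted to double-check does hold, since $|\bar b z+\bar a|^2-|az+b|^2=(|a|^2-|b|^2)(1-|z|^2)=1-|z|^2$.
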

Hence 
\[
0\longrightarrow p^\ast L\otimes q^\ast L_{-2}\longrightarrow E\longrightarrow \underline{\mathbb{C}}_{X\times \Delta} \longrightarrow0.
\]
is a holomorphic bundle extension.\par
Let $h^{(-2)}$ be the $SU(1,1)$-invariant Hermitian metric of $L_{-2}$ (See Section \ref{sec 3.3.1}), and $h_X$ be a Hermitian metric of $\underline{\mathbb{C}}_X$.
We note that $h_X$ is a smooth positive function on $X$ and therefore $h_Xh_L$ is also a Hermitian metric on $L$.
We define a pseudo-Hermitian metric $h_E$ on $E$ as 
\begin{align}\label{eq 3.18}
h_E:=
\begin{pmatrix}
p^\ast (h_Xh_L)\otimes q^\ast h^{(-2)}&0\\
0& -p^\ast h_X
\end{pmatrix}.
\end{align}
It follows from the construction that $h_E$ is a pseudo-Hermitian metric of signature $(1,1)$.
Since $h^{(-2)}$ is $SU(1,1)$-invariant, and $SU(1,1)$ action on $E$ is trivial on the factors pulled back from $X$, the pseudo-Hermitian metric $h_E$ is also $SU(1,1)$-invariant.
\par
We denote by $\partial_{h_Xh_L},\partial_{h^{(-2)}}$, and $\partial_{h_X}$ the $(1,0)$-part of the Chern connection associated to $(L,\overline\partial_L,h_Xh_L),$ $(L_{-2},\overline\partial_\Delta,h^{(-2)})$, and $(\mathbb{C}_X,\overline\partial_X,h_X)$ respectively.
Let $\nabla_{h_E,c}$ be the unique connection that is compatible with $h_E$ and $\nabla_{h_E,c}^{0,1}=\overline\partial_{E,c}$ (See Proposition \ref{prop 3.1}), and $\partial_{h_E,c}$ the $(1,0)$-part of $\nabla_{h_E}$.
By the same argument as in Lemma \ref{lem 3.5}, we obtain the following Lemma.
\begin{lem}\label{lem 3.9}
$\partial_{h_E,c}$ has the form
\begin{equation}\label{eq 3.19}
\partial_{h_E,c}
=\begin{pmatrix}
p^\ast\partial_{h_Xh_L}\otimes \mathrm{Id} +\mathrm{Id}\otimes p^\ast\partial_{h^{(-2)}}&0\\
\overline c \cdot p^\ast (h_L\overline \phi)\otimes q^\ast \beta& p^\ast \partial_{h_X}
\end{pmatrix}.
\end{equation}
See Lemma \ref{lem 3.9} for $\beta$.
\end{lem}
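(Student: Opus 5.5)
The plan is to copy the proof of Lemma \ref{lem 3.5}, with $SU(2)$, $\mathbb{P}^1$, $\mathcal{O}(-2)$ replaced by $SU(1,1)$, $\Delta$, $L_{-2}$. First I split
\[
\overline\partial_{E,c}=\overline\partial_{E,c,\mathrm{diag}}+A,\qquad A=\begin{pmatrix}0& c\cdot p^\ast\phi\otimes q^\ast\alpha\\ 0&0\end{pmatrix},
\]
where $\overline\partial_{E,c,\mathrm{diag}}$ is the block-diagonal Dolbeault operator of $p^\ast L\otimes q^\ast L_{-2}\oplus\underline{\mathbb{C}}_{X\times\Delta}$. Since $h_E$ in \eqref{eq 3.18} is block diagonal, the connection compatible with $h_E$ whose $(0,1)$-part is $\overline\partial_{E,c,\mathrm{diag}}$ is the block-diagonal sum of Chern connections. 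Its $(1,0)$-part is the matrix $\partial_{h_E,c,\mathrm{diag}}$ with entries $p^\ast\partial_{h_Xh_L}\otimes\mathrm{Id}+\mathrm{Id}\otimes q^\ast\partial_{h^{(-2)}}$ and $p^\ast\partial_{h_X}$. The sign $-p^\ast h_X$ on the second block causes no trouble, because a Chern connection is unchanged when the metric is multiplied by a nonzero constant.

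Next I set $B:=-h_E^{-1}\overline{A}^t h_E$. This gives $h_E(A\cdot,\cdot)+h_E(\cdot,B\cdot)=0$, so $\partial_{h_E,c,\mathrm{diag}}+B+\overline\partial_{E,c}$ is compatible with $h_E$. Its $(0,1)$-part is $\overline\partial_{E,c}$, since $B$ is of type $(1,0)$ because $\overline{\alpha}$ is a $(1,0)$-form. By the uniqueness in Proposition \ref{prop 3.1}, this connection is $\nabla_{h_E,c}$, and hence $\partial_{h_E,c}=\partial_{h_E,c,\mathrm{diag}}+B$.

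The remaining step, and the only real computation, is to identify $B$. Its only nonzero entry is in the lower-left corner. That entry is $-(-p^\ast h_X)^{-1}\cdot \overline{c\,p^\ast\phi\otimes q^\ast\alpha}\cdot p^\ast(h_Xh_L)\otimes q^\ast h^{(-2)}$, which equals $\overline c\, p^\ast(h_L\overline\phi)\otimes q^\ast\big(h^{(-2)}\overline{\alpha}\big)$. The factor $h_X$ cancels, and the two minus signs cancel. So I must check that $h^{(-2)}\overline{\alpha}$, viewed as a section of $T^{1,0\ast}\Delta\otimes L_{-2}^\vee=T^{1,0\ast}\Delta\otimes L_2$, equals $\beta$. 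In the frame $e_{-2}$ we have $h^{(-2)}(e_{-2},e_{-2})=(1-|z|^2)^2$ and $\alpha=(1-|z|^2)^{-2}d\bar z\otimes e_{-2}$. Hence
\[
h^{(-2)}\overline{\alpha}=(1-|z|^2)^2(1-|z|^2)^{-2}\,dz\otimes e_{-2}^\vee=dz\otimes e_2=\beta .
\]
Here I identify $e_{-2}^\vee$ with $e_2$, which is consistent with the weights in Lemma \ref{lem 3.1}. This is exactly the $\beta$ of Lemma \ref{lem 3.8}, and with it $\partial_{h_E,c}$ takes the form \eqref{eq 3.19}.

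The point I expect to need the most care is this final normalization, namely that the conjugate of $\alpha$ twisted by $h^{(-2)}$ gives $\beta$ with coefficient exactly $1$. Unlike the $\mathbb{P}^1$ case, there is no second chart here and so no sign issue. Still, the conjugation convention for the dual frame has to be matched with the one implicitly used in Lemma \ref{lem 3.5}. If a constant factor appeared, I would absorb it by rescaling $\beta$ within the one-dimensional space of Lemma \ref{lem 3.8}.
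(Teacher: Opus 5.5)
Your proposal is correct and follows the route the paper intends: the paper proves this lemma only by saying it follows by the same argument as Lemma \ref{lem 3.5}, which is exactly your argument of splitting off $A$, setting $B=-h_E^{-1}\overline{A}^t h_E$, and invoking the uniqueness in Proposition \ref{prop 3.1}. Your explicit check that $h^{(-2)}\overline{\alpha}=dz\otimes e_2=\beta$ holds with coefficient exactly $1$ is the step the paper leaves implicit, so the fallback of rescaling $\beta$ is never needed (it would not be legitimate anyway, since $\beta$ is already fixed in Lemma \ref{lem 3.8} and used in \eqref{eq 3.16}).
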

We show that for a suitable $c\in \mathbb{C}$ and a suitable K\"ahler form on $X\times \Delta$, there exists a Hermitian metric $h_X$ of $\underline{\mathbb{C}}_X$ such that $h_E$ is a pseudo-Hermitian-Einstein metric of $(E,\overline\partial_{E,c})$ (Definition \ref{def 3.2}).\par
Let $\omega_{\mathrm{Poi}}$ be the K\"ahler form on $\Delta$, associated to the Poinc\'are metric (See Section \ref{sec 3.3.1}).
We define a K\"ahler form of $X\times \Delta$ as 
\be\label{eq 3.20}
\Omega_\tau:=\bigg(-\frac{\tau}{2}p^\ast\omega_X\bigg)\oplus q^\ast \omega_{\mathrm{Poi}}.
\ee
Note that $\tau<0$.\\
We now set 
\[
c=\sqrt{-\frac{1}{\tau}}
\]
and consider $(E,\overline\partial_{E,\sqrt{-\frac{1}{\tau}}})$.
For simplicity, we write
\[
\overline\partial_E:=\overline\partial_{E,\sqrt{-1/\tau}}.
\]
We also write 
\begin{align*}
\nabla_{h_E}&:=\nabla_{h_E,\sqrt{-1/\tau}},\\
\partial_{h_E}&:=\partial_{h_E, \sqrt{-1/\tau}}.
\end{align*}
We denote the curvature of $\nabla_{h_E}$ as
\[
F_{h_E}:=(\nabla_{h_E})^2.
\]
We also denote by $F_{h_Xh_L}, F_{h^{(-2)}}$, and $F_{h_X}$ the curvature of the Chern connections of $(L,\overline\partial_L,$$h_Xh_L),$ $(L_{(-2)},\overline\partial_{\Delta},h^{(-2)})$, and $(\underline{\mathbb{C}}_X,\overline\partial_X, h_X)$ respectively.
The next Proposition is an analogy of Proposition \ref{prop 3.4}. 
However, since we are now considering a vector bundle over the non-compact space $X\times\Delta$, we also need to specify the constant appearing in \eqref{eq 3.1}.
\begin{prop}\label{prop 3.9}
There exists a Hermitian metric $h_X$ of $\underline{\mathbb{C}}_X$ such that the pseudo-Hermitian metric $h_E$ defined by \eqref{eq 3.18} is a pseudo-Hermitian-Einstein metric with respect to the K\"ahler form $\Omega_\tau$ and the constant $\frac{1}{2}(1-\frac{4\pi}{\tau}\mathrm{deg}L)$;
that is
\be
\sqrt{-1}\Lambda_{\Omega_\tau}F_{h_E}=\frac{1}{2}\bigg(1-\frac{4\pi}{\tau}\mathrm{deg}L\bigg) \mathrm{Id}_E.
\ee
Equivalently, the connection $\nabla_{h_E}$ is a pseudo-Hermitian-Yang-Mills connection.
\end{prop}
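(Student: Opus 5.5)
The plan is to repeat the proof of Proposition \ref{prop 3.4}, with $(\mathbb{P}^1,\omega_{\mathrm{FS}},\mathcal{O}(-2))$ replaced by $(\Delta,\omega_{\mathrm{Poi}},L_{-2})$. There is one structural difference. Since $X\times\Delta$ is non-compact, we cannot fix $\lambda$ by a degree computation as in \eqref{eq 3.7}. Instead, $\lambda$ will be forced by the solvability condition of the Poisson equation for $\log h_X$ on the compact surface $X$. The claim is that this forced value is $\frac12(1-\frac{4\pi}{\tau}\mathrm{deg}L)$.

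\textbf{Step 1: curvature.} First I would check the input to Lemma \ref{lem 3.9}. The computation of $B=-h_E^{-1}\overline{A}^th_E$ turns $\overline{\alpha}$ into $\beta$. For this, the factor $(1-|z|^2)^{-2}$ in $\alpha$ must cancel against $h^{(-2)}=(1-|z|^2)^{2}$, so that $\bar c\, p^\ast(h_L\bar\phi)\otimes q^\ast\beta$ appears with the stated sign. Then, since $\overline\partial_E^2=0=\partial_{h_E}^2$, I would write $F_{h_E}=\overline\partial_E\partial_{h_E}+\partial_{h_E}\overline\partial_E$ and expand in blocks using \eqref{eq 3.17} and \eqref{eq 3.19}. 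The diagonal blocks are
\[
F_{11}=p^\ast F_{h_L}+p^\ast F_{h_X}+q^\ast F_{h^{(-2)}}-\tfrac{1}{\tau}p^\ast|\phi|^2_{h_L}\,q^\ast(\alpha\wedge\beta),
\qquad
F_{22}=p^\ast F_{h_X}+\tfrac{1}{\tau}p^\ast|\phi|^2_{h_L}\,q^\ast(\beta\wedge\alpha).
\]
The off-diagonal blocks are of mixed type: one leg lies along $X$ (from $\overline\partial_L\phi$, $\partial_{h_Xh_L}\phi$, or $\partial h_X$) and one leg along $\Delta$. Since $\Omega_\tau$ is a product form, $\Lambda_{\Omega_\tau}$ kills them.

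\textbf{Step 2: contraction.} For a form $a\,\omega_X+b\,\omega_{\mathrm{Poi}}$ we have $\Lambda_{\Omega_\tau}(a\,\omega_X+b\,\omega_{\mathrm{Poi}})=-\frac{2a}{\tau}+b$. Using \eqref{eq 3.15}, \eqref{eq 3.16} and $c^2=-1/\tau$, we get
\begin{align*}
\sqrt{-1}\Lambda_{\Omega_\tau}F_{11}&=-\tfrac{2}{\tau}\sqrt{-1}\Lambda_{\omega_X}(F_{h_L}+F_{h_X})+1+\tfrac{1}{2\tau}|\phi|^2_{h_L},\\
\sqrt{-1}\Lambda_{\Omega_\tau}F_{22}&=-\tfrac{2}{\tau}\sqrt{-1}\Lambda_{\omega_X}F_{h_X}-\tfrac{1}{2\tau}|\phi|^2_{h_L}.
\end{align*}
Subtracting, the condition $\sqrt{-1}\Lambda F_{11}=\sqrt{-1}\Lambda F_{22}$ becomes
\[
-\tfrac{2}{\tau}\Big(\sqrt{-1}\Lambda_{\omega_X}F_{h_L}-\tfrac12|\phi|^2_{h_L}-\tfrac{\tau}{2}\Big)=0,
\]
which is exactly Manton's exotic $\tau$-vortex equation and holds by hypothesis.

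\textbf{Step 3: solving for $h_X$.} Adding the two lines, the remaining condition is
\[
-\tfrac{2}{\tau}\big(\sqrt{-1}\Lambda_{\omega_X}F_{h_L}+2\sqrt{-1}\Lambda_{\omega_X}F_{h_X}\big)+1=2\lambda .
\]
Write $h_X=e^f$ and use $\sqrt{-1}\Lambda F_{h_X}=\frac12\triangle f$. This gives a Poisson equation
\[
\triangle f=\tfrac{\tau}{4}(1-2\lambda)-\tfrac12\sqrt{-1}\Lambda_{\omega_X}F_{h_L}
\]
on $X$. By Hodge theory it is solvable iff the right-hand side has integral zero. Since $\int_X\omega_X=1$ and $\int_X\sqrt{-1}\Lambda F_{h_L}=2\pi\,\mathrm{deg}L$, this means
\[
\tfrac{\tau}{4}(1-2\lambda)=\pi\,\mathrm{deg}L,
\qquad\text{that is,}\qquad
\lambda=\tfrac12\Big(1-\tfrac{4\pi}{\tau}\mathrm{deg}L\Big).
\]
With this $\lambda$, a solution $f$ exists and $h_X=e^f$ is the required metric.

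\textbf{Main obstacle.} I expect the main difficulty to be the sign bookkeeping in Step 1. This concerns the $|\phi|^2$ term: the sign of $c^2$, the order $\alpha\wedge\beta$ versus $\beta\wedge\alpha$, and the negative sign of $h_E$ on the trivial summand. Together these must produce the coefficient $-\frac12$ in the vortex equation rather than $+\frac12$. I would verify this by a direct local computation at $z=0$, where $\alpha=d\bar z\otimes e_{-2}$ and $\beta=dz\otimes e_2$.
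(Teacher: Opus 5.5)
Your proposal is correct and is essentially the paper's proof. It uses the same block computation of $F_{11},F_{22}$ leading to \eqref{eq 3.22}--\eqref{eq 3.23}, the difference reproduces Manton's equation, and the sum reduces to a Poisson equation for $\log h_X$ on $X$. The only cosmetic difference is that you read off $\lambda$ from the mean-zero condition, whereas the paper fixes $\lambda$ first and then verifies that condition.

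There are two harmless slips to fix. First, in Step 1 the $|\phi|^2$ term of $F_{22}$ should be $-\frac{1}{\tau}p^\ast|\phi|^2_{h_L}\,q^\ast(\beta\wedge\alpha)$, not $+\frac{1}{\tau}$. Your Step 2 formula already uses this correct sign, and only with it do the $\phi$-terms cancel in $\mathrm{Tr}\,F_{h_E}$. Second, with $\sqrt{-1}\Lambda_{\omega_X}F_{h_X}=\frac12\triangle f$ the Poisson equation is $\triangle f=\frac{\tau}{2}(1-2\lambda)-\sqrt{-1}\Lambda_{\omega_X}F_{h_L}$, which is twice what you wrote. This changes $f$, but not the solvability condition or the value of $\lambda$.
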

\begin{proof}
For brevity, we set
\[
\lambda:=\frac{1}{2}\bigg(1-\frac{4\pi}{\tau}\mathrm{deg}L\bigg).
 \]
 \par
Let $h_X$ be a Hermitian metric on $\mathbb{C}_X$ and let $h_E$ be a pseudo-Hermitian metric of signature (1,1) defined as \eqref{eq 3.18}.
Since $(\overline\partial_E)^2=(\partial_{h_E})^2=0$, we have
\[
F_{h_E}=\overline\partial_E\partial_{h_E}+\partial_{h_E}\overline\partial_E.
\]
We set
\[
F_{h_E}=
\begin{pmatrix}
F_{11}&F_{12}\\
F_{21}&F_{22}
\end{pmatrix}.
\]
Then by \eqref{eq 3.17} and \eqref{eq 3.19} we have 
\[
\begin{split}
F_{11}&=p^\ast F_{h_Xh_L}+q^\ast F_{h^{(-2)}}-\frac{1}{\tau}p^\ast|\phi|^2_{h_L}\otimes q^\ast(\alpha\wedge \beta)\\
&=p^\ast F_{h_L}+p^\ast F_{h_X}-\sqrt{-1}q^\ast \omega_{\mathrm{Poi}}-\frac{\sqrt{-1}}{2\tau}p^\ast|\phi|^2_{h_L}q^\ast\omega_{\mathrm{Poi}}.
\end{split}
\]
The last equation follows since $h_X$ is a positive function on $X$, \eqref{eq 3.15}, and \eqref{eq 3.16}.
Therefore, we have 
\be\label{eq 3.22}
\begin{split}
\sqrt{-1}\Lambda_{\Omega_\tau}F_{11}&=-\frac{2\sqrt{-1}}{\tau}p^\ast \Lambda_{\omega_X}(F_{h_L}+F_{h_X})+1+\frac{1}{2\tau}p^\ast|\phi|^2_{h_L}.
\end{split}
\ee
We also have 
\[
\begin{split}
F_{22}&=p^\ast F_{h_X}-\frac{1}{\tau}p^\ast|\phi|^2_{h_L}\otimes q^\ast(\beta\wedge \alpha)\\
&=p^\ast F_{h_X}+\frac{\sqrt{-1}}{2\tau}|\phi|^2_{h_L}q^\ast \omega_{\mathrm{Poi}}.
\end{split}
\]
Therefore
\be\label{eq 3.23}
\sqrt{-1}\Lambda_{\Omega_\tau}F_{22}=-\frac{2\sqrt{-1}}{\tau}p^\ast \Lambda_{\omega_X}F_{h_X}-\frac{1}{2\tau}p^\ast|\phi|^2_{h_L}.
\ee
Since $F_{12},F_{21}$ has mixed contribution from $X$ and $\Delta$ we have
\[
\sqrt{-1}\Lambda_{\Omega_\tau}F_{h_E}
=
\begin{pmatrix}
\sqrt{-1}\Lambda_{\Omega_\tau}F_{11}&0\\
0&\sqrt{-1}\Lambda_{\Omega_\tau}F_{22} \\
\end{pmatrix}.
\]
Hence by \eqref{eq 3.22} and \eqref{eq 3.23} the equation
\[
\sqrt{-1}\Lambda_{\Omega_\tau}F_{h_E}=\lambda \mathrm{Id}_E
\]
is equivalent to 
\begin{align}
-\frac{2\sqrt{-1}}{\tau} \Lambda_{\omega_X}(F_{h_L}+F_{h_X})+1+\frac{1}{2\tau}|\phi|^2_{h_L}&=\lambda,\label{eq 3.24}
\\
-\frac{2\sqrt{-1}}{\tau} \Lambda_{\omega_X}F_{h_X}-\frac{1}{2\tau}|\phi|^2_{h_L}&=\lambda.\label{eq 3.25}
\end{align}
The equations \eqref{eq 3.24} and \eqref{eq 3.25} are equivalent to 

\begin{align}
-\frac{2\sqrt{-1}}{\tau} \Lambda_{\omega_X}F_{h_L}+1+\frac{1}{\tau}|\phi|^2_{h_L}&=0,\label{eq 3.26}
\\
-\frac{2\sqrt{-1}}{\tau} \Lambda_{\omega_X}F_{h_L}-\frac{4\sqrt{-1}}{\tau} \Lambda_{\omega_X}F_{h_X}+1&=2\lambda,\label{eq 3.27}
\end{align}
since \eqref{eq 3.26} is obtained by $\eqref{eq 3.24}-\eqref{eq 3.25}$ and  \eqref{eq 3.27} is obtained by $\eqref{eq 3.24}+\eqref{eq 3.25}$.
\par
\eqref{eq 3.26} holds since the triple $((L,\overline\partial_L),\phi,h_L)$ satisfies Manton's exotic $\tau$-vortex equation.
We show that there exists a Hermitian metric $h_X$ such that \eqref{eq 3.27} holds.
By the same argument as in Proposition \ref{prop 3.4}, the existence of such a metric is equivalent to 
\[
\int_{X}\bigg(-1+2\lambda+\frac{2\sqrt{-1}}{\tau} \Lambda_{\omega_X}F_{h_L}\bigg)=0.
\]
Then by the definition of $\lambda$, we have
\begin{align*}
\int_{X}\bigg(-1+2\lambda+\frac{2\sqrt{-1}}{\tau} \Lambda_{\omega_X}F_{h_L}\bigg)&=-1+2\lambda+\frac{4\pi}{\tau}\mathrm{deg}L\\
&=0.
\end{align*}
Hence, the claim is proved.
\end{proof}
We give the summary of the results so far.
\begin{thm}\label{thm 3.3}
Let $X$ be a compact connected Riemann surface and let  $\omega_X$ be a K\"ahler form of $X$ satisfying $\int_X\omega_X=1$.
Let $(L,\overline\partial_L)$ be a holomorphic line bundle over $X$, $\phi$ a holomorphic section  of $(L,\overline\partial_L)$, and $h_L$ a Hermitian metric on $L$.
Suppose the triple $((L,\overline\partial_L),\phi,h_L)$ satisfies Manton's exotic $\tau$-vortex equation with $\tau<0$.
Then 
\begin{align*}
E:=&p^\ast L\otimes q^\ast L_{-2}\oplus \underline{\mathbb{C}}_{X\times\Delta},\\
\overline\partial_{E}:=&
\begin{pmatrix}
p^\ast \overline\partial_L\otimes \mathrm{Id}+\mathrm{Id}\otimes q^\ast \overline\partial_{\Delta}& \sqrt{-\frac{1}{\tau}}\cdot p^\ast \phi\otimes q^\ast \alpha\\
0&\bar\partial_{X\times\mathbb{P}^1}
\end{pmatrix}.
\end{align*}
is an $SU(1,1)$-equivariant holomorphic bundle over $X\times \Delta$.
We fix a K\"ahler form on $X\times\Delta$ given by
\[
\Omega_\tau=\bigg(-\frac{\tau}{2}p^\ast\omega_X\bigg)\oplus q^\ast \omega_{\mathrm{Poi}}.
\]
Then there exists a Hermitian metric $h_X$ on $\underline{\mathbb{C}}_X$ such that the pseudo-Hermitian metric of signature (1,1)
\begin{align*}
h_E=
\begin{pmatrix}
p^\ast (h_Xh_L)\otimes q^\ast h^{(-2)}&0\\
0& -p^\ast h_X
\end{pmatrix}
\end{align*}
is a pseudo-Hermitian-Einstein metric of $(E,\overline\partial_E)$ with respect to the K\"ahler form $\Omega_\tau$ and the constant $\frac{1}{2}(1-\frac{4\pi}{\tau}\mathrm{deg}L)$.
Equivalently, the unique connection $\nabla_{h_E}$ which is compatible with $h_E$ and $\nabla^{0,1}_{h_E}=\bar\partial_E$ is a pseudo-Hermitian-Yang-Mills connection.
\end{thm}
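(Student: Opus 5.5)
The plan is to assemble the statement of Theorem \ref{thm 3.3} from the $SU(1,1)$-analogues of the results already proved in the $\tau>0$ case, in the same order as the summary Theorem \ref{thm 3.1}.

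First, I would take $c=\sqrt{-1/\tau}$, which is real and positive since $\tau<0$. The preceding proposition, the analogue of Proposition \ref{prop 3.3}, then shows that $(E,\overline\partial_E)$ is an $SU(1,1)$-equivariant holomorphic bundle over $X\times\Delta$. Integrability $(\overline\partial_E)^2=0$ reduces to two facts: $\overline\partial_L\phi=0$, and $\overline\partial_\Delta\alpha=0$. The latter holds because $\alpha=(1-|z|^2)^{-2}d\bar z\otimes e_{-2}$ is a $(0,1)$-form on a one-dimensional base, so its $\overline\partial$ vanishes for type reasons. Equivariance follows because $L_{-2}$ is an $SU(1,1)$-equivariant holomorphic line bundle and $\alpha$ is $SU(1,1)$-invariant by Lemma \ref{lem 3.8}.

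Second, I would record that $h_E$ as in \eqref{eq 3.18} has signature $(1,1)$ and is $SU(1,1)$-invariant, since $h^{(-2)}$ was shown to be invariant. Lemma \ref{lem 3.9} then gives the $(1,0)$-part of the compatible connection $\nabla_{h_E}$. The only input needed here is the identity $-h_E^{-1}\overline{A}^th_E=\overline c\,p^\ast(h_L\bar\phi)\otimes q^\ast\beta$. This uses $h^{(-2)}(e_{-2},e_{-2})=(1-|z|^2)^2$, which exactly cancels the factor $(1-|z|^2)^{-2}$ in $\alpha$, so that $\beta=dz\otimes e_2$ appears.

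Third, and this is the core step, I would invoke Proposition \ref{prop 3.9}. Its content is a curvature computation. The block $F_{11}$ picks up $-\frac1\tau|\phi|^2\alpha\wedge\beta$, which by \eqref{eq 3.16} equals $-\frac{\sqrt{-1}}{2\tau}|\phi|^2\omega_{\mathrm{Poi}}$. It also picks up $F_{h^{(-2)}}=-\sqrt{-1}\omega_{\mathrm{Poi}}$ from \eqref{eq 3.15}. After contracting with $\Omega_\tau=(-\frac\tau2 p^\ast\omega_X)\oplus q^\ast\omega_{\mathrm{Poi}}$, I get two scalar equations, \eqref{eq 3.24} and \eqref{eq 3.25}. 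Their difference, \eqref{eq 3.26}, is $-\frac{2}{\tau}$ times Manton's exotic $\tau$-vortex equation, so it holds by hypothesis. Their sum, \eqref{eq 3.27}, is a linear Poisson equation $\triangle\log h_X=\text{(given function)}$. By Hodge theory it is solvable exactly when the integral over $X$ of the right-hand side vanishes. This is where the constant $\lambda=\frac12(1-\frac{4\pi}{\tau}\deg L)$ is forced, using $\int_X\sqrt{-1}\Lambda F_{h_L}=2\pi\deg L$ and $\mathrm{Vol}_X=1$.

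The main obstacle is non-compactness of $\Delta$. Unlike the proof of Proposition \ref{prop 3.4}, $\lambda$ cannot be obtained from a degree computation on $X\times\Delta$, since the degree is not defined there. My first approach is therefore to fix $\lambda$ a priori by the solvability condition on $X$ alone, as above. This works because every quantity in the computation is $SU(1,1)$-invariant, so the Einstein condition on $X\times\Delta$ reduces pointwise to an equation on $X$. Consequently no analysis at the boundary of $\Delta$ is required.
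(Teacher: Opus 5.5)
Your proposal is correct and follows the paper's route: holomorphicity and equivariance come from the $SU(1,1)$-analogue of Proposition~\ref{prop 3.3}, the $(1,0)$-part from Lemma~\ref{lem 3.9}, and the core is the curvature computation of Proposition~\ref{prop 3.9}, where the difference \eqref{eq 3.26} is the vortex equation and the sum \eqref{eq 3.27} is a Poisson equation for $\log h_X$. As in the paper, $\lambda=\frac12(1-\frac{4\pi}{\tau}\mathrm{deg}L)$ is fixed in advance so that the integral solvability condition on $X$ holds, which replaces the degree argument that is unavailable on the non-compact $X\times\Delta$.
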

We next prove the dimensional reduction: we construct a solution of Manton's exotic $\tau$-vortex ($\tau<0$) equation from an $SU(1,1)$-equivariant holomorphic bundle and an $SU(1,1)$-invariant pseudo-Hermitian-Einstein metric of signature $(1,1).$\par
Let $(L,\overline\partial_L)$ be a holomorphic line bundle over $X$.
Let $E$ be a holomorphic bundle on $X\times\Delta$ that is an extension of $\underline{\mathbb{C}}_{X\times \Delta}$ by $p^\ast L \otimes q^\ast L_{-2}$.
Thus, there exists a short exact sequence of holomorphic vector bundles
\[
0\longrightarrow p^\ast L \otimes q^\ast L_{-2}\longrightarrow E\longrightarrow \underline{\mathbb{C}}_{X\times \Delta}\longrightarrow 0.
\]
We also have
\[
E\simeq p^\ast L \otimes q^\ast L_{-2}\oplus\underline{\mathbb{C}}_{X\times \Delta}
\]
as smooth bundles.
From now on, we identify $E$ and $p^\ast L \otimes q^\ast L_{-2}\oplus \underline{\mathbb{C}}_{X\times \Delta}$.
Let $\bar\partial_E$ be the Dolbeault operator of $E$.
Since $E$ given by an extension, $\bar\partial_E$ can be written in the form of 
\[
\overline\partial_E=
\begin{pmatrix}
p^\ast \overline\partial_L\otimes \mathrm{Id}+\mathrm{Id}\otimes q^\ast \overline\partial_{\Delta}& \psi\\
0&\overline\partial_{X\times\Delta}
\end{pmatrix}
\]
where $\psi\in A^{0,1}(\mathrm{Hom}(\underline{\mathbb{C}}_{X\times \Delta}, p^\ast L \otimes q^\ast L_{-2})$.
\par
The bundle $E$ is $SU(1,1)$-equivariant with respect to the $SU(1,1)$-action induced by that on $L_{-2}$.
The following is an analogue of Proposition \ref{prop 3.5} and can be proved by the same argument.
\begin{prop}\label{prop 3.10}
If $(E,\overline\partial_E)$ is an $SU(1,1)$-equivariant holomorphic bundle, then
\begin{itemize}
\item[(1)] there exists a $\phi\in A(L)$ such that 
\[
\psi=p^\ast\phi\otimes q^\ast\alpha.
\]
\item[(2)] $\phi$ is a holomorphic section of $(L,\overline\partial_L)$.
\end{itemize}
\end{prop}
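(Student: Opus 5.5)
The plan is to follow the proof of Proposition \ref{prop 3.5}, replacing $\mathbb{P}^1$ by $\Delta\simeq SU(1,1)/U(1)$ and $\mathcal{O}(-2)$ by the weight $-2$ bundle $L_{-2}$. Since $(E,\overline\partial_E)$ is an $SU(1,1)$-equivariant holomorphic bundle, $\overline\partial_E$ commutes with the $SU(1,1)$-action. The diagonal blocks $p^\ast\overline\partial_L\otimes\mathrm{Id}+\mathrm{Id}\otimes q^\ast\overline\partial_\Delta$ and $\overline\partial_{X\times\Delta}$ already commute with the action, so the off-diagonal term $\psi$ must also be invariant:
\[
\psi\in A^{0,1}(\mathrm{Hom}(\underline{\mathbb{C}}_{X\times\Delta},p^\ast L\otimes q^\ast L_{-2}))^{SU(1,1)}.
\]

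Using $T^{0,1\ast}(X\times\Delta)=p^\ast T^{0,1\ast}X\oplus q^\ast T^{0,1\ast}\Delta$, I split this space into two summands. For the first summand, I restrict to a slice $\{x\}\times\Delta$. After fixing a basis of the finite-dimensional fibre $T^{0,1\ast}_xX\otimes L_x$, the restriction becomes a finite collection of $SU(1,1)$-equivariant sections of $L_{-2}$. By Lemma \ref{lem 3.2}(1) with $m=-2\neq 0$, each of these vanishes, so the first component is zero. For the second summand, the restriction to $\{x\}\times\Delta$ is an element of $L_x\otimes A^{0,1}(L_{-2})^{SU(1,1)}$. By Lemma \ref{lem 3.8}(1), this space is spanned by $\alpha$, so $\psi|_{\{x\}\times\Delta}=\phi_x\otimes\alpha$ for a unique $\phi_x\in L_x$.

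Since $\alpha$ is nowhere vanishing, I can recover $\phi_x$ by evaluating $\psi$ at $(x,0)$ and dividing by $\alpha_0$. This exhibits $x\mapsto\phi_x$ as the composition of a smooth section with a fixed linear isomorphism, so $\phi\in A(L)$ and $\psi=p^\ast\phi\otimes q^\ast\alpha$. This proves (1).

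For (2), I expand $(\overline\partial_E)^2=0$. Its off-diagonal entry is
\[
(p^\ast\overline\partial_L\otimes\mathrm{Id}+\mathrm{Id}\otimes q^\ast\overline\partial_\Delta)(p^\ast\phi\otimes q^\ast\alpha)+(p^\ast\phi\otimes q^\ast\alpha)\,\overline\partial_{X\times\Delta}=0.
\]
The term $q^\ast\overline\partial_\Delta\alpha$ vanishes because $\alpha$ is a $(0,1)$-form on the one-dimensional $\Delta$. Up to sign conventions, what remains is $p^\ast(\overline\partial_L\phi)\wedge q^\ast\alpha=0$. This is a $(0,2)$-form whose components lie in $p^\ast T^{0,1\ast}X\wedge q^\ast T^{0,1\ast}\Delta$, and $\alpha$ is nowhere zero, so $\overline\partial_L\phi=0$.

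The main point needing care is the first step: the one-dimensionality of invariant $(0,1)$-forms over the noncompact $\Delta$, and the vanishing of invariant sections of $L_{-2}$. Both follow from transitivity of the $SU(1,1)$-action and the stabilizer weight computations already carried out in Section \ref{sec 3.3.1}, via Lemmas \ref{lem 3.2} and \ref{lem 3.8}.
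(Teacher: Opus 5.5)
Your proposal is correct and follows the paper's route: the paper proves this statement by transplanting the proof of Proposition \ref{prop 3.5}, which is exactly your sequence of steps. Those steps are invariance of $\psi$, splitting $T^{0,1\ast}(X\times\Delta)$, killing the $p^\ast T^{0,1\ast}X$-component slice-wise, identifying the remaining component with $\phi_x\otimes\alpha$ via Lemma \ref{lem 3.8}, and reading off $\overline\partial_L\phi=0$ from the off-diagonal part of $(\overline\partial_E)^2=0$. Your two adaptations are the right ones: you use the stabilizer-weight argument of Lemma \ref{lem 3.2} instead of the degree argument used for $\mathbb{P}^1$, which is unavailable on the contractible $\Delta$, and you evaluate at $(x,0)$ to get smoothness of $\phi$.
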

Since multiplying a holomorphic section by a nonzero constant preserves holomorphicity, $\sqrt{-\tau}\phi$ is also holomorphic.
Thus, after replacing $\sqrt{-\tau}\phi$ by $\phi$, we may write
\[
\psi
=
\sqrt{-\frac{1}{\tau}}\cdot p^\ast\phi\otimes q^\ast\alpha.
\]
We keep the normalization of $\alpha$ fixed.\par
As in Proposition \ref{prop 3.6}, any $SU(1,1)$-invariant pseudo-Hermitian metric on $E$ is of the following form. 
The proof is the same as that of Proposition \ref{prop 3.6}.
\begin{prop}\label{prop 3.11}
Let $h_E$ be an $SU(1,1)$-invariant pseudo-Hermitian metric on $E$.
Then there exists a Hermitian metrics $h_L$ of $L$, $h_X$ of $\underline{\mathbb{C}}_X$ such that
\[
h_E
=
\begin{pmatrix}
C_0\cdot p^\ast h_L\otimes q^\ast h^{(-2)}&0\\
0& C_1\cdot p^\ast h_X
\end{pmatrix},\,\,C_0,C_1\in\{1,-1\}.
\]
\end{prop}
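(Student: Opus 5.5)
The proof will follow Proposition \ref{prop 3.6} step by step. The input from the homogeneous-space setting is that $\Delta\simeq SU(1,1)/U(1)$, that $L_{-2}$ is the weight $-2$ equivariant line bundle, and that $\underline{\mathbb{C}}_\Delta$ with the trivial action has weight $0$.

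\textbf{Step 1: orthogonality of the two summands.} Fix $x\in X$ and the base point $0=eU(1)\in\Delta$. Take basis vectors $p^\ast v\otimes q^\ast u$ of $(p^\ast L\otimes q^\ast L_{-2})_{(x,0)}$ and $w$ of $\underline{\mathbb{C}}_{X\times\Delta,(x,0)}$. The stabilizer element $k_\theta$ fixes $(x,0)$. It acts by $e^{-2i\theta}$ on the first fiber, since the $U(1)$-weight of $L_{-2}$ is $-2$ (Section \ref{sec 3.3.1}), and trivially on the second. $SU(1,1)$-invariance of $h_E$ then gives
\[
h_{E,(x,0)}(p^\ast v\otimes q^\ast u,w)=e^{-2i\theta}h_{E,(x,0)}(p^\ast v\otimes q^\ast u,w)
\]
for all $\theta$, so this pairing vanishes. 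Transitivity of $SU(1,1)$ on $\Delta$ transports the vanishing to every point $(x,g\cdot 0)$: use invariance and the fact that $g^{-1}$ maps each summand's fiber to the same summand's fiber at $(x,0)$. Hence $h_E=\mathrm{diag}(h_1,h_2)$, and $h_1,h_2$ are non-degenerate and $SU(1,1)$-invariant.

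\textbf{Step 2: the trivial summand.} For each $x$, the restriction $h_2|_{\{x\}\times\Delta}$ is an invariant non-degenerate Hermitian form on the weight-zero bundle with the trivial action. Since the action is transitive, it is a nonzero real constant $a_x$. Because $h_2$ is smooth, $x\mapsto a_x$ is smooth. Because $a_x$ never vanishes and $X$ is connected, its sign is constant. Set $h_X:=|a_x|$ and $C_1=\mathrm{sgn}(a_x)$.

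\textbf{Step 3: the $L$-summand.} For each $x$, $h_1|_{\{x\}\times\Delta}$ is an invariant pseudo-Hermitian metric on $L_x\otimes L_{-2}$, which is a copy of $L_{-2}$ after choosing a basis of $L_x$. The metric $h^{(-2)}$ is $SU(1,1)$-invariant, as verified in Section \ref{sec 3.3.1}. Lemma \ref{lem 3.3} therefore gives $h_1|_{\{x\}\times\Delta}=a_x\,h^{(-2)}$ with $a_x$ a nonzero real number, now a Hermitian form on the line $L_x$. This family is smooth in $x$ and has constant sign by connectedness. Set $h_L:=|a_x|$ and $C_0=\mathrm{sgn}$. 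This gives $h_1=C_0\,p^\ast h_L\otimes q^\ast h^{(-2)}$.

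\textbf{Main obstacle.} The only point not purely formal is smoothness of $a_x$ in $x$ together with constancy of sign. Two things are needed. First, non-degeneracy of $h_E$ forces each of $h_1,h_2$ to be non-degenerate, which is where the block-diagonal form from Step 1 is used. Second, $X$ must be connected, which is assumed in the surrounding theorems. Smoothness itself follows by evaluating $h_1$ and $h_2$ at the fixed point $(x,0)$, which is a smooth function of $x$. Non-compactness of $SU(1,1)$ plays no role: only transitivity and the compact stabilizer $U(1)$ are used.
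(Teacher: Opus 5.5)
Your proposal is correct and is essentially the paper's argument: the paper proves this by repeating the proof of Proposition~\ref{prop 3.6}, namely stabilizer-weight orthogonality at the base point transported by transitivity, constancy of the trivial block along $\Delta$, and Lemma~\ref{lem 3.3} for the $L_{-2}$ block. Your explicit appeal to connectedness of $X$ for the constant sign of $a_x$ makes precise a point the paper leaves implicit when it writes ``if $h_2$ is positive''.
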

The following is a consequence of Proposition \ref{prop 3.11} and a rescaling of the Hermitian metric on $L$.
\begin{lem}\label{lem 3.10}
Let $h_E$ be an $SU(1,1)$-invariant pseudo-Hermitian metric of signature $(1,1)$ on $E$.
Then there exist Hermitian metrics $h_L$ on $L$ and $h_X$ on $\underline{\mathbb{C}}_X$ such that 
\[
h_E
=
\begin{pmatrix}
C_0\cdot p^\ast (h_Xh_L)\otimes q^\ast h^{(-2)}&0\\
0& C_1\cdot p^\ast h_X
\end{pmatrix}
\]
where $(C_0,C_1)=(1,-1)$ or $(-1,1)$.
\end{lem}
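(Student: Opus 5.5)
We derive Lemma \ref{lem 3.10} from Proposition \ref{prop 3.11}, mirroring how Lemma \ref{lem 3.7} follows from Proposition \ref{prop 3.6}.

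First, Proposition \ref{prop 3.11} gives Hermitian metrics $h'_L$ on $L$ and $h_X$ on $\underline{\mathbb{C}}_X$, and signs $C_0,C_1\in\{1,-1\}$, such that
\[
h_E=\begin{pmatrix} C_0\cdot p^\ast h'_L\otimes q^\ast h^{(-2)}&0\\ 0&C_1\cdot p^\ast h_X\end{pmatrix}.
\]
Here $p^\ast h'_L\otimes q^\ast h^{(-2)}$ and $p^\ast h_X$ are positive definite on the two line summands, and these summands are $h_E$-orthogonal. So at each point the signature of $h_E$ is the number of positive entries among $C_0,C_1$ paired with the number of negative entries. Signature $(1,1)$ therefore excludes $C_0=C_1$, which leaves $(C_0,C_1)=(1,-1)$ or $(-1,1)$.

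Next, we rescale the metric on $L$. Set $h_L:=h_X^{-1}h'_L$. Since $h_X$ is a smooth positive function on $X$, $h_L$ is again a Hermitian metric on $L$. By construction $p^\ast(h_Xh_L)=p^\ast h'_L$, so the upper-left entry of $h_E$ becomes $C_0\cdot p^\ast(h_Xh_L)\otimes q^\ast h^{(-2)}$. The lower-right entry is unchanged. This is exactly the claimed form.

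The only point needing care is the signature count. It relies on the block-diagonal form from Proposition \ref{prop 3.11}, that is, on the orthogonality of the two summands, which was obtained there from the $U(1)$-weight argument. We take this as given. Everything else is bookkeeping.
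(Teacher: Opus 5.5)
Your proposal is correct and matches the paper's route: the paper derives Lemma~\ref{lem 3.10} from Proposition~\ref{prop 3.11} together with a rescaling of the metric on $L$. You supply the two details the paper leaves implicit, namely that signature $(1,1)$ rules out $C_0=C_1$, and that setting $h_L:=h_X^{-1}h'_L$ absorbs $h_X$ into the upper-left block (the sign of each block being constant on all of $X$ is inherited from Proposition~\ref{prop 3.11}, where it relies on $X$ being connected).
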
 
We now establish the dimensional reduction.
Throughout this argument, we work with the K\"ahler form $\Omega_\tau$ constructed in \eqref{eq 3.20}.
\par
Suppose that $\overline\partial_E$ defines an $SU(1,1)$-equivariant holomorphic bundle structure on $E$.
Then by Proposition \ref{prop 3.10}, there exists a holomorphic section $\phi$ of $(L,\bar\partial_L)$ such that 
\[
\overline\partial_E=
\begin{pmatrix}
p^\ast \overline\partial_L\otimes \mathrm{Id}+\mathrm{Id}\otimes q^\ast \overline\partial_{\Delta}& \sqrt{-\frac{1}{\tau}}\cdot p^\ast\phi\otimes q^\ast\alpha\\
0&\overline\partial_{X\times\Delta}
\end{pmatrix}.
\]
Let $h_E$ be as in Lemma \ref{lem 3.10}, and let $\nabla_{h_E}$ be the connection compatible with $h_E$ and satisfying $\nabla^{0,1}_{h_E}=\bar\partial_E$.
Then, by Lemma \ref{lem 3.9}, $\partial_{h_E}=\nabla^{1,0}_{h_E}$ has the form 
\[
\partial_{h_E}=
\begin{pmatrix}
p^\ast\partial_{h_Xh_L}\otimes \mathrm{Id} +\mathrm{Id}\otimes p^\ast\partial_{h^{(-2)}}&0\\
\sqrt{-\frac{1}{\tau}} \cdot p^\ast (h_L\overline \phi)\otimes q^\ast \beta& p^\ast \partial_{h_X}
\end{pmatrix}.
\]
We denote the curvature of $\nabla_{h_E}$ as $F_{h_E}$.
The following proposition is an analogue of Proposition \ref{prop 3.7}.
 In the present case, however, the base space is non-compact, and hence the constant in the Hermitian-Yang-Mills equation is not determined by the topological data of the bundle.
Nevertheless, we show that this constant is forced to take a specific value.

\begin{prop}\label{prop 3.12}
If $h_E$ is a pseudo-Hermitian-Einstein metric of $(E,\overline\partial_E)$ with respect to the K\"ahler form $\Omega_\tau$, that is,
\[
\sqrt{-1}\Lambda_{\Omega_\tau}F_{h_E}=\lambda \mathrm{Id}_E
\]
for some constant $\lambda\in\mathbb{C}$, then 
\begin{itemize}
\item[(1)] $(\overline\partial_L,\phi,h_L)$ satisfies Manton's exotic $\tau$-vortex equation with $\tau<0$.
\item[(2)]
\[
\lambda=\frac{1}{2}\bigg(1-\frac{4\pi}{\tau}\mathrm{deg}L\bigg).
\]
\end{itemize}
\end{prop}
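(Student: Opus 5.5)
We follow the proof of Proposition \ref{prop 3.7}, using the block computation already carried out in the proof of Proposition \ref{prop 3.9}. That computation is valid for an arbitrary Hermitian metric $h_X$ on $\underline{\mathbb{C}}_X$. The sign conventions of Lemma \ref{lem 3.10} are the ones entering \eqref{eq 3.18}: for $(C_0,C_1)=(-1,1)$ the metric is $-h_E$ with $h_E$ as in \eqref{eq 3.18}, and this yields the same compatible connection. Hence Lemma \ref{lem 3.9} gives the stated form of $\partial_{h_E}$, and the curvature blocks are as in the proof of Proposition \ref{prop 3.9}.

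First we write $F_{h_E}=\overline\partial_E\partial_{h_E}+\partial_{h_E}\overline\partial_E$ in block form. As before, the off-diagonal blocks $F_{12},F_{21}$ are mixed forms with one leg along $X$ and one along $\Delta$, so their contraction with $\Omega_\tau$ vanishes. The equation $\sqrt{-1}\Lambda_{\Omega_\tau}F_{h_E}=\lambda\mathrm{Id}_E$ is therefore equivalent to the pair \eqref{eq 3.24}, \eqref{eq 3.25}, with $\lambda$ now an unknown constant.

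Subtracting \eqref{eq 3.25} from \eqref{eq 3.24} gives \eqref{eq 3.26}. Multiplying \eqref{eq 3.26} by $\tau/2$ gives exactly
\[
\sqrt{-1}\Lambda_{\omega_X}F_{h_L}-\tfrac12|\phi|^2_{h_L}-\tfrac{\tau}{2}=0,
\]
which is Manton's exotic $\tau$-vortex equation, proving (1).

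For (2), adding \eqref{eq 3.24} and \eqref{eq 3.25} gives \eqref{eq 3.27}. Integrating over the compact surface $X$ with $\int_X\omega_X=1$, the term $\int_X\sqrt{-1}\Lambda_{\omega_X}F_{h_X}$ vanishes, because $F_{h_X}=\overline\partial\partial\log h_X$ is exact on the trivial bundle. We also have $\int_X\sqrt{-1}\Lambda_{\omega_X}F_{h_L}=2\pi\deg L$. This yields
\[
-\frac{4\pi}{\tau}\deg L+1=2\lambda,
\]
which is the claimed value of $\lambda$.

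The point needing care is that $X\times\Delta$ is non-compact, so we cannot determine $\lambda$ from the degree of $E$ as in Proposition \ref{prop 3.4}. Instead we integrate only along the compact factor $X$, at a fixed point of $\Delta$. This is legitimate because all quantities in \eqref{eq 3.27} are pulled back from $X$ and hence independent of the $\Delta$-coordinate. The remaining checks are that the sign choice $(C_0,C_1)$ does not alter the connection, and that $\lambda$ comes out real automatically.
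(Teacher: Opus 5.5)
Your proposal is correct and follows the paper's proof. You subtract the two diagonal equations \eqref{eq 3.24} and \eqref{eq 3.25} to get the vortex equation, then add them and integrate over the compact factor $X$, where the $F_{h_X}$ term integrates to zero, to pin down $\lambda$. The only slip is harmless: to obtain the equation in the stated form, \eqref{eq 3.26} should be multiplied by $-\tau/2$ rather than $\tau/2$, although your version is an equivalent equation; your explicit checks on the sign choice $(C_0,C_1)$ and on integrating only along $X$ usefully spell out points the paper leaves implicit.
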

\begin{proof}
We set 
\[
F_{h_E}=
\begin{pmatrix}
F_{11}&F_{12}\\
F_{21}&F_{22}.
\end{pmatrix}
\]
The equaiton
\[
\sqrt{-1}\Lambda_{\Omega_\tau}F_{h_E}=\lambda Id_E
\]
implies 
\begin{align*}
\sqrt{-1}\Lambda_{\Omega_\tau}F_{11}&=\lambda,\\
\sqrt{-1}\Lambda_{\Omega_\tau}F_{22}&=\lambda.
\end{align*}
Then, by \eqref{eq 3.24}, \eqref{eq 3.25}, \eqref{eq 3.26}, and \label{eq 3.27}, 
we have 
\begin{align*}
-\frac{2\sqrt{-1}}{\tau} \Lambda_{\omega_X}(F_{h_L}+F_{h_X})+1+\frac{1}{2\tau}|\phi|^2_{h_L}&=\lambda,
\\
-\frac{2\sqrt{-1}}{\tau} \Lambda_{\omega_X}F_{h_X}-\frac{1}{2\tau}|\phi|^2_{h_L}&=\lambda.
\end{align*}
Subtracting the second equation from the first, we obtain
\[
-\frac{2\sqrt{-1}}{\tau} \Lambda_{\omega_X}F_{h_L}+1+\frac{1}{\tau}|\phi|^2_{h_L}=0.
\]
Hence $(\overline\partial_L,\phi,h_L)\in\mathrm{Tri}_\tau$ is a solution of Manton's exotic $\tau$-vortex equation.\par
In addition, adding the two equations, we obtain 
\be\label{eq 3.28}
-\frac{2\sqrt{-1}}{\tau} \Lambda_{\omega_X}F_{h_L}-\frac{4\sqrt{-1}}{\tau} \Lambda_{\omega_X}F_{h_X}+1=2\lambda.
\ee
Recall that 
\[
\sqrt{-1} \Lambda_{\omega_X}F_{h_X}=\frac{1}{2}\triangle h_X.
\]
Hence the equation \eqref{eq 3.28} is equivariant to 
\[
-\frac{2\sqrt{-1}}{\tau} \Lambda_{\omega_X}F_{h_L}-\frac{2}{\tau}\triangle h_X+1=2\lambda.
\]
This equation implies 
\begin{align*}
0&=\int_X\bigg(2\lambda-1+\frac{2\sqrt{-1}}{\tau} \Lambda_{\omega_X}F_{h_L}\bigg)\\
&=2\lambda-1+\frac{4\pi}{\tau}\mathrm{deg}L
\end{align*}
which derive (2).
\end{proof}

Combining the results obtained above, we obtain the following dimensional reduction theorem.
\begin{thm}
Let $X$ be a compact connected Riemann surface and let  $\omega_X$ be a K\"ahler form of $X$ satisfying $\int_X\omega_X=1$.
Let $(L,\overline\partial_L)$ be a holomorphic line bundle over $X$.
Let $E$ be a holomorphic bundle over $X\times \Delta$ which is an extension of $\underline{\mathbb{C}}_{X\times \Delta}$ by $p^\ast L\otimes q^\ast L_{-2}$.
Let $\overline\partial_E$ be the Dolbeault operator of $E$.
On $X\times\Delta$, we consider the K\"ahler form $\Omega_\tau$ constructed in \eqref{eq 3.20}.
\par
If $(E,\overline\partial_E)$ is an $SU(1,1)$-equivariant holomorphic bundle, then there exists a holomorphic section $\phi$ of $(L,\bar\partial_L)$ such that 
\[
\overline\partial_E=
\begin{pmatrix}
p^\ast \overline\partial_L\otimes \mathrm{Id}+\mathrm{Id}\otimes q^\ast \overline\partial_{\Delta}& \sqrt{-\frac{1}{\tau}}\cdot p^\ast\phi\otimes q^\ast\alpha\\
0&\overline\partial_{X\times\Delta}
\end{pmatrix}.
\]
Let $h_E$ be a pseudo-Hermitian metric of signature $(1,1)$ on $E.$
If $h_E$ is $SU(1,1)$-invariant, then there exist Hermitian metrics $h_L$ on $L$ and $h_X$ on $\underline{\mathbb{C}}_X$ such that 
\[
h_E
=
\begin{pmatrix}
C_0\cdot p^\ast (h_Xh_L)\otimes q^\ast h^{(-2)}&0\\
0& C_1\cdot p^\ast h_X
\end{pmatrix}
\]
where $(C_0,C_1)=(1,-1)$ or $(-1,1)$.\par
Let $h_E$ and $(E,\overline\partial_E)$ be as above, and $\nabla_{h_E}$ be the connection compatible with $h_E$ and $\nabla^{0,1}_{h_E}=\bar\partial_E$.
If $h_E$ is a pseudo-Hermitian-Einstein metric of $(E,\overline\partial_E)$ with respect to $\Omega_\tau$, or equivalently, if $\nabla_{h_E}$ is a pseudo-Hermitian-Yang-Mills connection, then
\begin{itemize}
\item The triple $((L,\overline\partial_L),\phi,h_L)$ satisfies Manton's exotic $\tau$-vortex equation with $\tau<0$.
\item The constant $\lambda$ appearing in \eqref{eq 3.1} satisfies
\[
\lambda=\frac{1}{2}\bigg(1-\frac{4\pi}{\tau}\mathrm{deg}L\bigg).
\]
\end{itemize}
\end{thm}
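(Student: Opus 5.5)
The theorem assembles results already proved in this subsection, so the plan is to chain them in the order the statement lists its conclusions. First, assuming $(E,\overline\partial_E)$ is an $SU(1,1)$-equivariant holomorphic bundle, we identify $E$ smoothly with $p^\ast L\otimes q^\ast L_{-2}\oplus\underline{\mathbb{C}}_{X\times\Delta}$. We then write $\overline\partial_E$ in upper triangular form with off-diagonal term $\psi$ and apply Proposition \ref{prop 3.10}. Its proof mirrors Proposition \ref{prop 3.5}. The $p^\ast T^{0,1\ast}X$-component of $\psi$ is, fibrewise over $x\in X$, an $SU(1,1)$-equivariant section of the weight $-2$ bundle, so it vanishes by Lemma \ref{lem 3.2}. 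The remaining $q^\ast T^{0,1\ast}\Delta$-component lies, fibrewise, in $A^{0,1}(L_{-2})^{SU(1,1)}$, which is spanned by $\alpha$ by Lemma \ref{lem 3.8}. This gives $\psi=p^\ast\phi\otimes q^\ast\alpha$. Integrability $(\overline\partial_E)^2=0$ then forces $\overline\partial_L\phi=0$, because $\alpha$ vanishes nowhere. Replacing $\phi$ by $\sqrt{-\tau}\,\phi$ gives the normalization $\sqrt{-1/\tau}$.

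Second, for an $SU(1,1)$-invariant $h_E$ of signature $(1,1)$ we invoke Proposition \ref{prop 3.11} and Lemma \ref{lem 3.10}. The weight argument used in Proposition \ref{prop 3.6} shows the two summands are $h_E$-orthogonal: the stabilizer $U(1)$ acts with weight $-2$ on one fibre and weight $0$ on the other, so the cross term picks up a factor $e^{-2i\theta}$ and must vanish. Lemma \ref{lem 3.3}, applied fibrewise over $X$, identifies each diagonal block with a nonzero real multiple of $h^{(-2)}$, respectively of the constant $1$. Signature $(1,1)$ forces the two blocks to have opposite signs. Absorbing $h_X$ into $h_L$ gives the stated form with $(C_0,C_1)=(1,-1)$ or $(-1,1)$.

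Third, for the pseudo-Hermitian-Yang--Mills conclusions we apply Proposition \ref{prop 3.12}. We compute $\partial_{h_E}$ by Lemma \ref{lem 3.9} and $F_{h_E}=\overline\partial_E\partial_{h_E}+\partial_{h_E}\overline\partial_E$. The off-diagonal blocks of $F_{h_E}$ are mixed forms, so they are killed by $\Lambda_{\Omega_\tau}$. The diagonal blocks contract via \eqref{eq 3.15} and \eqref{eq 3.16} to \eqref{eq 3.24} and \eqref{eq 3.25}. Subtracting these two equations gives \eqref{eq 3.26}, which is Manton's exotic $\tau$-vortex equation for $h_L$. Adding them and integrating over the compact surface $X$ kills the Laplacian term in $h_X$, and Chern--Weil gives $\int_X\sqrt{-1}\Lambda_{\omega_X}F_{h_L}=2\pi\deg L$. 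This yields $2\lambda-1+\frac{4\pi}{\tau}\deg L=0$, which is the claimed value of $\lambda$.

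The main obstacle is the case $(C_0,C_1)=(-1,1)$. The computation of $\partial_{h_E}$ in Lemma \ref{lem 3.9} was written for $(1,-1)$. Flipping the overall sign of $h_E$ leaves the compatible connection unchanged, since the defining equation is homogeneous in $h_E$. So this case reduces to the previous one, and I would state that reduction explicitly. The only other point needing care is that $\Delta$ is non-compact, so $\lambda$ cannot be read off from a degree on $X\times\Delta$. This is exactly why the determination of $\lambda$ has to go through integration over $X$ alone.
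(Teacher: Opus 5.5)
Your proposal is correct and follows the paper's proof. You use Proposition \ref{prop 3.10} for the form of $\overline\partial_E$, Proposition \ref{prop 3.11} and Lemma \ref{lem 3.10} for the structure of $h_E$, and Proposition \ref{prop 3.12} for the rest, where subtracting the two diagonal equations gives the vortex equation and adding them and integrating over $X$ gives $\lambda$. Your remark that $-h_E$ has the same compatible connection, so the case $(C_0,C_1)=(-1,1)$ reduces to the formula of Lemma \ref{lem 3.9}, makes explicit a point the paper leaves implicit.
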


\subsection{$\tau=0$ case}\label{sec 3.4}
\subsubsection{$SE(2)$-equivariant line bundle over $\mathbb{C}$}\label{sec 3.4.1}
The special Euclidean group $SE(2)$ is defined as the semi-direct product 
\[
SE(2):=\mathbb{C}\rtimes U(1).
\]
Let $(a,e^{i\theta})\in SE(2)$ and $z\in\mathbb{C}$.
$SE(2)$ acts on $\mathbb{C}$ as 
\[
(a,e^{i\theta})\cdot z:=a+e^{i\theta} z.
\]
The action is holomorphic.
Moreover, it is transitive since 
\[
(z,1)\cdot 0=z.
\]
The stabilizer subgroup at $0\in \mathbb{C}$ is 
\[
\mathrm{Stab}_0=\{(0,e^{i\theta})\,\,|\,\, \theta\in\mathbb{R}\}\simeq U(1).
\]

Hence, we identify $\mathbb{C}$ with the homogeneous space $SE(2)/U(1)$.
We denote by $\underline{\mathbb{C}}_{\mathbb{C}}$ the trivial line bundle over $\underline{\mathbb{C}}$.
The standard Dolbeault operator $\overline\partial_{\mathbb{C}}$ of $\mathbb{C}$ endows $\mathbb{C}_{\mathbb{C}}$ with its standard holomorphic structure.
Let $e$ be the canonical holomorphic frame of $\mathbb{C}_\mathbb{C}$, and denote by $e_z:=e(z)$ its value at $z\in\mathbb{C}$.
\par
Let 
\[
g=(a,e^{i\theta})\in SE(2).
\]
For every $n\in\mathbb{Z}$, we define an $SE(2)$-action on $\mathbb{C}_{\mathbb{C}}$ by
\[
g\cdot e_z:=e^{in\theta}e_{g\cdot z}.
\]
We show that this defines an $SE(2)$-action.
Let
\[
g_j=
(a_j,e^{i\theta_j})
\quad (j=1,2).
\]
Then
\[
g_1g_2=(a_1+e^{i\theta_1}a_2,e^{i(\theta_1+\theta_2)}).
\]
Therefore
\begin{align*}
g_1\cdot(g_2\cdot e_z)
&=e^{in\theta_2}g_1\cdot e_{g_2\cdot z}\\
&=e^{in\theta_2+in\theta_1}e_{g_1g_2\cdot z}\\
&=(g_1g_2)\cdot e_z.
\end{align*}
It is also clear that the identity element acts trivially.
Hence this formula defines an $SE(2)$-action on $\underline{\mathbb{C}}_{\mathbb{C}}$.
This action is holomorphic. 
Hence $(\underline{\mathbb{C}}_{\mathbb{C}},\overline\partial_{\mathbb{C}})$, endowed with this action, is an $SE(2)$-equivariant holomorphic line bundle.
\par
For later use, we denote this $SE(2)$-equivariant holomorphic line bundle by
$L_n$.
Then, by construction, $L_n$ is the weight $n$ $SE(2)$-equivariant holomorphic line bundle
over $\mathbb{C}\simeq SE(2)/U(1)$ (See Section \ref{sec 3.2.1}).
Its underlying holomorphic line bundle is still
\[
(\underline{\mathbb{C}}_{\mathbb{C}},\overline\partial_{\mathbb{C}}),
\]
and we continue to denote its Dolbeault operator by
$\overline\partial_{\mathbb{C}}$.
Although the underlying holomorphic frame is the same canonical frame $e$
of $\underline{\mathbb{C}}_{\mathbb{C}}$, we denote it by $e_n$ when it is
regarded as a frame of $L_n$, in order to indicate the weight.
Thus the $SE(2)$-action on $L_n$ is written as
\[
(a,e^{i\theta})\cdot (e_n)_z
=
e^{in\theta}(e_n)_{a+e^{i\theta}z}.
\]
Since the $SE(2)$-action on $\mathbb{C}$ is holomorphic, it induces a natural
$SE(2)$-action on $T^{1,0}\mathbb{C}$ by push-forward.
For $v_z\in T^{1,0}_z\mathbb{C}$, we define
\[
g\cdot v_z:=(dg)_z(v_z)\in T^{1,0}_{g\cdot z}\mathbb{C}.
\]
Let
\[
g=(a,e^{i\theta})\in SE(2).
\]
Since
\[
g\cdot z=a+e^{i\theta}z,
\]
we have
\[
\frac{d}{dz}(g\cdot z)=e^{i\theta}.
\]
Therefore
\[
g\cdot\left(\frac{\partial}{\partial z}\bigg|_z\right)
=
e^{i\theta}\frac{\partial}{\partial z}\bigg|_{g\cdot z}.
\]
Since $T^{1,0*}\mathbb{C}$ is the dual bundle of $T^{1,0}\mathbb{C}$,
the action on $T^{1,0}\mathbb{C}$ naturally induces an $SE(2)$-action on
$T^{1,0*}\mathbb{C}$.
With respect to the frame $dz$, this induced action is given by
\[
g\cdot (dz)_z
=
e^{-i\theta}(dz)_{g\cdot z}.
\]
It follows from this formula that $T^{1,0*}\mathbb{C}$ is the weight $-1$ $SE(2)$-equivariant holomorphic line bundle over $\mathbb{C}$.
Therefore
\[
T^{1,0*}\mathbb{C}\simeq L_{-1}
\]
as $SE(2)$-equivariant holomorphic line bundles.
The complex conjugate of the above action defines a smooth $SE(2)$-action on
$T^{0,1*}\mathbb{C}$, which is given by
\[
g\cdot (d\overline z)_z
=
e^{i\theta}(d\overline z)_{g\cdot z}.
\]
Thus $T^{0,1*}\mathbb{C}$ is, by construction, the weight $1$
smooth $SE(2)$-equivariant complex line bundle over $\mathbb{C}$.
Therefore
\[
T^{0,1*}\mathbb{C}\simeq L_1
\]
as smooth $SE(2)$-equivariant complex line bundles.
Then, by a similar argument with Lemma \ref{lem 3.8}, the following statement holds.
\begin{lem}\label{lem 3.11}
\begin{itemize}
\item[(1)]
The space $A^{0,1}(L_{-1})^{SE(2)}$ is one-dimensional and is generated by
\[
\alpha=d\overline z\otimes e_{-1}.
\]
\item[(2)]  The space $A^{1,0}(L_{1})^{SE(2)}$ is one-dimensional and is generated by
\[
\beta=dz\otimes e_1.
\]
\end{itemize}
\end{lem}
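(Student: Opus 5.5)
We will follow the proof of Lemma \ref{lem 3.8} essentially verbatim, replacing $SU(1,1)$ by $SE(2)$ and the weights $\pm 2$ by $\pm 1$. We prove (1); (2) is the same argument with the roles of $T^{1,0*}\mathbb{C}$ and $T^{0,1*}\mathbb{C}$ exchanged.

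\emph{Dimension count.} We use the identification $T^{0,1*}\mathbb{C}\simeq L_1$ of smooth $SE(2)$-equivariant line bundles established just before the statement. Together with Lemma \ref{lem 3.1} (weights add under tensor product) this gives
\[
A^{0,1}(L_{-1})^{SE(2)}=A(T^{0,1*}\mathbb{C}\otimes L_{-1})^{SE(2)}\simeq A(L_1\otimes L_{-1})^{SE(2)}=A(L_0)^{SE(2)}.
\]
By Lemma \ref{lem 3.2}(2), an equivariant section of $L_0$ is determined by its value at $0$. Since $SE(2)$ acts transitively on $\mathbb{C}$, that value can be any complex number, so the space is $\mathbb{C}$ and is one-dimensional. (Lemma \ref{lem 3.2}(2) as stated says $A(L_0)\simeq\mathbb{C}$; we read it, as its proof does, for equivariant sections.) For (2) we use $T^{1,0*}\mathbb{C}\simeq L_{-1}$, so $T^{1,0*}\mathbb{C}\otimes L_1\simeq L_0$ and the same conclusion follows.

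\emph{Equivariance of the generator.} Let $g=(a,e^{i\theta})$. We check $g\cdot\alpha_z=\alpha_{g\cdot z}$ from the formulas for the actions:
\[
g\cdot (d\bar z)_z=e^{i\theta}(d\bar z)_{g\cdot z},\qquad g\cdot(e_{-1})_z=e^{-i\theta}(e_{-1})_{g\cdot z}.
\]
Hence
\[
g\cdot(d\bar z\otimes e_{-1})_z=(d\bar z\otimes e_{-1})_{g\cdot z},
\]
and the coefficient of $\alpha$ is the constant $1$, unlike the $SU(1,1)$ case where a conformal factor had to be tracked. So $\alpha$ is equivariant. It is also nowhere zero, so it spans the one-dimensional space. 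For $\beta$ the analogous computation is
\[
g\cdot(dz\otimes e_1)_z=e^{-i\theta}e^{i\theta}(dz\otimes e_1)_{g\cdot z}=(dz\otimes e_1)_{g\cdot z}.
\]

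\emph{Main obstacle.} There is no real analytic difficulty. The only point needing care is bookkeeping of conventions: the action on a tensor product is the product of the actions, and the sign of the weight on $T^{0,1*}\mathbb{C}$ is $+1$ because it is the conjugate of the weight $-1$ on $T^{1,0*}\mathbb{C}$. This sign is exactly what makes the weights cancel against those of $L_{\mp1}$.
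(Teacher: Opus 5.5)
Your proposal is correct and follows the paper's own argument. The paper proves this lemma only by saying it holds "by a similar argument with Lemma \ref{lem 3.8}", and that argument is exactly what you give: identify $T^{0,1*}\mathbb{C}\otimes L_{-1}\simeq L_0$ (resp.\ $T^{1,0*}\mathbb{C}\otimes L_{1}\simeq L_0$) and apply Lemma \ref{lem 3.2}(2) for the dimension, then check directly that $\alpha$ and $\beta$ are equivariant, which here is immediate because the factors $e^{\pm i\theta}$ cancel.
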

We next define an $SE(2)$-invariant Hermitian metric on $L_n$.
For every $n\in\mathbb{Z}$, we  define a Hermitian metric $h^{(n)}$ on $L_n$ as
\[
h^{(n)}_z(e_{n,z},e_{n,z}):=1.
\]
Let $g=(a,e^{i\theta})\in SE(2)$.
Since 
\begin{align*}
h^{(n)}_{g\cdot z}(g\cdot e_{n,z},g\cdot e_{n,z})&=e^{in\theta}\overline {e^{in\theta}}\,h^{(n)}_{g\cdot z}( e_{n,g\cdot z},e_{n,g\cdot z})\\
&=1\\
&= h^{(n)}_z(e_{n,z},e_{n,z}),
\end{align*}
$h^{(n)}$ is $SE(2)$-invariant.
We denote by $\partial_{\mathbb{C}}$ the usual $\partial$ on $\mathbb{C}$.
 It is clear that the (1,0)-part of the Chern connection associated with $(\bar\partial_{\mathbb{C}},h^{(n)})$ is independent of $n$ and is given by $\partial_{\mathbb{C}}.$
\par
We next fix our notation for the canonical Riemannian metric and the associated K\"ahler form on $\mathbb{C}.$
Writing $z=x+\sqrt{-1}y,$ we denote by
\[
g_{\mathrm{can}}:=dx\otimes dx+dy\otimes dy
\]
the canonical Euclidean metric on $\mathbb{C}$. 
Its associated K\"ahler form is
\[
\omega_{\mathrm{can}}:=\frac{\sqrt{-1}}{2}dz\wedge d\bar z.
\]
Then, by the definition of $\alpha$ and $\beta$, we have 
\be\label{eq 3.29}
\alpha\wedge\beta=2\sqrt{-1}\omega_{\mathrm{can}}.
\ee
\subsubsection{Dimensional Reduction}
We use the same notation as in the previous section.
Let $X$ be a compact Riemann surface, and $\omega_X$ be a K\"ahler form on $ X$.
We normalize $\omega_X$ so that 
\[
\mathrm{Vol}_{\omega_X}=\int_X\omega_X=1.\]
We now consider the action of $SE(2)$ on $X\times \mathbb{C}$, which is trivial on $X$ and standard on $\mathbb{C}$.
\par
The argument in this section closely parallels that in Section \ref{sec 3.2.2} and \ref{sec 3.3.2}.
However, whereas Section \ref{sec 3.2.2} (resp. Section \ref{sec 3.3.2}) deals with the $SU(2)$ (resp. $SU(1,1)$)-equivariant geometry over $X\times \mathbb{P}^1$ (resp. $X\times\Delta$), we work here with the $SE(2)$-equivariant geometry over $X\times \mathbb{C}$.
\par
In the first half of this section, starting from a solution of Manton's exotic $\tau$-vortex equation with $\tau=0$ on $X$, we construct an $SE(2)$-equivariant holomorphic bundle over $X\times \mathbb{C}$ and an $SE(2)$-invariant Hermitian-Einstein form of signature $(1,1)$ on it.
In the second half, we establish the converse.
\par
We denote by 
\begin{align*}
&p:X\times \mathbb{C}\to X\\
&q:X\times \mathbb{C}\to \mathbb{C}
\end{align*}
the natural projections.\par
Let $(L,\overline\partial_L)$ be a holomorphic line bundle over $X$, $\phi \in A(L)$, and $h_L$ be a Hermitian metric on $L$.
We assume 
$(\overline\partial_L,\phi,h_L)\in \mathrm{Tri}_0$ (See Section \ref{sec 2.1} for the defintion of $\mathrm{Tri}_\tau$).
We construct an $SE(2)$-equivariant holomorphic bundle over $X\times \mathbb{C}$ and an $SE(2)$-invariant pseudo-Hermitian-Einstein metric of signature $(1,1)$ from $(\overline\partial_L,\phi,h_L)$.
We first construct an $SE(2)$-equivariant holomophic bundle over $X\times\mathbb{C}$.
Recall that we denote the trivial line bundle over $X$ as $\underline{\mathbb{C}}_X$.
The Dolbeault operator $\overline\partial_{X\times\mathbb{C}}$ of $X\times\mathbb{C}$ defines a holomorphic bundle structure of $\underline{\mathbb{C}}_{X\times \mathbb{C}}$.
Let $c\in\mathbb{C}$ be a constant.
We define
\begin{align}
E:=&p^\ast L\otimes q^\ast L_{-1}\oplus \underline{\mathbb{C}}_{X\times \mathbb{C}},\nonumber\\
\overline\partial_{E,c}:=&
\begin{pmatrix}
p^\ast \overline\partial_L\otimes \mathrm{Id}+\mathrm{Id}\otimes q^\ast \overline\partial_{\mathbb{C}}& c\cdot p^\ast \phi\otimes q^\ast \alpha\\
0&\overline\partial_{X\times\mathbb{C}}
\end{pmatrix}.\label{eq 3.30}
\end{align}
See Lemma \ref{lem 3.11} for $\alpha$.
We also note that $c\cdot  p^\ast \phi\otimes q^\ast \alpha\in A^{0,1}(\mathrm{Hom}(\underline{\mathbb{C}}_{X\times\mathbb{C}}, p^\ast L\otimes q^\ast L_{-1})$.
The $SE(2)$-action on $L_{-1}$ induces an $SE(2)$-action on $E$, where $SE(2)$ acts trivially on the factors pulled back from $X$.
By the definition of the action of $SE(2)$ on $X\times\mathbb{C}$, $E$ is also an $SE(2)$-equivariant bundle over $X\times \mathbb{C}$.
By the same argument as in Proposition \ref{prop 3.3}, we obtain the following proposition.
\begin{prop}
$(E,\overline\partial_{E,c})$ is an $SE(2)$-equivariant holomorphic bundle over $X\times\mathbb{C}$.
\end{prop}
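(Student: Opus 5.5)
The plan is to mirror the proof of Proposition \ref{prop 3.3}, with $\mathcal{O}(-2)$ replaced by $L_{-1}$ and $\overline\partial_{\mathcal{O}(-2)}$ by $\overline\partial_{\mathbb{C}}$. There are two things to verify: first that $\overline\partial_{E,c}$ is integrable, and second that the $SE(2)$-action on $E$ preserves it.

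For integrability, I square the upper-triangular operator \eqref{eq 3.30}. The diagonal blocks are $(p^\ast \overline\partial_L\otimes \mathrm{Id}+\mathrm{Id}\otimes q^\ast \overline\partial_{\mathbb{C}})^2$ and $(\overline\partial_{X\times\mathbb{C}})^2$. Both vanish, because each is a tensor product or pullback of Dolbeault operators of holomorphic bundles and is therefore integrable.

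The off-diagonal block is
\[
(p^\ast \overline\partial_L\otimes \mathrm{Id}+\mathrm{Id}\otimes q^\ast \overline\partial_{\mathbb{C}})(c\, p^\ast\phi\otimes q^\ast\alpha)+(c\,p^\ast\phi\otimes q^\ast\alpha)\,\overline\partial_{X\times\mathbb{C}}.
\]
By the Leibniz rule this reduces to $c\,p^\ast(\overline\partial_L\phi)\wedge q^\ast\alpha \pm c\,p^\ast\phi\otimes q^\ast(\overline\partial_{\mathbb{C}}\alpha)$. The first term vanishes because $\phi$ is holomorphic. The second vanishes because $\alpha=d\bar z\otimes e_{-1}$ has constant coefficient in the holomorphic frame $e_{-1}$, so $\overline\partial_{\mathbb{C}}\alpha=0$. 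In any case it is a $(0,2)$-form on the one-dimensional factor $\mathbb{C}$, so it vanishes for degree reasons.

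For equivariance, $SE(2)$ acts on $E$ through its action on $L_{-1}$ and trivially on the factors pulled back from $X$. The action on $(L_{-1},\overline\partial_{\mathbb{C}})$ is holomorphic: $g\cdot e_z=e^{-i\theta}e_{g\cdot z}$ with constant factor, and $z\mapsto a+e^{i\theta}z$ is holomorphic. Hence each $g$ commutes with the diagonal part of $\overline\partial_{E,c}$. The trivial summand $\underline{\mathbb{C}}_{X\times\mathbb{C}}$ is acted on only through the base, which is also holomorphic.

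For the off-diagonal term, Lemma \ref{lem 3.11} gives that $\alpha$ is $SE(2)$-invariant. Since $p^\ast\phi$ is fixed by the action, the whole term $c\,p^\ast\phi\otimes q^\ast\alpha$ is invariant. Therefore $g^{-1}\circ\overline\partial_{E,c}\circ g=\overline\partial_{E,c}$, and each $g$ acts biholomorphically on $(E,\overline\partial_{E,c})$.

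I expect no real obstacle; this step is routine. The only point needing care is the invariance of $\alpha$, which follows from the weight computation: $d\bar z$ has weight $1$ and $e_{-1}$ has weight $-1$, so at the level of transformation rules the factors $e^{i\theta}\cdot e^{-i\theta}$ cancel.
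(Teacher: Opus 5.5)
Your proposal is correct and takes the same route as the paper, which simply invokes the argument of Proposition \ref{prop 3.3}. That argument has two parts: integrability of $\overline\partial_{E,c}$ from the holomorphicity of $\phi$ and $\overline\partial\alpha=0$, and equivariance from the $SE(2)$-equivariant holomorphic structure of $L_{-1}$ together with the invariance of $\alpha$ from Lemma \ref{lem 3.11}.
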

Hence 
\[
0\longrightarrow p^\ast L\otimes q^\ast L_{-1}\longrightarrow E\longrightarrow \underline{\mathbb{C}}_{X\times\mathbb{C}} \longrightarrow0.
\]
is a holomorphic bundle extension.\par
Let $h^{(-1)}$ be the $SE(2)$-invariant Hermitian metric of $L_{-1}$ (See Section \ref{sec 3.4.1}), and $h_X$ be a Hermitian metric of $\underline{\mathbb{C}}_X$.
We note that $h_X$ is a smooth positive function on $X$ and therefore $h_Xh_L$ is also a Hermitian metric on $L$.
We define a pseudo-Hermitian metric $h_E$ on $E$ as 
\begin{align}\label{eq 3.31}
h_E:=
\begin{pmatrix}
p^\ast (h_Xh_L)\otimes q^\ast h^{(-1)}&0\\
0& -p^\ast h_X
\end{pmatrix}.
\end{align}
It follows from the construction that $h_E$ is a pseudo-Hermitian metric of signature $(1,1)$.
Since $h^{(-1)}$ is $SE(2)$-invariant, and $SE(2)$ action on $E$ is trivial on the factors pulled back from $X$, the pseudo-Hermitian metric $h_E$ is also $SE(2)$-invariant.
\par
We denote by $\partial_{h_Xh_L}$ and $\partial_{h_X}$ the $(1,0)$-part of the Chern connection associated to $(L,\overline\partial_L,h_Xh_L)$ and $(\underline{\mathbb{C}}_X,\overline\partial_X,h_X)$ respectively.
Recall from Section \ref{sec 3.4.1} that the $(1,0)$-part of the Chern connection of $(L_{-1},\overline\partial_{\mathbb C},h^{(-1)})$ is $\partial_{\mathbb C}$.
Let $\nabla_{h_E,c}$ be the unique connection that is compatible with $h_E$ and $\nabla_{h_E,c}^{0,1}=\overline\partial_{E,c}$ (See Proposition \ref{prop 3.1}), and $\partial_{h_E,c}$ the $(1,0)$-part of $\nabla_{h_E}$.
By the same argument as in Lemma \ref{lem 3.5}, we obtain the following Lemma.
\begin{lem}\label{lem 3.12}
$\partial_{h_E,c}$ has the form
\begin{equation}\label{eq 3.32}
\partial_{h_E,c}
=\begin{pmatrix}
p^\ast\partial_{h_Xh_L}\otimes \mathrm{Id} +\mathrm{Id}\otimes p^\ast\partial_{\mathbb C}&0\\
\overline c \cdot p^\ast (h_L\overline \phi)\otimes q^\ast \beta& p^\ast \partial_{h_X}
\end{pmatrix}.
\end{equation}
See Lemma \ref{lem 3.11} for $\beta$.
\end{lem}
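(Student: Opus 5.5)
Following the proof of Lemma \ref{lem 3.5}, I would split $\overline\partial_{E,c}$ as
\[
\overline\partial_{E,c}=\overline\partial_{E,c,\mathrm{diag}}+A,\qquad
A=\begin{pmatrix}0&c\cdot p^\ast\phi\otimes q^\ast\alpha\\0&0\end{pmatrix},
\]
where $\overline\partial_{E,c,\mathrm{diag}}$ is block-diagonal with blocks $p^\ast\overline\partial_L\otimes\mathrm{Id}+\mathrm{Id}\otimes q^\ast\overline\partial_{\mathbb C}$ and $\overline\partial_{X\times\mathbb C}$. The block-diagonal part $h_E$ is a direct sum of genuine Hermitian metrics, one of them multiplied by $-1$. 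Multiplying a metric by a nonzero constant does not change its Chern connection. Hence the diagonal compatible connection has $(1,0)$-part
\[
\partial_{h_E,c,\mathrm{diag}}=\mathrm{diag}\big(p^\ast\partial_{h_Xh_L}\otimes\mathrm{Id}+\mathrm{Id}\otimes q^\ast\partial_{\mathbb C},\;p^\ast\partial_{h_X}\big).
\]
This uses the fact from Section \ref{sec 3.4.1} that the Chern $(1,0)$-part of $(L_{-1},\overline\partial_{\mathbb C},h^{(-1)})$ is $\partial_{\mathbb C}$, since $h^{(-1)}(e_{-1},e_{-1})=1$.

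Next I would set $B:=-h_E^{-1}\overline{A}^t h_E$, so that $h_E(A\cdot,\cdot)+h_E(\cdot,B\cdot)=0$. The connection $\partial_{h_E,c,\mathrm{diag}}+B+\overline\partial_{E,c}$ is then compatible with $h_E$ and has $(0,1)$-part $\overline\partial_{E,c}$. By the uniqueness in Proposition \ref{prop 3.1}, its $(1,0)$-part $\partial_{h_E,c,\mathrm{diag}}+B$ is $\partial_{h_E,c}$.

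The only real computation is $B$. It is strictly lower triangular, and its $(2,1)$ entry is
\[
-(-p^\ast h_X^{-1})\,\overline{c\,p^\ast\phi\otimes q^\ast\alpha}\,\big(p^\ast(h_Xh_L)\otimes q^\ast h^{(-1)}\big)
=\overline c\,p^\ast(h_L\overline\phi)\otimes\big(h^{(-1)}\overline{\alpha}\big).
\]
The key sign point is that the two minus signs cancel: one comes from the definition of $B$, the other from the negative entry $-p^\ast h_X$ of $h_E$. This is exactly why the result has $+\overline c$, as in Lemma \ref{lem 3.5}.

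It remains to identify $h^{(-1)}\overline\alpha$ with $\beta$. Conjugating $\alpha=d\bar z\otimes e_{-1}$ gives $dz\otimes\overline{e_{-1}}$. Using $h^{(-1)}$ to identify $\overline{L_{-1}}$ with $L_{-1}^\vee=L_1$ sends $\overline{e_{-1}}$ to $h^{(-1)}(\cdot,e_{-1})=e_1$, because $h^{(-1)}\equiv1$. So the entry is $dz\otimes e_1=\beta$ from Lemma \ref{lem 3.11}. This is simpler than the $\mathbb P^1$ case, where the factor $(1+|z|^2)^{-2}$ in $\alpha$ had to cancel against $h^{(-2)}$.

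The main point requiring care is bookkeeping rather than analysis. I need to check that $\overline{A}^t$ indeed lands in $A^{1,0}(\mathrm{Hom}(p^\ast L\otimes q^\ast L_{-1},\underline{\mathbb C}))$, with the conventions on which slot of $h_E$ is conjugate-linear matching those of Lemma \ref{lem 3.5}. I also need to verify that the sign conventions in $h_E(A\cdot,\cdot)+h_E(\cdot,B\cdot)=0$ produce $-h_E^{-1}\overline A^th_E$ rather than its negative.
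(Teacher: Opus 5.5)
Your proposal is correct and is exactly the paper's argument: the paper proves this lemma ``by the same argument as in Lemma~\ref{lem 3.5}'', i.e.\ splitting $\overline\partial_{E,c}=\overline\partial_{E,c,\mathrm{diag}}+A$, setting $B=-h_E^{-1}\overline{A}^t h_E$ (where the two minus signs cancel against the $-p^\ast h_X$ block), and invoking the uniqueness in Proposition~\ref{prop 3.1}. Your identification $h^{(-1)}\overline\alpha=dz\otimes e_1=\beta$, which is immediate here because $h^{(-1)}\equiv 1$, is right. Your $q^\ast\partial_{\mathbb C}$ in the diagonal block is also right; the paper's $p^\ast\partial_{\mathbb C}$ is a typo.
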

We show that for a suitable $c\in \mathbb{C}$ and a suitable K\"ahler form on $X\times  \mathbb{C}$, there exists a Hermitian metric $h_X$ of $\underline{\mathbb{C}}_X$ such that $h_E$ is a pseudo-Hermitian-Einstein metric of $(E,\overline\partial_{E,c})$ (See Definition \ref{def 3.2}).\par
Let $\omega_{\mathrm{can}}$ be the K\"ahler form on $\mathbb{C}$, associated to the canonical Euclidean metric (See Section \ref{sec 3.4.1}).
We define a K\"ahler form of $X\times \mathbb{C}$ as 
\be\label{eq 3.33}
\Omega:=p^\ast\omega_X\oplus q^\ast \omega_{\mathrm{can}}.
\ee
We now set 
\[
c=\frac{1}{2\sqrt{2}}
\]
and consider $(E,\overline\partial_{E,\frac{1}{2\sqrt{2}}})$.
For simplicity, we write
\[
\overline\partial_E:=\overline\partial_{E,\frac{1}{2\sqrt{2}}}.
\]
We also write 

\begin{align*}
\nabla_{h_E}&:=\nabla_{h_E,\frac{1}{2\sqrt{2}}},\\
\partial_{h_E}&:=\partial_{h_E, \frac{1}{2\sqrt{2}}}.
\end{align*}

We denote the curvature of $\nabla_{h_E}$ as
\[
F_{h_E}:=(\nabla_{h_E})^2.
\]
We also denote by $F_{h_Xh_L}$ and $F_{h_X}$ the curvature of the Chern connections of $(L,\overline\partial_L,h_Xh_L)$ and $(\underline{\mathbb{C}}_X,\overline\partial_X, h_X)$ respectively.
The Chern connection of $(L_{-1},\bar\partial_\mathbb{C},h^{-1})$ is flat.\par
The next Proposition is an analogy of Proposition \ref{prop 3.4} and Proposition \ref{prop 3.9}. 
As in Proposition \ref{prop 3.9}, since we are now considering a vector bundle over the non-compact space $X\times\mathbb{C}$, we also need to specify the constant appearing in \eqref{eq 3.1}.
\begin{prop}\label{prop 3.14}
There exists a Hermitian metric $h_X$ of $\underline{\mathbb{C}}_X$ such that the pseudo-Hermitian metric $h_E$ defined by \eqref{eq 3.31} is a pseudo-Hermitian-Einstein metric with respect to the K\"ahler form $\Omega$ and the constant $\pi\mathrm{deg}L$;
that is
\be
\sqrt{-1}\Lambda_{\Omega}F_{h_E}=\pi\mathrm{deg}L\cdot \mathrm{Id}_E.
\ee
Equivalently, the connection $\nabla_{h_E}$ is a pseudo-Hermitian-Yang-Mills connection.
\end{prop}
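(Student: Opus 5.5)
The plan is to follow the proofs of Proposition \ref{prop 3.4} and Proposition \ref{prop 3.9} essentially verbatim. The changes are that $(L_{-1},\overline\partial_{\mathbb C},h^{(-1)})$ is flat, that $\Lambda_\Omega q^\ast\omega_{\mathrm{can}}=1$, and that the constant $c=\frac{1}{2\sqrt 2}$ has been chosen so that $|c|^2\,\alpha\wedge\beta=\frac{\sqrt{-1}}{4}\omega_{\mathrm{can}}$ by \eqref{eq 3.29}. Set $\lambda:=\pi\,\mathrm{deg}L$ and let $h_X$ be an arbitrary Hermitian metric on $\underline{\mathbb{C}}_X$, to be fixed at the end.

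First, by \eqref{eq 3.30}, $(\overline\partial_E)^2=0$, and by Lemma \ref{lem 3.12}, $(\partial_{h_E})^2=0$. Hence $F_{h_E}=\overline\partial_E\partial_{h_E}+\partial_{h_E}\overline\partial_E$, and I write it in blocks $F_{ij}$. The diagonal blocks are computed exactly as in the earlier cases. The $(1,1)$ block is $p^\ast F_{h_L}+p^\ast F_{h_X}$, plus the $|c|^2|\phi|^2_{h_L}\alpha\wedge\beta$ term; there is no $\mathbb{C}$-curvature term because $h^{(-1)}$ is flat. The $(2,2)$ block is $p^\ast F_{h_X}$ plus the $\beta\wedge\alpha$ term. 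The off-diagonal blocks are of mixed type in $X$ and $\mathbb{C}$, so they are killed by $\Lambda_\Omega$. Tracking the signs as in \eqref{eq 3.8}--\eqref{eq 3.9}, I expect the two diagonal equations to read
\begin{align*}
\sqrt{-1}\Lambda_{\omega_X}(F_{h_L}+F_{h_X})-\tfrac14|\phi|^2_{h_L}&=\lambda,\\
\sqrt{-1}\Lambda_{\omega_X}F_{h_X}+\tfrac14|\phi|^2_{h_L}&=\lambda.
\end{align*}
Here the $\omega_X$-part of $\Omega$ has coefficient $1$, so no rescaling of $\Lambda_{\omega_X}$ occurs.

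Next I take the difference and the sum of these two equations. The difference is $\sqrt{-1}\Lambda_{\omega_X}F_{h_L}-\frac12|\phi|^2_{h_L}=0$, which is exactly Manton's exotic $0$-vortex equation; it holds because $(\overline\partial_L,\phi,h_L)\in\mathrm{Tri}_0$. The sum is
\[
\sqrt{-1}\Lambda_{\omega_X}F_{h_L}+2\sqrt{-1}\Lambda_{\omega_X}F_{h_X}=2\lambda .
\]
Writing $h_X=e^f$ and using $\sqrt{-1}\Lambda_{\omega_X}F_{h_X}=\frac12\triangle f$, this becomes the Poisson equation
\[
\triangle f=2\lambda-\sqrt{-1}\Lambda_{\omega_X}F_{h_L}.
\]
By Hodge theory it is solvable if and only if the right-hand side has zero integral. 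Using $\int_X\omega_X=1$, that integral is $2\lambda-2\pi\,\mathrm{deg}L$, which vanishes exactly for $\lambda=\pi\,\mathrm{deg}L$. Taking $f$ to be a solution, $h_X:=e^f$ is the required metric.

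The main obstacle is the sign bookkeeping in the diagonal blocks. The $\phi$-contributions to $F_{11}$ and $F_{22}$ must come with opposite signs, $-\frac14$ and $+\frac14$, so that the difference reproduces the minus sign in \eqref{eq 1.2} with $\tau=0$. I would verify this by redoing the block product as in Lemma \ref{lem 3.5}, carefully tracking the ordering of $A$ and $B$ and using \eqref{eq 3.29}. The coefficient $c=\frac{1}{2\sqrt2}$ is exactly what makes each contribution equal to $\frac14|\phi|^2$, so that the difference gives the factor $\frac12$.
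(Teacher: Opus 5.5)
Your proposal is correct and follows the paper's proof essentially step for step. You use the same block computation of $F_{h_E}$, giving $\sqrt{-1}\Lambda_{\Omega}F_{11}=\sqrt{-1}\Lambda_{\omega_X}(F_{h_L}+F_{h_X})-\frac14|\phi|^2_{h_L}$ and $\sqrt{-1}\Lambda_{\Omega}F_{22}=\sqrt{-1}\Lambda_{\omega_X}F_{h_X}+\frac14|\phi|^2_{h_L}$. As in the paper, the difference yields the $\tau=0$ exotic vortex equation, and the sum reduces to a Poisson equation for $\log h_X$ whose solvability condition $2\lambda-2\pi\mathrm{deg}L=0$ forces $\lambda=\pi\mathrm{deg}L$.

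The signs you were worried about come out as you predicted. Because of the $-p^\ast h_X$ entry of $h_E$, the compatible $(1,0)$-part has $+\bar c\,p^\ast(h_L\bar\phi)\otimes q^\ast\beta$ in its lower-left block. So $F_{11}$ picks up $+|c|^2|\phi|^2_{h_L}\alpha\wedge\beta$ and $F_{22}$ picks up $+|c|^2|\phi|^2_{h_L}\beta\wedge\alpha$, which give opposite signs.
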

\begin{proof}
For brevity, we set
\[
\lambda:=\pi\mathrm{deg}L.
 \]
 \par
Let $h_X$ be a Hermitian metric on $\mathbb{C}_X$ and let $h_E$ be a pseudo-Hermitian metric of signature (1,1) defined as \eqref{eq 3.31}.
Since $(\overline\partial_E)^2=(\partial_{h_E})^2=0$ we have
\[
F_{h_E}=\overline\partial_E\partial_{h_E}+\partial_{h_E}\overline\partial_E.
\]
We set
\[
F_{h_E}=
\begin{pmatrix}
F_{11}&F_{12}\\
F_{21}&F_{22}
\end{pmatrix}.
\]
Then by \eqref{eq 3.30} and \eqref{eq 3.32} we have 
\[
\begin{split}
F_{11}&=p^\ast F_{h_Xh_L}+\frac{1}{8}p^\ast|\phi|^2_{h_L}\otimes q^\ast(\alpha\wedge \beta)\\
&=p^\ast F_{h_L}+p^\ast F_{h_X}+\frac{\sqrt{-1}}{4}p^\ast|\phi|^2_{h_L}q^\ast\omega_{\mathrm{can}}.
\end{split}
\]
The last equation follows since $h_X$ is a positive function on $X$, the Chern connection of $(L_{-1},\overline\partial_\mathbb{C},h^{-1})$ being flat, and \eqref{eq 3.29}.
Therefore we have 
\be\label{eq 3.35}
\begin{split}
\sqrt{-1}\Lambda_{\Omega}F_{11}&=\sqrt{-1}p^\ast \Lambda_{\omega_X}(F_{h_L}+F_{h_X})-\frac{1}{4}p^\ast|\phi|^2_{h_L}.
\end{split}
\ee
We also have 
\[
\begin{split}
F_{22}&=p^\ast F_{h_X}+\frac{1}{8}p^\ast|\phi|^2_{h_L}\otimes q^\ast(\beta\wedge \alpha)\\
&=p^\ast F_{h_X}-\frac{\sqrt{-1}}{4}p^\ast|\phi|^2_{h_L}q^\ast \omega_{\mathrm{can}}.
\end{split}
\]
Ttherefore
\be\label{eq 3.36}
\sqrt{-1}\Lambda_{\Omega}F_{22}=\sqrt{-1}p^\ast \Lambda_{\omega_X}F_{h_X}+\frac{1}{4}p^\ast|\phi|^2_{h_L}.
\ee
Since $F_{12},F_{21}$ has mixed contribution from $X$ and $\mathbb{C}$ we have

\[
\sqrt{-1}\Lambda_{\Omega}F_{h_E}
=
\begin{pmatrix}
\sqrt{-1}\Lambda_{\Omega}F_{11}&0\\
0&\sqrt{-1}\Lambda_{\Omega}F_{22} \\
\end{pmatrix}.
\]
Hence by \eqref{eq 3.35} and \eqref{eq 3.36} the equation
\[
\sqrt{-1}\Lambda_{\Omega}F_{h_E}=\lambda Id_E
\]
is equivalent to 
\begin{align}
\sqrt{-1} \Lambda_{\omega_X}(F_{h_L}+F_{h_X})-\frac{1}{4}|\phi|^2_{h_L}&=\lambda,\label{eq 3.37}
\\
\sqrt{-1}\Lambda_{\omega_X}F_{h_X}+\frac{1}{4}|\phi|^2_{h_L}&=\lambda.\label{eq 3.38}
\end{align}
The equations \eqref{eq 3.37} and \eqref{eq 3.38} are equivalent to 

\begin{align}
\sqrt{-1} \Lambda_{\omega_X}F_{h_L}-\frac{1}{2}|\phi|^2_{h_L}&=0,\label{eq 3.39}
\\
\sqrt{-1} \Lambda_{\omega_X}F_{h_L}+2\sqrt{-1} \Lambda_{\omega_X}F_{h_X}&=2\lambda,\label{eq 3.40}
\end{align}

since \eqref{eq 3.39} is obtained by \eqref{eq 3.37}-\eqref{eq 3.38} and \eqref{eq 3.40} is obtained by \eqref{eq 3.37}+\eqref{eq 3.38}.
\par
\eqref{eq 3.39} holds since $((L,\overline\partial_L),\phi,h_L)$ satisfies Manton's exotic vortex equation with $\tau=0$.
We show that there exists a Hermitian metric $h_X$ such that \eqref{eq 3.40} holds.
By the same argument as in Proposition \ref{prop 3.4}, the existence of such a metric is equivalent to 
\[
\int_{X}\bigg(2\lambda-\sqrt{-1} \Lambda_{\omega_X}F_{h_L}\bigg)=0.
\]
Then, by the definition of $\lambda$, we have 
\begin{align*}
\int_{X}\bigg(2\lambda-\sqrt{-1} \Lambda_{\omega_X}F_{h_L}\bigg)&=2\lambda-2\pi\mathrm{deg}L\\
&=0.
\end{align*}
Hence, the claim is proved.
\end{proof}

We give the summary of the results so far.

\begin{thm}\label{thm 3.5}
Let $X$ be a compact connected Riemann surface and let  $\omega_X$ be a K\"ahler form of $X$ satisfying $\int_X\omega_X=1$.
Let $(L,\overline\partial_L)$ be a holomorphic line bundle over $X$, $\phi$ a holomorphic section  of $(L,\overline\partial_L)$, and $h_L$ a Hermitian metric on $L$.
Suppose the triple $((L,\overline\partial_L),\phi,h_L)$ satisfies Manton's exotic $\tau$-vortex equation with $\tau=0$.
Then 
\begin{align*}
E:=&p^\ast L\otimes q^\ast L_{-1}\oplus \underline{\mathbb{C}}_{X\times\mathbb{C}},\\
\overline\partial_{E}:=&
\begin{pmatrix}
p^\ast \overline\partial_L\otimes \mathrm{Id}+\mathrm{Id}\otimes q^\ast \overline\partial_{\mathbb{C}}& \frac{1}{2\sqrt{2}}\cdot p^\ast \phi\otimes q^\ast \alpha\\
0&\overline\partial_{X\times\mathbb{C}}
\end{pmatrix}.
\end{align*}
is an $SE(2)$-equivariant holomorphic bundle over $X\times\mathbb{C}$.
We fix a K\"ahler form on $X\times\mathbb{C}$ given by
\[
\Omega=p^\ast\omega_X\oplus q^\ast \omega_{\mathrm{can}}.
\]
Then there exists a Hermitian metric $h_X$ on $\underline{\mathbb{C}}_X$ such that the pseudo-Hermitian metric of signature (1,1)
\begin{align*}
h_E=
\begin{pmatrix}
p^\ast (h_Xh_L)\otimes q^\ast h^{(-1)}&0\\
0& -p^\ast h_X
\end{pmatrix}
\end{align*}
is a pseudo-Hermitian-Einstein metric of $(E,\bar\partial_E)$ with respect to the K\"ahler form $\Omega$ and the constant $\pi\mathrm{deg}L$.
Equivalently, the unique connection $\nabla_{h_E}$ which is compatible with $h$ and $\nabla^{0,1}_{h_E}=\bar\partial_E$ is a pseudo-Hermitian-Yang-Mills connection.
\end{thm}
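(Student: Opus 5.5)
The theorem is a summary of Section 3.4, so I would prove it by assembling the $SE(2)$-analogues of the results used in Section 3.2, in the same order.

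\emph{Holomorphic and equivariant structure.} I would invoke the proposition stated right after \eqref{eq 3.30}, which is proved exactly as Proposition \ref{prop 3.3}. The square $(\overline\partial_E)^2$ has vanishing diagonal blocks because each diagonal entry is a Dolbeault operator. The off-diagonal block reduces to $\frac{1}{2\sqrt2}\, p^\ast(\overline\partial_L\phi)\otimes q^\ast\alpha \pm \frac{1}{2\sqrt2}\, p^\ast\phi\otimes q^\ast(\overline\partial_{\mathbb C}\alpha)$. The first term vanishes since $\phi$ is holomorphic. The second vanishes since $\alpha=d\bar z\otimes e_{-1}$ has constant coefficient, so $\overline\partial_{\mathbb C}\alpha=0$. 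For equivariance, $SE(2)$ acts trivially on the factors pulled back from $X$, and $\alpha$ is $SE(2)$-invariant by Lemma \ref{lem 3.11}. Hence $\overline\partial_E$ commutes with the action, and $E$ is an $SE(2)$-equivariant holomorphic bundle.

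\emph{Metric and connection.} The form $h_E$ in \eqref{eq 3.31} has signature $(1,1)$ by construction. It is $SE(2)$-invariant because $h^{(-1)}$ is invariant, as computed in Section \ref{sec 3.4.1}. Proposition \ref{prop 3.1} then gives the unique compatible connection $\nabla_{h_E}$ with $(0,1)$-part $\overline\partial_E$. Its $(1,0)$-part is given by Lemma \ref{lem 3.12}, obtained as in Lemma \ref{lem 3.5} by adding $B=-h_E^{-1}\overline{A}^t h_E$ to the diagonal Chern operators. The sign $-1$ in the second block of $h_E$ is what produces the off-diagonal entry $+\bar c\,p^\ast(h_L\bar\phi)\otimes q^\ast\beta$.

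\emph{Einstein condition.} The remaining claim is exactly Proposition \ref{prop 3.14}, which I would prove as follows. I would compute $F_{h_E}=\overline\partial_E\partial_{h_E}+\partial_{h_E}\overline\partial_E$ and observe that the off-diagonal blocks $F_{12},F_{21}$ are mixed $(X,\mathbb C)$-type forms, so they are killed by $\Lambda_\Omega$. For the diagonal blocks I would use \eqref{eq 3.29}, $c^2=1/8$, and the flatness of the Chern connection on $L_{-1}$. This gives \eqref{eq 3.35} and \eqref{eq 3.36}.

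Their difference is Manton's equation with $\tau=0$, which holds by hypothesis. Their sum is the linear equation \eqref{eq 3.40} for $f=\log h_X$. Writing $\sqrt{-1}\Lambda_{\omega_X}F_{h_X}=\frac12\triangle f$, \eqref{eq 3.40} becomes a Poisson equation $\triangle f = 2\lambda-\sqrt{-1}\Lambda_{\omega_X}F_{h_L}$. By Hodge theory this is solvable if and only if the right-hand side has zero integral. Since $\int_X\omega_X=1$, the integral equals $2\lambda-2\pi\deg L$, which vanishes exactly for $\lambda=\pi\deg L$.

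\emph{Main obstacle.} I expect the delicate step to be the curvature bookkeeping. One must check the signs of $\alpha\wedge\beta$ against $\beta\wedge\alpha$, and confirm that the normalization $c=1/(2\sqrt2)$ yields precisely the coefficient $\frac12$ in front of $|\phi|^2_{h_L}$ after taking the difference. Everything else follows the $\tau>0$ template verbatim.
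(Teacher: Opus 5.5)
Your proposal is correct and follows essentially the same route as the paper. You assemble the $SE(2)$-analogue of Proposition \ref{prop 3.3}, Lemma \ref{lem 3.12} and Proposition \ref{prop 3.14}; the difference of \eqref{eq 3.35} and \eqref{eq 3.36} gives Manton's $\tau=0$ equation, and their sum becomes a Poisson equation for $\log h_X$ whose solvability forces $\lambda=\pi\deg L$. The one point that you and the paper both pass over quickly is that $F_{12},F_{21}$ have no pure $\mathbb{C}$-type $(1,1)$-component; this holds because $\partial_{h^{(-1)}}\alpha=0$ and $\overline\partial\beta=0$, since $\alpha,\beta$ have constant coefficients and $h^{(-1)}\equiv 1$.
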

We next prove the dimensional reduction: we construct a solution of Manton's exotic $\tau$-vortex equation with $\tau=0$ from an $SE(2)$-equivariant holomorphic bundle and an $SE(2)$-invariant pseudo-Hermitian-Einstein metric of signature $(1,1).$\par
Let $(L,\overline\partial_L)$ be a holomorphic line bundle over $X$.
Let $E$ be a holomorphic bundle on $X\times\Delta$ that is an extension of $\underline{\mathbb{C}}_{X\times \mathbb{C}}$ by $p^\ast L \otimes q^\ast L_{-1}$.
Thus, there exists a short exact sequence of holomorphic vector bundles
\[
0\longrightarrow p^\ast L \otimes q^\ast L_{-1}\longrightarrow E\longrightarrow \underline{\mathbb{C}}_{X\times \mathbb{C}}\longrightarrow 0.
\]
We also have
\[
E\simeq p^\ast L \otimes q^\ast L_{-1}\oplus\underline{\mathbb{C}}_{X\times\mathbb{C}}
\]
as smooth bundles.
From now on, we identify $E$ and $p^\ast L \otimes q^\ast L_{-1}\oplus \underline{\mathbb{C}}_{X\times \mathbb{C}}$.
Let $\overline\partial_E$ be the Dolbeault operator of $E$.
Since $E$ given by an extension, $\overline\partial_E$ can be written in the form of 
\[
\bar\partial_E=
\begin{pmatrix}
p^\ast \overline\partial_L\otimes \mathrm{Id}+\mathrm{Id}\otimes q^\ast \overline\partial_{\mathbb{C}}& \psi\\
0&\overline\partial_{X\times\mathbb{C}}
\end{pmatrix}
\]
where $\psi\in A^{0,1}(\mathrm{Hom}(\underline{\mathbb{C}}_{X\times \mathbb{C}}, p^\ast L \otimes q^\ast L_{-1})$.
\par
The bundle $E$ is $SE(2)$-equivariant with respect to the $SE(2)$-action induced by that on $L_{-1}$.
The following is an analogue of Proposition \ref{prop 3.5} and Proposition \ref{prop 3.10} and can be proved by the same argument.
\begin{prop}\label{prop 3.15}
If $(E,\bar\partial_E)$ is an $SE(2)$-equivariant holomorphic bundle, then
\begin{itemize}
\item[(1)] there exists a $\phi\in A(L)$ such that 
\[
\psi=p^\ast\phi\otimes q^\ast\alpha.
\]
\item[(2)] $\phi$ is a holomorphic section of $(L,\bar\partial_L)$.
\end{itemize}
\end{prop}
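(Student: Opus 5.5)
We follow the argument of Proposition \ref{prop 3.5} (and its $SU(1,1)$ analogue, Proposition \ref{prop 3.10}), now for the group $SE(2)$ acting on $X\times\mathbb{C}$. The hypothesis that $(E,\overline\partial_E)$ is an $SE(2)$-equivariant holomorphic bundle means that $\overline\partial_E$ commutes with the $SE(2)$-action. The diagonal entries of $\overline\partial_E$ are already $SE(2)$-invariant: $\overline\partial_{\mathbb{C}}$ commutes with the action on $L_{-1}$, since that action is holomorphic, and $SE(2)$ acts trivially on the factors pulled back from $X$. Hence the off-diagonal entry is invariant, so
\[
\psi\in A^{0,1}(\mathrm{Hom}(\underline{\mathbb{C}}_{X\times\mathbb{C}},p^\ast L\otimes q^\ast L_{-1}))^{SE(2)}.
\]
Using $T^{0,1\ast}(X\times\mathbb{C})=p^\ast T^{0,1\ast}X\oplus q^\ast T^{0,1\ast}\mathbb{C}$, we split $\psi=\psi_X+\psi_{\mathbb{C}}$. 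Each summand is separately invariant, because the splitting is $SE(2)$-equivariant.

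\emph{Killing $\psi_X$.} Fix $x\in X$ and restrict $\psi_X$ to $\{x\}\times\mathbb{C}$. The result is an $SE(2)$-equivariant section of the bundle $(T^{0,1\ast}_xX\otimes L_x)\otimes L_{-1}$. The first factor is a fixed vector space with trivial action, so after choosing a basis we get an $SE(2)$-equivariant section of the weight $-1$ bundle $L_{-1}$ over $SE(2)/U(1)$. By Lemma \ref{lem 3.2}(1) this section is zero, hence $\psi_X=0$.

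\emph{Identifying $\psi_{\mathbb{C}}$.} Restrict to $\{x\}\times\mathbb{C}$ again. We have $T^{0,1\ast}\mathbb{C}\simeq L_1$ as $SE(2)$-equivariant bundles, so by Lemma \ref{lem 3.1} the restriction $\psi_{\mathbb{C}}|_{\{x\}\times\mathbb{C}}$ is an equivariant section of $L_x\otimes L_1\otimes L_{-1}\simeq L_x\otimes L_0$. By Lemma \ref{lem 3.11}(1), equivalently Lemma \ref{lem 3.2}(2), it equals $\phi_x\otimes\alpha$ for a unique $\phi_x\in L_x$. One can read off $\phi_x$ by evaluating at $(x,0)$ against the nonvanishing vector $\alpha_0=d\bar z\otimes e_{-1}$. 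Since $\psi$ is smooth, $x\mapsto\phi_x$ is a smooth section $\phi\in A(L)$, and then $\psi=p^\ast\phi\otimes q^\ast\alpha$. This proves (1).

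For (2), expand $(\overline\partial_E)^2=0$ and look at the off-diagonal entry:
\[
(p^\ast\overline\partial_L\otimes\mathrm{Id}+\mathrm{Id}\otimes q^\ast\overline\partial_{\mathbb{C}})(p^\ast\phi\otimes q^\ast\alpha)+(p^\ast\phi\otimes q^\ast\alpha)\,\overline\partial_{X\times\mathbb{C}}=0.
\]
We have $\overline\partial_{\mathbb{C}}\alpha=0$, because $\alpha=d\bar z\otimes e_{-1}$ has constant coefficient and $e_{-1}$ is holomorphic. So the equation reduces to $p^\ast(\overline\partial_L\phi)\wedge q^\ast\alpha=0$, up to sign. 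The form $p^\ast(\overline\partial_L\phi)\wedge q^\ast\alpha$ is of type $(0,2)$ with one leg along $X$ and one along $\mathbb{C}$, and $\alpha$ vanishes nowhere. It follows that $\overline\partial_L\phi=0$.

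The main obstacle, though a mild one, is making the restriction-to-fibres step rigorous. We must make sure that Lemma \ref{lem 3.2} applies after tensoring with the constant fibre $L_x$, which carries trivial action. We also need the smooth dependence of $\phi_x$ on $x$; it follows from the explicit evaluation formula at $z=0$.
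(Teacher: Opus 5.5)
Your proposal is correct and takes the same route as the paper, whose proof is that of Proposition~\ref{prop 3.5} carried over to $SE(2)$: invariance of $\psi$, the splitting along $p^\ast T^{0,1\ast}X\oplus q^\ast T^{0,1\ast}\mathbb{C}$, killing the $X$-component with the weight argument of Lemma~\ref{lem 3.2}(1), identifying the $\mathbb{C}$-component fibrewise with a multiple of $\alpha$, and deducing $\overline\partial_L\phi=0$ from the off-diagonal entry of $(\overline\partial_E)^2=0$. Your explicit justifications, namely why $\psi$ is invariant and why $x\mapsto\phi_x$ is smooth (by evaluating against $\alpha_0$), fill in details that the paper leaves implicit.
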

Since multiplying a holomorphic section by a nonzero constant preserves holomorphicity, $2\sqrt{2}\phi$ is also holomorphic.
Thus, after replacing $2\sqrt{2}\phi$ by $\phi$, we may write
\[
\psi
=
\frac{1}{2\sqrt{2}}\cdot p^\ast\phi\otimes q^\ast\alpha.
\]
We keep the normalization of $\alpha$ fixed.\par
As in Proposition \ref{prop 3.6} and Proposition \ref{prop 3.11}, any $SE(2)$-invariant pseudo-Hermitian metric on $E$ is of the following form. 
The proof is the same as that of Proposition \ref{prop 3.6}.

\begin{prop}\label{prop 3.16}
Let $h_E$ be an $SE(2)$-invariant pseudo-Hermitian metric on $E$.
Then there exists a Hermitian metrics $h_L$ of $L$, $h_X$ of $\underline{\mathbb{C}}_X$ such that
\[
h_E
=
\begin{pmatrix}
C_0\cdot p^\ast h_L\otimes q^\ast h^{(-1)}&0\\
0& C_1\cdot p^\ast h_X
\end{pmatrix},\,\,C_0,C_1\in\{1,-1\}.
\]
\end{prop}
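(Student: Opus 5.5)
We follow the proof of Proposition \ref{prop 3.6}, with $SE(2)$ in place of $SU(2)$ and $\mathbb{C}\simeq SE(2)/U(1)$ in place of $\mathbb{P}^1$. The plan has three steps: show that the two summands are $h_E$-orthogonal, reduce each diagonal block to a function on $X$, and then extract the signs $C_0,C_1$.

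\textbf{Orthogonality.} Fix $x\in X$ and consider the point $(x,0)$, whose stabilizer is $\mathrm{Stab}_0=\{(0,e^{i\theta})\}\simeq U(1)$. By construction this stabilizer acts with weight $-1$ on the fiber of $p^\ast L\otimes q^\ast L_{-1}$. It acts trivially on the fiber of $\underline{\mathbb{C}}_{X\times\mathbb{C}}$. Take $v$ in the first fiber and $w$ in the second. Invariance of $h_E$ gives
\[
h_E(v,w)=h_E(e^{-i\theta}v,w)=e^{-i\theta}h_E(v,w)\quad\text{for all }\theta,
\]
so $h_E(v,w)=0$. Transitivity of $SE(2)$ on $\mathbb{C}$ and invariance of $h_E$ carry this to every point $(x,z)$, exactly as in Proposition \ref{prop 3.6}. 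Hence $h_E=\mathrm{diag}(h_1,h_2)$, and both $h_1$ and $h_2$ are non-degenerate and $SE(2)$-invariant.

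\textbf{The lower block $h_2$.} Restrict $h_2$ to a slice $\{x\}\times\mathbb{C}$. There it is an $SE(2)$-invariant function, and the action is transitive, so $h_2|_{\{x\}\times\mathbb{C}}$ is a nonzero real constant $a_x$. The family $a_x$ is smooth in $x$ and nowhere zero. Since $X$ is connected, $a_x$ has a constant sign $C_1\in\{1,-1\}$. Set $h_X:=C_1a_x$.

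\textbf{The upper block $h_1$.} On each slice, $h_1$ restricts to an $SE(2)$-invariant pseudo-Hermitian metric on the weight $-1$ bundle $L_x\otimes L_{-1}$. The metric $h^{(-1)}$ is another such invariant metric, so Lemma \ref{lem 3.3} gives $h_1|_{\{x\}\times\mathbb{C}}=a'_x\, h^{(-1)}$ with $a'_x$ a nonzero real number. Here $a'_x$ is a Hermitian form on the one-dimensional space $L_x$. It varies smoothly, and again it has a constant sign $C_0$ by connectedness. Then $h_L:=C_0a'$ is a Hermitian metric on $L$, and we get $h_1=C_0\,p^\ast h_L\otimes q^\ast h^{(-1)}$.

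\textbf{Main obstacle.} The only subtle point is applying Lemma \ref{lem 3.3} to the non-compact homogeneous space $SE(2)/U(1)$. It still applies, because its proof uses only transitivity and the one-dimensionality of the fiber at the base point. One also checks that $h^{(-1)}\equiv1$ is $SE(2)$-invariant; this was verified in Section \ref{sec 3.4.1}.
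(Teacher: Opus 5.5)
Your proposal is correct and follows the paper's own route: the paper proves this proposition by repeating the proof of Proposition \ref{prop 3.6} with $SE(2)$, $\mathbb{C}\simeq SE(2)/U(1)$ and weight $-1$ in place of $SU(2)$, $\mathbb{P}^1$ and weight $-2$. As in your outline, that means orthogonality from the nonzero stabilizer weight, slice-wise constancy of $h_2$, and Lemma \ref{lem 3.3} for $h_1$. Your appeal to the connectedness of $X$ to get constant signs $C_0,C_1$ makes explicit a step the paper leaves implicit.
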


The following is a consequence of Proposition \ref{prop 3.16} and a rescaling of the Hermitian metric on $L$.
\begin{lem}\label{lem 3.13}
Let $h_E$ be an $SE(2)$-invariant pseudo-Hermitian metric of signature $(1,1)$ on $E$.
Then there exist Hermitian metrics $h_L$ on $L$ and $h_X$ on $\underline{\mathbb{C}}_X$ such that 
\[
h_E
=
\begin{pmatrix}
C_0\cdot p^\ast (h_Xh_L)\otimes q^\ast h^{(-1)}&0\\
0& C_1\cdot p^\ast h_X
\end{pmatrix}
\]
where $(C_0,C_1)=(1,-1)$ or $(-1,1)$.
\end{lem}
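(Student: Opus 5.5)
Lemma \ref{lem 3.13} follows from Proposition \ref{prop 3.16} by absorbing a factor of $h_X$ into the metric on $L$, together with a signature count; this mirrors the derivations of Lemma \ref{lem 3.7} and Lemma \ref{lem 3.10}.

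First, apply Proposition \ref{prop 3.16} to the $SE(2)$-invariant pseudo-Hermitian metric $h_E$. This gives Hermitian metrics $h'_L$ on $L$ and $h_X$ on $\underline{\mathbb{C}}_X$, and signs $C_0,C_1\in\{1,-1\}$, such that $h_E$ is block diagonal with entries $C_0\, p^\ast h'_L\otimes q^\ast h^{(-1)}$ and $C_1\, p^\ast h_X$. Since $h_X$ is a smooth positive function on $X$, the product $h_L:=h_X^{-1}h'_L$ is again a Hermitian metric on $L$, and $h_Xh_L=h'_L$. Substituting, the upper-left block becomes $C_0\, p^\ast(h_Xh_L)\otimes q^\ast h^{(-1)}$, which is the form in the statement.

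Second, determine the signs. The metrics $h^{(-1)}$, $h_L$ and $h_X$ are all positive definite. Hence, at each point, the upper-left block of $h_E$ has the sign $C_0$ and the lower-right block has the sign $C_1$. Because the two blocks are $h_E$-orthogonal (this is part of Proposition \ref{prop 3.16}), the signature of $h_E$ is
\begin{itemize}
\item[] $(2,0)$ if $C_0=C_1=1$,
\item[] $(0,2)$ if $C_0=C_1=-1$,
\item[] $(1,1)$ if $C_0\neq C_1$.
\end{itemize}
The hypothesis that $h_E$ has signature $(1,1)$ therefore forces $(C_0,C_1)=(1,-1)$ or $(-1,1)$.

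There is no real obstacle here. The only points to check are that the signs $C_0,C_1$ are constant over $X\times\mathbb{C}$ rather than varying from point to point, and that $h_X^{-1}h'_L$ is smooth and positive. Constancy of the signs is already built into Proposition \ref{prop 3.16}: each block is a nondegenerate Hermitian form on a line bundle over a connected base, so its sign cannot change. Smoothness and positivity of $h_X^{-1}h'_L$ hold because $h_X$ is a smooth positive function.
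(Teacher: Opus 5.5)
Your proposal is correct and follows the paper's route: the paper also obtains this lemma directly from Proposition \ref{prop 3.16} by rescaling the metric on $L$, i.e.\ replacing $h_L$ by $h_X^{-1}h_L$. Your signature count is the step the paper leaves implicit, and it is what rules out $C_0=C_1$ and leaves only $(C_0,C_1)=(1,-1)$ or $(-1,1)$.
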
 

We now establish the dimensional reduction.
Throughout this argument, we work with the K\"ahler form $\Omega$ constructed in \eqref{eq 3.33}.
\par
Suppose that $\overline\partial_E$ defines an $SE(2)$-equivariant holomorphic bundle structure on $E$.
Then by Proposition \ref{prop 3.15}, there exists a holomorphic section $\phi$ of $(L,\bar\partial_L)$ such that 
\[
\overline\partial_E=
\begin{pmatrix}
p^\ast \overline\partial_L\otimes\mathrm{Id}+\mathrm{Id}\otimes q^\ast \overline\partial_{\Delta}& \frac{1}{2\sqrt{2}}\cdot p^\ast\phi\otimes q^\ast\alpha\\
0&\overline\partial_{X\times\mathbb{C}}
\end{pmatrix}.
\]
Let $h_E$ be as in Lemma \ref{lem 3.13}, and let $\nabla_{h_E}$ be the connection compatible with $h_E$ and satisfying $\nabla^{0,1}_{h_E}=\bar\partial_E$.
Then, by Lemma \ref{lem 3.12}, $\partial_{h_E}=\nabla^{1,0}_{h_E}$ has the form 
\[
\partial_{h_E}=
\begin{pmatrix}
p^\ast\partial_{h_Xh_L}\otimes\mathrm{Id} +\mathrm{Id}\otimes p^\ast\partial_{h^{(-2)}}&0\\
\frac{1}{2\sqrt{2}}\cdot p^\ast (h_L\overline \phi)\otimes q^\ast \beta& p^\ast \partial_{h_X}
\end{pmatrix}.
\]
We denote the curvature of $\nabla_{h_E}$ as $F_{h_E}$.
The following proposition is an analogue of Proposition \ref{prop 3.7} and Proposition \ref{prop 3.12}.
As in Proposition \ref{prop 3.12}, the constant in the Hermitian-Yang-Mills equation is not determined by the topological data of the bundle.
Nevertheless, we show that this constant is forced to take a specific value.

\begin{prop}
If $h_E$ is a pseudo-Hermitian-Einstein metric of $(E,\bar\partial_E)$ with respect to the K\"ahler form $\Omega$, that is,
\[
\sqrt{-1}\Lambda_{\Omega}F_{h_E}=\lambda \mathrm{Id}_E
\]
for some constant $\lambda\in\mathbb{C}$, then 
\begin{itemize}
\item[(1)] The triple $((L,\overline\partial_L),\phi,h_L)$ satisfies Manton's exotic $\tau$-vortex equation with $\tau=0$.
\item[(2)]
\[
\lambda=\pi\mathrm{deg}L.
\]
\end{itemize}
\end{prop}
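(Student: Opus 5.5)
The plan is to repeat the argument of Proposition \ref{prop 3.12}, now in the $SE(2)$ setting over $X\times\mathbb{C}$ with the K\"ahler form $\Omega$ of \eqref{eq 3.33}. I take $h_E$ in the normal form of Lemma \ref{lem 3.13} with $(C_0,C_1)=(1,-1)$. The case $(-1,1)$ reduces to this one, since the equation $\sqrt{-1}\Lambda_\Omega F_{h_E}=\lambda\mathrm{Id}_E$ is unchanged when $h_E$ is replaced by $-h_E$: the compatible connection is the same. The off-diagonal term of $\partial_{h_E}$ is given by Lemma \ref{lem 3.12} with $c=\frac{1}{2\sqrt2}$.

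I then write $F_{h_E}=\overline\partial_E\partial_{h_E}+\partial_{h_E}\overline\partial_E$ in block form, exactly as in the proof of Proposition \ref{prop 3.14}. The off-diagonal blocks $F_{12},F_{21}$ are mixed forms, with one leg on $X$ and one leg on $\mathbb{C}$, so they have no component along $\Omega$ and are killed by $\Lambda_\Omega$.

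The diagonal blocks are computed in \eqref{eq 3.35} and \eqref{eq 3.36}, using that the Chern connection of $(L_{-1},\overline\partial_{\mathbb C},h^{(-1)})$ is flat and that $\alpha\wedge\beta=2\sqrt{-1}\omega_{\mathrm{can}}$ by \eqref{eq 3.29}. The Einstein condition then gives \eqref{eq 3.37} and \eqref{eq 3.38}. Subtracting them yields \eqref{eq 3.39}, namely $\sqrt{-1}\Lambda_{\omega_X}F_{h_L}-\frac12|\phi|^2_{h_L}=0$, which is Manton's exotic vortex equation with $\tau=0$. This proves (1).

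For (2), I add \eqref{eq 3.37} and \eqref{eq 3.38} to obtain \eqref{eq 3.40}. I rewrite $\sqrt{-1}\Lambda_{\omega_X}F_{h_X}=\frac12\triangle\log h_X$ and integrate over the compact surface $X$ with $\int_X\omega_X=1$. The Laplacian term integrates to zero, and $\int_X\sqrt{-1}\Lambda_{\omega_X}F_{h_L}=2\pi\deg L$. This gives $2\pi\deg L=2\lambda$, so $\lambda=\pi\deg L$. It also shows that $\lambda$ is real, even though it was a priori allowed to be complex.

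I expect the only delicate point to be the justification that $\Lambda_\Omega F_{12}=\Lambda_\Omega F_{21}=0$, together with the sign bookkeeping in $F_{22}$, which comes from the negative sign in $h_E$. Both are identical to the checks already carried out in Proposition \ref{prop 3.14}, so I will simply cite that computation.
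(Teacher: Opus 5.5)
Your proposal is correct and follows the paper's proof essentially step for step. You decompose $F_{h_E}$ into blocks, reuse the diagonal computations of Proposition~3.14, subtract to obtain the $\tau=0$ equation, and add and integrate over $X$ to get $\lambda=\pi\deg L$. Your observation that $-h_E$ has the same compatible connection neatly handles the case $(C_0,C_1)=(-1,1)$, which the paper leaves implicit. You also correctly write $\frac12\triangle\log h_X$ where the paper's proof has $\frac12\triangle h_X$.
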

\begin{proof}
We set 
\[
F_{h_E}=
\begin{pmatrix}
F_{11}&F_{12}\\
F_{21}&F_{22}.
\end{pmatrix}
\]
The equaiton
\[
\sqrt{-1}\Lambda_{\Omega}F_{h_E}=\lambda\mathrm{Id}_E
\]
implies 
\begin{align*}
\sqrt{-1}\Lambda_{\Omega}F_{11}&=\lambda,\\
\sqrt{-1}\Lambda_{\Omega}F_{22}&=\lambda.
\end{align*}
Then, by \eqref{eq 3.37}, \eqref{eq 3.38}, \eqref{eq 3.39}, and \eqref{eq 3.40}, 
we have 
\begin{align*}
\sqrt{-1} \Lambda_{\omega_X}(F_{h_L}+F_{h_X})-\frac{1}{4}|\phi|^2_{h_L}&=\lambda,
\\
\sqrt{-1}\Lambda_{\omega_X}F_{h_X}+\frac{1}{4}|\phi|^2_{h_L}&=\lambda.
\end{align*}
Subtracting the second equation from the first, we obtain
\[
\sqrt{-1} \Lambda_{\omega_X}F_{h_L}-\frac{1}{2}|\phi|^2_{h_L}=0.
\]
Hence the triple $((L,\overline\partial_L),\phi,h_L)$ is a solution of Manton's exotic $\tau$-vortex equation with $\tau=0$.\par
In addition, adding the two equations, we obtain 
\be\label{eq 3.41}
\sqrt{-1} \Lambda_{\omega_X}F_{h_L}+2\sqrt{-1} \Lambda_{\omega_X}F_{h_X}=2\lambda.
\ee
Recall that 
\[
\sqrt{-1} \Lambda_{\omega_X}F_{h_X}=\frac{1}{2}\triangle h_X.
\]
Hence the equation \eqref{eq 3.41} is equivariant to 
\[
\sqrt{-1} \Lambda_{\omega_X}F_{h_L}+\triangle h_X=2\lambda.
\]
This equation implies 
\begin{align*}
0&=\int_X\bigg(2\lambda-\sqrt{-1} \Lambda_{\omega_X}F_{h_L}\bigg)\\
&=2\lambda-2\pi\mathrm{deg}L
\end{align*}
which derive (2).
\end{proof}
Combining the results obtained above, we obtain the following dimensional reduction theorem.
\begin{thm}
Let $X$ be a compact connected Riemann surface with a K\"ahler form $\omega_X$ of $\int_X \omega_X=1$.
Let $(L,\overline\partial_L)$ be a holomorphic line bundle over $X$.
Let $E$ be a holomorphic bundle over $X\times \mathbb{C}$ which is an extension of $\underline{\mathbb{C}}_{X\times \mathbb{C}}$ by $p^\ast L\otimes q^\ast L_{-1}$.
Let $\overline\partial_E$ be the Dolbeault operator of $E$.
On $X\times\mathbb{C}$, we consider the K\"ahler form $\Omega$ constructed in \eqref{eq 3.33}.
\par
If $(E,\overline\partial_E)$ is an $SE(2)$-equivariant holomorphic bundle, then there exists a holomorphic section $\phi$ of $(L,\bar\partial_L)$ such that 
\[
\overline\partial_E=
\begin{pmatrix}
p^\ast \overline\partial_L\otimes\mathrm{Id}+\mathrm{Id}\otimes q^\ast \overline\partial_{\Delta}& \frac{1}{2\sqrt{2}}\cdot p^\ast\phi\otimes q^\ast\alpha\\
0&\bar\partial_{X\times\mathbb{C}}
\end{pmatrix}.
\]
Let $h_E$ be a pseudo-Hermitian metric of signature $(1,1)$ on $E.$
If $h_E$ is $SE(2)$-invariant, then there exist Hermitian metrics $h_L$ on $L$ and $h_X$ on $\underline{\mathbb{C}}_X$ such that 
\[
h_E
=
\begin{pmatrix}
C_0\cdot p^\ast (h_Xh_L)\otimes q^\ast h^{(-1)}&0\\
0& C_1\cdot p^\ast h_X
\end{pmatrix}
\]
where $(C_0,C_1)=(1,-1)$ or $(-1,1)$.\par
Let $h_E$ and $(E,\bar\partial_E)$ be as above, and $\nabla_{h_E}$ be the connection compatible with $h_E$ and $\nabla^{0,1}_{h_E}=\overline\partial_E$.
If $h_E$ is a pseudo-Hermitian-Einstein metric of $(E,\overline\partial_E)$ with respect to $\Omega$, or equivalently, if $\nabla_{h_E}$ is a pseudo-Hermitian-Yang-Mills connection, then
\begin{itemize}
\item The triple $((L,\overline\partial_L),\phi,h_L)$ satisfies Manton's exotic $\tau$-vortex equation with $\tau=0$.
\item The constant $\lambda$ appearing in \eqref{eq 3.1} satisfies
\[
\lambda=\pi\mathrm{deg}L.
\]
\end{itemize}
\end{thm}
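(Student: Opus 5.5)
The plan is to assemble the theorem from the three $SE(2)$-analogues already established in this subsection, mirroring the proofs of Theorem \ref{thm 3.2} and of its $\tau<0$ counterpart. The statement has three assertions: the shape of $\overline\partial_E$, the shape of $h_E$, and the vortex equation together with the value of $\lambda$. I will treat them in that order.

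\emph{Step 1: the Dolbeault operator.} I will write $\overline\partial_E$ as an upper-triangular operator with extension class $\psi\in A^{0,1}(\mathrm{Hom}(\underline{\mathbb{C}}_{X\times\mathbb{C}},p^\ast L\otimes q^\ast L_{-1}))$, using the smooth splitting of the extension. By Proposition \ref{prop 3.15}, $SE(2)$-equivariance forces $\psi=p^\ast\phi\otimes q^\ast\alpha$ for a holomorphic section $\phi$.

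The substance of Proposition \ref{prop 3.15} is the following. The $p^\ast T^{0,1\ast}X$-component of $\psi$ restricts, on each slice $\{x\}\times\mathbb{C}$, to an invariant section of a weight $-1$ bundle, so it vanishes by Lemma \ref{lem 3.2}. The remaining component lies fibrewise in the one-dimensional space $A^{0,1}(L_{-1})^{SE(2)}=\mathbb{C}\alpha$ of Lemma \ref{lem 3.11}. Holomorphicity of $\phi$ then follows from the off-diagonal part of $\overline\partial_E^2=0$. Rescaling $\phi$ by $2\sqrt2$ gives the stated normalization $\frac{1}{2\sqrt2}p^\ast\phi\otimes q^\ast\alpha$.

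\emph{Step 2: the metric.} For an $SE(2)$-invariant $h_E$ of signature $(1,1)$, I will invoke Proposition \ref{prop 3.16} and Lemma \ref{lem 3.13}. Orthogonality of the two summands comes from the weight argument for the stabilizer $U(1)$: the weights $-1$ and $0$ differ. Uniqueness of invariant metrics up to a real constant (Lemma \ref{lem 3.3}) then gives the diagonal form. Signature $(1,1)$ forces opposite signs, and absorbing $h_X$ into $h_L$ yields $(C_0,C_1)=(1,-1)$ or $(-1,1)$.

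\emph{Step 3: the curvature equations.} I will compute $\partial_{h_E}$ via Lemma \ref{lem 3.12}. Then I decompose $F_{h_E}=\overline\partial_E\partial_{h_E}+\partial_{h_E}\overline\partial_E$ into blocks, observing that the off-diagonal blocks are mixed-type forms that are killed by $\Lambda_\Omega$. Using \eqref{eq 3.29} and the flatness of $h^{(-1)}$, this gives \eqref{eq 3.37}–\eqref{eq 3.38}.

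Subtracting the two equations yields the $\tau=0$ vortex equation. Adding them gives \eqref{eq 3.41}. Integrating over $X$ and using that $\int_X\triangle(\cdot)=0$ gives $2\lambda=2\pi\deg L$, which is exactly the preceding proposition.

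The main obstacle I expect is the case $(C_0,C_1)=(-1,1)$. There the sign of $h_E$ flips, and I must check that the second fundamental form term $B=-h_E^{-1}\overline{A}^th_E$ keeps the same sign. This holds because both diagonal entries change sign together, so the ratio in $B$ is unchanged. Hence \eqref{eq 3.37}–\eqref{eq 3.38}, and with them the resulting equations, are identical in both cases.
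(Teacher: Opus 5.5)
Your proposal is correct and follows essentially the same route as the paper, which proves the theorem simply by combining Proposition \ref{prop 3.15}, Proposition \ref{prop 3.16} with Lemma \ref{lem 3.13}, Lemma \ref{lem 3.12}, and the final proposition (subtract and add the two diagonal equations, then integrate over $X$). Your extra check of the case $(C_0,C_1)=(-1,1)$ covers a point the paper passes over silently, and it is right: compatibility is linear in $h_E$, so $h_E$ and $-h_E$ have the same compatible connection with $(0,1)$-part $\overline\partial_E$.
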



\begin{thebibliography}{99}
\bibitem{Akh} D. N. Akhiezer, Lie group actions in complex analysis. Aspects of Mathematics, E27. Friedr. Vieweg $\&$ Sohn, Braunschweig, 1995.
\bibitem{AmOl1} J. Ambj{\o}rn, P. Olesen, Anti-screening of large magnetic fields by vector bosons, Phys. Lett. \textbf{B214}, 565 (1988).
\bibitem{AmOl2} J. Ambj{\o}rn, P. Olesen,  On electroweak magnetism, Nucl. Phys. \textbf{B} 315, 606 (1989).
\bibitem{Br1}  S. B. Bradlow, Vortices in holomorphic line bundles over closed K\"ahler manifolds. Commun. Math. Phys. \textbf{135} (1990).
\bibitem{BrG} S. B. Bradlow and O. Garc\'ia-Prada, Stable triples, equivariant bundles and dimensional reduction, Math. Ann. \textbf{304} (1996).
\bibitem{Bry} R. L. Bryant, On the geometry of almost complex 6-manifolds, Asian J. Math. \textbf{10} (2006).
\bibitem{CD} F. Contatto, M. Dunajski, Manton's five vortex equations from self-duality, J. Phys. A: Math. Theor. \textbf{50} 375201
\bibitem{DK}  S. K. Donaldson, P. B. Kronheimer, The geometry of four-manifolds, Oxford Mathematical Monographs. The Clarendon Press, Oxford University Press, New York, (1990), 440 pp.
\bibitem{GP1} O. Garc\'ia-Prada, Invariant connections and vortices, Commun. Math. Phys. \textbf{156} (1993).
\bibitem{GP2} O. Garc\'ia-Prada, Dimensional reduction of stable bundles, vortices and stable pairs, Int. J. Math. \textbf{5} (1994).
\bibitem{GP3} O. Garc\'ia-Prada, A direct existence proof for the vortex equations over a compact Riemann surface. Bulletin of the London Mathematical Society \textbf{26}.1 (1994): 88-96.
\bibitem{GL} V. L. Ginsburg, L. D. Landau, Zh. Eksp. Theor. Fiz. \textbf{20}, 1064 (1950)
\bibitem{Ha} B. C. Hall, Lie Groups, Lie Algebras, and Representations: An Elementary Introduction, Springer, 2015.
\bibitem{JaPi1} R. Jackiw, S.-Y. Pi, Soliton solutions to the gauged nonlinear Schr\"odinger equation on the plane, Phys. Rev. Lett. \textbf{64}, 2969 (1990).
\bibitem{JaPi2} R. Jackiw, S.-Y. Pi, Classical and quantum nonrelativistic Chern-Simons theory, Phys. Rev. \textbf{D42}, 3500 (1990).
\bibitem{JT} A. Jaffe, C. H. Taubes, Vortices and Monopoles, Progress in Physics 2, Boston: Birkhauser 1980 
\bibitem{KN} S. Kobayashi K. Nomizu, “Foundations of Differential Geometry. Volume I,” John Wiley and Sons (1996)
\bibitem{KW} J. L. Kazdan, F. Warner, Curvature functions for compact 2-manifolds, Ann. of Math. (2) 99 (1974).
\bibitem{Ko}  S. Kobayashi, Differential geometry of complex vector bundles, Publications of the MSJ, 15. Kan\^o Memorial Lectures, 5. Princeton University Press, Princeton, NJ;
Iwanami Shoten, Tokyo, 1987
\bibitem{LT} M. L\"ubke, A. Teleman, The Kobayashi-Hitchin correspondence, World Scientific, Singapore (1995).
\bibitem{Ma} N. S. Manton, Five Vortex Equations J. Phys. A \textbf{50}, 125403 (2017).
\bibitem{Nico} L. I. Nicolaescu, Notes on Seiberg-Witten theory. Vol. 28. Providence, RI: American Math
\bibitem{Nico 1} L. I. Nicolaescu, Notes on the Atiyah-Singer Index Theorem. Notes for a Topics in Topology Course, University of Notre Dame (2013)
\bibitem{Par} T. H. Parker,  Gauge theories on four dimensional Riemannian manifolds. Commun.Math. Phys. \textbf{85} (1982).
\bibitem{Po1} A. D. Popov, Hermitian Yang-Mills equations and pseudo-holomorphic bundles on nearly K\"ahler and nearly Calabi-Yau twistor 6-manifolds, Nuclear Phys. B \textbf{828} (2010).
\bibitem{Po2} A. D. Popov, Integrable vortex-type equations on the two-sphere, Phys. Rev. D \textbf{86} (2012).
\bibitem{R} C. Ross, Cartan connections and integrable vortex equations. Journal of Geometry and Physics \textbf{179} (2022).
\bibitem{Se} G. Segal, Equivariant K-theory, Inst. Hautes \'Etudes Sci. Publ. Math, (34), 1968.
\bibitem{Tdl}  F. Torres de Lizaur, Vortex Equations and Non-Compact Classical Yang-Mills Theories, 2013.
\bibitem{Wa} F. W. Warner, Foundations of differentiable manifolds and Lie groups. Vol. 94. Springer Science and Business Media, 1983.
\bibitem{Wells} R. O. Wells, Jr., Differential analysis on complex manifolds, volume 65 of Graduate Texts in Mathematics. Springer-Verlag, New York-Berlin, second edition, 1980.
\end{thebibliography}
\end{document}